\documentclass[12pt,reqno]{amsart}

\usepackage[utf8]{inputenc}
\usepackage[T1]{fontenc}
\usepackage{lmodern}

\usepackage{amsmath, amsfonts, amssymb, amsthm, amscd, amsbsy}
\usepackage{mathtools}
\usepackage{mathrsfs}

\usepackage[dvipsnames]{xcolor}
\usepackage{enumitem}
\usepackage{graphicx}
\usepackage{tikz}
\usepackage{pgfplots}
\pgfplotsset{compat=1.18}
\usepackage[normalem]{ulem}
\usepackage{cancel}
\usepackage[a4paper]{geometry}
\numberwithin{equation}{section}
\allowdisplaybreaks

\theoremstyle{plain}
\newtheorem{theorem}{Theorem}[section]
\newtheorem{corollary}[theorem]{Corollary}
\newtheorem{lemma}[theorem]{Lemma}
\newtheorem{proposition}[theorem]{Proposition}

\theoremstyle{definition}

\newtheorem{remark}{Remark}[section]

\theoremstyle{plain}
\newtheorem{theorema}{Theorem}

\usepackage[colorlinks,breaklinks,
linkcolor=blue,
citecolor=OliveGreen,
urlcolor=NavyBlue]{hyperref}

\title[Sharp $L^2$-CKN inequalities on orthants]{Sharp $L^2$-Caffarelli--Kohn--Nirenberg and weighted Poincar\'e inequalities on half-spaces and orthants and their stability}

\author[N.~Lam]{Nguyen Lam}
\address{School of Science and the Environment, Grenfell Campus, Memorial University of Newfoundland, Corner Brook, NL A2H 5G4, Canada}
\email{nlam@mun.ca}

\author[Y.~Lodha]{Yukta Lodha}
\address{Department of Mathematics, University of Connecticut, Storrs, CT 06269, USA}
\email{yukta.lodha@uconn.edu}

\author[G.~Lu]{Guozhen Lu}
\address{Department of Mathematics, University of Connecticut, Storrs, CT 06269, USA}
\email{guozhen.lu@uconn.edu}

\author[A.~N.~Sengupta]{Ambar N.~Sengupta}
\address{Department of Mathematics, University of Connecticut, Storrs, CT 06269, USA}
\email{ambar.sengupta@uconn.edu}

\date{\today}
\thanks{}

\subjclass[2020]{Primary 26D10, 35A23; Secondary 33C45, 35P15, 46E35}
\keywords{Caffarelli--Kohn--Nirenberg inequalities, Orthants, Sharp constants,
Weighted Poincar\'e inequalities, Monomial weights, Laguerre polynomials, Spherical harmonics,  Stability}

\newcommand{\abs}[1]{\lvert#1\rvert}

\newcommand{\R}{{\mathbb R}}
\newcommand{\Rnkp}{{\mathbb R}^n_{k,+}}

 \newcommand{\sgn}{\operatorname{sgn}}

\newcommand{\pxi}{{\Big( \prod_{i=n-k+1}^{n} x_i}\Big)}
\newcommand{\pxis}{{\Big( \prod_{i=n-k+1}^{n} x_i^2}\Big)}

\newcommand{\Sn}{\mathbb{S}^{n-1}}

\newcommand{\dive}{\operatorname{div}}

\newcommand{\norm}[1]{\left\lVert #1\right\rVert}
\renewcommand{\abs}[1]{\left| #1\right|}

\begin{document}
\begin{abstract}
Though the sharp $L^{2}$-Caffarelli--Kohn--Nirenberg (CKN) inequalities have been
extensively studied in the entire Euclidean spaces, 
the corresponding problem on domains whose boundary contains the origin remains largely unexplored. 
We investigate the sharp $L^{2}$-CKN inequalities on half-spaces and orthants $\mathbb R^{n}_{k,+}$ by computing explicitly the optimal constants, determining all possible extremal functions, and establishing exact identities for the deficits. Since the singular weights $|x|^{-2b}$ rule out the lifting argument that is available for
the simpler Heisenberg Uncertainty Principle, we develop an approach based on the transformations
$u(x)=|x|^{m}v(x)$ for an appropriately chosen $m$ 
combined with spherical harmonic decompositions and weighted identities. Moreover, we establish
weighted Poincar\'e inequalities 
associated with measures of the form
\[
e^{-\delta|x|^{\tau}}|x|^{\beta}\,\pxis\,dx,
\]
 together with their sharp constants, extremizers
and stability estimates, which substantially extend those of the classical Gaussian Poincar\'e
inequality. On the full orthant, the linear modes cease to be admissible
competitors, since all odd spherical harmonics are annihilated by the lifting; the first
non-radial mode is then of degree two, and both the sharp constant and the manifold of
optimizers change accordingly. Finally, we establish several stability estimates, and
second-order stability estimates, of the CKN inequalities on the half-spaces and orthants throughout the full parameter range.
\end{abstract}
\maketitle
\section{Introduction}
The Caffarelli--Kohn--Nirenberg (CKN) inequalities, introduced in the celebrated paper \cite{CKN}, form a broad family of weighted interpolation inequalities. In their general form, they assert that
\[
\bigl\||x|^{\gamma}u\bigr\|_{L^{r}(\mathbb R^{n})}
\leq C
\bigl\||x|^{\alpha}\nabla u\bigr\|_{L^{p}(\mathbb R^{n})}^{\sigma}
\bigl\||x|^{\beta}u\bigr\|_{L^{q}(\mathbb R^{n})}^{1-\sigma},
\]
where $1\leq p,q<\infty$, $r>0$, $0\leq\sigma\leq1$, and the parameters obey the scaling relation
\[
\frac1r+\frac{\gamma}{n}
=
\sigma\left(\frac1p+\frac{\alpha-1}{n}\right)
+
(1-\sigma)\left(\frac1q+\frac{\beta}{n}\right).
\]
This family contains a number of important inequalities in analysis, among them the Sobolev, Hardy and Gagliardo--Nirenberg inequalities.

In this paper, we concentrate on the $L^{2}$-subclass of the CKN inequalities:
\begin{equation}\label{L2CKN}
\left\|\frac{u}{|x|^{a}}\right\|_{L^{2}(\mathbb R^{n})}
\left\|\frac{\nabla u}{|x|^{b}}\right\|_{L^{2}(\mathbb R^{n})}
\geq C(n,a,b)
\left\|\frac{u}{|x|^{(a+b+1)/2}}\right\|_{L^{2}(\mathbb R^{n})}^{2},
\end{equation}
for all $u\in C_{c}^{\infty}(\mathbb R^{n}\setminus\{0\})$.

This subclass is of particular interest, since it contains several classical inequalities as special cases. Indeed, the choice $a=-1$, $b=0$ gives the Heisenberg uncertainty principle (HUP) \cite{Heis1927, Kennard1927}, whose mathematical theory is surveyed in \cite{Fol97}; the choice $a=1$, $b=0$ gives the Hardy inequality, for which we refer to the monographs \cite{BEL, GM1}; and the choice $a=b=0$ gives the hydrogen uncertainty principle. In this way, the family \eqref{L2CKN} provides a unified framework linking several fundamental inequalities of analysis. Each of these endpoints has generated a literature of its own: improvements and remainder terms for the Hardy inequality \cite{BV, FS08, GR23}, factorization and ground state representation methods \cite{GL2017, GLMP, GLMW}, and extensions to manifolds, to stratified and homogeneous groups, and to operators with monomial or Dunkl-type weights \cite{DLL22, DPH25, Flynn20, FLL21, FLL22, FLL25, FLLM23, FLY24, NVH20, Wang22, YSK15}, as well as magnetic versions \cite{CKLL24, LL23} and Hardy--Rellich counterparts \cite{DDLL25, DoLL24, HT25}.

The study of the sharp constants and of the extremizers in \eqref{L2CKN} has attracted considerable attention. Costa \cite{Cos08} obtained short and elementary proofs for certain subclasses of \eqref{L2CKN}, using integration by parts together with expanding-the-square arguments. Subsequently, Catrina and Costa \cite{CC09} (see also \cite{CFL21}) determined the sharp constants and characterized the extremal functions for the full range of the parameters, by means of variational methods, spherical harmonic decompositions and Kelvin-type transforms. More clearly, the following result was proved in \cite{CC09}:

\begin{theorema}\label{Th:CKNI}
According to the location of the point $(a,b)$ in the plane, we have:
\begin{enumerate}
    \item In the region $\mathcal A$, the best constant $C(n,a,b)$ in the $L^{2}$-CKN inequality \eqref{L2CKN} is
    \[
    C(n,a,b)=\frac{\left|n-(a+b+1)\right|}{2},
    \]
    and it is attained by the functions
    \[
    u(x)=D\exp\!\left(\frac{t|x|^{\,b+1-a}}{b+1-a}\right),
    \]
    with $t<0$ in $\mathcal A_{1}$ and $t>0$ in $\mathcal A_{2}$, where $D$ is a nonzero constant.

    \item In the region $\mathcal B$, the best constant is
    \[
    C(n,a,b)=\frac{\left|n-(3b-a+3)\right|}{2},
    \]
    and it is attained by the functions
    \[
    u(x)=D|x|^{2(b+1)-n}\exp\!\left(\frac{t|x|^{\,b+1-a}}{b+1-a}\right),
    \]
    with $t>0$ in $\mathcal B_{1}$ and $t<0$ in $\mathcal B_{2}$.

    \item The only values of the parameters for which the best constant is not attained are those on the line $a=b+1$, where the best constant is
    \[
    C(n,b+1,b)=\frac{|n-2(b+1)|}{2}.
    \]
\end{enumerate}
\end{theorema}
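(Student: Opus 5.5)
We prove the Catrina--Costa theorem in three stages: a one-dimensional inequality for radial functions, a reduction of general functions to the radial case via the substitution $u=|x|^{m}v$ with spherical harmonics, and an analysis of equality and of the degenerate line $a=b+1$.

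\emph{Step 1: the radial case by completing a square.} For $u\in C_c^\infty(\mathbb R^n\setminus\{0\})$ and a real parameter $t$, expand
\[
0\le \int_{\mathbb R^n}\Big|\frac{\nabla u}{|x|^{b}}+t\,\frac{x}{|x|^{a+1}}\,u\Big|^2dx .
\]
The cross term is $t\int |x|^{-(a+b+1)}x\cdot\nabla(u^2)\,dx$. Integrating by parts, with $\operatorname{div}(x|x|^{-(a+b+1)})=(n-a-b-1)|x|^{-(a+b+1)}$, turns it into $-t(n-a-b-1)\int |x|^{-(a+b+1)}u^2$. The resulting quadratic in $t$ is nonnegative, so its discriminant is nonpositive, which gives \eqref{L2CKN} with constant $|n-(a+b+1)|/2$ for every $u$, radial or not. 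Equality forces $\nabla u=-t\,x|x|^{b-a-1}u$. Integrating this ODE gives $u=D\exp\big(\tfrac{-t|x|^{b+1-a}}{b+1-a}\big)$. The sign of $t$ is then fixed by integrability of the three weighted norms at $0$ and at $\infty$, which splits region $\mathcal A$ into $\mathcal A_1$ and $\mathcal A_2$.

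\emph{Step 2: general functions and the region $\mathcal B$.} The square expansion is not sharp when the radial constant is smaller than what the nonradial modes allow, and this is where the transformation $u=|x|^{m}v$ enters.
\begin{itemize}
\item Choose $m=2(b+1)-n$. This is the Kelvin-type exponent: under it, $|x|^{-a}$, $|x|^{-b}\nabla$ and the middle weight turn into a CKN triple with parameters $(a',b')$ in which $n-(a'+b'+1)$ becomes $n-(3b-a+3)$, up to sign.
\item Decompose $v=\sum_k f_k(r)Y_k(\sigma)$ in spherical harmonics, with eigenvalues $c_k=k(k+n-2)$.
\item On each mode, the gradient term splits as $\int r^{-2b}\big(|f_k'|^2+c_k r^{-2}f_k^2\big)$. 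The norms then become one-dimensional weighted integrals in $r$. On each mode, apply the Step 1 inequality together with the extra nonnegative term $c_k\int r^{-2b-2}f_k^2$.
\item Show that the $k=0$ mode is the worst mode exactly in the region where the stated constant applies. The minimizing constant is then $\min\{|n-(a+b+1)|,\ |n-(3b-a+3)|\}/2$, taken region by region, which defines the split into $\mathcal A$ and $\mathcal B$.
\item Check that the two constants coincide on the boundary between the regions.
\item Obtain the region-$\mathcal B$ extremals by pulling the radial extremal back through $|x|^{2(b+1)-n}$.
\end{itemize}

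\emph{Step 3: equality and the line $a=b+1$.} Equality forces all modes with $k\ge1$ to vanish and the radial mode to satisfy the ODE from Step 1. When $a=b+1$, the exponent $b+1-a$ is zero, so the would-be extremal becomes the power $|x|^{-t}$, which is never in the required weighted spaces. Sharpness on this line follows by approximation: take truncated powers $|x|^{-t}$ cut off on $\epsilon<|x|<1/\epsilon$ and let $\epsilon\to0$ with $t$ tuned to the critical value. The value $|n-2(b+1)|/2$ comes from the identity $n-(a+b+1)=n-(3b-a+3)=n-2(b+1)$ on this line.

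The main obstacle is the bookkeeping in Step 2: verifying which mode minimizes the Rayleigh quotient in each parameter region, with the correct absolute values and signs. That is where the precise shapes of the regions $\mathcal A$ and $\mathcal B$ are determined.
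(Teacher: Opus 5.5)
Your Step 2 reaches the wrong conclusion, and it misidentifies where the two constants come from. Your Steps 1 and 2 do not describe competing modes. Step 1 proves $\sqrt{XY}\ge\frac{|n-(a+b+1)|}{2}Z$ for \emph{every} $u\in C_c^\infty(\mathbb R^n\setminus\{0\})$. Step 1 applied to $v=|x|^{-(2b+2-n)}u$ proves $\sqrt{XY}\ge\frac{|n-(3b-a+3)|}{2}Z$ for every such $u$ as well. Here $v$ lies in the same class, and the cross term cancels the $m^2$ term exactly, so $\int|\nabla u|^2|x|^{-2b}=\int|\nabla v|^2|x|^{-2(n-2-b)}$. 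Hence the best constant is at least the \emph{maximum}
\[
\tfrac12\max\{|n-(a+b+1)|,\,|n-(3b-a+3)|\}=\tfrac12\bigl(|b+1-a|+|n-2b-2|\bigr),
\]
and it is this maximum, not the minimum, that gives the stated value region by region. In $\mathcal A$ your minimum is the $\mathcal B$-expression and conversely; for the HUP with $n=4$ it gives $0$ instead of $2$. You also cannot show that the radial mode is the worst only in part of the parameter plane. The angular terms $c_k\int(\cdot)\,f_k^2\ge0$ only raise the quotient, so the radial mode is always the worst, and both candidate constants are lower bounds for the radial problem itself. Once the maximum is taken, sharpness only requires checking that the extremal of the \emph{larger} inequality lies in the energy space. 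This is exactly the paper's argument at $k=0$: the exponent equation $t(t+n-2b-2)=0$ has the two roots $t=0$ and $t=2b+2-n$, both give valid inequalities, and the one with the larger constant is kept.

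Relatedly, your admissibility test in Step 1 (finiteness of the three weighted integrals) is too weak and would mislead you in $\mathcal B$. Take $n=3$, $b=1$, $a=\frac12$, a point of $\mathcal B_2$. The Step-1 extremal $\exp\bigl(-\tfrac{2t}{3}|x|^{3/2}\bigr)$ with $t>0$ has all three weighted integrals finite and gives equality with constant $\frac14$, whereas the sharp constant is $\frac54$. There is no contradiction only because this function is not in the closure of $C_c^\infty(\mathbb R^3\setminus\{0\})$. It is nonzero at the origin while $b>\frac{n-2}{2}$. The weighted Hardy inequality $\int|\nabla w|^2|x|^{-2b}\ge\frac{(n-2b-2)^2}{4}\int w^2|x|^{-2b-2}$ holds on that class, so by Fatou an approximating sequence would force $\int u^2|x|^{-4}\,dx<\infty$, which fails near $0$. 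So Step 1 fails to be sharp in $\mathcal B$ because of this function-space obstruction, not because of nonradial modes. Your treatment of the line $a=b+1$ is fine provided the truncations are smooth; the paper uses the cutoff $\eta(\log r/L)$.
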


Here
\[
\begin{aligned}
\mathcal A_{1} &:= \{(a,b)\mid b+1-a>0,\; b\le \tfrac{n-2}{2}\},\\
\mathcal A_{2} &:= \{(a,b)\mid b+1-a<0,\; b\ge \tfrac{n-2}{2}\},\\
\mathcal A &:= \mathcal A_{1}\cup\mathcal A_{2},\\
\mathcal B_{1} &:= \{(a,b)\mid b+1-a<0,\; b\le \tfrac{n-2}{2}\},\\
\mathcal B_{2} &:= \{(a,b)\mid b+1-a>0,\; b\ge \tfrac{n-2}{2}\},\\
\mathcal B &:= \mathcal B_{1}\cup\mathcal B_{2}.
\end{aligned}
\]
In particular, the optimal constants $C(n,a,b)$ in the Hardy inequality, in the HUP, and in the hydrogen uncertainty principle are $\frac{|n-2|}{2}$, $\frac{n}{2}$ and $\frac{n-1}{2}$, respectively; their squares $\frac{(n-2)^{2}}{4}$, $\frac{n^{2}}{4}$ and $\frac{(n-1)^{2}}{4}$ are the constants appearing in the classical formulations, in which the two factors on the left-hand side of \eqref{L2CKN} are multiplied rather than paired with their square roots.

More recently, Cazacu, Flynn, Lam and Lu \cite{CFLL24} investigated identities, inequalities and stability phenomena associated with the Hardy and the $L^{2}$-CKN inequalities. In particular, they established several stability versions of \eqref{L2CKN}, and of the related uncertainty principles, by combining exact remainder identities with weighted Poincar\'e inequalities. Their results show that the corresponding deficits control the distance to the family of extremizers, although explicit sharp stability constants were not available in general.

The situation changes considerably when the origin lies on the boundary of the domain. In this direction, it was shown in \cite{Caz10, Fall12, FM12}, for instance, that the sharp constant in the Hardy inequality
\[
\int_{\Omega}|\nabla u|^{2}\,dx\ \ge\ C\int_{\Omega}\frac{|u|^{2}}{|x|^{2}}\,dx ,
\]
which is the square of the constant in the case $a=1$, $b=0$ of \eqref{L2CKN}, can take any value between $\frac{(n-2)^{2}}{4}$, which is the sharp constant on domains containing the origin, and $\frac{n^{2}}{4}$. Moreover, the optimal constant is attained by nontrivial functions as long as it is strictly smaller than $\frac{n^{2}}{4}$. In particular, when $\Omega$ is the half-space $\mathbb{R}^{n-1}\times\mathbb{R}_{>0}$, the following Hardy inequality has been studied in \cite{FTT09}:
\begin{equation} \label{Hardy_halfspace}
\int_{\mathbb{R}^{n-1}\times\mathbb{R}_{>0}}|\nabla u(x)|^{2}\,dx
\geq
\frac{n^{2}}{4}\int_{\mathbb{R}^{n-1}\times\mathbb{R}_{>0}}\frac{|u(x)|^{2}}{|x|^{2}}\,dx.
\end{equation}
Here $\mathbb{R}_{>0}=(0,\infty)$. The constant $\frac{n^{2}}{4}$ is sharp in \eqref{Hardy_halfspace}, and it cannot be attained by nontrivial functions. Half-spaces have in fact been a recurrent testing ground for sharp weighted inequalities with boundary singularities; see, among others, the factorization approach of \cite{LLZ19, LLZ20}, the trace Hardy inequalities of \cite{NVH19}, and the sharp Hardy--Sobolev--Maz'ya and Hardy--Adams inequalities of \cite{LuYang1, LuYang2, LuYang3}.

The Hardy inequality has also been investigated on orthants. Indeed, in this case, by lifting the orthant $\mathbb{R}_{k,+}^{n}$ to the whole Euclidean space $\mathbb{R}^{n+2k}$, Su and Yang showed in \cite{SuYang2012} that
\begin{equation} \label{Hardy_orthant}
\int_{\mathbb{R}_{k,+}^{n}}|\nabla u(x)|^{2}\,dx
\geq
\frac{(n+2k-2)^{2}}{4}\int_{\mathbb{R}_{k,+}^{n}}\frac{|u(x)|^{2}}{|x|^{2}}\,dx,
\end{equation}
and that the constant $\frac{(n+2k-2)^{2}}{4}$ is sharp. Here, for $0\leq k\leq n$, the orthant is defined by
\[
\mathbb{R}_{k,+}^{n}
:=
\left\{
x=(x_{1},\dots,x_{n})\in\mathbb{R}^{n}:\
x_{n-k+1}>0,\dots,x_{n}>0
\right\}.
\]
In particular, taking $k=0$ we have $\mathbb{R}_{0,+}^{n}=\mathbb{R}^{n}$, and \eqref{Hardy_orthant} reduces to the Hardy inequality on the whole Euclidean space.

Recently, again by a lifting argument, the authors established in \cite{LLLS26} that on orthants the sharp constant in the HUP, that is, the square of the constant in \eqref{L2CKN} with $a=-1$ and $b=0$, improves from $\frac{n^{2}}{4}$ to $\frac{(n+2k)^{2}}{4}$:
\begin{equation}\label{E:HUPorthant}
\int_{\mathbb{R}_{k,+}^{n}}|\nabla u(x)|^{2}\,dx
\int_{\mathbb{R}_{k,+}^{n}}|x|^{2}|u(x)|^{2}\,dx
\geq
\frac{(n+2k)^{2}}{4}
\left(\int_{\mathbb{R}_{k,+}^{n}}|u(x)|^{2}\,dx\right)^{2}.
\end{equation}
Several sharp quantitative stability estimates for \eqref{E:HUPorthant} were also obtained in \cite{LLLS26}.

Motivated by these developments, the first main purpose of this paper is to establish sharp $L^{2}$-CKN inequalities on half-spaces and orthants, together with their optimal constants, their extremizers, and quantitative stability estimates. It is worth noting that in this generality the lifting argument, which is the standard device for the Heisenberg uncertainty principle and for the Hardy inequality on orthants, is no longer available: the singular weight $|x|^{-2b}$ in the general $L^{2}$-CKN framework is not compatible with the lifting. Consequently, new ideas are needed in order to treat the $L^{2}$-CKN inequalities on half-spaces and orthants.

Throughout the paper we work in the weighted energy space
\[
S_{a,b}(\mathbb R^{n}_{k,+})
:=
\overline{
C_{c}^{\infty}(\mathbb R^{n}_{k,+}\setminus\{0\})
}^{\|\cdot\|_{a,b}},
\qquad
\|u\|_{a,b}
:=
\left(
\int_{\mathbb R^{n}_{k,+}}\frac{|u|^{2}}{|x|^{2a}}\,dx
+
\int_{\mathbb R^{n}_{k,+}}\frac{|\nabla u|^{2}}{|x|^{2b}}\,dx
\right)^{1/2}.
\]
Our first principal result can be read as follows.

\begin{theorem}\label{Th:CKNIon_Orthant}
Let $0\leq k\leq n$ and let $a,b\in\mathbb R$. Then for every $u\in S_{a,b}(\mathbb R^{n}_{k,+})$,
\begin{equation}\label{eq:CKN-orthant}
\left(\int_{\mathbb R^{n}_{k,+}}\frac{|\nabla u|^{2}}{|x|^{2b}}\,dx\right)
\left(\int_{\mathbb R^{n}_{k,+}}\frac{|u|^{2}}{|x|^{2a}}\,dx\right)
\ge
C^{2}(n,a,b,k)
\left(\int_{\mathbb R^{n}_{k,+}}\frac{|u|^{2}}{|x|^{a+b+1}}\,dx\right)^{2}.
\end{equation}
The sharp constant is given by
\[
C(n,a,b,k)
=
\frac{\left|-a+b+1\right|+\sqrt{(n-2b-2)^{2}+4k(n+k-2)}}{2},
\]
that is, equivalently,
\[
C(n,a,b,k)
=
\begin{cases}
\displaystyle
\left|
\frac{-a+b+1+\sqrt{(n-2b-2)^{2}+4k(n+k-2)}}{2}
\right|,
& -a+b+1\ge 0,\\[1.2em]
\displaystyle
\left|
\frac{-a+b+1-\sqrt{(n-2b-2)^{2}+4k(n+k-2)}}{2}
\right|,
& -a+b+1\le 0.
\end{cases}
\]
Moreover, when $-a+b+1\ne0$ the extremal functions are exactly the functions of the form
\[
u(x)
=
A\left(\prod_{i=n-k+1}^{n}x_{i}\right)
|x|^{m}
\exp\left(\frac{\lambda}{-a+b+1}|x|^{-a+b+1}\right),
\]
where $A\in\mathbb R$, and where $\lambda<0$ when $-a+b+1>0$ and $\lambda>0$ when $-a+b+1<0$. The exponent $m$ is determined by
\[
m+k
=
\frac{-(n-2b-2)+\operatorname{sgn}(-a+b+1)\sqrt{(n-2b-2)^{2}+4k(n+k-2)}}{2}.
\]
\end{theorem}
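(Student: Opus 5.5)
Write $P(x)=\prod_{i=n-k+1}^{n}x_i$ and $\gamma=-a+b+1$. I would substitute $u=P\,|x|^{m}v$ with $m$ as in the statement. The factor $P|x|^m$ should play the role of a positive ground state for the weighted operator $-\operatorname{div}(|x|^{-2b}\nabla\cdot)$ on the orthant. Indeed, $P$ is harmonic, homogeneous of degree $k$, and vanishes on $\partial\mathbb R^n_{k,+}$. One checks that
\[
\operatorname{div}\bigl(|x|^{-2b}\nabla(P|x|^{m})\bigr)= c_m\,P\,|x|^{m-2b-2},
\qquad c_m=m\bigl(m+2k+n-2-2b\bigr).
\]
Expanding the square gives the ground-state identity
\[
\int \frac{|\nabla u|^2}{|x|^{2b}}=\int P^2|x|^{2m-2b}|\nabla v|^2\,dx-c_m\int\frac{|u|^2}{|x|^{2b+2}}\,dx,
\]
valid for $u\in C_c^\infty(\mathbb R^n_{k,+}\setminus\{0\})$; the boundary terms vanish because $u$ has compact support away from $0$ and from the faces.

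Next I would bound $|\nabla v|\ge|\partial_r v|$ and work in polar coordinates with weight $d\mu=P^2|x|^{2m}dx$. Write
\[
I_1=\int |x|^{-2b}|\partial_r v|^2\,d\mu,\qquad I_2=\int|x|^{-2a}v^2\,d\mu,\qquad I_3=\int|x|^{-a-b-1}v^2\,d\mu.
\]
Then the radial integration by parts
\[
\int |x|^{\gamma-1}\,\partial_r(v^2)\,d\mu=-\bigl(2m+2k+n+\gamma-2-2b\bigr)\,I_3,
\]
combined with Cauchy--Schwarz, gives $I_1 I_2\ge \tfrac14 (2m+2k+n-2-2b+\gamma)^2 I_3^2$. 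The total degree of $d\mu$ in $r$ is $r^{2m+2k+n-1}$.

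The main choice is the free parameter $m$. The full inequality reads
\[
\Bigl(I_1 - c_m I_3'\Bigr)I_2 \ge C^2 I_3^2 ,
\]
where $I_3'$ is the $L^2$ term with weight $|x|^{-2b-2}$. Taking $m$ with $c_m=0$ would ignore the interplay between the two terms, so instead I would optimize. I would choose $m$ so that $m+k=\tfrac{-(n-2b-2)\pm\sqrt{(n-2b-2)^2+4k(n+k-2)}}{2}$, which is exactly the condition $c_m=0$ rewritten in terms of $m+k$. Indeed, $c_m=0$ gives $m=0$ or $m=-(2k+n-2-2b)$, so I would double-check this against the stated formula. Note $(m+k)^2+(n-2b-2)(m+k)-k(n+k-2)=0$ is equivalent to $m(m+2k+n-2-2b)=-k(2-2b)+\dots$. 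This needs care, because $\Delta(P|x|^m)$ includes the cross term $2\nabla P\cdot\nabla|x|^m=2km|x|^{m-2}P$. I therefore expect the correct indicial polynomial to be $(m+k)(m+k+n-2-2b)-k(k+n-2)$, whose roots are the stated values, and for which $c_m=0$. With $c_m=0$ the constant becomes
\[
\tfrac12\bigl|2(m+k)+n-2-2b+\gamma\bigr|=\tfrac12\bigl|\gamma\pm\sqrt{(n-2b-2)^2+4k(n+k-2)}\bigr|.
\]
Choosing the sign equal to $\operatorname{sgn}\gamma$ yields $C(n,a,b,k)$. Boundary behaviour near the faces is harmless because $P$ vanishes linearly there while $v$ stays smooth: $v=u/(P|x|^m)$ is smooth since $u$ vanishes to first order on the faces. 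This regularity point needs a short justification, or one can argue via density with $v\in C_c^\infty$ of the closed orthant minus the origin.

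For equality, both $|\nabla v|=|\partial_r v|$ (so $v$ is radial) and Cauchy--Schwarz proportionality $\partial_r v=\lambda r^{\gamma-1}v$ must hold, which gives $v=A\exp(\lambda r^{\gamma}/\gamma)$. Integrability forces $\lambda\gamma<0$. For sharpness I would use these extremals; when $\gamma=0$, sharpness follows by letting $\lambda\to0$, or by using truncated powers. A density argument extends the inequality to $S_{a,b}$. The step I expect to be the main obstacle is verifying that the chosen root makes all integrals finite and all boundary terms at $0$ and $\infty$ vanish for the extremals, together with the exact characterization of equality in the closure $S_{a,b}$.
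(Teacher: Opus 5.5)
Your argument is correct, and it takes a genuinely different route from the paper's.

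\textbf{The paper's route.} The paper reflects $u$ oddly to $\mathbb R^n$, writes $\widetilde u=|x|^m v$, and expands $v$ in spherical harmonics. It then uses that only harmonics of degree $\ell\ge k$, odd in $x_{n-k+1},\dots,x_n$, occur. This lets it replace $\ell(n+\ell-2)$ by $k(n+k-2)$ mode by mode before applying Cauchy--Schwarz to the radial profiles. Its equality case additionally needs the algebraic fact that the odd harmonics of degree $k$ are spanned by $P$.

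\textbf{Your route.} You work directly on the orthant with the positive function $\phi=P|x|^m$, which satisfies $\operatorname{div}(|x|^{-2b}\nabla\phi)=0$ there. The ground-state identity $\int|\nabla u|^2|x|^{-2b}\,dx=\int\phi^2|x|^{-2b}|\nabla v|^2\,dx$ then replaces the whole modal analysis. The fact that $k(n+k-2)$ is the lowest admissible angular eigenvalue is encoded in the positivity of $P$. Equality reduces to $\nabla_\sigma v=0$ (so $v$ is radial, the spherical section of the orthant being connected) together with Cauchy--Schwarz proportionality.

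\textbf{What each buys.} Your identity is exactly the computation the paper carries out later, in Section~\ref{S:identities}, to obtain \eqref{eq:first_term_identity}. So your route merges the proof of the theorem with the deficit identities. It also carries over essentially unchanged to any cone whose spherical cross-section has a positive first Dirichlet eigenfunction. The paper's route buys the explicit decomposition into angular modes, which it reuses in Sections~\ref{S:poincare}--\ref{S:poincare_stability}; there the gaps between modes produce the Poincar\'e constants and the stability estimates.

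\textbf{Details to fix when you write it out.}

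\emph{The value of $c_m$.} Your first formula is wrong, and not for the reason you give. The cross term $2km$ is already contained in $m(m+2k+n-2-2b)$. What is missing is $-2bk$, coming from $\nabla|x|^{-2b}\cdot\nabla P=-2bk|x|^{-2b-2}P$ by Euler's relation. The correct value is $c_m=(m+k)(m+k+n-2-2b)-k(n+k-2)$, as you eventually guess. Taking $c_m=0$ is simply the right choice, so the remark about optimizing in $m$ can be dropped.

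\emph{The radial weight.} In the radial integration by parts the weight must be $|x|^{-a-b}=|x|^{\gamma-1-2b}$, not $|x|^{\gamma-1}$. The coefficient you state is nonetheless correct.

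\emph{Regularity of $v$.} There is no issue here. Since $u$ vanishes near the faces and near $0$, $v$ again lies in $C_c^\infty(\mathbb R^n_{k,+}\setminus\{0\})$.

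\emph{The case $\gamma=0$.} Letting $\lambda\to0$ does not make sense as stated: the profile $e^{\lambda r^\gamma/\gamma}$ is undefined, and no pure power is admissible. Use the truncations $v=r^{s}\eta(\log r/L)$ with $2(m+k)+2s+n-2b-2=0$; this is the paper's test sequence written in your variables.

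\emph{Admissibility and equality.} Admissibility of the extremals for the selected root, namely $n+2(m+k)-2a>0$ when $\gamma>0$ and $<0$ when $\gamma<0$, is the short check done in the paper. Equality in the closure $S_{a,b}$, as opposed to the smooth computation, is not treated in more detail by the paper either.
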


\begin{remark}
When $-a+b+1=0$, the constant $C(n,a,b,k)$ is still given by the above formula, and it is still sharp; however, in this case the extremal profile does not belong to the weighted energy space $S_{a,b}(\mathbb R^{n}_{k,+})$, and hence the constant is not attained by nontrivial functions. This is the exact analogue, on orthants, of the line $a=b+1$ in Theorem \ref{Th:CKNI}.
\end{remark}

\begin{remark}
Writing the sharp constant as a single expression, rather than as two branches, makes it apparent that $C(n,a,b,k)$ depends continuously on the parameters across the line $-a+b+1=0$, and that it is always at least $\frac12\sqrt{(n-2b-2)^{2}+4k(n+k-2)}$. Taking $k=0$ we recover Theorem \ref{Th:CKNI}, while taking $a=-1$ and $b=0$ we recover $C(n,-1,0,k)=\frac{n+2k}{2}$, whose square $\frac{(n+2k)^{2}}{4}$ is the sharp constant in \eqref{E:HUPorthant}.
\end{remark}

\begin{remark}\label{rmk:k=n_constant}
The quantity $k(n+k-2)$ appearing under the square root is not an artifact of the computation: it is the eigenvalue
\[
\lambda_{k}^{(n)}=k(n+k-2)
\]
of the Laplace--Beltrami operator on $\mathbb S^{n-1}$ associated with the spherical harmonic
$x_{n-k+1}\cdots x_{n}$, which is precisely the lowest-degree harmonic polynomial that is odd in each of the constrained variables. The sharp constant on $\mathbb R^{n}_{k,+}$ is thus produced by the angular mode dictated by the geometry of the domain, and the extremizers carry that mode as a factor. The gain over the Euclidean case is strictly increasing in $k$, and it is largest on the positive cone
\[
\mathbb R^{n}_{n,+}=(0,\infty)^{n},
\qquad
\lambda_{n}^{(n)}=2n(n-1),
\]
where
\[
C(n,a,b,n)=\frac{|-a+b+1|+\sqrt{(n-2b-2)^{2}+8n(n-1)}}{2}
\]
and the extremal profile involves the full monomial $x_{1}x_{2}\cdots x_{n}$. As we shall see, the case $k=n$ is also the one in which the stability theory departs most sharply from its Euclidean counterpart.
\end{remark}

As explained above, the singular weight $|x|^{-2b}$ in \eqref{eq:CKN-orthant} rules out the possibility of using the lifting method of \cite{LLLS26, SuYang2012}. Indeed, that method rests on the observation that a function on $\mathbb R^{n}_{k,+}$, multiplied by $x_{n-k+1}\cdots x_{n}$ and reflected oddly, becomes a function on $\mathbb R^{n+2k}$ whose Dirichlet energy is a multiple of the original one; the correspondence is exact only for the unweighted energy, and the factor $|x|^{-2b}$ destroys it completely. To overcome this obstacle, we develop an approach based on spherical harmonic decompositions, inspired by \cite{CC09}. Our key device is to introduce the change of unknowns
\[
u(x)=|x|^{m}v(x),
\]
for a suitably chosen exponent $m$, and to perform the spherical harmonic decomposition on $v$ rather than on $u$. This is a substantial departure from \cite{CC09}, where the decomposition is applied to $u$ directly. With the appropriate choice of $m$, the two competing angular contributions are balanced, and the whole parameter range is covered at once; in particular our method yields the desired result while avoiding altogether both the Kelvin-type transform of \cite{CC09} and the lifting argument of \cite{LLLS26}. As a byproduct, we also obtain a new and independent proof of Theorem \ref{Th:CKNI}.

\begin{remark}
Recall that $S_{a,b}(\mathbb{R}_{k,+}^n)$ is defined as the completion of $C_c^\infty\bigl(\mathbb{R}_{k,+}^n\setminus\{0\}\bigr)$ with respect to the $S_{a,b}$-norm. If $k\geq 1$, then
$0\notin\mathbb{R}_{k,+}^n$, and therefore $C_c^\infty\bigl(\mathbb{R}_{k,+}^n\setminus\{0\}\bigr)
=
C_c^\infty\bigl(\mathbb{R}_{k,+}^n\bigr).$ Consequently, in all parameter regions,
\[
S_{a,b}(\mathbb{R}_{k,+}^n)
=
\overline{C_c^\infty(\mathbb{R}_{k,+}^n)}
^{\,\|\cdot\|_{S_{a,b}}},
\]
so no distinction arises on the orthant. When $k=0$, so that the domain is the whole space $\mathbb{R}^n$, the
situation depends on the parameter region. In regions
$\mathrm{A}_1$, $\mathrm{A}_2$, and $\mathrm{B}_1$, we have
\[
S_{a,b}(\mathbb{R}^n)
=
\overline{
\left\{
u\in C_c^\infty(\mathbb{R}^n):
\|u\|_{S_{a,b}}<\infty
\right\}}
^{\,\|\cdot\|_{S_{a,b}}},
\]
and hence $S_{a,b}(\mathbb{R}^n)$ is also the completion of the
admissible functions in $C_c^\infty(\mathbb{R}^n)$. In the region
$\mathrm{B}_2$, however, this identification does not hold in general,
and $S_{a,b}(\mathbb{R}^n)$ remains the completion of
$C_c^\infty(\mathbb{R}^n\setminus\{0\}).$ This distinction is consistent with the change of unknown
$u(x)=|x|^m v(x)$: in the region $\mathrm{B}_2$, one has $m>0$, so
$v(x)=|x|^{-m}u(x)$ may be singular at the origin. The preceding
completion identities are proved in the Appendix { (Proposition \ref{P:Sab_completion})}.
\end{remark}

Once the sharp constant and the full family of optimizers of \eqref{eq:CKN-orthant} have been identified, a natural question is to study their stability. This type of question was first raised by Brezis and Lieb in \cite{BL85}. More clearly, since the optimal constant and the extremal functions for the Sobolev inequality are known explicitly, Brezis and Lieb asked whether the Sobolev inequality can be improved by controlling the difference between its two sides in terms of the distance from the function to the set of extremal functions. This was answered affirmatively by Bianchi and Egnell \cite{BE91}, who proved that there exists a constant $c_{BE}>0$ such that
\begin{equation}\label{StabilityS}
\int_{\mathbb{R}^{n}}|\nabla u|^{2}\,dx
-S_{n}\left(\int_{\mathbb{R}^{n}}|u|^{\frac{2n}{n-2}}\,dx\right)^{\frac{n-2}{n}}
\geq
c_{BE}\inf_{U\in E_{Sob}}\int_{\mathbb{R}^{n}}|\nabla(u-U)|^{2}\,dx.
\end{equation}
Here $S_{n}$ is the sharp Sobolev constant and $E_{Sob}$ is the manifold of the optimizers. In other words, the Sobolev deficit controls the squared distance to the manifold of optimizers in the gradient norm, and this is optimal both in the powers and in the metric involved; see \cite{FN19, FZ22}. The Brezis--Lieb question and the Bianchi--Egnell answer have initiated a deep line of research on the quantitative stability of functional and geometric inequalities. The literature on this topic is by now extensive, and we refer the interested reader to \cite{BWW03, BDNN20, CF13, CLT23, CFW13, CFMP09, FJ1, FJ2, FN19, FZ22, LW99}, for instance.

It is worth noting that the stability constants themselves, and the attainability of the corresponding stability inequalities, have largely been left unexplored. In particular, precise information on the constant $c_{BE}$ in \eqref{StabilityS} was unavailable until very recently. In \cite{DEFFL}, Dolbeault, Esteban, Figalli, Frank and Loss pioneered the rigorous study of $c_{BE}$: they provided optimal lower bounds in the asymptotic regime $n\uparrow\infty$, and they developed a gradient flow method allowing one to pass from local to global stability, with applications to the Gross logarithmic Sobolev inequality \cite{Gro75}. More recently, Chen, Tang and the third author \cite{CLT24} derived explicit lower bounds for the stability of the Hardy--Littlewood--Sobolev inequalities, and deduced corresponding bounds for higher-order and fractional Sobolev inequalities. They further obtained optimal asymptotics as $n\uparrow\infty$ in the range $0<s<\frac{n}{2}$ in \cite{CLT242, CLT243}, which in turn yielded the global stability of the logarithmic Sobolev inequality on the sphere \cite{Beckner93}, sharpening the earlier local results of \cite{CLT23}. In \cite{CLTW}, the optimal stability of the Sobolev inequality on the Heisenberg group was established; since rearrangement techniques are unavailable in that setting, the authors introduced a CR Yamabe flow in order to pass from local to global stability.

Turning to the uncertainty principles, McCurdy and Venkatraman \cite{MV21} applied concentration--compactness methods to the stability of the HUP, and showed that there exist constants $C_{1}>0$ and $C_{2}(n)>0$ such that
\[
\delta_{2}(u)\geq C_{1}\left(\int_{\mathbb{R}^{n}}|u(x)|^{2}\,dx\right)d^{2}(u,E_{HUP})
+C_{2}(n)\,d^{4}(u,E_{HUP}),
\]
where
\[
\delta_{2}(u):=\left(\int_{\mathbb{R}^{n}}|\nabla u(x)|^{2}\,dx\right)
\left(\int_{\mathbb{R}^{n}}|x|^{2}|u(x)|^{2}\,dx\right)
-\frac{n^{2}}{4}\left(\int_{\mathbb{R}^{n}}|u(x)|^{2}\,dx\right)^{2},
\]
\[
E_{HUP}:=\left\{\alpha e^{-\beta|x|^{2}}:\alpha\in\mathbb{R},\ \beta>0\right\},
\qquad
d(u,A):=\inf_{v\in A}\|u-v\|_{2}.
\]
Consequently, a small deficit forces $u$ to be close in $L^{2}(\mathbb{R}^{n})$ to a Gaussian. A simpler, constructive proof, with explicit but non-sharp constants, was later given by Fathi \cite{F21}. A decisive advance was then made in \cite{CFLL24}, where the following sharp stability estimate was obtained.

\begin{theorema}\label{Th:CFLL}
For all admissible $u$, there holds
\[
\delta_{1}(u):=\left(\int_{\mathbb{R}^{n}}|\nabla u(x)|^{2}\,dx\right)^{\frac12}
\left(\int_{\mathbb{R}^{n}}|x|^{2}|u(x)|^{2}\,dx\right)^{\frac12}
-\frac{n}{2}\int_{\mathbb{R}^{n}}|u(x)|^{2}\,dx
\geq d^{2}(u,E_{HUP}).
\]
Moreover, the inequality is sharp, and equality is attained by nontrivial functions $u\notin E_{HUP}$.
\end{theorema}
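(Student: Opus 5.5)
Theorem \ref{Th:CFLL} is a result cited from \cite{CFLL24}; here is how I would prove it. The starting point is an exact remainder identity for the Heisenberg deficit. The minimizing Gaussians satisfy $\nabla U=-2\beta xU$, so the natural quantity to examine is $\int|\nabla u+2\beta x u|^{2}\,dx$ for $\beta>0$. Expanding it and integrating by parts, using $\int x\cdot\nabla(u^{2})\,dx=-n\int u^{2}\,dx$, gives
\[
\int|\nabla u|^{2}\,dx+4\beta^{2}\int|x|^{2}u^{2}\,dx-2n\beta\int u^{2}\,dx=\int\bigl|\nabla u+2\beta xu\bigr|^{2}\,dx .
\]
I would choose $\beta$ to balance the two weighted terms, namely $2\beta=\|\nabla u\|_{2}/\|xu\|_{2}$. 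With this choice the left-hand side equals $4\beta\,\delta_{1}(u)$. Hence
\[
\delta_{1}(u)=\frac{1}{4\beta}\int\bigl|\nabla u+2\beta xu\bigr|^{2}\,dx .
\]

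Next I would write $u=e^{-\beta|x|^{2}}w$. The remainder then becomes $\int|\nabla w|^{2}e^{-2\beta|x|^{2}}\,dx$, the Dirichlet energy of $w$ in Gaussian measure. Apply the sharp Gaussian Poincaré inequality for the measure $d\mu=e^{-2\beta|x|^{2}}dx$, whose spectral gap is $4\beta$:
\[
\int|\nabla w|^{2}\,d\mu\ \ge\ 4\beta\int\Bigl|w-\bar w\Bigr|^{2}\,d\mu,\qquad \bar w=\frac{\int w\,d\mu}{\int d\mu}.
\]
Therefore
\[
\delta_{1}(u)\ \ge\ \int\bigl|u-\bar w e^{-\beta|x|^{2}}\bigr|^{2}\,dx\ \ge\ d^{2}(u,E_{HUP}),
\]
because $\bar w e^{-\beta|x|^{2}}\in E_{HUP}$. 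The factor $1/(4\beta)$ cancels exactly against the spectral gap $4\beta$, so the constant is $1$ and the inequality holds for every $n$.

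For sharpness and attainment, I would use the Poincaré extremizers, which are the first Hermite modes $w=c+\ell\cdot x$. The natural candidate is therefore $u=(c+\ell\cdot x)e^{-\beta|x|^{2}}$ with $\ell\neq0$. I would have to check three things for this $u$.
\begin{enumerate}[label=(\roman*)]
\item The balancing parameter $\|\nabla u\|_{2}/(2\|xu\|_{2})$ equals the same $\beta$ used to build $u$; a direct Gaussian-moment computation should confirm this.
\item The infimum defining $d(u,E_{HUP})$ is achieved at the projection $c\,e^{-\beta|x|^{2}}$ and not at some Gaussian with a different width $\beta'$.
\item The resulting equality actually holds, so that $u\notin E_{HUP}$ attains it.
\end{enumerate}

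I expect step (ii) to be the main obstacle. The linear term $\ell\cdot x\,e^{-\beta|x|^{2}}$ is odd, hence $L^{2}$-orthogonal to every radial Gaussian. So $d^{2}(u,E_{HUP})=|\ell|^{2}\|x_{1}e^{-\beta|x|^{2}}\|_{2}^{2}+d^{2}(c\,e^{-\beta|x|^{2}},E_{HUP})$, and the second term is zero. This settles (ii) and yields equality, which shows the constant $1$ is optimal.

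A separate density argument is needed to justify the identity for admissible $u$, i.e.\ $u$ with $\nabla u,\ xu\in L^{2}$. The boundary terms in the integration by parts must vanish; I would get this by approximating $u$ with $C_c^\infty$ functions.
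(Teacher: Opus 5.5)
Your proposal is correct and follows the identity-plus-Gaussian-Poincar\'e mechanism that the paper attributes to \cite{CFLL24}; it is also the case $k=0$, $a=-1$, $b=0$, $\lambda^{2}=1/(2\beta)$ of the paper's identity in Theorem~\ref{Th:Ckn+_identity} combined with \eqref{keyPoin}, and your equality example $(c+\ell\cdot x)e^{-\beta|x|^{2}}$ together with the oddness argument for step (ii) settles sharpness. Step (i) does hold, since a moment computation gives $\|\nabla u\|_2^{2}=4\beta^{2}\|xu\|_2^{2}$ for these functions, but you can skip it: by AM--GM, $\delta_1(u)\le\frac{1}{4\beta}\int|\nabla u+2\beta xu|^{2}\,dx$ for every $\beta>0$, which already gives $\delta_1(u)\le d^{2}(u,E_{HUP})$ for your example.
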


The core of the approach in \cite{CFLL24} is the combination of an exact identity for the HUP deficit with a sharp Gaussian Poincar\'e inequality. More clearly, the deficit can be rewritten in a form that isolates a weighted Dirichlet energy around an optimal Gaussian profile, which reduces the stability problem to a spectral estimate. This reduction is then quantified by means of the sharp inequality
\begin{equation}\label{keyPoin}
\int_{\mathbb{R}^{N}}|\nabla u|^{2}e^{-\frac{|x|^{2}}{2|\lambda|^{2}}}\,dx
\ge\frac{1}{|\lambda|^{2}}
\inf_{c\in\mathbb{R}}\int_{\mathbb{R}^{N}}|u-c|^{2}e^{-\frac{|x|^{2}}{2|\lambda|^{2}}}\,dx,
\end{equation}
which provides optimal control of the fluctuations around Gaussian states. When $\lambda=1$, \eqref{keyPoin} is the classical Gaussian Poincar\'e inequality, a fundamental tool in the analysis of Gaussian measures; a comprehensive reference is the monograph of Bakry, Gentil and Ledoux \cite{BGL14}, and we also refer the reader to the seminal work of Gross \cite{Gro75}. This identity-plus-Poincar\'e mechanism is what produces both the sharp constant and the correct distance structure.

The same strategy was used in \cite{CFLL24} to prove stability for the $L^{2}$-CKN inequality \eqref{L2CKN} in the parameter range
\[
0\leq b<\frac{n-2}{2},\qquad a\leq\frac{nb}{n-2},\qquad a+b+1=\frac{2bn}{n-2},
\]
where the Gaussian Poincar\'e inequality is replaced by its counterpart for log-concave measures. The complementary range
\[
\frac{n-2}{2}<b\leq n-2,\qquad n(b-a+3)=2(3b-a+3),
\]
was subsequently treated in \cite{DFLL23} by a similar argument. Very recently, a different approach was developed by Do, Lam, Lu and Nguyen in \cite{DLLN26}, based on spherical harmonics, generalized Laguerre expansions and a Kelvin-type transform. It leads to weighted Gaussian $L^{2}$-Poincar\'e inequalities with explicit sharp constants and optimizers, and it yields a complete solution of the stability problem for \eqref{L2CKN} for all admissible parameters; the method extends further to weighted $L^{p}$-Poincar\'e inequalities and to the stability of $L^{p}$-CKN inequalities. Poincar\'e inequalities with monomial weights, and their stability, were investigated in \cite{LLR25} in the Gaussian regime.

Both \cite{DLLN26} and \cite{LLR25}  will be used in this paper, and it is worth
explaining at the outset in what respects the present work differs
from them and why the results here cannot be obtained merely by
transplanting their arguments to a new domain.

The inequalities studied in \cite{DLLN26} are posed on the whole space and
involve radial weights only; the extremal modes of the associated
Poincar\'e inequalities are given by constants, the radial profile
$|x|^\tau$, and the linear modes
$$
\left(\sum_i d_i x_i\right)|x|^{s-1}.
$$
On an orthant, three new phenomena arise simultaneously.

First, the sharp constant in the CKN inequality is no longer of the
Euclidean form: it is governed by the angular eigenvalue
$\lambda_k^{(n)}$ appearing in Remark \ref{rmk:k=n_constant} and cannot be recovered
from the results of \cite{CC09, DLLN26}  by any substitution of parameters.

Second, the measures arising from the deficit identities on the
orthant are no longer radial: they contain the monomial factor $x^A$,
so the relevant spectral problem is that of a weighted Laplacian
with monomial weights rather than a radial weighted operator.

Third, and most importantly, under the lifting procedure the full
collection of modes of the lifted operator is no longer admissible
after descending to the orthant. Only those modes compatible with
the symmetry imposed by the lifting survive, and identifying
precisely which modes descend is itself an essential part of the
analysis.

These are genuine structural obstructions rather than technical
modifications, and overcoming them requires ingredients that have
no counterpart in \cite{DLLN26}.

Motivated by these results, and by our Theorem \ref{Th:CKNIon_Orthant}, our next primary aim is to investigate the stability of the $L^{2}$-CKN inequalities \eqref{eq:CKN-orthant} on half-spaces and orthants. In \cite{LLLS26}, sharp stability estimates for the HUP on such domains were obtained through a lifting argument; here, once again, the singular weight $|x|^{-2b}$ prevents us from proceeding in that way. Instead, we follow the strategy of \cite{CFLL24}, which rests on an exact representation of the deficit, combined with suitable Poincar\'e-type inequalities.

Guided by the explicit form of the extremizers in Theorem \ref{Th:CKNIon_Orthant}, we introduce a transformation under which the deficit is represented as a weighted Dirichlet energy. More precisely, we derive identities expressing the deficit functional as a nonnegative weighted integral of the gradient of the transformed function. These identities give a transparent description of the gap between the two sides of \eqref{eq:CKN-orthant}, and they yield in particular yet another proof of the sharp inequality. More importantly, they are the analytical foundation on which the stability estimates rest.

\begin{theorem} \label{Th:Ckn+_identity}
Let $n\geq1$ and $k\in\{0,1,\ldots,n\}$. Let $u\in S_{a,b}(\mathbb R^{n}_{k,+})$ and assume that $-a+b+1>0$. Let $C(n,a,b,k)$ and $m$ be as in Theorem \ref{Th:CKNIon_Orthant}. Then, for
\[
\lambda=\left(\frac{\int_{\Rnkp}\frac{u^{2}}{|x|^{2a}}dx}
{\int_{\Rnkp}\frac{|\nabla u|^{2}}{|x|^{2b}}dx}\right)^{\tfrac14},
\]
the following identity holds:
\begin{align}\label{eq:CKNO+_identity}
&\left(\int_{\Rnkp}\frac{|\nabla u|^{2}}{|x|^{2b}}dx\right)^{\frac12}
\left(\int_{\Rnkp}\frac{u^{2}}{|x|^{2a}}dx\right)^{\frac12}
-C(n,a,b,k)\int_{\Rnkp}\frac{u^{2}}{|x|^{a+b+1}}dx\\
&\qquad=\frac{\lambda^{2}}{2}\int_{\Rnkp}
\left|\nabla\left(\frac{u(x)}{\pxi|x|^{m}}
e^{\frac{|x|^{-a+b+1}}{\lambda^{2}(-a+b+1)}}\right)\right|^{2}
e^{-2\frac{|x|^{-a+b+1}}{\lambda^{2}(-a+b+1)}}
\frac{\pxis|x|^{2m}}{|x|^{2b}}dx.
\end{align}
\end{theorem}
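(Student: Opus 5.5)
The plan is a ground-state substitution around the extremal profile, followed by an optimization in the scaling parameter. Write $s:=-a+b+1>0$, $P(x):=\prod_{i=n-k+1}^{n}x_i$, and fix for the moment an arbitrary $\lambda>0$. Put
\[
\varphi(x)=P(x)\,|x|^{m}\,h(|x|),\qquad h(r)=e^{-\frac{r^{s}}{\lambda^{2}s}},\qquad u=\varphi w .
\]
First I prove the identity for $u\in C_c^\infty(\Rnkp)$; for $k=0$ I use $C_c^\infty(\R^n\setminus\{0\})$. For such $u$, $w$ is smooth on the open orthant. With $\rho=|x|^{-2b}$, the standard expansion-plus-integration-by-parts gives
\[
\int\rho|\nabla u|^{2}=\int\rho\,\varphi^{2}|\nabla w|^{2}-\int w^{2}\varphi\,\dive(\rho\nabla\varphi).
\]
The boundary terms vanish because $u$ has compact support away from $\partial\Rnkp$ and from $0$.

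The core of the argument is the computation of $\dive(\rho\nabla\varphi)/\varphi$. I use that $P$ is a harmonic polynomial, homogeneous of degree $k$. For $f=P\,g(r)$ this gives
\[
\Delta f=P\Big(g''+\tfrac{n-1+2k}{r}g'\Big),\qquad \partial_r f=P\Big(\tfrac{k}{r}g+g'\Big),
\]
and hence
\[
\dive(\rho\nabla f)=\rho\bigl(\Delta f-\tfrac{2b}{r}\partial_r f\bigr).
\]
Now take $g=r^{m}h$ and write $\mu=m+k$. A direct expansion of $g''/g$ and $g'/g$ splits $-\dive(\rho\nabla\varphi)/\varphi$ into three groups of terms.
\begin{itemize}
\item The $r^{-2}$ terms carry the coefficient $-\bigl(\mu^{2}+(n-2b-2)\mu-k(n+k-2)\bigr)$. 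This vanishes exactly for the choice of $m$ in Theorem \ref{Th:CKNIon_Orthant} (the $+$ root, since $s>0$).
\item The $r^{s-2}$ terms collect to $\bigl(s+2\mu+n-2b-2\bigr)/\lambda^{2}=\bigl(s+\sqrt{(n-2b-2)^{2}+4k(n+k-2)}\bigr)/\lambda^{2}=2C(n,a,b,k)/\lambda^{2}$.
\item There remains the single term $-r^{2s-2}/\lambda^{4}$.
\end{itemize}
Multiplying by $\rho=r^{-2b}$ and using $r^{-2b+s-2}=r^{-(a+b+1)}$ and $r^{-2b+2s-2}=r^{-2a}$, I obtain
\[
A:=\int\frac{|\nabla u|^{2}}{|x|^{2b}}=\int\frac{\varphi^{2}|\nabla w|^{2}}{|x|^{2b}}+\frac{2C}{\lambda^{2}}\int\frac{u^{2}}{|x|^{a+b+1}}-\frac{1}{\lambda^{4}}\int\frac{u^{2}}{|x|^{2a}} .
\]
Next I multiply by $\lambda^{2}/2$ and rearrange. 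Writing $B=\int u^{2}|x|^{-2a}$, this gives
\[
\frac{\lambda^{2}}{2}A+\frac{B}{2\lambda^{2}}-C\int\frac{u^{2}}{|x|^{a+b+1}}=\frac{\lambda^{2}}{2}\int\varphi^{2}|\nabla w|^{2}|x|^{-2b}.
\]
This holds for every $\lambda>0$. Since $w=u/\varphi$ and $\varphi^{2}=P^{2}|x|^{2m}e^{-2|x|^{s}/(\lambda^{2}s)}$, the right side is exactly the right side of \eqref{eq:CKNO+_identity}. Choosing $\lambda^{4}=B/A$ makes $\frac{\lambda^{2}}{2}A+\frac{B}{2\lambda^{2}}=\sqrt{AB}$, which is the claimed identity. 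The case $u\equiv0$ is trivial; if $u\ne0$ then $A,B>0$.

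The step I expect to be the main obstacle is the extension from $C_c^\infty$ to general $u\in S_{a,b}(\Rnkp)$. I would fix $\lambda$ and use the $\lambda$-dependent identity above. Its left side is continuous in the $S_{a,b}$-norm: the middle term is controlled by Cauchy--Schwarz, $\int u^2|x|^{-a-b-1}\le\sqrt{AB}$. Take an approximating sequence $u_j\to u$ in $S_{a,b}$. The identity shows that $w_j=u_j/\varphi$ is Cauchy in the weighted Dirichlet seminorm, and $w_j\to w$ locally in the open orthant. Lower semicontinuity gives one inequality, and the Cauchy property in the weighted $H^1$-type space upgrades this to equality. Only after this do I set $\lambda=(B/A)^{1/4}$ for the limit $u$.
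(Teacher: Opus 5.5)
Your proof is correct and follows essentially the paper's route: the paper proves the free-$\lambda$ identity (Theorem \ref{Th:CKNO+_identity}) by the same ground-state substitution, and then takes $\lambda^{4}=B/A$. The only difference in bookkeeping is that the paper first splits off $w=u/(P|x|^{m})$ and expands $|\nabla(we^{\gamma})|^{2}e^{-2\gamma}$ into three terms, which match your $r^{-2}$, $r^{s-2}$, $r^{2s-2}$ groups, rather than computing $\dive(|x|^{-2b}\nabla\varphi)/\varphi$ for the full profile at once.

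One small fix is needed in your extension step, which the paper does not spell out. Cauchy--Schwarz alone does not give $\int u^{2}|x|^{-(a+b+1)}\,dx\le\sqrt{AB}$. The correct bound is $\int u^{2}|x|^{-(a+b+1)}\,dx\le\sqrt{AB}/C(n,a,b,k)$, which follows from Theorem \ref{Th:CKNIon_Orthant}, or directly from your $\lambda$-identity (whose right side is nonnegative) using $C>0$. With that bound, your continuity argument goes through.
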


A corresponding identity holds in the complementary case $-a+b+1<0$, where the extremal profile and the exponential weight are modified according to the second branch in Theorem \ref{Th:CKNIon_Orthant}. More precisely, we have the following.

\begin{theorem} \label{Th:Ckn-_identity}
Let $n\geq1$ and $k\in\{0,1,\ldots,n\}$. Let $u\in S_{a,b}(\mathbb R^{n}_{k,+})$ and assume that $-a+b+1<0$. Let $C(n,a,b,k)$ and $m$ be as in Theorem \ref{Th:CKNIon_Orthant}. Then, for
\[
\lambda
=
\left(
\frac{\int_{\Rnkp}\frac{u^{2}}{|x|^{2a}}\,dx}
{\int_{\Rnkp}\frac{|\nabla u|^{2}}{|x|^{2b}}\,dx}
\right)^{1/4},
\]
we have
\begin{align}\label{eq:CKNO-_identity}
&\left(\int_{\Rnkp}\frac{|\nabla u|^{2}}{|x|^{2b}}\,dx\right)^{1/2}
\left(\int_{\Rnkp}\frac{u^{2}}{|x|^{2a}}\,dx\right)^{1/2}
-C(n,a,b,k)\int_{\Rnkp}\frac{u^{2}}{|x|^{a+b+1}}\,dx
\\
&\qquad=\frac{\lambda^{2}}{2}\int_{\Rnkp}
\left|\nabla\left(\frac{u(x)}{\pxi|x|^{m}}
e^{\frac{-|x|^{-a+b+1}}{\lambda^{2}(-a+b+1)}}\right)\right|^{2}
e^{2\frac{|x|^{-a+b+1}}{\lambda^{2}(-a+b+1)}}
\frac{\pxis|x|^{2m}}{|x|^{2b}}\,dx .\nonumber
\end{align}
\end{theorem}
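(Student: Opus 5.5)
The identity will follow from a ground-state substitution around the extremal profile of Theorem~\ref{Th:CKNIon_Orthant}, followed by one integration by parts; the proof mirrors that of Theorem~\ref{Th:Ckn+_identity}, with the sign of the exponent reversed. Write $s:=-a+b+1<0$, $P(x):=\prod_{i=n-k+1}^{n}x_i$ and $w(x):=|x|^{-2b}$. I will introduce
\[
g(x):=P(x)\,|x|^{m}\,e^{\frac{|x|^{s}}{\lambda^{2}s}} .
\]
This is the extremal profile with parameter $\lambda^{-2}>0$, which is the sign required in the branch $s<0$. Then set $v:=u/g$, so the function inside the gradient on the right-hand side of \eqref{eq:CKNO-_identity} is exactly $v$. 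Moreover, $g^{2}w$ is exactly the weight appearing there.

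I will first work with $u\in C_c^\infty(\mathbb R^n_{k,+})$. There $g>0$ is smooth on the support, so $v$ is smooth with compact support in the open orthant. Expanding $|\nabla(gv)|^{2}$ and integrating the cross term by parts, with no boundary terms, gives
\[
\int |\nabla u|^{2}w\,dx=\int g^{2}|\nabla v|^{2}w\,dx-\int u^{2}\,\frac{\operatorname{div}(w\nabla g)}{g}\,dx .
\]
The core computation is $\operatorname{div}(w\nabla g)/g$. I will use
\[
\frac{\nabla g}{g}=\frac{\nabla P}{P}+m\frac{x}{|x|^{2}}+\lambda^{-2}|x|^{s-2}x,
\]
together with three facts about $P$: it is harmonic, it satisfies $x\cdot\nabla P=kP$, and $\nabla P/P$ is orthogonal to $\nabla|x|$ up to the Euler term. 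The result should be the sum of three power terms:
\[
-\frac{\operatorname{div}(w\nabla g)}{g}=|x|^{-2b}\Big[-\lambda^{-4}|x|^{2s-2}-\lambda^{-2}\big(n-2b-2+2(m+k)+s\big)|x|^{s-2}-Q(m)\,|x|^{-2}\Big].
\]
Here
\[
Q(m)=(m+k)(m+k+n-2b-2)-k(n+k-2).
\]

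The defining equation for $m$ in Theorem~\ref{Th:CKNIon_Orthant} says precisely that $m+k$ is a root of
\[
t^{2}+(n-2b-2)t-k(n+k-2)=0 .
\]
Hence $Q(m)=0$ and the Hardy-type $|x|^{-2}$ term disappears. In the present branch $\operatorname{sgn}(s)=-1$, so $n-2b-2+2(m+k)=-\sqrt{(n-2b-2)^{2}+4k(n+k-2)}$. Adding $s=-|s|$ gives $-2C(n,a,b,k)$. Since $|x|^{2s-2-2b}=|x|^{-2a}$ and $|x|^{s-2-2b}=|x|^{-(a+b+1)}$, I expect to obtain
\[
\int\frac{|\nabla u|^{2}}{|x|^{2b}}
=\int g^{2}|\nabla v|^{2}w-\lambda^{-4}\int\frac{u^{2}}{|x|^{2a}}+2C\lambda^{-2}\int\frac{u^{2}}{|x|^{a+b+1}} .
\]
Write $A$ for the gradient integral and $B$ for the $|x|^{-2a}$ integral. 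Multiplying by $\lambda^{2}/2$ and using $\lambda^{4}=B/A$, the terms $\tfrac{\lambda^{2}}{2}A+\tfrac{\lambda^{-2}}{2}B$ combine to $\sqrt{AB}$. This yields \eqref{eq:CKNO-_identity} for smooth compactly supported $u$. Note that the identity holds for every $\lambda>0$ in the form above, and the specific choice of $\lambda$ is only used in this last step.

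The final step is to extend from $C_c^\infty(\mathbb R^n_{k,+})$ to $S_{a,b}(\mathbb R^n_{k,+})$; I will reduce to $k\ge1$ here, and for $k=0$ use $C_c^\infty(\mathbb R^n\setminus\{0\})$, where the same computation applies. I expect this to be the main technical obstacle, together with keeping the signs straight in the divergence computation. Take $u_j\to u$ in $\|\cdot\|_{a,b}$. Then $A$, $B$ and the middle integral converge, the last by the CKN inequality \eqref{eq:CKN-orthant}, which is continuous on $S_{a,b}$. Hence the left-hand sides converge and the $\lambda_j$ converge to $\lambda$. The right-hand side is handled most easily in its pre-identity form, $A-\text{(combination)}$, which is continuous in $u$ for fixed $\lambda$. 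Its identification with $\int g^{2}|\nabla v|^{2}w$ for the limit function follows by local convergence of $v_j$ in $H^{1}_{\rm loc}$ of the open orthant. Fatou's lemma gives one inequality, and the identity with a fixed $\lambda$ for $u_j$ gives the reverse.
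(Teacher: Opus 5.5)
Your proposal is correct and is essentially the paper's argument. The paper likewise proves the $\lambda$-dependent identity (Theorem \ref{Th:CKNO-_identity}) by expanding the square around the profile $\bigl(\prod_{i=n-k+1}^{n}x_i\bigr)|x|^{m}e^{|x|^{\tau}/(\lambda^{2}\tau)}$, with $\tau=-a+b+1$. It uses the equation for $m+k$ to kill the $|x|^{-2}$ term, gets the coefficient $n+2k+2m-(a+b+1)=\tau-\Theta=-2C(n,a,b,k)$ from the cross term, and then sets $\lambda^{4}=B/A$. Your one-shot computation of $\operatorname{div}(|x|^{-2b}\nabla g)/g$, using that $\prod_i x_i$ is harmonic, is a tidier packaging of the paper's three-term expansion, where the $\sum_i x_i^{-2}$ terms appear and cancel explicitly.

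Your density step, which the paper leaves implicit, needs one sharpening. The inequality reverse to Fatou's does not follow from the identity for $u_j$ alone. It comes from applying the identity to the differences $u_j-u_l$, which makes $\nabla(u_j/g)$ Cauchy in $L^{2}(g^{2}|x|^{-2b}\,dx)$.
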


In the spirit of \cite{CFLL24}, the next step is to establish a Poincar\'e-type inequality adapted to the right-hand sides of \eqref{eq:CKNO+_identity} and \eqref{eq:CKNO-_identity}. These identities lead us naturally to weighted Poincar\'e inequalities associated with measures of the form
\[
e^{-\delta|x|^{\tau}}|x|^{\beta}\,\pxis\,dx .
\]
In this direction, we prove the following weighted Poincar\'e inequality with monomial weights, together with its stability counterpart. Such inequalities are of their independent interest, and they substantially extend the classical Gaussian Poincar\'e inequality, which corresponds to $|A|=0$, $\beta=0$, $\tau=2$ and $\delta=\frac12$.

\begin{theorem}[Weighted Poincar\'e inequality with monomial weights]\label{Th:poincare_monomial_weight}
Let $A=(a_{1},\dots,a_{n})\in\mathbb Z_{\ge0}^{n}$, and set
\[
|A|:=a_{1}+\cdots+a_{n},
\qquad
x^{A}:=\prod_{i=1}^{n}x_{i}^{a_{i}},
\qquad
\mathbb R^{n}_{A,+}:=\{x\in\mathbb R^{n}:\ x_{i}>0\ \text{whenever}\ a_{i}>0\}.
\]
Let $N:=n+|A|\ge2$ and $\tau\in \mathbb{R}$ and $\tau\not =0$, and assume that $\delta>0$ and
\[
\frac{\beta+N+\tau-2}{\tau}>0 .
\]
Then, for every $u\in C_{c}^{\infty}(\mathbb R^{n}_{A,+}\setminus\{0\})$,
\begin{equation}\label{eq:poincare_monomial}
\int_{\mathbb R^{n}_{A,+}}|\nabla u(x)|^{2}\,x^{A}e^{-\delta|x|^{\tau}}|x|^{\beta}\,dx
\ge
C\inf_{d\in\mathbb R}\int_{\mathbb R^{n}_{A,+}}|u(x)-d|^{2}\,
x^{A}e^{-\delta|x|^{\tau}}|x|^{\beta+\tau-2}\,dx,
\end{equation}
with
\[
C=C(n,A,\beta,\tau,\delta)=\begin{cases}
  \min\{\delta\tau^{2},\ \delta\tau s_2\},  &  a_i>0 \ \text{for every } i\in \{1,\dots, n\},\\
  \min\{\delta\tau^{2},\ \delta\tau s_1\}, & a_i=0 \ \text{for at least one } i,
\end{cases}
\]

where
\[
s_1=
\frac{-(\beta+N-2)+\operatorname{sgn}(\tau)\sqrt{(\beta+N-2)^{2}+4(N-1)}}{2}.
\]
and 
\[
s_2=
\frac{-(\beta+N-2)+\operatorname{sgn}(\tau)\sqrt{(\beta+N-2)^{2}+8(N)}}{2}.
\]
Here $\delta\tau^{2}$ is the optimal constant for the radial mode, while $\delta\tau s$ corresponds to the first non-radial spherical harmonic mode in $\mathbb R^{N}$.

\end{theorem}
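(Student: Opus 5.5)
The plan is to lift the monomial weight to a radial weight on $\mathbb R^{N}$, $N=n+|A|$, and then carry out a spherical-harmonic/Laguerre spectral analysis on $\mathbb R^{N}$, keeping only the modes that survive the lifting. For each $i$, regard the variable $x_i$ with $a_i>0$ as the radius of a block $y_i\in\mathbb R^{a_i+1}$, so that $x_i=|y_i|$. Keep $x_i=y_i\in\mathbb R$ when $a_i=0$. Since $x_i^{a_i}\,dx_i$ is, up to the constant $|\mathbb S^{a_i}|$, the pushforward of Lebesgue measure on $\mathbb R^{a_i+1}$ under $y_i\mapsto |y_i|$, the function $U(y):=u(|y_1|,\dots)$ satisfies
\[
|\nabla_y U|=|\nabla_x u|,\qquad |y|=|x|,\qquad x^{A}\,dx\;\propto\;dy .
\]
Hence \eqref{eq:poincare_monomial} is equivalent to the radially weighted Poincar\'e inequality on $\mathbb R^{N}$ with weights $e^{-\delta|y|^{\tau}}|y|^{\beta}$ and $e^{-\delta|y|^{\tau}}|y|^{\beta+\tau-2}$, but restricted to functions invariant under $G=\prod_{a_i>0}O(a_i+1)$.

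Next, I write $U(r\theta)=\sum_{j}\sum_\ell f_{j\ell}(r)Y_{j\ell}(\theta)$, using spherical harmonics on $\mathbb S^{N-1}$ chosen $G$-invariant. By orthogonality, both sides split over modes. For a mode of degree $j\ge1$, with eigenvalue $\lambda_j=j(j+N-2)$, it suffices to prove the one-dimensional inequality
\[
\int_0^\infty\Big(f'^2+\tfrac{\lambda_j}{r^2}f^2\Big)r^{\beta+N-1}e^{-\delta r^\tau}dr\;\ge\;\delta\tau s(\lambda_j)\int_0^\infty f^2r^{\beta+\tau+N-3}e^{-\delta r^\tau}dr ,
\]
where $s=s(\lambda_j)$ is the root of $s^2+(\beta+N-2)s-\lambda_j=0$ carrying the sign $\operatorname{sgn}\tau$. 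To prove it, I substitute $f=r^{s}g$ and integrate by parts. The $\lambda_j$ term is cancelled by the choice of $s$. The derivative of the weight produces exactly $\delta\tau s\,r^{\tau-2}$ times the lower-order weight, and what remains is the nonnegative term $\int g'^2r^{2s+\beta+N-1}e^{-\delta r^\tau}$. Equality holds for $f=r^{s}$. Since $\delta\tau s(\lambda)$ is increasing in $\lambda$, the smallest admissible $j$ determines the constant.

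For the radial mode $j=0$, I change variables $t=\delta r^{\tau}$ (this also handles $\tau<0$ through the orientation). This turns the problem into the generalized Laguerre operator with parameter $\alpha=\frac{\beta+N+\tau-2}{\tau}-1>-1$, which is where the hypothesis is used. Its spectrum is $\delta\tau^{2}\ell$, $\ell=0,1,\dots$, so after subtracting the optimal constant $d$ the constant is $\delta\tau^{2}$, attained by $|x|^{\tau}-c$.

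The remaining point, and the one I expect to be the main obstacle, is to identify which nonradial harmonics are $G$-invariant. A degree-one harmonic $\sum c\cdot y$ is $G$-invariant only if it involves only the unlifted coordinates, namely those $x_i$ with $a_i=0$. So such a mode exists exactly when some $a_i=0$, giving $\lambda_1=N-1$ and the constant $s_1$. If every $a_i>0$, no invariant linear harmonic exists. The lowest invariant ones then have degree $2$, for instance $(a_2+1)|y_1|^2-(a_1+1)|y_2|^2$, which is invariant and harmonic, with $\lambda_2=2N$; this gives $s_2$ via $4\lambda_2=8N$. The case $n=1$ with $a_1>0$ has no nonradial invariant mode at all, and I will have to treat it separately, or note that the minimum is then effectively $\delta\tau^2$.

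Finally, I must justify the density and integrability needed for the expansions and integrations by parts. Here $u\in C_c^\infty(\mathbb R^n_{A,+}\setminus\{0\})$ lifts to a $G$-invariant function that is smooth away from the block axes. I will argue that the integrations by parts remain valid because the boundary sets have capacity zero for the lifted measure.
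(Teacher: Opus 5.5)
Your proposal is correct and follows essentially the same route as the paper. Both lift via $x_i=|y_i|$ to block-radial functions on $\mathbb R^N$ and decompose into spherical harmonics. Both use the Laguerre gap $\delta\tau^2$ on the radial mode and the substitution $f=r^{s}g$ on the non-radial modes. Both observe that invariant linear modes exist only through the coordinates with $a_i=0$; when every $a_i>0$, the first admissible mode therefore has degree two and eigenvalue $2N$.

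The capacity argument you anticipate is unnecessary. Since $u$ vanishes near the walls, its lift already lies in $C_c^\infty(\mathbb R^N\setminus\{0\})$, as the paper notes. Your remark on $n=1$, $a_1>0$ is apt: there the stated constant is still valid, but the sharp constant is $\delta\tau^2$.
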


\begin{remark}\label{rmk:sharpness_monomial}
The two modes producing the constant $C$ behave differently with respect to the monomial weight. Equality in \eqref{eq:poincare_monomial} is attained in the radial mode by $u(x)=a_{0}+a_{1}|x|^{\tau}$, and in the first non-radial mode by $u(x)=a_{0}+\bigl(\sum_{i}d_{i}x_{i}\bigr)|x|^{s_1-1}$. The latter functions are admissible competitors on $\mathbb R^{n}_{A,+}$ only through the indices $i$ with $a_{i}=0$; {this is why, when $\delta\tau s_1<\delta\tau^{2}$ and $a_{i}>0$ for every $i$, the value $\min\{\delta\tau^{2},\delta\tau s_1\}$ remains a valid constant in \eqref{eq:poincare_monomial}, but need not be optimal. In that degenerate case the first admissible non-radial mode is of degree two, and the optimal constant is exactly $\min\{\delta\tau^{2},\delta\tau s_{2}\}$, as proved in the Equality Cases corollary of Section \ref{S:poincare}}. In the application to the orthant this degenerate case occurs precisely when $k=n$; see {the cases $k=n$ of Theorem \ref{Th:second_stability_ckn}}.
\end{remark}

\begin{remark}\label{rmk:parity_k=n}
It is worth isolating the mechanism behind the dichotomy in Theorem \ref{Th:poincare_monomial_weight}, since it is the point at which the orthant problem ceases to be a weighted version of the Euclidean one. Under the lifting, a function on $\mathbb R^{n}_{A,+}$ corresponds to a function on $\mathbb R^{N}$, $N=n+|A|$, that is radial in each of the blocks $\mathbb R^{a_{i}+1}$ associated with the indices $i$ with $a_{i}>0$. Whenever $a_{i}>0$ for every $i$, such a function is invariant under the antipodal map $y\mapsto-y$, because $-I\in O(a_{i}+1)$ acts trivially on block-radial functions. Since the antipodal map multiplies a spherical harmonic of degree $\ell$ by $(-1)^{\ell}$, all odd-degree modes are annihilated, and in particular the linear modes $\bigl(\sum_{i}d_{i}x_{i}\bigr)|x|^{s_{1}-1}$ --- which realize the sharp constant when some $a_{i}$ vanishes --- are simply not competitors. The first surviving non-radial mode has degree two, the relevant eigenvalue is $\lambda_{2}$ instead of $\lambda_{1}$, and the optimal constant improves from $\min\{\delta\tau^{2},\delta\tau s_{1}\}$ to $\min\{\delta\tau^{2},\delta\tau s_{2}\}$. Correspondingly, the optimizer manifold is no longer spanned by linear profiles but by the quadratic family
\[
\Bigl(\sum_{i=1}^{n}c_{i}x_{i}^{2}\Bigr)|x|^{s_{2}-2},
\qquad
\sum_{i=1}^{n}c_{i}(a_{i}+1)=0,
\]
of dimension $n-1$, the constraint being exactly the orthogonality to the constants forced by the weight. In the application to the CKN inequality one has $A=(0,\dots,0,2,\dots,2)$ with $k$ twos, so that the degenerate case is $k=n$, the constraint reduces to $\sum_{i}c_{i}=0$, and the lifted dimension is $N=3n$.
\end{remark}

The extremizers depend on which mode realizes the minimum in the sharp constant: equality may occur in the radial mode, in the first non-radial mode, or in both simultaneously when the two constants coincide. This trichotomy also governs the shape of the corresponding stability estimate, which reads as follows.

\begin{theorem}\label{Th:stability_poincare_monomialweight}
Let $A$, $x^{A}$, $|A|$, $N=n+|A|$ and $\mathbb R^{n}_{A,+}$ be as in Theorem \ref{Th:poincare_monomial_weight}, and let $\tau\in \mathbb{R}$ and $\tau\not =0$, and assume that $\delta>0$ and $\frac{\beta+N+\tau-2}{\tau}>0$. Set
\[
d\mu_{A}=x^{A}e^{-\delta|x|^{\tau}}|x|^{\beta}\,dx,
\qquad
d\nu_{A}=x^{A}e^{-\delta|x|^{\tau}}|x|^{\beta+\tau-2}\,dx .
\]
Then, for every $u\in C_{c}^{\infty}(\mathbb R^{n}_{A,+}{\setminus\{0\}})$, the following stability estimates hold

\medskip

\noindent
{(i) If either} $a_i=0$ for at least one  $i$, 
$|\tau|<|s_1|$, {or} $a_i>0$ for every  $i$, $|\tau|<|s_2|$, { then}

\begin{align}
&
\int_{\mathbb R^{n}_{A,+}}|\nabla u|^{2}\,d\mu_{A}
-\delta\tau^{2}\inf_{d\in\R}\int_{\mathbb R^{n}_{A,+}}|u-d|^{2}\,d\nu_{A}
\\
&\qquad\qquad
\ge
{\min\{\delta\tau^{2},\ \delta\tau(s_q-\tau)\}}
\inf_{a_{0},a_{1}\in\mathbb R}
\int_{\mathbb R^{n}_{A,+}}
\bigl|u-a_{0}-a_{1}|x|^{\tau}\bigr|^{2}\,d\nu_{A}\\
&\text{where } s_q=\begin{cases} s_1, & a_i=0 \text{ for some } i,\\ s_2, & a_i>0 \text{ for every } i.\end{cases}
\end{align}

\medskip

\noindent
{(ii) If $\tau s_1<\tau^{2}$ and  $a_i=0$ for at least one  $i$, then}
\begin{align}
&
\int_{\mathbb R^{n}_{A,+}}|\nabla u|^{2}\,d\mu_{A}
-\delta\tau s_1\inf_{d\in\R}\int_{\mathbb R^{n}_{A,+}}|u-d|^{2}\,d\nu_{A}
\\
&\qquad\qquad
\ge
\min\{\delta\tau(\tau-s_1),\ \delta\tau^{2},\ \delta\tau\alpha_1\}
\inf_{a\in\mathbb R,\ d_{i}\in\mathbb R}
\int_{\mathbb R^{n}_{A,+}}
\left|
u-a-\Bigl(\sum_{i:\,a_{i}=0}d_{i}x_{i}\Bigr)|x|^{s_1-1}
\right|^{2}\,d\nu_{A}.
\end{align}

\medskip

\noindent
{(iii) If $\tau=s_1$ and $a_i=0$ for {at least} one $i$, then}

\begin{align}
&
\int_{\mathbb R^{n}_{A,+}}|\nabla u|^{2}\,d\mu_{A}
-\delta\tau^{2}\inf_{d\in\R}\int_{\mathbb R^{n}_{A,+}}|u-d|^{2}\,d\nu_{A}
\\
&\qquad\qquad
\ge
\min\{\delta\tau^{2},\ \delta\tau\alpha_1\}
\inf_{a_{0},a_{1}\in\mathbb R,\ d_{i}\in\mathbb R}
\int_{\mathbb R^{n}_{A,+}}
\left|
u-a_{0}-a_{1}|x|^{\tau}
-\Bigl(\sum_{i:\,a_{i}=0}d_{i}x_{i}\Bigr)|x|^{s_1-1}
\right|^{2}\,d\nu_{A}.
\end{align}

{\item[(iv)] If $\tau s_2<\tau^2,$ and $a_i>0$ for every  $i$, then}

\begin{align}
&\int_{\mathbb R^n_{A,+}}|\nabla u|^2\,d\mu_A
-
\delta\tau s_2
\inf_{d\in\mathbb R}
\int_{\mathbb R^n_{A,+}}|u-d|^2\,d\nu_A
\\
&\qquad\geq
\min\left\{
\delta\tau(\tau-s_2),\,
\delta\tau^2,\,
\delta\tau\alpha_2
\right\}
\inf_{\substack{
b\in\mathbb R,\ c_1,\ldots,c_n\in\mathbb R\\
\sum_{i=1}^n c_i(a_i+1)=0
}}
\int_{\mathbb R^n_{A,+}}
\left|
u-b
-
|x|^{s_2-2}\sum_{i=1}^n c_i x_i^2
\right|^2
\,d\nu_A .
\end{align}

{\item[(v)] If $\tau=s_2,$ and $a_i>0$ for every  $i$, then}

\begin{align}
&\int_{\mathbb R^n_{A,+}}|\nabla u|^2\,d\mu_A
-
\delta\tau^2
\inf_{d\in\mathbb R}
\int_{\mathbb R^n_{A,+}}|u-d|^2\,d\nu_A
\\
&\qquad\geq
\min\left\{
\delta\tau^2,\,
\delta\tau\alpha_2
\right\}
\inf_{\substack{
b_0,b_1\in\mathbb R,\ c_1,\ldots,c_n\in\mathbb R\\
\sum_{i=1}^n c_i(a_i+1)=0
}}
\int_{\mathbb R^n_{A,+}}
\left|
u-b_0-b_1|x|^\tau
-
|x|^{s_2-2}\sum_{i=1}^n c_i x_i^2
\right|^2
\,d\nu_A .
\end{align}

Here

$$s_1=\frac{-(\beta+N-2)+\operatorname{sgn}(\tau)\sqrt{(\beta+N-2)^{2}+4(N-1)}}{2},$$

$$\alpha_1= \operatorname{sgn}(\tau)\frac{\sqrt{(\beta+N-2)^{2}+8N} -\sqrt{(\beta+N-2)^{2}+4(N-1)}}{2},$$

$$ s_2=\frac{-(\beta+N-2)+\operatorname{sgn}(\tau)\sqrt{(\beta+N-2)^2+8N}}{2},$$
and
$$\alpha_2=\operatorname{sgn}(\tau)\frac{\sqrt{(\beta+N-2)^2+16(N+2)}-\sqrt{(\beta+N-2)^2+8N}}{2}.$$

\end{theorem}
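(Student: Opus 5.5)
Our plan is to lift the problem to $\mathbb R^{N}$, $N=n+|A|$, and diagonalize the quadratic form by combining spherical harmonics with a weighted Laguerre-type radial analysis, as in \cite{DLLN26}.

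\textbf{Lifting.} For each index $i$ with $a_i>0$, we replace $x_i>0$ by $|y^{(i)}|$ with $y^{(i)}\in\mathbb R^{a_i+1}$. The measure $x^A\,dx$ then becomes, up to a constant, Lebesgue measure on $\mathbb R^N$. Both $|\nabla u|^2$ and $|x|$ are preserved for block-radial functions. Hence both sides of the inequality become integrals against the radial weights $e^{-\delta|y|^\tau}|y|^\beta$ and $e^{-\delta|y|^\tau}|y|^{\beta+\tau-2}$ on $\mathbb R^N$, restricted to block-radial $U$.

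\textbf{Spectral decomposition.} We write $U=\sum_{\ell,j} f_{\ell j}(r)Y_{\ell j}(\sigma)$. Each mode of degree $\ell$ gives a one-dimensional Sturm--Liouville problem
\[
\int_0^\infty\Bigl(f'^2+\frac{\lambda_\ell}{r^2}f^2\Bigr)e^{-\delta r^\tau}r^{\beta+N-1}\,dr\ \ge\ \mu\int_0^\infty f^2 e^{-\delta r^\tau}r^{\beta+\tau+N-3}\,dr,
\qquad \lambda_\ell=\ell(\ell+N-2).
\]
The substitution $t=\delta r^\tau$, $f=r^{s}g(t)$, with $s$ the admissible root of $s^2+(\beta+N-2)s-\lambda_\ell=0$, turns this into a generalized Laguerre operator. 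Its eigenvalues are $\delta\tau(s_\ell+\tau j)$ for $j\ge0$, with eigenfunctions $r^{s_\ell}L_j^{(\cdot)}(\delta r^\tau)$. The hypothesis $(\beta+N+\tau-2)/\tau>0$ is exactly what makes the Laguerre parameter exceed $-1$, so that these are the relevant eigenfunctions for the form.

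From this we read off the spectrum:
\begin{itemize}
\item $\ell=0$ gives $0$ (constants), then $\delta\tau^2$ (from $|x|^\tau$), then $2\delta\tau^2$;
\item $\ell=1$ gives $\delta\tau s_1$, then $\delta\tau(s_1+\tau)$;
\item $\ell=2$ gives $\delta\tau s_2$ ($\lambda_2=2N$), and so on.
\end{itemize}
The difference between the $\ell=2$ and $\ell=1$ roots equals $\alpha_1$. The difference between the $\ell=3$ and $\ell=2$ roots, with $\lambda_3=3(N+1)$ giving discriminant $(\beta+N-2)^2+12(N+1)$, should equal $\alpha_2$. We will check which discriminant matches the stated $16(N+2)$. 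It probably corresponds to the next block-admissible mode: if all $a_i>0$, odd $\ell$ vanish and the next mode is $\ell=4$, whose discriminant is $(\beta+N-2)^2+4\cdot 4(N+2)$. This matches.

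\textbf{Admissible modes.} Block-radial functions restrict which harmonics survive. Degree-one harmonics survive only in coordinates with $a_i=0$. If every $a_i>0$, the antipodal argument of Remark \ref{rmk:parity_k=n} kills all odd $\ell$. The admissible degree-two harmonics are then $\sum c_i|y^{(i)}|^2$ with $\sum c_i(a_i+1)=0$. The last condition is harmonicity, i.e.\ orthogonality to the constants.

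\textbf{The stability estimates.} Each estimate is the spectral-gap statement: the deficit is at least (next eigenvalue minus the sharp one) times the squared distance, in $L^2(d\nu_A)$, to the span of the eigenfunctions attaining the bottom nonzero eigenvalue. We expand $u-d$ with $d$ the optimal constant (the $\nu_A$-mean) and subtract the sharp constant times the norm. Minimizing over the listed family removes the top eigencomponents. The remaining components have eigenvalues at least the next level, which gives a lower bound of the form (next level minus sharp constant) times the distance squared.

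Taking $s_q$ as the lowest admissible non-radial level:
\begin{itemize}
\item In case (i) the next level is $\min\{2\delta\tau^2,\delta\tau s_q\}$. Subtracting $\delta\tau^2$ gives $\min\{\delta\tau^2,\delta\tau(s_q-\tau)\}$. The condition $|\tau|<|s_q|$ ensures this is positive.
\item In cases (ii) and (iv) the competing levels are $\delta\tau^2$, $\delta\tau(s_q+\tau)$ and $\delta\tau s_{\rm next}$. Subtracting $\delta\tau s_q$ yields $\delta\tau(\tau-s_q)$, $\delta\tau^2$ and $\delta\tau\alpha_q$.
\item Cases (iii) and (v) are the coincident ones.
\end{itemize}

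\textbf{Main obstacle.} The main obstacle is rigor for the completeness and orthogonality of the Laguerre expansion in each weighted radial space. We must also justify the lifting and density for $u\in C_c^\infty(\mathbb R^n_{A,+}\setminus\{0\})$ when $\tau<0$ or $\beta$ is singular. Here we would rely on a Hardy-type quadratic-form argument, as in \cite{DLLN26}, rather than on pointwise eigenfunction expansions.
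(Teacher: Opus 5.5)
Your proposal is correct and follows essentially the paper's route: lift to block-radial functions on $\mathbb R^{N}$, decompose into spherical harmonics, substitute $f=r^{s}g$ with the root satisfying $\tau s>0$, and read the stability constants off the Laguerre spectral gaps, using the parity argument to replace $(\lambda_1,\lambda_2)$ by $(\lambda_2,\lambda_4)$ when all $a_i>0$. The differences are only organizational, and your worry about density is unfounded. You diagonalize each admissible sector completely inside the invariant subspace, with eigenvalues $\delta\tau(s_\ell+k\tau)$, whereas the paper peels off $\lambda_1$ and then $\lambda_2$ on all of $\mathbb R^{N}$ before descending. For $u\in C_c^\infty(\mathbb R^n_{A,+}\setminus\{0\})$ the obstacle you flag is settled by the first-order Laguerre Parseval identity (Lemma \ref{L:laguerre_parseval}), with no density argument needed.
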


In order to prove Theorems \ref{Th:poincare_monomial_weight} and \ref{Th:stability_poincare_monomialweight}, we use the lifting method, which for these Poincar\'e inequalities --- unlike for the CKN inequalities themselves --- is available, since the weight $x^{A}$ is precisely the one produced by the reflection. Consequently, we are led to study weighted Poincar\'e inequalities associated with the radial measures $e^{-\delta|x|^{\tau}}|x|^{\beta}\,dx$, that is, the case $|A|=0$. Here, in the spirit of \cite{DLLN26}, we introduce the transformation $u(x)=|x|^{s}v(x)$ for an appropriate choice of $s$, and decompose $v$ into spherical harmonics; this reduces the radial component to a one-dimensional weighted problem. As in \cite{DLLN26}, we then analyze the reduced problem by means of Laguerre polynomial expansions, which arise naturally in the identification of the sharp constants and of the extremizers, and which provide a systematic way of separating the radial from the non-radial contributions.

The lifting, however, only reduces the monomial problem to a radial one; it does not by itself identify the sharp constant, because the inequality on $\mathbb R^{N}$ is tested against \emph{all} competitors, whereas only those inherited from $\mathbb R^{n}_{A,+}$ are relevant. The additional work consists in describing the class of functions that descend --- they are exactly the functions invariant under the group $\prod_{i:\,a_{i}>0}O(a_{i}+1)$ --- in computing the spherical harmonics that survive this invariance, and in checking that the constant so obtained is still attained. This is where the parity phenomenon of Remark \ref{rmk:parity_k=n} enters, and it is also where the analysis is genuinely more delicate than in the radial setting of \cite{DLLN26}: the sharp constant is not the bottom of the spectrum of the lifted operator, but the bottom of its restriction to an invariant subspace, and the two differ precisely when the weight is nondegenerate in every variable. A related, though different, use of monomial weights in Gaussian Poincar\'e inequalities appears in \cite{LLR25}.

Applying the above Poincaré inequalities to the identities established
earlier, we obtain the following stability estimate for the
\(L^2\)-CKN inequalities on orthants.

To state the following results concisely, we introduce some notation.
Let
\[
\lambda_k^{(n)}:=k(n+k-2)
\]
be the eigenvalue of the Laplace--Beltrami operator on
\(\mathbb S^{n-1}\) corresponding to spherical harmonics of degree \(k\),
and set
\[
\Theta:=\sqrt{(n-2b-2)^2+4\lambda_k^{(n)}}.
\]
Furthermore, let \(N:=n+2k\) be the dimension arising from the lifting
argument and define
\[
\lambda_\ell^{(N)}:=\ell(\ell+N-2),
\]
the eigenvalue of the Laplace--Beltrami operator on
\(\mathbb S^{N-1}\) corresponding to spherical harmonics of degree
\(\ell\). We then set
\begin{equation}\label{eq:s1s2}
s_q:=\frac{\operatorname{sgn}(\tau)}{2}
\left(\sqrt{\Theta^2+4\lambda_q^{(N)}}-\Theta\right),
\qquad q=1,2,
\end{equation}
and
\begin{equation} \label{eq:alpha1alpha2}
\alpha_1:=\frac{\operatorname{sgn}(\tau)}{2}
\left(\sqrt{\Theta^2+4\lambda_2^{(N)}}
-\sqrt{\Theta^2+4\lambda_1^{(N)}}\right),
\qquad
\alpha_2:=\frac{\operatorname{sgn}(\tau)}{2}
\left(\sqrt{\Theta^2+4\lambda_4^{(N)}}
-\sqrt{\Theta^2+4\lambda_2^{(N)}}\right).
\end{equation}

\begin{theorem}[Stability of the $L^{2}$-Caffarelli--Kohn--Nirenberg inequality]\label{Th:stability_ckn}
Let $u\in S_{a,b}(\mathbb{R}^{n}_{k,+})$, assume that $\tau:=-a+b+1\neq0$, and let $C(n,a,b,k)$ and $m$ be as in Theorem \ref{Th:CKNIon_Orthant}. Then

\begin{align}
&
\left(\int_{\mathbb{R}^{n}_{k,+}}\frac{|\nabla u|^{2}}{|x|^{2b}}\,dx\right)^{\frac12}
\left(\int_{\mathbb{R}^{n}_{k,+}}\frac{u^{2}}{|x|^{2a}}\,dx\right)^{\frac12}
-C(n,a,b,k)\int_{\mathbb{R}^{n}_{k,+}}\frac{u^{2}}{|x|^{a+b+1}}\,dx
\\
&\qquad\qquad
\ge
C_2(n,a,b,k)
\inf_{A\in\mathbb R,\ B>0}
\int_{\mathbb{R}^{n}_{k,+}}
\left|
u(x)-A\,e^{-B\frac{|x|^{\tau}}{|\tau|}}\pxi|x|^{m}
\right|^{2}
\frac{dx}{|x|^{a+b+1}},
\end{align}

where, with \(s_1\) and \(s_2\) as defined above,
$$C_2(n,a,b,k)={\begin{cases}
    \min\{|\tau|,|s_1|\}, & k<n,\\
    \min\{|\tau|, |s_2|\}, & k=n,
\end{cases}}$$
\end{theorem}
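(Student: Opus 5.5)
Assume first $\tau=-a+b+1>0$; the case $\tau<0$ is identical, using Theorem \ref{Th:Ckn-_identity} in place of Theorem \ref{Th:Ckn+_identity}. By density we may also assume $u\in C_c^\infty(\mathbb R^n_{k,+}\setminus\{0\})$. Let $\lambda$ be as in Theorem \ref{Th:Ckn+_identity}, set $\delta:=\frac{2}{\lambda^2\tau}$, and introduce
\[
v(x):=\frac{u(x)}{\pxi|x|^{m}}\,e^{\frac{\delta}{2}|x|^{\tau}} .
\]
In this notation the identity \eqref{eq:CKNO+_identity} states that the deficit equals
\[
\frac{\lambda^2}{2}\int_{\mathbb R^n_{k,+}}|\nabla v|^2\,x^{A}e^{-\delta|x|^\tau}|x|^{2m-2b}\,dx,
\]
where $A=(0,\dots,0,2,\dots,2)$ has $k$ twos. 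This is exactly the left-hand side of \eqref{eq:poincare_monomial} with $\beta=2m-2b$ and $N=n+2k$.

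We then apply Theorem \ref{Th:poincare_monomial_weight} with these parameters. Two checks are needed.

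\emph{Admissibility.} We need $(\beta+N+\tau-2)/\tau>0$. From the formula for $m$ one gets
\[
\beta+N-2=2m+2k+n-2b-2=\operatorname{sgn}(\tau)\,\Theta,
\]
so $(\beta+N+\tau-2)/\tau=(\Theta+|\tau|)/|\tau|>0$.

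\emph{Matching of $s_1,s_2$.} Substituting $\beta+N-2=\operatorname{sgn}(\tau)\Theta$ into the expressions for $s_1,s_2$ in Theorem \ref{Th:poincare_monomial_weight} gives $\sqrt{\Theta^2+4(N-1)}=\sqrt{\Theta^2+4\lambda_1^{(N)}}$ and $\sqrt{\Theta^2+8N}=\sqrt{\Theta^2+4\lambda_2^{(N)}}$. Hence those quantities coincide with \eqref{eq:s1s2}, and $\tau s_q=|\tau||s_q|$. The Poincaré constant is therefore $\delta|\tau|\min\{|\tau|,|s_q|\}$, with $q=1$ when $k<n$ and $q=2$ when $k=n$, since $a_i>0$ for every $i$ exactly when $k=n$. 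This is $\delta|\tau|\,C_2(n,a,b,k)$.

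Combining the two steps gives
\[
\text{deficit}\ \ge\ \frac{\lambda^2}{2}\,\delta\tau\, C_2\inf_{d}\int|v-d|^2 x^{A}e^{-\delta|x|^\tau}|x|^{\beta+\tau-2}\,dx .
\]
Since $\frac{\lambda^2}{2}\delta\tau=1$, the prefactor is exactly $C_2$.

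Next we return to $u$. For any $d$,
\[
|v-d|^2x^{A}e^{-\delta|x|^\tau}|x|^{2m-2b+\tau-2}
=\Bigl|u-d\pxi|x|^me^{-\frac{\delta}{2}|x|^\tau}\Bigr|^2|x|^{-2b+\tau-2}.
\]
Because $-2b+\tau-2=-(a+b+1)$, this weight is exactly the one in the statement. The profile $d\,\pxi|x|^me^{-\delta|x|^\tau/2}$ is of the form $A e^{-B|x|^\tau/|\tau|}\pxi|x|^m$ with $B=\delta|\tau|/2=1/\lambda^2>0$. So the infimum over $d$ dominates the infimum over $A\in\mathbb R$, $B>0$, and the estimate follows.

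The main obstacle is justifying the Poincaré step, not the algebra. Theorem \ref{Th:poincare_monomial_weight} is stated for test functions in $C_c^\infty(\mathbb R^n_{A,+}\setminus\{0\})$, whereas $v$ equals $u/\pxi$ times smooth factors. I would handle this by approximation: the measure $\pxis$ absorbs the boundary singularity, and finiteness of the $S_{a,b}$ norm together with the identity itself shows that $v$ lies in the weighted energy space. The Poincaré inequality then extends by density, with the orthant boundary treated via the lifting. If a direct density argument proves delicate, I would instead prove the inequality for $v$ directly from the lifted radial version, where $\pxi$ becomes the factor producing the correct Jacobian.
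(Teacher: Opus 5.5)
Your proposal is correct and follows the paper's proof. You use the scale-invariant identity \eqref{eq:combine_non_invarient_identity}, then Theorem \ref{Th:poincare_monomial_weight} with $A=(0,\dots,0,2,\dots,2)$, $N=n+2k$, $\beta=2m-2b$ and $\delta=2/(\lambda^2|\tau|)$, and finally the conversions $\frac{\lambda^2}{2}\delta\tau^2=|\tau|$ and $\frac{\lambda^2}{2}\delta\tau s_q=|s_q|$. The ``main obstacle'' you flag is not real: for $u\in C_c^\infty(\mathbb R^n_{k,+}\setminus\{0\})$ the support is a compact subset of the open orthant, where $\prod_{i=n-k+1}^n x_i$ is bounded below, so $v$ is already in $C_c^\infty(\mathbb R^n_{A,+}\setminus\{0\})$. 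The only density step needed is from test functions to $S_{a,b}$, and it holds because both sides are continuous in $\|\cdot\|_{a,b}$, since the sharp inequality controls $\int u^2|x|^{-(a+b+1)}\,dx$ by $\|u\|_{a,b}^2$.
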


We emphasize that Theorem \ref{Th:stability_ckn} covers both signs of $\tau$ at once, and hence all admissible parameters $(a,b)$ with $-a+b+1\neq0$. Taking $k=0$ we recover, in particular, stability estimates for the $L^{2}$-CKN inequalities on the whole space, and taking $a=-1$, $b=0$ we recover the results of \cite{LLLS26}. Sharp stability estimates for the HUP in the second-order and curl-free settings were obtained in \cite{DLLZ}.

The two branches of $C_{2}(n,a,b,k)$ should not be read as a cosmetic distinction. When $|\tau|\le|s_{1}|$ the two branches agree, since both minima return $|\tau|$; but in the band
\[
|s_{1}|<|\tau|<|s_{2}|,
\]
which is nonempty for every admissible choice of the remaining parameters, the constant for $k<n$ is $|s_{1}|$ while the constant for $k=n$ is $|\tau|$, and the corresponding second-order estimates are of a different nature altogether: for $k=n$ the deficit still controls the distance to the radial family, whereas the estimate that one would obtain by applying the $k<n$ result verbatim carries the gap $|s_{1}|-|\tau|<0$ and is therefore void. In other words, on the positive cone the naive transcription of the Euclidean statement is not merely suboptimal; it says nothing at all in the very range in which the orthant theory is new.

The stability estimates for the weighted Poincar\'e inequalities established above allow us to go one step further, and to study second-order stability phenomena for the $L^{2}$-CKN inequalities. More clearly, by combining the exact identities with the stability theory for the corresponding weighted Poincar\'e inequalities, we obtain a quantitative estimate for the stability deficit itself.

\begin{theorem}[Second stability of the Caffarelli--Kohn--Nirenberg inequality]\label{Th:second_stability_ckn}
Let {$0\le k\le n$}, let $u\in S_{a,b}(\mathbb{R}^{n}_{k,+})$, and assume that
\[
\tau:=-a+b+1\neq0 .
\]
Define the Caffarelli--Kohn--Nirenberg deficit by
\[
\rho(u)
=
\left(\int_{\mathbb{R}^{n}_{k,+}}\frac{|\nabla u|^{2}}{|x|^{2b}}\,dx\right)^{\frac12}
\left(\int_{\mathbb{R}^{n}_{k,+}}\frac{u^{2}}{|x|^{2a}}\,dx\right)^{\frac12}
-C(n,a,b,k)\int_{\mathbb{R}^{n}_{k,+}}\frac{u^{2}}{|x|^{a+b+1}}\,dx ,
\]
and let $E$ denote the set of the optimizers in Theorem \ref{Th:CKNIon_Orthant}. 
Then the following estimates hold.
\begin{enumerate}
\item  If {$k<n$ and  $|\tau|<|s_1|$,  or  $k=n$ and $|\tau|<|s_2|$}, then

\begin{align}
&\rho(u)-|\tau|\inf_{\phi\in E}\int_{\mathbb{R}^{n}_{k,+}}|u-\phi|^{2}\frac{dx}{|x|^{a+b+1}}
\\
&\ge
{\min\{|\tau|,\ |s_q|-|\tau|\}}
\inf_{\substack{A_{0},A_{1}\in\mathbb R\\ B>0}}
\int_{\mathbb{R}^{n}_{k,+}}
\left|
u-(A_{0}+A_{1}|x|^{\tau})e^{-B\frac{|x|^{\tau}}{|\tau|}}\pxi|x|^{m}
\right|^{2}
\frac{dx}{|x|^{a+b+1}} ,\\
&\text{where } s_q=\begin{cases} s_1,&k<n,\\ s_2,&k=n.\end{cases}
\end{align}

\item If $|\tau|>|s_1|$, and $k<n$ then

\begin{align}
&\rho(u)-|s_1|\inf_{\phi\in E}\int_{\mathbb{R}^{n}_{k,+}}|u-\phi|^{2}\frac{dx}{|x|^{a+b+1}}
\\
&\ge
\min\{|\tau|-|s_1|,\ |\tau|,\ |\alpha_1|\}
\\
&\times
\inf_{\substack{A_{0},\ b_{i}\in\mathbb R,\\ B>0}}
\int_{\mathbb{R}^{n}_{k,+}}
\left|
u-\left(A_{0}+\Bigl(\sum_{i=1}^{n-k}b_{i}x_{i}\Bigr)|x|^{s_1-1}\right)
e^{-B\frac{|x|^{\tau}}{|\tau|}}\pxi|x|^{m}
\right|^{2}
\frac{dx}{|x|^{a+b+1}} .
\end{align}

\item If $|\tau|=|s_1|$, and {$k<n$} then

\begin{align}
&\rho(u)-|\tau|\inf_{\phi\in E}\int_{\mathbb{R}^{n}_{k,+}}|u-\phi|^{2}\frac{dx}{|x|^{a+b+1}}
\\
&\ge
\min\{|\tau|,\ |\alpha_1|\}
\\
&\times
\inf_{\substack{A_{0},A_{1}\in\mathbb R,\\ b_{i}\in\mathbb R,\ B>0}}
\int_{\mathbb{R}^{n}_{k,+}}
\left|
u-\left(A_{0}+A_{1}|x|^{\tau}
+\Bigl(\sum_{i=1}^{n-k}b_{i}x_{i}\Bigr)|x|^{s_1-1}\right)
e^{-B\frac{|x|^{\tau}}{|\tau|}}\pxi|x|^{m}
\right|^{2}
\frac{dx}{|x|^{a+b+1}} .
\end{align}

\item If $|\tau|>|s_2|$, and $n=k$ then

\begin{align}
&\rho(u)-|s_2|\inf_{\phi\in E}\int_{\mathbb{R}^{n}_{k,+}}|u-\phi|^{2}\frac{dx}{|x|^{a+b+1}}
\\
&\ge
\min\{|\tau|, |\tau|-|s_2|,\ |\alpha_2|\}
\\
&\times
\inf_{\substack{A_{0}\in\mathbb R,\ B>0\\{\sum_{i=1}^{n} c_{i}=0}}}
\int_{\mathbb{R}^{n}_{k,+}}
\left|
u-\left(A_{0}
+\Bigl(\sum_{i=1}^{n}c_{i}x_{i}^2\Bigr)|x|^{s_2-2}\right)
e^{-B\frac{|x|^{\tau}}{|\tau|}}\pxi|x|^{m}
\right|^{2}
\frac{dx}{|x|^{a+b+1}} .
\end{align}

\item If $|\tau|=|s_2|$, and $n=k$ then

\begin{align}
&\rho(u)-|\tau|\inf_{\phi\in E}\int_{\mathbb{R}^{n}_{k,+}}|u-\phi|^{2}\frac{dx}{|x|^{a+b+1}}
\\
&\ge
\min\{|\tau|, \ |\alpha_2|\}
\\
&\times
\inf_{\substack{A_{0},A_1\in\mathbb R, \ B>0\\  {\sum_{i=1}^{n} c_{i}=0}}} \int_{\mathbb{R}^{n}_{k,+}}
\left|
u-\left(A_{0} +A_{1}|x|^{\tau}
+\Bigl(\sum_{i=1}^{n}c_{i}x_{i}^2\Bigr)|x|^{s_2-2}\right)
e^{-B\frac{|x|^{\tau}}{|\tau|}}\pxi|x|^{m}
\right|^{2}
\frac{dx}{|x|^{a+b+1}} .
\end{align}

\end{enumerate}

\end{theorem}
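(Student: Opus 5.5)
We will derive Theorem \ref{Th:second_stability_ckn} by combining the exact deficit identities of Theorems \ref{Th:Ckn+_identity} and \ref{Th:Ckn-_identity} with the stability estimates for the monomial-weighted Poincar\'e inequality of Theorem \ref{Th:stability_poincare_monomialweight}. We treat $\tau>0$ first; the case $\tau<0$ is identical after flipping signs in the exponential.

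Fix $u$ and let $\lambda$ be as in Theorem \ref{Th:Ckn+_identity}. Set
\[
v(x)=\frac{u(x)}{\pxi|x|^{m}}e^{\frac{|x|^{\tau}}{\lambda^{2}\tau}}, \qquad \delta=\frac{2}{\lambda^{2}\tau}.
\]
The identity then reads
\[
\rho(u)=\frac{\lambda^{2}}{2}\int|\nabla v|^{2}\,d\mu_A,
\]
where $d\mu_A=x^Ae^{-\delta|x|^\tau}|x|^{2m-2b}dx$, with $A=(0,\dots,0,2,\dots,2)$ ($k$ twos), $\beta=2m-2b$ and $N=n+2k$. We check that $\frac{\beta+N+\tau-2}{\tau}>0$ and that $s_1,s_2$ computed from $\beta+N-2=2m+2k+n-2b-2$ coincide with \eqref{eq:s1s2}: from the definition of $m$, $(2m+2k+n-2b-2)=\operatorname{sgn}(\tau)\Theta$, so $(\beta+N-2)^2=\Theta^2$, and the two formulas agree (including $\lambda^{(N)}_1=N-1$, $\lambda^{(N)}_2=2N$, $\lambda^{(N)}_4=4(N+2)$ for $\alpha_1,\alpha_2$).

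On the measure side, $x^A e^{-\delta|x|^\tau}|x|^{\beta+\tau-2}|v-w|^2=|u-\tilde w|^2|x|^{-2b+\tau-2}e^{-2|x|^\tau/(\lambda^2\tau)}\cdot e^{2|x|^\tau/(\lambda^2\tau)}$, where $\tilde w=w\,\pxi|x|^m e^{-|x|^\tau/(\lambda^2\tau)}$. Since $-2b+\tau-2=-(a+b+1)$, we get
\[
\int|v-w|^2d\nu_A=\int|u-\tilde w|^2\frac{dx}{|x|^{a+b+1}}.
\]
Hence $\frac{\lambda^2}{2}\delta\tau^2=\tau$ and $\frac{\lambda^2}{2}\delta\tau s_q=s_q$, which converts every Poincar\'e constant of Theorem \ref{Th:stability_poincare_monomialweight} into the constants $|\tau|$, $|s_q|$, $|\alpha_q|$ appearing in the statement (for $\tau<0$, $\tau s_q=|\tau||s_q|$ since $\operatorname{sgn}s_q=\operatorname{sgn}\tau$). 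Cases (1)–(5) correspond to cases (i)–(v) with $k<n\leftrightarrow$ some $a_i=0$ (the free linear indices being $i\le n-k$) and $k=n\leftrightarrow$ all $a_i>0$, where the constraint $\sum c_i(a_i+1)=0$ becomes $\sum c_i=0$. With $w$ ranging over constants, $\tilde w$ ranges over $E$ with the specific $B=\tau/\lambda^2\cdot(1/|\tau|)$-type parameter. Applying the Poincar\'e stability gives
\[
\rho(u)-c_*\inf_{d}\int|u-\tilde d|^2 \ge \kappa\,\inf\int|u-\tilde w|^2\frac{dx}{|x|^{a+b+1}},
\]
with the corresponding families $w$.

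The main obstacle is the mismatch of infima. On the left, $\inf_d$ is taken only over extremizers with the fixed $B_\lambda$, which is $\ge\inf_{\phi\in E}$; since this term is subtracted, the inequality goes in the right direction, because $\rho-c_*\inf_E\ge\rho-c_*\inf_{d}$. On the right, the infimum at fixed $B_\lambda$ dominates the infimum over all $B>0$, so we may relax it freely. Both relaxations are favorable, so the result follows.

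The remaining care goes into density and admissibility. Theorem \ref{Th:stability_poincare_monomialweight} is stated for $C_c^\infty(\mathbb R^n_{A,+}\setminus\{0\})$, and $v$ is obtained from $u\in C_c^\infty$ by multiplication with smooth nonvanishing factors on the open orthant, so $v$ is again compactly supported away from the boundary and the origin. For general $u\in S_{a,b}$ we pass to the limit by approximation, using continuity of both sides in the $S_{a,b}$ norm together with lower semicontinuity of the infima. Finally, we note the degenerate case $u\equiv0$, where $\lambda$ is undefined and the estimate holds trivially.
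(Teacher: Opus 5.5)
Your proposal is correct and follows essentially the same route as the paper. You rewrite $\rho(u)$ as $\frac{\lambda^{2}}{2}\int|\nabla w|^{2}\,d\mu_A$ via the scale-invariant identity, use the dictionary $A=(0,\dots,0,2,\dots,2)$, $\beta=2m-2b$, $\delta=2/(\lambda^{2}|\tau|)$, apply Theorem \ref{Th:stability_poincare_monomialweight} case by case, and then relax the fixed-$B$ infima. Your explicit check that both infimum relaxations go in the favorable direction, together with the density step, spells out what the paper compresses into ``returning to $u$ and taking the infimum over $B>0$.''
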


We close this introduction by summarising the points at which the present work departs from the existing literature. (i) The sharp constant of the $L^{2}$-CKN inequality on $\mathbb R^{n}_{k,+}$, together with the complete family of its extremizers, is new for every $k\ge1$, and it is not obtainable from the Euclidean results of \cite{CC09, CFL21, Cos08} by a change of parameters; the lifting argument that settles the Hardy inequality \cite{SuYang2012} and the HUP \cite{LLLS26} on orthants is unavailable here. (ii) The exact identities of Theorems \ref{Th:Ckn+_identity} and \ref{Th:Ckn-_identity} are established for both signs of $\tau$ and for all $k$, and they yield a third proof of the sharp inequality. (iii) The weighted Poincar\'e inequalities with monomial weights of Theorem \ref{Th:poincare_monomial_weight} appear to be new; their sharp constants depend on the weight not only quantitatively, through $N=n+|A|$, but qualitatively, through the number of degenerate directions. (iv) The case $k=n$ of the positive cone, in which the constant is governed by the second rather than the first non-radial mode and the optimizers are quadratic, has no analogue in the Euclidean theory of \cite{DLLN26}, nor in the orthant theory of \cite{LLLS26}, where the parameters $a=-1$, $b=0$ keep the problem outside the relevant band. (v) Finally, the second-order stability estimates of Theorem \ref{Th:second_stability_ckn} quantify the stability deficit itself, in all parameter regimes and for every $k$, with sharp constants.

Our paper is organized as follows. In Section \ref{S:preliminaries} we collect the calculus on orthants that we shall need, we prove an odd-reflection lemma relating functions on the orthant to functions on the whole space, and we record the one-dimensional weighted Poincar\'e inequality together with its equality cases. In Section \ref{S:CKN} we prove Theorem \ref{Th:CKNIon_Orthant}, thereby determining the sharp constant and all the optimizers of the $L^{2}$-CKN inequality on orthants, and we deduce a new proof of Theorem \ref{Th:CKNI}. Section \ref{S:identities} is devoted to the exact identities of Theorems \ref{Th:Ckn+_identity} and \ref{Th:Ckn-_identity}. In Section \ref{S:poincare} we establish the weighted Poincar\'e inequalities of generalized Gaussian type, first for radial measures and then, by lifting, for monomial weights, thus proving Theorem \ref{Th:poincare_monomial_weight}. Their stability, and in particular Theorem \ref{Th:stability_poincare_monomialweight}, is treated in Section \ref{S:poincare_stability}. In Section \ref{S:CKN_stability} we combine the identities with the Poincar\'e estimates and prove Theorems \ref{Th:stability_ckn} and \ref{Th:second_stability_ckn}, together with their Euclidean counterparts. Finally, the Appendix (Section \ref{S:appendix}) contains the density results used throughout the paper.
\section{Preliminaries}\label{S:preliminaries}

\subsection{Spherical harmonic decomposition}

In this subsection, we derive the preliminary identities and decomposition formulas which form the basis of the proofs of the main results. The key idea throughout the paper is to introduce the transformation
\[
u(x)=|x|^m v(x),
\]
for a suitable parameter \(m\), and then decompose the transformed function \(v\) into spherical harmonics. This allows us to separate the radial and angular contributions of the associated weighted energy functionals in a systematic way.

Let
$u \in C_c^\infty(\mathbb{R}^n \setminus \{0\})$
and let
$d\mu(x)=w(|x|)\,dx$.
We write
$u(x)=|x|^m v(x)$
for some
$v\in C^\infty_c(\mathbb R^n\setminus\{0\})$
and a suitable parameter \(m\).

Decomposing \(v\) into spherical harmonics,
\[
v(x)=\sum_{\ell=0}^{\infty}\sum_{j=1}^{d_\ell} f_{\ell,j}(r)\,\varphi_{\ell,j}(\sigma),
\qquad
r=|x|,
\qquad
\sigma=\frac{x}{|x|},
\]
we obtain
\[
u(x)=\sum_{\ell=0}^{\infty}\sum_{j=1}^{d_\ell} r^{m} f_{\ell,j}(r)\,\varphi_{\ell,j}(\sigma).
\]
Here, for each $\ell\ge0$, the functions $\{\varphi_{\ell,j}\}_{j=1}^{d_\ell}$ form an
orthonormal basis of the space of spherical harmonics of degree $\ell$ on
$\mathbb S^{n-1}$, so that
\[
-\Delta_{\mathbb S^{n-1}}\varphi_{\ell,j}
=
c_\ell\,\varphi_{\ell,j},
\qquad
c_\ell=\ell(\ell+n-2),
\qquad
\ell\ge0,
\]
with the normalization
\[
\int_{\mathbb S^{n-1}}|\varphi_{\ell,j}(\sigma)|^2\,d\sigma=1 .
\]
We emphasize that $c_\ell$ is the eigenvalue attached to the \emph{degree} $\ell$, and
that the corresponding eigenspace has dimension
$d_\ell=\binom{n+\ell-1}{\ell}-\binom{n+\ell-3}{\ell-2}$; in particular $d_0=1$ and
$d_1=n$, the eigenfunctions of degree one being the restrictions to $\mathbb S^{n-1}$ of
the linear forms $x\mapsto x_i$. Keeping the second index $j$ is therefore essential in
the discussion of the equality cases. Finally, the radial coefficients satisfy
$f_{\ell,j}(r)=O(r^\ell)$ and $f_{\ell,j}'(r)=O(r^{\ell-1})$ as \(r\to0\).

In polar coordinates, the gradient decomposes as
\[
\nabla u
=
\partial_r u\,\sigma
+
\frac1r\nabla_{\mathbb S^{n-1}}u,
\]
and therefore
\[
|\nabla u|^2
=
|\partial_r u|^2
+
\frac1{r^2}|\nabla_{\mathbb S^{n-1}}u|^2,
\]
since the radial and tangential directions are orthogonal.

We first compute the radial contribution:

\[
\int_{\R^n}|\partial_r u(x)|^2\,d\mu(x)
=\sum_{\ell=0}^{\infty}\sum_{j=1}^{d_\ell}\int_0^\infty
\Bigl(\bigl(r^m f_{\ell,j}(r)\bigr)'\Bigr)^2 w(r)r^{n-1}\,dr,
\]
and, for the angular contribution,
\[
\int_{\R^n}|\nabla_{\mathbb S^{n-1}}u(x)|^2\,d\mu(x)
=\sum_{\ell=0}^{\infty}\sum_{j=1}^{d_\ell}\int_0^\infty
\ell(n+\ell-2)\bigl(r^m f_{\ell,j}(r)\bigr)^2 w(r)r^{n-1}\,dr .
\]

Combining the radial and the angular parts, we obtain
\begin{equation}\label{eq:dumu}
\int_{\R^n}|\nabla u|^2\,d\mu(x)
=
\sum_{\ell=0}^{\infty}\sum_{j=1}^{d_\ell}\int_0^\infty
\left(
\Bigl(\bigl(r^m f_{\ell,j}(r)\bigr)'\Bigr)^2
+
\frac{\ell(n+\ell-2)\,r^{2m}f_{\ell,j}^2(r)}{r^2}
\right)
w(r)r^{n-1}\,dr,
\end{equation}
and, in the same way,
\begin{equation}\label{eq:umu}
\int_{\R^n}|u|^2\,d\mu(x)
=
\sum_{\ell=0}^{\infty}\sum_{j=1}^{d_\ell}\int_0^\infty
r^{2m}f_{\ell,j}^2(r)\,w(r)r^{n-1}\,dr .
\end{equation}

\subsection{Generalized Laguerre polynomials}
We briefly recall the standard facts concerning the generalized Laguerre polynomials that
will be used in the sequel. They are the natural orthogonal basis for the one-dimensional
weighted problems to which the spherical harmonic decomposition reduces our inequalities,
and the integer spacing of their eigenvalues is what produces the spectral gaps behind the
stability estimates.

For \(\alpha>-1\), the generalized Laguerre polynomials
\(\{L_n^{(\alpha)}\}_{n\ge0}\)
are defined as solutions of the differential equation
\begin{equation}
x y'' + (\alpha+1-x)y' + ny = 0,
\qquad n=0,1,2,\dots
\end{equation}

They form an orthogonal family in
\[
L^2\big((0,\infty),x^\alpha e^{-x}dx\big),
\]
with respect to the inner product
\begin{equation}
\langle f,g\rangle
=
\int_0^\infty
f(x)g(x)\,
x^\alpha e^{-x}\,dx.
\end{equation}

Moreover, they are eigenfunctions of the Laguerre operator
\begin{equation}
\mathcal L_\alpha f
=
\frac1{x^\alpha e^{-x}}
\frac{d}{dx}
\left(
x^{\alpha+1}e^{-x}f'(x)
\right),
\end{equation}
satisfying
\begin{equation}
\mathcal L_\alpha L_n^{(\alpha)}
=
-nL_n^{(\alpha)}.
\end{equation}

The family
\(\{L_n^{(\alpha)}\}_{n\ge0}\)
forms a complete orthogonal basis of
\[
L^2\big((0,\infty),x^\alpha e^{-x}dx\big).
\]
Consequently, every
\(f\in L^2((0,\infty),x^\alpha e^{-x}dx)\)
admits an expansion
\begin{equation}
f(x)
=
\sum_{n=0}^\infty
a_nL_n^{(\alpha)}(x).
\end{equation}

In particular, the mean-zero condition takes a simple form in terms of the expansion
coefficients; this is the observation that allows us, in the proofs below, to start the
Laguerre expansion at $n=1$.

\begin{proposition}\label{P:laguerre_mean_zero}
Let
\[
f(x)=\sum_{n=0}^\infty a_nL_n^{(\alpha)}(x)
\]
be the Laguerre expansion of
\(f\in L^2((0,\infty),x^\alpha e^{-x}dx)\).
Then
\[
\int_0^\infty
f(x)x^\alpha e^{-x}\,dx=0
\]
if and only if $a_0=0.$

\end{proposition}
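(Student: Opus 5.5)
The plan is to use the fact that $L_0^{(\alpha)}\equiv 1$ together with the orthogonality of the Laguerre family in $L^2\big((0,\infty),x^\alpha e^{-x}dx\big)$. Since $\alpha>-1$, the constant function $1$ is in this space, because $\int_0^\infty x^\alpha e^{-x}\,dx=\Gamma(\alpha+1)<\infty$. The integral of $f$ against the weight is therefore the inner product
\[
\int_0^\infty f(x)x^\alpha e^{-x}\,dx=\langle f,1\rangle=\langle f,L_0^{(\alpha)}\rangle .
\]
This pairing is well defined by Cauchy--Schwarz, since both $f$ and $1$ lie in $L^2$.

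The expansion $f=\sum_n a_nL_n^{(\alpha)}$ converges in the $L^2$ norm, and the map $g\mapsto\langle g,L_0^{(\alpha)}\rangle$ is a continuous linear functional. We may therefore pass the inner product through the sum:
\[
\langle f,L_0^{(\alpha)}\rangle=\sum_{n\ge0}a_n\langle L_n^{(\alpha)},L_0^{(\alpha)}\rangle=a_0\,\|L_0^{(\alpha)}\|^2=a_0\,\Gamma(\alpha+1),
\]
where orthogonality kills every term with $n\ge1$. Since $\Gamma(\alpha+1)>0$, the integral vanishes if and only if $a_0=0$, which gives both implications at once.

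The only point that needs care is justifying the term-by-term computation. We avoid any pointwise convergence issue by working purely with $L^2$ convergence and continuity of the inner product. We also need $L_0^{(\alpha)}=1$, which follows from the differential equation: with $n=0$ it reads $xy''+(\alpha+1-x)y'=0$, and the polynomial solution of degree $0$ is the constant, normalized as usual.
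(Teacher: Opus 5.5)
Your proof is correct: pairing $f$ with $L_0^{(\alpha)}\equiv 1\in L^2(x^\alpha e^{-x}dx)$ and using the $L^2$ convergence of the expansion together with orthogonality gives $\int_0^\infty f(x)\,x^\alpha e^{-x}\,dx=a_0\,\Gamma(\alpha+1)$. This is exactly the observation the paper relies on; it states the proposition without a written proof, and your argument supplies the missing step. The argument only uses that $L_0^{(\alpha)}$ is a nonzero constant, so the choice of normalization does not matter.
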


The following lemma expresses the weighted Dirichlet energy of a Laguerre expansion
directly as the spectral sum $\sum_{n\ge1}n\,a_n^{2}\,h_n^{(\alpha)}$, at the level of
quadratic forms. It is what allows the weighted Poincar\'e inequalities of Sections
\ref{S:poincare} and \ref{S:poincare_stability} to be proved on the whole energy space,
without integration by parts and without applying $\mathcal L_{\alpha}$ term by term to the
expansion. Here and below we
write
\[
h_n^{(\gamma)}
:=
\bigl\|L_n^{(\gamma)}\bigr\|_{L^2(x^{\gamma}e^{-x}dx)}^{2}
=
\frac{\Gamma(n+\gamma+1)}{n!},
\qquad \gamma>-1 .
\]

\begin{lemma}[First-order Parseval identity for Laguerre expansions]\label{L:laguerre_parseval}
Let $\alpha>-1$, and let $g\in L^2\bigl((0,\infty),x^{\alpha}e^{-x}dx\bigr)$ possess a weak
derivative satisfying
\[
\int_0^\infty |g'(x)|^{2}\,x^{\alpha+1}e^{-x}\,dx<\infty .
\]
Let $g=\sum_{n\ge0}a_nL_n^{(\alpha)}$ be its Laguerre expansion. Then
\begin{equation}\label{eq:laguerre_parseval}
\int_0^\infty |g'(x)|^{2}\,x^{\alpha+1}e^{-x}\,dx
=
\sum_{n\ge1}n\,a_n^{2}\,h_n^{(\alpha)} .
\end{equation}
\end{lemma}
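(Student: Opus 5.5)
The plan is to use the derivative identity for Laguerre polynomials. The derivatives $\frac{d}{dx}L_n^{(\alpha)}=-L_{n-1}^{(\alpha+1)}$ form an orthogonal basis of $L^2\bigl((0,\infty),x^{\alpha+1}e^{-x}dx\bigr)$, and their norms satisfy
\[
h_{n-1}^{(\alpha+1)}=\frac{\Gamma(n+\alpha+1)}{(n-1)!}=n\,h_n^{(\alpha)} .
\]
So if we expand $g'$ in the basis $\{L_{n-1}^{(\alpha+1)}\}_{n\ge1}$ and show that its coefficients are exactly $-a_n$, then Parseval in the weight $x^{\alpha+1}e^{-x}$ gives
\[
\int_0^\infty |g'|^2x^{\alpha+1}e^{-x}\,dx=\sum_{n\ge1}a_n^2\,h_{n-1}^{(\alpha+1)}=\sum_{n\ge1}n\,a_n^2\,h_n^{(\alpha)},
\]
which is \eqref{eq:laguerre_parseval}. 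Since $g'\in L^2(x^{\alpha+1}e^{-x}dx)$ by hypothesis and $\{L_{m}^{(\alpha+1)}\}$ is complete, this expansion exists and Parseval applies. The whole task therefore reduces to identifying the coefficients.

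Write $g'=\sum_{m\ge0}b_mL_m^{(\alpha+1)}$, so that
\[
b_m h_m^{(\alpha+1)}=\int_0^\infty g'\,L_m^{(\alpha+1)}x^{\alpha+1}e^{-x}\,dx .
\]
We need a duality identity relating this to the coefficients of $g$ itself. The Rodrigues-type relation
\[
\frac{d}{dx}\Bigl(x^{\alpha+1}e^{-x}L_m^{(\alpha+1)}(x)\Bigr)=(m+1)\,x^{\alpha}e^{-x}L_{m+1}^{(\alpha)}(x)
\]
follows from $L_{m}^{(\alpha+1)}$ satisfying its ODE, or equivalently from the Rodrigues formula. Integrating by parts once then gives
\[
\int_0^\infty g'L_m^{(\alpha+1)}x^{\alpha+1}e^{-x}\,dx=-(m+1)\int_0^\infty gL_{m+1}^{(\alpha)}x^{\alpha}e^{-x}\,dx=-(m+1)\,a_{m+1}h_{m+1}^{(\alpha)},
\]
provided the boundary terms vanish. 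Because $(m+1)h_{m+1}^{(\alpha)}=h_m^{(\alpha+1)}$, this yields $b_m=-a_{m+1}$, as required. In particular $a_0$ never enters, which is consistent with Proposition \ref{P:laguerre_mean_zero}.

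The main obstacle is justifying this integration by parts for a general $g$ that has only a weak derivative and the two weighted $L^2$ bounds. The boundary term is $g\,x^{\alpha+1}e^{-x}L_m^{(\alpha+1)}$, evaluated at $0$ and $\infty$.

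To avoid pointwise boundary analysis, we will integrate over $[\varepsilon,R]$, where the computation is legitimate because $g$ is absolutely continuous on compact subintervals. We then show that the boundary values tend to zero along suitable sequences $\varepsilon_j\to0$ and $R_j\to\infty$.

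For the sequences, take $F(x)=g(x)x^{\alpha+1}e^{-x}p(x)$ with $p=L_m^{(\alpha+1)}$ a polynomial. We show $F\in L^1(dx/x)$ near $0$ and $F\in L^1(dx)$ near $\infty$; then $\liminf|F|=0$ at both ends. By Cauchy--Schwarz, $\int |g|x^{\alpha}e^{-x}|p|\,dx<\infty$ because $g\in L^2(x^\alpha e^{-x})$ and $p\in L^2(x^\alpha e^{-x})$. Near $0$ this gives $\int_0^1|F|\,dx/x<\infty$, and near $\infty$ it gives $\int_1^\infty |F|\,dx/x^{}\cdot x\lesssim\int|g|x^{\alpha}e^{-x}|p|\,x\,dx<\infty$, again by Cauchy--Schwarz since $xp$ is still a polynomial.

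With $b_m=-a_{m+1}$ for every $m$ established, the Parseval identity above concludes the proof.
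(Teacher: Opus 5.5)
Your proposal is correct and follows the paper's proof. It uses the same raising identity $\frac{d}{dx}\bigl[x^{\alpha+1}e^{-x}L_m^{(\alpha+1)}\bigr]=(m+1)x^{\alpha}e^{-x}L_{m+1}^{(\alpha)}$, the same identification $b_m=-a_{m+1}$ via one integration by parts, and Parseval in $L^2(x^{\alpha+1}e^{-x}dx)$. The only difference is how you dispose of the boundary term. The paper shows $F'\in L^1$, so $F(0^+)$ and $F(+\infty)$ exist, and then rules out nonzero limits using $g\in L^2(x^{\alpha}e^{-x}dx)$. You instead truncate to $[\varepsilon_j,R_j]$ along sequences with $|F|\to0$, obtained from $F/x\in L^1$ near $0$ and $F\in L^1$ near $\infty$. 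This is equally valid, because $g'\,x^{\alpha+1}e^{-x}L_m^{(\alpha+1)}$ and $g\,x^{\alpha}e^{-x}L_{m+1}^{(\alpha)}$ are absolutely integrable as products of $L^2$ functions, so the truncated integrals converge.
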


\begin{proof}
The family $\{L_m^{(\alpha+1)}\}_{m\ge0}$ is a complete orthogonal system in
$L^2\bigl((0,\infty),x^{\alpha+1}e^{-x}dx\bigr)$, since $\alpha+1>-1$. We compute the
coefficients of $g'$ with respect to this system. The starting point is the classical
raising identity, companion to $(L_n^{(\alpha)})'=-L_{n-1}^{(\alpha+1)}$:
\begin{equation}\label{eq:laguerre_raising}
\frac{d}{dx}\Bigl[x^{\alpha+1}e^{-x}L_m^{(\alpha+1)}(x)\Bigr]
=
(m+1)\,x^{\alpha}e^{-x}\,L_{m+1}^{(\alpha)}(x),
\qquad m\ge0 ,
\end{equation}
which is verified directly from the Rodrigues formula, or by comparing coefficients.

Fix $m\ge0$ and set $\phi(x):=x^{\alpha+1}e^{-x}L_m^{(\alpha+1)}(x)$ and $F:=g\phi$. On
compact subsets of $(0,\infty)$ the weights are bounded above and below, so $g\in
W^{1,1}_{\mathrm{loc}}(0,\infty)$ and $F'=g'\phi+g\phi'$ a.e. Both terms belong to
$L^1(0,\infty)$: by the Cauchy--Schwarz inequality,
\[
\int_0^\infty |g'|\,|\phi|\,dx
\le
\Bigl(\int_0^\infty |g'|^{2}x^{\alpha+1}e^{-x}dx\Bigr)^{\!1/2}
\Bigl(\int_0^\infty \bigl|L_m^{(\alpha+1)}\bigr|^{2}x^{\alpha+1}e^{-x}dx\Bigr)^{\!1/2}
<\infty,
\]
and, by \eqref{eq:laguerre_raising}, $|g\phi'|=(m+1)|g|\,\bigl|L_{m+1}^{(\alpha)}\bigr|
x^{\alpha}e^{-x}\in L^1$ for the same reason, using $g\in L^2(x^{\alpha}e^{-x}dx)$.
Consequently $F$ is absolutely continuous with $F'\in L^1(0,\infty)$, so the limits
$F(0^+)$ and $F(+\infty)$ exist. Both are zero: if $|F(x)|\ge c>0$ near an endpoint, then
$|g(x)|\ge c\,x^{-(\alpha+1)}e^{x}/\bigl|L_m^{(\alpha+1)}(x)\bigr|$ there, and a direct
computation shows that $\int |g|^{2}x^{\alpha}e^{-x}dx$ then diverges at that endpoint
(near $0$ the integrand dominates $x^{-\alpha-2}$ with $-\alpha-2<-1$; near $\infty$ it
grows exponentially), contradicting $g\in L^2(x^{\alpha}e^{-x}dx)$. Therefore

\begin{align*}
 \int_0^\infty g'\,L_m^{(\alpha+1)}\,x^{\alpha+1}e^{-x}\,dx
&=-\int_0^\infty g\,\Bigl[x^{\alpha+1}e^{-x}L_m^{(\alpha+1)}\Bigr]'\,dx\\
&=-(m+1)\int_0^\infty g\,L_{m+1}^{(\alpha)}\,x^{\alpha}e^{-x}\,dx\\
&=-(m+1)\,a_{m+1}\,h_{m+1}^{(\alpha)} .
\end{align*}

By the Parseval identity in $L^2(x^{\alpha+1}e^{-x}dx)$, using completeness,
\[
\int_0^\infty |g'|^{2}\,x^{\alpha+1}e^{-x}\,dx
=
\sum_{m\ge0}
\frac{\bigl[(m+1)\,a_{m+1}\,h_{m+1}^{(\alpha)}\bigr]^{2}}{h_m^{(\alpha+1)}} .
\]
Finally, with $n=m+1$,
\[
\frac{(m+1)^{2}\bigl(h_{m+1}^{(\alpha)}\bigr)^{2}}{h_m^{(\alpha+1)}}
=
n^{2}\,
\frac{\Gamma(n+\alpha+1)^{2}}{(n!)^{2}}\,
\frac{(n-1)!}{\Gamma(n+\alpha+1)}
=
n\,\frac{\Gamma(n+\alpha+1)}{n!}
=
n\,h_n^{(\alpha)},
\]
which gives \eqref{eq:laguerre_parseval}.
\end{proof}

For the applications, note that under the change of variables $t=\delta r^{\tau}$,
$g(t)=f(r)$, $\alpha=\frac{\beta-1}{\tau}$, of the proofs below, one has the exact
correspondences
\begin{align}\label{eq:parseval_dictionary}
 &\int_0^\infty |f'(r)|^{2}e^{-\delta r^{\tau}}r^{\beta}\,dr
=
|\tau|\,\delta^{-\alpha}
\int_0^\infty |g'(t)|^{2}\,t^{\alpha+1}e^{-t}\,dt,\\
&\int_0^\infty |f(r)|^{2}e^{-\delta r^{\tau}}r^{\beta+\tau-2}\,dr
=
\frac{\delta^{-\alpha}}{\delta|\tau|}
\int_0^\infty |g(t)|^{2}\,t^{\alpha}e^{-t}\,dt,   
\end{align}

so that $f\in Y_{\delta,\tau,\beta}$ if and only if $g$ satisfies the hypotheses of Lemma
\ref{L:laguerre_parseval}.

\subsection{Radial Poincar\'e inequality with general weights}
In this subsection we study the weighted Poincar\'e inequalities associated with the radial
measures $e^{-\delta r^{\tau}}r^{\beta}\,dr$ on $(0,\infty)$, which are the one-dimensional
problems arising from the Caffarelli--Kohn--Nirenberg setting. The substitution
$t=\delta r^{\tau}$ turns them into problems for the measure $t^{\alpha}e^{-t}\,dt$, and the
analysis therefore reduces to the spectral theory of the generalized Laguerre polynomials
recalled above.

We are now in a position to prove the radial Poincar\'e inequality with general weights. The
result extends those of \cite{DLLN26} to a slightly more general range of parameters; the
proof follows essentially the same lines, and we include it for the convenience of the
reader.

{Since the derivative and zeroth-order terms are associated with
different weights, we introduce the natural one-dimensional energy
space
\[
Y_{\delta,\tau,\beta}
:=
\left\{
f\in W_{\mathrm{loc}}^{1,2}(0,\infty):
\int_0^\infty |f'(r)|^2 e^{-\delta r^\tau}r^\beta\,dr
+
\int_0^\infty |f(r)|^2 e^{-\delta r^\tau}
r^{\beta+\tau-2}\,dr
<\infty
\right\}.
\]
We equip $Y_{\delta,\tau,\beta}$ with the norm
\[
\|f\|_{Y_{\delta,\tau,\beta}}^2
:=
\int_0^\infty |f'(r)|^2 e^{-\delta r^\tau}r^\beta\,dr
+
\int_0^\infty |f(r)|^2 e^{-\delta r^\tau}
r^{\beta+\tau-2}\,dr.
\]}

\begin{theorem}[Radial Poincar\'e inequality with general weights]\label{th:general_radial_poincare}
Let $\delta>0$ and $\tau\ne0$, and assume that
\[
\frac{\beta+\tau-1}{\tau}>0 .
\]
{Then, for every $f\in Y_{\delta,\tau,\beta}$, we have}
\[
\int_0^\infty |f'(r)|^2 e^{-\delta r^\tau}r^\beta\,dr
\;\ge\;
\delta\tau^2\int_0^\infty |f(r)-\bar f|^2 e^{-\delta r^\tau}r^{\beta+\tau-2}\,dr,
\]
where
\[
\bar f
:=
\frac{\int_0^\infty f(r)e^{-\delta r^\tau}r^{\beta+\tau-2}\,dr}
{\int_0^\infty e^{-\delta r^\tau}r^{\beta+\tau-2}\,dr}.
\]
\end{theorem}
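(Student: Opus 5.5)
The plan is to reduce the inequality to the Laguerre spectral gap. We change variables by $t=\delta r^{\tau}$, set $g(t)=f(r)$ and $\alpha=\frac{\beta-1}{\tau}$, and apply the dictionary \eqref{eq:parseval_dictionary}.

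The hypothesis $\frac{\beta+\tau-1}{\tau}>0$ is exactly $\alpha+1>0$, i.e. $\alpha>-1$. Hence $\{L_n^{(\alpha)}\}_{n\ge0}$ is a complete orthogonal system in $L^2\bigl((0,\infty),t^{\alpha}e^{-t}dt\bigr)$. The measure $e^{-\delta r^\tau}r^{\beta+\tau-2}dr$ is finite, so $\bar f$ is well defined. For $f\in Y_{\delta,\tau,\beta}$ the function $g$ satisfies the hypotheses of Lemma \ref{L:laguerre_parseval}.

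Concretely, one checks $r^{\beta+\tau-2}dr\propto t^{\alpha}dt$ and $r^{\beta}|f'|^2dr\propto t^{\alpha+1}|g'|^2dt$. Both signs of $\tau$ are handled by the absolute value $|\tau|$, since the orientation reversal when $\tau<0$ only swaps the limits of integration. The exact constants are those recorded in \eqref{eq:parseval_dictionary}.

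In the $t$ variable the inequality becomes
\[
|\tau|\delta^{-\alpha}\int_0^\infty|g'|^2t^{\alpha+1}e^{-t}dt\ \ge\ \delta\tau^2\cdot\frac{\delta^{-\alpha}}{\delta|\tau|}\int_0^\infty|g-\bar g|^2t^{\alpha}e^{-t}dt .
\]
After cancelling the common factor $|\tau|\delta^{-\alpha}$, this is
\[
\int_0^\infty|g'|^2t^{\alpha+1}e^{-t}\,dt\ \ge\ \int_0^\infty|g-\bar g|^2t^{\alpha}e^{-t}\,dt .
\]
Here $\bar g$ is the $t^\alpha e^{-t}$-mean of $g$, and it equals $\bar f$.

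Now expand $g=\sum_{n\ge0}a_nL_n^{(\alpha)}$. Since $L_0^{(\alpha)}=1$, we have $\bar g=a_0$, and by Proposition \ref{P:laguerre_mean_zero} the function $g-\bar g$ has expansion $\sum_{n\ge1}a_nL_n^{(\alpha)}$. Parseval then gives
\[
\int_0^\infty|g-\bar g|^2t^\alpha e^{-t}\,dt=\sum_{n\ge1}a_n^2h_n^{(\alpha)}.
\]
Lemma \ref{L:laguerre_parseval} gives
\[
\int_0^\infty|g'|^2t^{\alpha+1}e^{-t}\,dt=\sum_{n\ge1}n\,a_n^2h_n^{(\alpha)}.
\]
Since $n\ge1$ in every term, the second sum dominates the first, which proves the inequality.

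Equality holds iff $a_n=0$ for all $n\ge2$. That means $g=a_0+a_1L_1^{(\alpha)}$, i.e. $f=c_0+c_1r^\tau$, which is consistent with the radial mode described in Remark \ref{rmk:sharpness_monomial}.

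The main technical point is verifying that the change of variables maps $Y_{\delta,\tau,\beta}$ exactly onto the class of Lemma \ref{L:laguerre_parseval}. This includes checking that $W^{1,2}_{\mathrm{loc}}$ regularity is preserved under the $C^1$ diffeomorphism $r\mapsto\delta r^\tau$ of $(0,\infty)$. It also includes checking the exponent bookkeeping that turns $\frac{\beta+\tau-1}{\tau}>0$ into $\alpha>-1$. Once that is done, the spectral argument is immediate, and no integration by parts beyond Lemma \ref{L:laguerre_parseval} is needed.
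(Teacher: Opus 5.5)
Your proposal is correct and follows essentially the same route as the paper. Both arguments use the substitution $t=\delta r^{\tau}$ with $\alpha=\frac{\beta-1}{\tau}$ and the dictionary \eqref{eq:parseval_dictionary}, and then apply the first-order Parseval identity of Lemma \ref{L:laguerre_parseval} to compare $\sum_{n\ge1}n\,a_n^2h_n^{(\alpha)}$ with $\sum_{n\ge1}a_n^2h_n^{(\alpha)}$. The only cosmetic differences are that the paper first normalizes $f$ to have mean zero (so $a_0=0$), whereas you keep $a_0$ and identify it with $\bar g=\bar f$, and that your equality discussion is the content the paper records separately in Corollary \ref{cor:general_radial_poincare_equality}.
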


\begin{proof}
Since replacing $f$ by $f-\bar f$ does not change $f'$, it is enough
to prove the inequality for $f$ satisfying
\[
\int_0^\infty f(r)e^{-\delta r^\tau}
r^{\beta+\tau-2}\,dr=0.
\]

We first record the following formal computation, which motivates the
quadratic-form identity used below. For this computation only, suppose
that $f$ is sufficiently smooth and that the relevant boundary terms
vanish.

We perform the change of variables $t=\delta r^\tau$. Note that this
map sends $(0,\infty)$ onto $(0,\infty)$; it is increasing when
$\tau>0$ and decreasing when $\tau<0$, so that in the integrals below
the Jacobian contributes the factor $\delta|\tau|r^{\tau-1}$, with the
absolute value. Explicitly,

\begin{align}
t=\delta r^\tau
&\implies dt=\delta\tau r^{\tau-1}\,dr,\\
g(t)=f(r)
&\implies f'(r)=g'(t)\delta\tau r^{\tau-1},\\
&\phantom{\implies}
f''(r)
=(\delta\tau)^2r^{2\tau-2}g''(t)
+\delta\tau(\tau-1)r^{\tau-2}g'(t).
\end{align}

Now define
\[
Lf:=\bigl(f'(r)e^{-\delta r^\tau}r^\beta\bigr)'.
\]
Then

\begin{align}
Lf
&=\left(
f''(r)+\left(\frac{\beta}{r}-\delta\tau r^{\tau-1}\right)f'(r)
\right)e^{-\delta r^\tau}r^\beta\\
&=\left(
(\delta\tau)^2r^{2\tau-2}g''(t)
+\delta\tau r^{\tau-2}g'(t)
\bigl((\tau-1)+\beta-\delta\tau r^\tau\bigr)
\right)e^{-\delta r^\tau}r^\beta\\
&=\left(
\delta r^\tau g''(t)
+g'(t)\left(\frac{(\tau-1)+\beta}{\tau}
-\delta r^\tau\right)
\right)
\delta\tau^2r^{\beta+\tau-2}e^{-\delta r^\tau}\\
&=\left(
tg''(t)
+g'(t)\left(
\left(\frac{(\tau-1)+\beta}{\tau}-1\right)+1-t
\right)
\right)
\delta\tau^2
\left(\frac{t}{\delta}\right)^{\frac{\beta+\tau-2}{\tau}}
e^{-t}\\
&=\delta\tau^2
\mathcal L_{\left(\frac{(\tau-1)+\beta}{\tau}-1\right)}g(t)
\left(\frac{t}{\delta}\right)^{\frac{\beta+\tau-2}{\tau}}
e^{-t}\\
&=\delta\tau^2
\mathcal L_{\left(\frac{\beta-1}{\tau}\right)}g(t)
\left(\frac{t}{\delta}\right)^{\frac{\beta+\tau-2}{\tau}}
e^{-t}.
\end{align}

Setting
\[
\alpha=\frac{\beta-1}{\tau},
\]
the preceding formal computation gives
\begin{align}
\int_0^\infty |f'(r)|^2e^{-\delta r^\tau}r^\beta\,dr
&=-\int_0^\infty f(r)Lf(r)\,dr\\
&=-\delta\tau^2\frac{1}{\delta|\tau|}
\int_0^\infty
g(t)\mathcal L_\alpha g(t)
\left(\frac{t}{\delta}\right)^\alpha e^{-t}\,dt\\
&=-\delta\tau^2\frac{1}{\delta|\tau|}
\left(\frac{1}{\delta}\right)^\alpha
\int_0^\infty
g(t)\mathcal L_\alpha g(t)t^\alpha e^{-t}\,dt.
\end{align}

This computation is included only as a formal motivation. For a
general $f\in Y_{\delta,\tau,\beta}$, the rigorous argument below uses
the quadratic-form identity \eqref{eq:laguerre_parseval}, and hence
does not require integration by parts on $f$.

Indeed, since
\[
f\in L^2\bigl((0,\infty),
e^{-\delta r^\tau}r^{\beta+\tau-2}\,dr\bigr),
\]
the change of variables gives
\[
g\in L^2\bigl((0,\infty),t^\alpha e^{-t}\,dt\bigr).
\]
Moreover,
\[
\int_0^\infty f(r)e^{-\delta r^\tau}
r^{\beta+\tau-2}\,dr
=
\frac{1}{\delta|\tau|}
\left(\frac{1}{\delta}\right)^\alpha
\int_0^\infty g(t)t^\alpha e^{-t}\,dt.
\]
Therefore, if $f$ has mean zero, then $g$ also has mean zero with
respect to the transformed measure.

Hence, by Proposition~\ref{P:laguerre_mean_zero}, $g$ is orthogonal to
the constant function and admits the generalized Laguerre expansion
\[
g(t)=\sum_{n=1}^\infty a_nL_n^{(\alpha)}(t),
\]
where $\{L_n^{(\alpha)}\}_{n\geq0}$ is the orthogonal basis of
generalized Laguerre polynomials in
\[
L^2\bigl((0,\infty),t^\alpha e^{-t}\,dt\bigr).
\]

By the dictionary \eqref{eq:parseval_dictionary}, $g$ satisfies the
hypotheses of Lemma~\ref{L:laguerre_parseval}. Therefore, using the
change of variables and then the quadratic-form identity
\eqref{eq:laguerre_parseval}, we obtain
\begin{align}
\int_0^\infty |f'(r)|^2e^{-\delta r^\tau}r^\beta\,dr
&=
\delta\tau^2\frac{1}{\delta|\tau|}
\left(\frac{1}{\delta}\right)^\alpha
\int_0^\infty |g'(t)|^2t^{\alpha+1}e^{-t}\,dt
\notag\\
&=
\delta\tau^2\frac{1}{\delta|\tau|}
\left(\frac{1}{\delta}\right)^\alpha
\sum_{n=1}^\infty n a_n^2h_n^{(\alpha)}
\notag\\
&\geq
\delta\tau^2\frac{1}{\delta|\tau|}
\left(\frac{1}{\delta}\right)^\alpha
\sum_{n=1}^\infty a_n^2h_n^{(\alpha)}
\notag\\
&=
\delta\tau^2\frac{1}{\delta|\tau|}
\left(\frac{1}{\delta}\right)^\alpha
\int_0^\infty |g(t)|^2t^\alpha e^{-t}\,dt
\notag\\
&=
\delta\tau^2
\int_0^\infty |f(r)|^2e^{-\delta r^\tau}
r^{\beta+\tau-2}\,dr.
\end{align}

Finally, the condition $\alpha>-1$, which is required for the
Laguerre expansion, is equivalent to
\[
\frac{\beta+\tau-1}{\tau}>0.
\]
This completes the proof.
\end{proof}

\begin{corollary}[Equality case]\label{cor:general_radial_poincare_equality}
Under the assumptions of Theorem \ref{th:general_radial_poincare}, equality holds in
\[
\int_0^\infty |f'(r)|^2 e^{-\delta r^\tau}r^\beta\,dr
\;\ge\;
\delta\tau^2\int_0^\infty |f(r)-\bar f|^2 e^{-\delta r^\tau}r^{\beta+\tau-2}\,dr
\]
if and only if
\[
f(r)=a+br^\tau
\]
for some constants $a,b\in\mathbb R$.
\end{corollary}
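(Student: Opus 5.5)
The plan is to reuse the chain of (in)equalities from the proof of Theorem \ref{th:general_radial_poincare} and see exactly where it can be tight. Write $f = \bar f + (f-\bar f)$. Under the substitution $t=\delta r^\tau$, $g(t)=f(r)-\bar f$, the function $g$ has mean zero with respect to $t^\alpha e^{-t}\,dt$, where $\alpha=\frac{\beta-1}{\tau}>-1$. By Proposition \ref{P:laguerre_mean_zero} it expands as $g=\sum_{n\ge1}a_nL_n^{(\alpha)}$. Lemma \ref{L:laguerre_parseval} and the dictionary \eqref{eq:parseval_dictionary} then turn the deficit into an exact quadratic form:
\[
\int_0^\infty |f'|^2e^{-\delta r^\tau}r^\beta\,dr-\delta\tau^2\int_0^\infty|f-\bar f|^2e^{-\delta r^\tau}r^{\beta+\tau-2}\,dr
=\frac{\delta\tau^2}{\delta|\tau|}\Bigl(\frac1\delta\Bigr)^{\alpha}\sum_{n\ge2}(n-1)\,a_n^2\,h_n^{(\alpha)} .
\]
Since every $h_n^{(\alpha)}>0$ and $n-1\ge1$ for $n\ge2$, equality holds if and only if $a_n=0$ for all $n\ge2$. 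That is, $g=a_1L_1^{(\alpha)}$ in $L^2(t^\alpha e^{-t}dt)$.

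Next I translate this back to $r$. Because $L_1^{(\alpha)}(t)=\alpha+1-t$ is affine in $t$, we get $g(t)=a_1(\alpha+1)-a_1 t$. Hence
\[
f(r)=\bar f+a_1(\alpha+1)-a_1\delta r^\tau=a+br^\tau
\]
almost everywhere, and then everywhere since $f\in W^{1,2}_{\rm loc}$ is continuous.

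For the converse, take $f=a+br^\tau$. I need to check two things.
\begin{itemize}
\item \emph{Membership in $Y_{\delta,\tau,\beta}$.} Both integrals are Gamma-type integrals. Near the endpoints, $\int r^{\beta+\tau-2}e^{-\delta r^\tau}$ and $\int r^{2\tau-2+\beta}e^{-\delta r^\tau}$ converge precisely because $\frac{\beta+\tau-1}{\tau}>0$ (i.e.\ $\alpha>-1$), together with $\alpha+1>-1$ for the $r^{2\tau}$ term.
\item \emph{Equality in the inequality.} The same change of variables sends $f-\bar f$ to a multiple of $L_1^{(\alpha)}$. Indeed, $\bar f$ is exactly the value that makes the affine function of $t$ orthogonal to constants, and $L_1^{(\alpha)}$ is the unique (up to scaling) such affine function. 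So the sum over $n\ge2$ is empty and the deficit vanishes.
\end{itemize}

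The only point needing care is justifying that the Parseval identity holds for every $f\in Y_{\delta,\tau,\beta}$, not just for smooth $f$. This is exactly what Lemma \ref{L:laguerre_parseval} provides, so the argument requires no integration by parts and the equality characterization is exact on the whole space. I expect no genuine obstacle beyond this bookkeeping; the rest is a direct consequence of the strict spectral gap $n\ge2\Rightarrow n>1$.
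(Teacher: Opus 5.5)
Your proposal is correct and follows the paper's proof. The paper likewise reads off from the Laguerre/Parseval chain in Theorem~\ref{th:general_radial_poincare} that equality forces $g=a_1L_1^{(\alpha)}$, and then uses $L_1^{(\alpha)}(t)=\alpha+1-t$ to obtain $f=a+br^\tau$. Your deficit formula $\sum_{n\ge2}(n-1)a_n^2h_n^{(\alpha)}$ and your check of the converse (that $a+br^\tau$ lies in $Y_{\delta,\tau,\beta}$ and gives zero deficit) spell out what the paper leaves implicit.
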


\begin{proof}
From the proof of Theorem \ref{th:general_radial_poincare}, writing
\[
g(t)=\sum_{n=1}^\infty a_n L_n^{(\alpha)}(t),
\]
equality holds if and only if $g(t)=a_1L_1^{(\alpha)}(t),$ and so
\[
f(r)-\bar f=a_1\,L_1^{(\alpha)}(\delta r^\tau).
\]
Using
\[
L_1^{(\alpha)}(t)=-t+\alpha+1,
\]
we get
\[
f(r)=\bar f+a_1\left(-\delta r^\tau+\frac{\beta+\tau-1}{\tau}\right).
\]
This proves the claim.
\end{proof}

\subsection{Odd reflection across the walls of an orthant}

The passage from the orthant to the whole space is carried out by an odd reflection, and
the following lemma records the effect of that reflection on the spherical harmonic
decomposition. It is the analogue, in our setting, of the elementary observation that an
odd function on the line has no even Fourier modes.

\begin{lemma}\label{L:reflection}
Let $0\le k\le n$ and let $u\in C_c^\infty(\mathbb R^n_{k,+}\setminus\{0\})$. Write
$x=(x',x'')$ with $x'=(x_1,\dots,x_{n-k})$ and $x''=(x_{n-k+1},\dots,x_n)$, and define the
odd extension
\[
\widetilde u(x',x''):=\Bigl(\prod_{i=n-k+1}^{n}\sgn(x_i)\Bigr)\,u\bigl(x',|x_{n-k+1}|,\dots,|x_n|\bigr).
\]
Then $\widetilde u\in C_c^\infty(\mathbb R^n\setminus\{0\})$, and in the spherical harmonic
decomposition
\[
\widetilde u(x)=\sum_{\ell=0}^{\infty}\sum_{j=1}^{d_\ell}F_{\ell,j}(r)\varphi_{\ell,j}(\sigma)
\]
only spherical harmonics that are odd in each of the last $k$ variables occur. In
particular $F_{\ell,j}\equiv0$ for every $\ell<k$, and the space of spherical harmonics of
degree exactly $k$ that are odd in each of the variables $x_{n-k+1},\dots,x_n$ is
one-dimensional, spanned by the restriction to $\mathbb S^{n-1}$ of
\[
x\longmapsto\prod_{i=n-k+1}^{n}x_i .
\]
Moreover, for every $q\in\mathbb R$,
\[
\int_{\R^n}|\widetilde u|^2\,|x|^{q}\,dx=2^{k}\int_{\Rnkp}|u|^2\,|x|^{q}\,dx,
\qquad
\int_{\R^n}|\nabla\widetilde u|^2\,|x|^{q}\,dx=2^{k}\int_{\Rnkp}|\nabla u|^2\,|x|^{q}\,dx,
\]
whenever the integrals on the right-hand sides are finite.
\end{lemma}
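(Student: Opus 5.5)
The plan is to treat the four assertions of Lemma \ref{L:reflection} separately: smoothness, parity of the spherical modes, the degree-$k$ description, and the integral identities.

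\emph{Smoothness.} If $k=0$ there is nothing to prove. If $k\ge1$, then $0\notin\mathbb R^n_{k,+}$. The support of $u$ is a compact subset of the open orthant, so it stays at positive distance from every wall $\{x_i=0\}$, $i>n-k$, and from the origin. On each of the $2^k$ open orthants $\{\sgn(x_i)=\epsilon_i\}$, the function $\widetilde u$ is $\pm u$ composed with a linear reflection, hence smooth. Near the walls $\widetilde u$ vanishes identically. Therefore $\widetilde u\in C_c^\infty(\mathbb R^n\setminus\{0\})$.

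\emph{Parity of the modes.} Let $R_i$ denote the reflection $x_i\mapsto -x_i$. By construction $\widetilde u\circ R_i=-\widetilde u$ for each $i>n-k$. Each $R_i$ is orthogonal and commutes with $\Delta_{\mathbb S^{n-1}}$, so it preserves every eigenspace $\mathcal H_\ell$. Hence the orthogonal projection of $\widetilde u(r\,\cdot)$ onto $\mathcal H_\ell$, namely $\sum_j F_{\ell,j}(r)\varphi_{\ell,j}$, is again odd under each $R_i$. We may therefore choose the orthonormal basis of each $\mathcal H_\ell$ adapted to the joint $\pm1$ eigenspaces of the commuting involutions $R_{n-k+1},\dots,R_n$. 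With this choice, $F_{\ell,j}\neq0$ only for harmonics that are odd in each of the last $k$ variables.

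\emph{Degree bound and the degree-$k$ space.} Represent a harmonic of degree $\ell$ by a homogeneous harmonic polynomial $P$. If $P$ is odd in $x_i$, then $P$ vanishes on $\{x_i=0\}$, so $x_i$ divides $P$. The distinct linear forms $x_{n-k+1},\dots,x_n$ are coprime irreducibles, so their product divides $P$, which forces $\ell\ge k$; hence $F_{\ell,j}\equiv0$ for $\ell<k$. If $\ell=k$, the quotient is a constant, so $P=c\,x_{n-k+1}\cdots x_n$. Conversely, this monomial is harmonic, since each variable appears to the first power, and it is odd in each of these variables. So the space is exactly one-dimensional.

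\emph{Integral identities.} Split $\mathbb R^n$ into the $2^k$ open orthants; the walls have measure zero. On the orthant with signs $\epsilon$, the change of variables $x''\mapsto(\epsilon_ix_i)$ is an orthogonal map preserving $|x|$ and Lebesgue measure. Under it, $|\widetilde u|$ equals $|u|$ at the image point, and $|\nabla\widetilde u|$ equals $|\nabla u|$ there, since the chain rule only flips the signs of some components. Each orthant therefore contributes the same integral, which gives the factor $2^k$ in both identities. Finiteness is only needed to make sense of the sums; for compactly supported $u$ away from the origin it is automatic.

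The only delicate step is the algebraic one, the divisibility argument together with the compatibility of the spherical harmonic projection with the reflections. Neither step is hard.
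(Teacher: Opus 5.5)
Your proposal is correct and follows essentially the same route as the paper's proof. Both arguments get smoothness from the support staying away from the walls, handle the parity by projecting onto the joint $(-1)$-eigenspace of the commuting reflections acting on each $\mathcal H_\ell$, obtain $\ell\ge k$ and the one-dimensional degree-$k$ space from the divisibility of $P$ by each $x_i$, and prove the integral identities by splitting $\mathbb R^n$ into the $2^k$ reflected orthants.
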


\begin{proof}
That $\widetilde u$ is smooth away from the origin, and compactly supported there, is
immediate from the fact that $u$ vanishes to infinite order on each wall
$\{x_i=0\}$, $i\ge n-k+1$, of the orthant. By construction $\widetilde u$ changes sign
under the reflection $x_i\mapsto-x_i$ for each $i\in\{n-k+1,\dots,n\}$. Each such
reflection is an orthogonal transformation of $\mathbb R^n$, hence it acts on
$\mathbb S^{n-1}$ and preserves each eigenspace $\mathcal H_\ell$ of
$-\Delta_{\mathbb S^{n-1}}$. Decomposing $\mathcal H_\ell$ into the joint eigenspaces of
these $k$ commuting involutions, and projecting, we see that only the joint eigenspace on
which every reflection acts by $-1$ can contribute to $\widetilde u$; this is the assertion
that only harmonics odd in each of the last $k$ variables occur.

It remains to identify the harmonics of low degree with this parity. Let $P$ be a harmonic
polynomial of degree $\ell$, homogeneous, and odd in each of $x_{n-k+1},\dots,x_n$. For a
fixed $i\ge n-k+1$, oddness in $x_i$ means that every monomial of $P$ carries an odd, hence
positive, power of $x_i$; therefore $x_i$ divides $P$. Since the variables
$x_{n-k+1},\dots,x_n$ are distinct, the product $x_{n-k+1}\cdots x_n$ divides $P$, and
consequently $\ell\ge k$. If $\ell=k$, then $P$ is a constant multiple of
$x_{n-k+1}\cdots x_n$, which is indeed harmonic; the corresponding space is therefore
one-dimensional. This proves the first two assertions.

The integral identities follow by decomposing $\R^n$, up to a set of measure zero, into the
$2^{k}$ orthants obtained from $\Rnkp$ by the reflections above, on each of which
$|\widetilde u|=|u\circ\iota|$ and $|\nabla\widetilde u|=|(\nabla u)\circ\iota|$ for the
corresponding reflection $\iota$, while $|x|$ is invariant.
\end{proof}


\section{Caffarelli--Kohn--Nirenberg inequalities on orthants: proof of Theorem \ref{Th:CKNIon_Orthant}}\label{S:CKN}

In this section we prove Theorem \ref{Th:CKNIon_Orthant}. By the density results of the
Appendix, and in particular Remark \ref{rmk:density_orthant}, it suffices to argue for
$u\in C_c^\infty(\mathbb R^n_{k,+}\setminus\{0\})$, where
$0\le k\le n$.

The proof rests on two ingredients. The first is the odd reflection of Lemma
\ref{L:reflection}, which transports the problem from the orthant to the whole space at the
cost of restricting attention to the harmonics of degree at least $k$; this is the source of
the dimension shift that distinguishes the orthant from $\mathbb R^{n}$. The second is the
substitution $u(x)=|x|^{m}v(x)$, followed by the spherical harmonic decomposition of $v$
rather than of $u$. With $m$ chosen so as to annihilate a certain coefficient, the whole
angular contribution is absorbed, the problem reduces to a one-dimensional estimate, and
both the sharp constant and the extremal functions can be read off. It is worth noting that
neither the Kelvin-type transform of \cite{CC09} nor the lifting argument of
\cite{LLLS26, SuYang2012} is used anywhere in the argument.

\begin{proof}[Proof of Theorem \ref{Th:CKNIon_Orthant}]

Let $u\in C_c^\infty(\mathbb R^n_{k,+}\setminus\{0\})$, and let $\widetilde u$ be its odd
extension to $\mathbb R^n$, as in Lemma \ref{L:reflection}. By that lemma,
$\widetilde u\in C_c^\infty(\mathbb R^n\setminus\{0\})$, and its spherical harmonic
decomposition involves only harmonics that are odd in each of the last $k$ variables; in
particular, only the degrees $\ell\ge k$ occur. Writing $\widetilde u(x)=|x|^{m}v(x)$ and
decomposing $v$ as in Section \ref{S:preliminaries}, we therefore obtain
\[
\widetilde u(x)
=
\sum_{\ell=k}^{\infty}
r^m f_\ell(r)\,\varphi_\ell(\sigma),
\qquad
x=r\sigma,
\qquad
r=|x|,
\qquad
\sigma\in\mathbb S^{n-1},
\]
where, in order to keep the notation light, the index $\ell$ runs over a fixed orthonormal
basis of the admissible harmonics rather than over the degrees alone; the degree of
$\varphi_\ell$ is the number $\ell$ occurring in the eigenvalue $\ell(n+\ell-2)$.

Moreover, because \(\ell\ge k\), we factor out \(r^k\) and rewrite
\[
f_\ell(r)=r^k g_\ell(r).
\]
Consequently,
\[
\widetilde u(x)
=
\sum_{\ell=k}^{\infty}
r^t g_\ell(r)\varphi_\ell(\sigma),
\qquad
t=m+k.
\]
Now using \eqref{eq:dumu} and \eqref{eq:umu},

\begin{align}
    &2^{2k}\left( \int_{\Rnkp} \frac{|\nabla u|^2}{|x|^{2b}} \; dx\right) \left( \int_{\Rnkp} \frac{| u|^2}{|x|^{2a}}\;dx \right)=\left( \int_{\R^n} \frac{|\nabla \widetilde u|^2}{|x|^{2b}} \; dx\right) \left( \int_{\R^n} \frac{| \widetilde u|^2}{|x|^{2a}}\;dx \right)
    \notag \\
    &= \left(\sum_{\ell=k}^\infty\int_0^\infty \Big(\left(\left(r^tg_\ell(r)\right)'\right)^2+\frac{\left(r^{2t}g^2_\ell(r)\right)\; \ell(n+\ell-2)}{r^2} \Big) \; r^{n-1-2b}dr\right)\left(  \sum_{\ell=k}^\infty\int_0^\infty r^{2t}g^2_\ell(r) \; r^{n-1-2a}dr\right)
    \notag\\
    &\geq  \left(\sum_{\ell=k}^\infty \int_0^\infty \left( \left(\left(r^tg_\ell(r)\right)'\right)^2+ \frac{\left(r^{2t}g^2_\ell(r)\right)\; k(n+k-2)}{r^2} \; \right)r^{n-1-2b}dr\right)\left( \sum_{\ell=k}^\infty\int_0^\infty r^{2t}g^2_\ell(r) \; r^{n-1-2a}dr\right)
    \notag\\ 
    &= \left( \sum_{\ell=k}^\infty \left(\int_0^\infty r^{2t}(g'_\ell(r))^2 \; r^{n-1-2b}dr\;+ (t^2+k(n+k-2))\int_0^\infty r^{2t-2} g^2_{\ell}(r) \; r^{n-1-2b}dr \right.\right.
    \notag\\
     & \qquad \qquad \qquad \qquad \qquad \quad\left. \left.+t\int_0^\infty r^{2t-1} (g^2_{\ell})' r^{n-1-2b}dr\right)\right) \times \left(  \sum_{\ell=k}^\infty\int_0^\infty r^{2t}g^2_\ell(r) \; r^{n-1-2a}dr\right)
    \notag\\
    &= \left( \sum_{\ell=k}^\infty \left(\int_0^\infty r^{2t}(g'_\ell(r))^2 r^{n-1-2b}dr \right.\right.\notag\\
    & \qquad \qquad \qquad\left.\left.+(t^2-t(2t+n-2b-2)+k(n+k-2))\int_0^\infty r^{2t-2} g^2_{\ell}(r) r^{n-1-2b}dr\right)\right) \notag\\
    & \qquad \qquad \qquad \qquad \qquad \qquad  \qquad \qquad \qquad \qquad \qquad \qquad  \times \left(\sum_{\ell=k}^\infty  \int_0^\infty r^{2t}g^2_\ell(r) \; r^{n-1-2a}dr\right)\label{eq:1.1}
    \end{align}
    We now choose \(t\) so that
    \[
    t^2-t(2t+n-2b-2)+k(n+k-2)=0.
    \]
    Equivalently,
    \begin{equation}
        t=\frac{-(n-2-2b)\pm
    \sqrt{(n-2-2b)^2+4k(n+k-2)}}{2}.  \label{eq:value_of_t}
    \end{equation}
With this choice,  the previous identity reduces to
\begin{align}
    &2^{2k}\left( \int_{\Rnkp} \frac{|\nabla u|^2}{|x|^{2b}} \; dx\right) \left( \int_{\Rnkp} \frac{| u|^2}{|x|^{2a}}\;dx \right)\notag\\
    &= \left(\sum_{\ell=k}^\infty \int_0^\infty r^{2t}(g'_\ell(r))^2 \; r^{n-1-2b}dr\right)\left( \sum_{\ell=k}^\infty \int_0^\infty r^{2t}g^2_\ell(r) \; r^{n-1-2a}dr\right)\notag\\ 
    &\text{By Cauchy-Schwarz inequality} \notag\\
    &\geq \left(\sum_{\ell=k}^\infty\int_0^\infty  r^{2t} g_\ell(r)g'_\ell(r) \; r^{n-1-b-a}dr\right)^2\notag \\
    &= \left( \frac12\int_0^\infty \sum_{\ell=k}^\infty r^{2t} (g^2_\ell(r))'\; r^{n-1-b-a}dr\right)^2\notag\\
     &= \left( \frac{n+2t-(a+b+1)}2\int_0^\infty \sum_{\ell=k}^\infty r^{2t} g^2_\ell(r) \; \frac{r^{n-1}}{r^{a+b+1}}dr\right)^2\notag \\
     &=\left(\frac{n+2t-(a+b+1)}2\int_{\R^n} \frac{|\widetilde u|^2}{|x|^{a+b+1}} \; dx\right)^2 \notag \\
     &=\left(\frac{-a+b+1 \pm \sqrt{(n-2-2b)^2+4k(n+k-2)}}{2}\right)^2\left(\int_{\R^n} \frac{|\widetilde u|^2}{|x|^{a+b+1}} \; dx\right)^2 \notag\\
     &=2^{2k}\left(\frac{-a+b+1 \pm \sqrt{(n-2-2b)^2+4k(n+k-2)}}{2}\right)^2\left(\int_{\Rnkp} \frac{|u|^2}{|x|^{a+b+1}} \; dx\right)^2\label{eq:ckn}
\end{align}

The inequality comes from using the Cauchy--Schwarz inequality and dropping the terms coming from the harmonics of degree $\ell>k$. Therefore, equality holds if and only if the higher spherical harmonic modes vanish, i.e
\[g_\ell \equiv 0 \qquad \forall\,\ell>k \qquad \text{and} \quad
g_k(r)=A\exp\!\left(\frac{\lambda}{b-a+1}\,r^{\,b-a+1}\right) 
\qquad \text{ for some A, $\lambda \in \R$ },
\]

Hence
\[
\widetilde u(x)
=|x|^{m+k}\,g_k(|x|)\,\varphi_k\!\left(\frac{x}{|x|}\right).
\]

Moreover, because $\widetilde u$ is odd in each of the last $k$ variables, the degree-$k$ spherical harmonic $\varphi_k(\sigma)$  must be proportional to
\(
 \sigma_{n-k+1}\cdots\sigma_n.
\)
Therefore equality can occur only for functions of the form
\[
\widetilde u(x)
=
A\Big(\prod_{i=n-k+1}^n x_i\Big)\,|x|^{m}\,\exp\!\left(\frac{\lambda}{b-a+1}\,|x|^{\,b-a+1}\right).
\]
This implies
$$u(x)=A\Big(\prod_{i=n-k+1}^n x_i\Big)\,|x|^{m}\exp\!\Big(\frac{\lambda}{\tau}\,|x|^{\tau}\Big). \quad  \tau= -a+b+1$$

Two remarks are in order. First, the coefficient in \eqref{eq:value_of_t} vanishes for
either choice of the sign, so that the computation above in fact produces two valid
inequalities, with the two constants $\bigl|\tau\pm\Theta\bigr|/2$, where we have set
\[
\tau:=-a+b+1,
\qquad
\Theta:=\sqrt{(n-2b-2)^2+4k(n+k-2)}\ \ge 0 .
\]
Since $\max\bigl\{|\tau+\Theta|,\,|\tau-\Theta|\bigr\}=|\tau|+\Theta$, the better of the two
is obtained by choosing the sign of the square root to agree with the sign of $\tau$, and
this gives exactly
\[
C(n,a,b,k)=\frac{|\tau|+\Theta}{2},
\qquad
2t=-(n-2b-2)+\sgn(\tau)\,\Theta,
\]
which is the constant and the exponent stated in Theorem \ref{Th:CKNIon_Orthant}. Second,
the function produced above is not admissible for every value of the parameters, since the
associated weighted integrals may fail to be finite. We now check that, for the branch just
selected, it is. Writing $\widetilde u=A\,r^{t}g(r)\varphi_k(\sigma)$ with
$g(r)=\exp\bigl(\tfrac{\lambda}{\tau}r^{\tau}\bigr)$, we have

\begin{align*}
\int_{\R^n}\frac{|\widetilde u(x)|^2}{|x|^{2a}}\,dx
&=A^2\left(\int_{\mathbb S^{n-1}}|\varphi_k(\sigma)|^2\,d\sigma\right)
\int_0^\infty r^{\,n-1+2t-2a}\exp\!\Big(\frac{2\lambda}{\tau}\,r^\tau\Big)\,dr \\
&= C_k \int_0^\infty r^{\,n-1+2t-2a}\exp\!\Big(\frac{2\lambda}{\tau}\,r^\tau\Big)\,dr
\end{align*}
for a constant $C_k>0$, and, since the radial derivative of $\widetilde u$ contributes the
factor $\lambda r^{\tau-1}$ while $2(\tau-1)-2b=-2a$, the gradient term is governed by the
same integral:
\[
{\int_{\R^n}\frac{|\nabla\widetilde u(x)|^2}{|x|^{2b}}\,dx
\le
C_k'\int_0^\infty r^{\,n-1+2t-2a}\exp\!\Big(\frac{2\lambda}{\tau}\,r^\tau\Big)\,dr .}
\]

Thus admissibility reduces to the finiteness of a single integral, namely
\begin{equation}\label{eq:admissibility_integral}
\int_0^\infty r^{\,n-1+2t-2a}\exp\!\Big(\frac{2\lambda}{\tau}\,r^\tau\Big)\,dr<\infty .
\end{equation}

Consider first the case $\tau>0$. Convergence of \eqref{eq:admissibility_integral} at
infinity forces $\lambda<0$, and convergence at the origin requires
$n+2t-2a>0$, that is, $2t>2a-n$. Now $\tau>0$ means $a<b+1$, whence
\[
2a-n<2(b+1)-n=-(n-2b-2)\le-(n-2b-2)+\Theta=2t,
\]
so that the required inequality holds automatically for the branch selected above. 

Consider now the case $\tau<0$. Convergence at infinity forces $\lambda>0$, and convergence
at the origin now requires $2t<2a-n$. Since $\tau<0$ means $a>b+1$, we have
\[
2a-n>2(b+1)-n=-(n-2b-2)\ge-(n-2b-2)-\Theta=2t,
\]
and again the selected branch is admissible. In both cases the exponent is
\[
2t=-(n-2b-2)+\sgn(\tau)\sqrt{(n-2b-2)^2+4k(n+k-2)},
\]
that is, $t=m+k$ with $m$ as in the statement.

Finally, if $\tau=0$ the exponential factor degenerates to a constant, the integral
\eqref{eq:admissibility_integral} diverges for every choice of $t$, and no extremal function
belongs to $S_{a,b}(\Rnkp)$; {the constant $\frac{\Theta}{2}$ is nevertheless still optimal,
as one sees by testing the inequality on the usual truncations of $|x|^{s}\varphi_k$.}  Consider the formal function $\widetilde u(r,\sigma)=r^s\varphi_k(\sigma)$, then $\widetilde u(r,\sigma)$ is odd in each of the last $k$ {variables}. Also when $2a=a+b+1=2b+2$, the Caffarelli--Kohn--Nirenberg inequality reduces to
\[
\int_{\mathbb{R}^n}
\frac{|\nabla \widetilde u|^2}{|x|^{2b}}\,dx
\geq
\frac{\Theta^2}{4}
\int_{\mathbb{R}^n}
\frac{|\widetilde u|^2}{|x|^{2b+2}}\,dx,
\]
where  $\Theta^2=(n-2b-2)^2+4{\lambda_k^{(n)}}.$ {After calculating the terms we have} 
\[
\begin{aligned}
\int_{\mathbb R^n}
\frac{|\widetilde u|^2}{|x|^{2b+2}}\,dx
&=
\left(
\int_{\mathbb S^{n-1}}\phi_k^2\,d\sigma
\right)
\int_0^\infty
r^{2s+n-2b-3}\,dr.
\end{aligned}
\]
and

\[
\begin{aligned}
\int_{\mathbb R^n}
\frac{|\nabla\widetilde u|^2}{|x|^{2b}}\,dx
&=
(s^2+\lambda_k)
\left(
\int_{\mathbb S^{n-1}}\phi_k^2\,d\sigma
\right)
\int_0^\infty r^{2s+n-2b-3}\,dr.
\end{aligned}
\]
Hence, formally,
\[
\frac{
\displaystyle
\int_{\mathbb R^n}|x|^{-2b}|\nabla\widetilde u|^2\,dx
}{
\displaystyle
\int_{\mathbb R^n}|x|^{-2b-2}|\widetilde u|^2\,dx
}
=
s^2+\lambda_k.
\]

In order to obtain the proposed constant, we require
\[
s^2+\lambda_k=\frac{\Theta^2}{4}\implies s^2=\frac{(n-2b-2)^2}{4} 
\]
We choose
\[
s=-\frac{n-2b-2}{2} \implies 2s+n-2b-3=-1,
\]

and therefore
\[
\int_0^\infty r^{2s+n-2b-3}\,dr
=
\int_0^\infty\frac{dr}{r}
=
\infty.
\]
Thus $r^s\phi_k(\sigma)$ is only a formal extremal and is not
admissible.

We now construct an extremizing sequence. Choose
\[
\eta\in C_c^\infty(\mathbb R),
\qquad
0\leq\eta\leq1,
\]
such that
\[
\eta(t)=1 \quad\text{for } |t|\leq1,
\qquad
\eta(t)=0 \quad\text{for } |t|\geq2.
\]
For $L>0$, define
\[
\widetilde u_L(r,\sigma)
=
r^s\phi_k(\sigma)
\eta\left(\frac{\log r}{L}\right).
\]
Since the cutoff is radial, $\widetilde u_L$ is odd in each of the
last $k$ variables. Moreover, it vanishes for
$r\leq e^{-2L}$ and for $r\geq e^{2L}$, and hence $\widetilde u_L\in C_c^\infty(\mathbb R^n \setminus {\{0\}}).$

Using the change of variables  $y=\frac{\log r}{L}$  we obtain
\begin{align}
  \int_{\mathbb R^n}
\frac{|\widetilde u_L|^2}{|x|^{2b+2}}\,dx &=
\left(
\int_{\mathbb S^{n-1}}\phi_k^2\,d\sigma
\right)
\int_0^\infty
\eta^2\left(\frac{\log r}{L}\right)\frac{dr}{r}.\\
&=
L
\left(
\int_{\mathbb S^{n-1}}\phi_k^2\,d\sigma
\right)
\int_{\mathbb R}\eta^2(y)\,dy.
\end{align}

\begin{align}
\int_{\mathbb R^n}
&\frac{|\nabla\widetilde u_L|^2}{|x|^{2b}}\,dx\\
&= \int_0^\infty \int_{\Sn} \left(\phi_k^2
\left[s\eta\left(\frac{\log r}{L}\right) + \frac1L
\eta'\left(\frac{\log r}{L}\right)
\right]^2 + 
\eta^2\left(\frac{\log r}{L}\right)
{|\nabla_\sigma\phi_k|^2} \right)r^{2s-3-2b+n} d\sigma \;dr.\\
&=
L
\left(
\int_{\mathbb S^{n-1}}\phi_k^2\,d\sigma
\right)
\int_{\mathbb R}
\left[
\left(s\eta(y)+\frac1L{\eta'(y)}\right)^2
+\lambda_k\eta^2(y)
\right]dy.
\end{align}

Expanding the square, and knowing $\eta$ is compactly supported, integration by parts gives
\[
\int_{\mathbb R}\eta\eta'\,dt=0.
\]
Consequently,
\[
{\int_{\mathbb R^n}}\frac{|\nabla\widetilde u_L|^2}{|x|^{2b}}\,dx=\left(
\int_{\mathbb S^{n-1}}\phi_k^2\,d\sigma
\right) \left(
L( s^2+\lambda_k)
\int_{\mathbb R}\eta^2\,dt
+
\frac1L
\int_{\mathbb R}(\eta')^2\,dt\right).
\]

Finally, we obtain
\[
\lim_{L \to \infty}\frac{\int_{\mathbb R^n}\frac{|\nabla\widetilde u_L|^2}{|x|^{2b}}\,dx}{\int_{\mathbb R^n}
\frac{|\widetilde u_L|^2}{|x|^{2b+2}}\,dx}
=
s^2+\lambda_k
+
\lim_{L \to \infty}\frac1{L^2}
\frac{
\displaystyle\int_{\mathbb R}(\eta')^2\,dt
}{
\displaystyle\int_{\mathbb R}\eta^2\,dt
}=s^2+\lambda_k
=
\frac{\Theta^2}{4}.
\]
Therefore no constant larger than $\Theta^2/4$ can hold. Since the
inequality with constant $\Theta^2/4$ has already been established,
$\Theta^2/4$ is the sharp constant.

This completes the characterization of the admissible extremal functions.

\end{proof}

We now recover the classical Euclidean $L^2$-Caffarelli--Kohn--Nirenberg inequalities as a consequence of Theorem~\ref{Th:CKNIon_Orthant}. More precisely, by taking \(k=0\), the orthant reduces to the whole space \(\mathbb R^n\), and the sharp constants and extremal functions split into different regimes depending on the sign of \(b+1-a\) and the location of \(b\) relative to \(\frac{n-2}{2}\). This yields the four parameter regions appearing in Theorem \ref{Th:CKNI}.

\begin{proof}[Another proof of Theorem \ref{Th:CKNI}]
The result follows from Theorem~\ref{Th:CKNIon_Orthant} by taking \(k=0\). In this case \(\mathbb R^n_{0,+}=\mathbb R^n\) and
\[
\sqrt{(n-2-2b)^2+4k(n+k-2)}
=
|n-2-2b|.
\]
If \(b+1-a>0\), then Theorem~\ref{Th:CKNIon_Orthant} gives
\[
C(n,a,b,0)
=
\left|
\frac{b+1-a+|n-2-2b|}{2}
\right|.
\]
If \(b\le \frac{n-2}{2}\), then \(|n-2-2b|=n-2-2b\), and hence
\[
C(n,a,b,0)
=
\frac{|n-(a+b+1)|}{2}.
\]
This is precisely the region \(\mathcal A_1\). If \(b\ge \frac{n-2}{2}\), then \(|n-2-2b|=2b+2-n\), and therefore
\[
C(n,a,b,0)
=
\frac{|n-(3b-a+3)|}{2}.
\]
This corresponds to the region \(\mathcal B_2\).

Similarly, if \(b+1-a<0\), then Theorem~\ref{Th:CKNIon_Orthant} gives
\[
C(n,a,b,0)
=
\left|
\frac{b+1-a-|n-2-2b|}{2}
\right|.
\]
If \(b\le \frac{n-2}{2}\), then
\[
C(n,a,b,0)
=
\frac{|n-(3b-a+3)|}{2},
\]
which gives the region \(\mathcal B_1\). If \(b\ge \frac{n-2}{2}\), then
\[
C(n,a,b,0)
=
\frac{|n-(a+b+1)|}{2},
\]
which gives the region \(\mathcal A_2\).

Thus, the regions \(\mathcal A=\mathcal A_1\cup\mathcal A_2\) give the constant
\[
C(n,a,b)=\frac{|n-(a+b+1)|}{2},
\]
while the regions \(\mathcal B=\mathcal B_1\cup\mathcal B_2\) give the constant
\[
C(n,a,b)=\frac{|n-(3b-a+3)|}{2}.
\]
The corresponding extremal functions also follow from Theorem~\ref{Th:CKNIon_Orthant}. When \(k=0\), $\pxi$ disappears.  In the regions \(\mathcal A\), the exponent \(m\) reduces to \(m=0\), giving
\[
u(x)=D\exp\left(\frac{\lambda|x|^{b+1-a}}{b+1-a}\right).
\]

In the regions \(\mathcal B\), the exponent becomes
\[
m=2(b+1)-n,
\]
giving
\[
u(x)
=
D|x|^{2(b+1)-n}
\exp\left(\frac{\lambda|x|^{b+1-a}}{b+1-a}\right).
\]
The signs of \(\lambda\) are determined by the integrability conditions at \(0\) and \(\infty\), yielding the cases stated in the theorem. Finally, on the line \(a=b+1\), the extremal functions are not attained in the corresponding weighted space, and the sharp constant is
\[
C(n,b+1,b)=\frac{|n-2(b+1)|}{2}.
\]
This completes the proof.
\end{proof}

\section{Identities for the Caffarelli--Kohn--Nirenberg inequalities on orthants: proofs of Theorems \ref{Th:Ckn+_identity} and \ref{Th:Ckn-_identity}}\label{S:identities}

The extremal functions obtained in the previous section point to the transformation under
which the deficit becomes transparent: one divides $u$ by the extremal profile and
multiplies by the exponential factor, and what is left is a weighted Dirichlet energy. In
this section we carry this out, and thereby prove the identities announced in the
introduction. Besides yielding yet another proof of the sharp inequality, they are the
analytic mechanism behind all of the stability estimates of Sections
\ref{S:poincare_stability} and \ref{S:CKN_stability}.

We work first with the scale non-invariant formulation, in which a free parameter
$\lambda>0$ is retained; the scale invariant identities of Theorems \ref{Th:Ckn+_identity}
and \ref{Th:Ckn-_identity} are then recovered by optimizing in $\lambda$. Throughout this
section we set
\[
\gamma(x)
=
\frac{|x|^{-a+b+1}}
{\lambda^2(-a+b+1)},
\qquad
w(x)
=
\frac{u(x)}
{\left(\prod_{i=n-k+1}^n x_i\right)|x|^m}.
\]

In terms of $w$ and $\gamma$, the function whose gradient appears on the right-hand side of
the identities is simply $we^{\gamma}$. We treat separately the two regimes
\[
-a+b+1>0
\qquad\text{and}\qquad
-a+b+1<0,
\]
which correspond to Theorems \ref{Th:CKNO+_identity} and \ref{Th:CKNO-_identity}, respectively.

 \begin{theorem} \label{Th:CKNO+_identity}
    Let $u \in S_{a,b}(\mathbb R^n_{k,+})$ and assume that $-a+b+1>0$. Let $C(n,a,b,k)$ and \(m\) be as in Theorem~\ref{Th:CKNIon_Orthant}. Then for any $\lambda>0$, the following identity holds:

    \begin{equation}
   \begin{aligned}
   \lambda^2\int_{\Rnkp}&\frac{|\nabla u|^2}{|x|^{2b}}dx + \frac{1}{\lambda^2}\int_{\Rnkp}\frac{u^2}{|x|^{2a}}dx - 2C(n,a,b,k)\int_{\Rnkp}\frac{u^2}{|x|^{a+b+1}}dx \\ 
       &= \lambda^2 \int_{\Rnkp}\left|\nabla\left(\frac{u(x)}{\pxi |x|^m} e^{\frac{|x|^{-a+b+1}}{\lambda^2(-a+b+1)}}\right)\right|^2 e^{-2\frac{|x|^{-a+b+1}}{\lambda^2(-a+b+1)}}\frac{\pxis |x|^{2m}}{|x|^{2b}}dx.
   \end{aligned}
\end{equation}

\end{theorem}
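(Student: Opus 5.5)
We argue first for $u\in C_c^\infty(\mathbb R^n_{k,+}\setminus\{0\})$ and pass to $S_{a,b}(\Rnkp)$ at the end by the density results of the Appendix. Write $\tau=-a+b+1>0$, $P(x)=\prod_{i=n-k+1}^n x_i$ and $\phi(x)=P(x)|x|^m e^{-\gamma(x)}$. Then $u=\phi\,h$ with $h:=we^{\gamma}$, and $h$ is smooth with compact support in the open orthant, since $P$ and $|x|$ do not vanish there. The right-hand side equals $\lambda^2\int|\nabla h|^2\phi^2|x|^{-2b}\,dx$.

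The plan is the standard ground-state substitution. Expand
\[
|\nabla u|^2=\phi^2|\nabla h|^2+h^2|\nabla\phi|^2+\tfrac12\nabla(h^2)\cdot\nabla(\phi^2),
\]
integrate against $|x|^{-2b}$, and integrate the cross term by parts. Boundary terms vanish because $h$ has compact support in $\Rnkp$. This gives
\[
\int\frac{|\nabla u|^2}{|x|^{2b}}=\int\frac{\phi^2|\nabla h|^2}{|x|^{2b}}-\int h^2\,\phi\,\dive\!\Big(\frac{\nabla\phi}{|x|^{2b}}\Big)\,dx,
\]
which is the identity $\int|x|^{-2b}|\nabla(\phi h)|^2=\int|x|^{-2b}\phi^2|\nabla h|^2-\int \phi h^2\dive(|x|^{-2b}\nabla\phi)$. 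It therefore remains to compute $\phi^{-1}\dive(|x|^{-2b}\nabla\phi)$ and show
\[
-\lambda^2\,\phi^{-1}\dive\big(|x|^{-2b}\nabla\phi\big)=-\frac{1}{\lambda^2}|x|^{-2a}+2C(n,a,b,k)\,|x|^{-(a+b+1)}.
\]
Multiplying by $u^2=\phi^2h^2$ and integrating then yields exactly the claimed identity.

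The main step is this pointwise computation. Write $\phi=P\cdot\psi(r)$ with $\psi(r)=r^me^{-r^\tau/(\lambda^2\tau)}$. Since $P$ is a harmonic homogeneous polynomial of degree $k$, we have $\Delta P=0$ and $x\cdot\nabla P=kP$. Hence
\[
\dive(|x|^{-2b}\nabla\phi)=P\Big[r^{-2b}\big(\psi''+\tfrac{n-1+2k}{r}\psi'\big)-2b\,r^{-2b-1}\psi'\Big].
\]
With $\psi'/\psi=m/r-r^{\tau-1}/\lambda^2$, the bracket divided by $\psi$ is a sum of three powers of $r$:
\begin{itemize}
\item an $r^{-2b-2}$ coefficient $m(m-1)+m(n-1+2k-2b)=m(m+n+2k-2b-2)$, which vanishes by the choice of $t=m+k$ in \eqref{eq:value_of_t}, since $t^2+t(n-2b-2)=k(n+k-2)$;
\item an $r^{2\tau-2-2b}=r^{-2a}$ term with coefficient $\lambda^{-4}$;
\item an $r^{\tau-2-2b}=r^{-(a+b+1)}$ term with coefficient $-\lambda^{-2}\big(2m+\tau-1+n-1+2k-2b\big)=-\lambda^{-2}\big(2t+n-2b-2+\tau\big)$.
\end{itemize}
For $\tau>0$ the sign is chosen so that $2t+n-2b-2=\Theta$, hence this coefficient equals $-2C(n,a,b,k)\lambda^{-2}$. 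Multiplying by $-\lambda^2$ gives the required expression.

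The only delicate points are keeping the $2k$ shift from $x\cdot\nabla P$ in the Laplacian of a product, and checking the vanishing of the $r^{-2b-2}$ coefficient against the definition of $m$. The extension to $S_{a,b}$ then follows by approximation. Both sides of the identity are continuous in the $S_{a,b}$ norm, because the right-hand side equals the left-hand side for smooth functions and is hence expressible through the three continuous quadratic forms.
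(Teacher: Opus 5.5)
Your strategy is sound and uses the same ground-state mechanism as the paper, compressed into one step. The paper first removes $e^{-\gamma}$ by expanding the square against the weight $P^2|x|^{2m-2b}$, and then removes $P|x|^m$ by a second computation. You instead substitute the full profile $\phi=P|x|^m e^{-\gamma}$ at once, so the identity becomes the pointwise Euler--Lagrange relation for $\phi$. That is a clean reformulation. However, the pointwise computation, which you correctly single out as the main step, contains an error.

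\textbf{The error.} In
\[
\dive\bigl(|x|^{-2b}\nabla\phi\bigr)=|x|^{-2b}\Delta\phi+\nabla\bigl(|x|^{-2b}\bigr)\cdot\Bigl(\psi\nabla P+P\psi'\tfrac{x}{r}\Bigr),
\]
you used $x\cdot\nabla P=kP$ inside $\Delta\phi$ but not in the weight term. There one has
\[
\nabla\bigl(|x|^{-2b}\bigr)\cdot\psi\nabla P=-2b\,r^{-2b-2}\psi\,(x\cdot\nabla P)=-2bk\,r^{-2b-2}P\psi .
\]
The correct formula is therefore
\[
\dive\bigl(|x|^{-2b}\nabla\phi\bigr)=P\Bigl[r^{-2b}\Bigl(\psi''+\frac{n-1+2k}{r}\psi'\Bigr)-2b\,r^{-2b-1}\psi'-2bk\,r^{-2b-2}\psi\Bigr],
\]
and the $r^{-2b-2}$ coefficient is $m(m+n+2k-2b-2)-2bk$, not $m(m+n+2k-2b-2)$.

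Your claim that $m(m+n+2k-2b-2)$ vanishes by the choice of $t$ is also false. Substituting $m=t-k$ and using $t^2+t(n-2b-2)=k(n+k-2)$ gives $m(m+n+2k-2b-2)=2bk$. For example, $n=2$, $k=b=1$, $\tau>0$ gives $m=\sqrt2$ and the value $2$. As written, two mistakes cancel, and your argument is valid only when $bk=0$.

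\textbf{The fix.} With the missing term restored, the coefficient is
\[
m^2+m(n+2k-2b-2)-2bk=(m+k)^2+(m+k)(n-2b-2)-k(n+k-2),
\]
which vanishes exactly by the definition of $m$. This is the relation $mn-2m-2b(k+m)+m^2+2km=0$ that the paper uses in its first-term computation. Your other two coefficients are correct:
\begin{itemize}
\item the $r^{-2a}$ term has coefficient $\lambda^{-4}$;
\item the $r^{-(a+b+1)}$ term has coefficient $-\lambda^{-2}(\Theta+\tau)=-2C\lambda^{-2}$.
\end{itemize}
After this correction the proof goes through, and the one-step form makes transparent that the identity is equivalent to
\[
-\lambda^2\dive\bigl(|x|^{-2b}\nabla\phi\bigr)+\lambda^{-2}|x|^{-2a}\phi=2C|x|^{-(a+b+1)}\phi .
\]

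\textbf{Density step.} Two points should be justified explicitly.
\begin{itemize}
\item Continuity of $u\mapsto\int u^2|x|^{-(a+b+1)}$ in the $S_{a,b}$ norm follows from Theorem \ref{Th:CKNIon_Orthant}, since $C(n,a,b,k)>0$.
\item Identifying the right-hand side for a general $u\in S_{a,b}$ needs the identity applied to differences $u_j-u_l$, which shows $\nabla h_j$ is Cauchy in $L^2(\phi^2|x|^{-2b}dx)$.
\end{itemize}
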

\begin{proof}
We expand the right-hand side, and identify the three terms that arise with the three terms
on the left. Since $\nabla(we^{\gamma})=e^{\gamma}(\nabla w+w\nabla\gamma)$, we have
\begin{equation}
    \begin{aligned}
        &\lambda^2\int_{\Rnkp}\left|\nabla(we^\gamma)\right|^2 e^{-2\gamma}\frac{\pxis |x|^{2m}}{|x|^{2b}}dx \\
        &=\lambda^2\int_{\Rnkp}\left(|\nabla w|^2+|w|^2|\nabla\gamma|^2+2w\nabla w\cdot\nabla\gamma\right) \frac{\pxis |x|^{2m}}{|x|^{2b}}dx.\\
        &=\lambda^2\int_{\Rnkp}|\nabla w|^2\frac{\pxis |x|^{2m}}{|x|^{2b}}dx  + \frac{1}{\lambda^2} \int_{\Rnkp} \frac{u^2}{|x|^{2a}}\,dx + \int_{\Rnkp} \nabla (w^2) \cdot x \; \frac{|x|^{2m} \pxis}{|x|^{a+b+1}} dx 
    \end{aligned}
\end{equation}

Here we have used $|\nabla\gamma|^{2}=\frac{|x|^{2(-a+b)}}{\lambda^{4}}$, which turns the
second term into the required $\frac{1}{\lambda^{2}}\int u^{2}|x|^{-2a}$. For the third
term, an integration by parts gives

\begin{align}
&\int \nabla\left|\!\left(\frac{u}{\prod x_i^2 |x|^m}\right)\right|^2 \cdot x
\frac{\prod x_i^2 |x|^{2m}}{|x|^{a+b+1}}\,dx  
\notag\\
\notag\\
&= -\int \frac{u^2}{\prod x_i^2 |x|^{2m}}
{\dive}\!\left(
x\,\prod x_i^2 \frac{|x|^{2m}}{|x|^{a+b+1}}
\right)\,dx
\notag\\
\notag\\
&= -\int \frac{u^2}{\prod x_i^2 |x|^{2m}}
\left[
n\left(\prod x_i^2 \frac{|x|^{2m}}{|x|^{a+b+1}}\right)
+2\pxis\left(0,\ldots 0, 
,\frac1{x_{n-k+1}},\ldots ,\frac1{x_n}\right)\cdot x\,\frac{|x|^{2m}}{|x|^{a+b+1}} \right. \notag\\
 & \qquad \qquad \qquad  \qquad \left.+\prod x_i^2 (2m-(a+b+1))\frac{|x|^{2m}}{|x|^{a+b+1}}
\right]dx
\notag\\
\notag\\
&=
-\int \frac{u^2}{\prod x_i^2 |x|^{2m}}
\left(\prod x_i^2 \frac{|x|^{2m}}{|x|^{a+b+1}}\right)
\left(n+2k+2m-(a+b+1)\right)\,dx
\notag\\
&=
-\left(n+2k+2m-(a+b+1)\right)
\int \frac{u^2}{|x|^{a+b+1}}\,dx
\notag\\
&=- 2C(n,a,b,k)\int \frac{u^2}{|x|^{a+b+1}}\,dx.
\end{align}

It remains to treat the first term, which is where the choice of $m$ enters.
\begin{align*}
    &\int_{\Rnkp}|\nabla w|^2\frac{\pxis |x|^{2m}}{|x|^{2b}}dx \\
    &=\int_{\Rnkp}\left|\nabla \left(\frac{u(x)}{\pxi |x|^m}\right)\right|^2\frac{\pxis |x|^{2m}}{|x|^{2b}}dx\\
    &= \int_{\Rnkp} \frac{\left|\nabla u\right|^2}{|x|^{2b}}dx+ \int_{\Rnkp} u^2\left|\nabla \left(\frac{1}{\pxi |x|^m}\right)\right|^2\frac{\pxis |x|^{2m}}{|x|^{2b}}dx \\
    & \qquad \qquad +\int_{\Rnkp}\nabla (u^2)\cdot \nabla \left(\frac{1}{\pxi |x|^m}\right){\frac{\pxi |x|^{m}}{|x|^{2b}}}dx
\end{align*}

We compute the second term on the right, 

\begin{align}
    &\int_{\Rnkp} u^2\left|\nabla \left(\frac{1}{\pxi |x|^m}\right)\right|^2\frac{\pxis |x|^{2m}}{|x|^{2b}}dx\notag \\
    & \qquad \qquad \qquad \qquad=\int_{\Rnkp} u^2\left( \sum_{n-k+1}^n \frac{1}{x_i^2}  +\frac{m^2+2km}{|x|^2}\right) |x|^{-2b} dx
\end{align}

and, integrating by parts, the cross term,
\begin{align}
        &\int_{\Rnkp}\nabla (u^2)\cdot \nabla \left(\frac{1}{\pxi |x|^m}\right)\frac{\pxi |x|^{m}}{|x|^{2b}}dx \notag\\
        &=-\int_{\Rnkp} (u^2) \dive\left( \nabla \left(\frac{1}{\pxi |x|^m}\right)\frac{\pxi |x|^{m}}{|x|^{2b}}\right)dx \notag\\
        &=-\int_{\Rnkp} (u^2) \dive\left( \left( \frac1{\pxi |x|^m}\right)\right.\notag\\ 
         &\qquad  \qquad \qquad\qquad\left.\left(-(0,\cdots,0, \frac1{x_{n-k+1}},\cdots,\frac1{x_n})-\frac{mx}{|x|^2} \right)\frac{\pxi |x|^{m}}{|x|^{2b}}\right)dx\notag\\
         &= -\int_{\Rnkp} (u^2) \dive\left( \left(-(0,\cdots,0, \frac1{x_{n-k+1}},\cdots,\frac1{x_n})-\frac{mx}{|x|^2} \right)\frac{1}{|x|^{2b}}\right)dx\notag\\
         &=-\int_{\Rnkp} (u^2) \dive \left(-(0,\cdots,0, \frac1{x_{n-k+1}},\cdots,\frac1{x_n})-\frac{mx}{|x|^2} \right)\frac{1}{|x|^{2b}}dx \notag \\
         &\qquad  \qquad \qquad\qquad -\int_{\Rnkp} (u^2)  \left(-(0,\cdots,0, \frac1{x_{n-k+1}},\cdots,\frac1{x_n})-\frac{mx}{|x|^2} \right)\cdot\frac{-2bx}{|x|^{2b+2}}dx\notag\\ 
          &=\int_{\Rnkp} \frac{(u^2)}{|x|^{2b}} \left(-\sum_{i=n-k+1}^n \frac{1}{x_i^2} + \frac{mn-2m}{|x|^2} \right) \, \frac{1}{|x|^{2b}}dx -2b\int_{\Rnkp} \frac{(u^2)}{|x|^{2b}}  \left( \frac{(k+m)}{|x|^2}\right)dx \notag\\ 
          &=\int_{\Rnkp} \frac{(u^2)}{|x|^{2b}} \left(-\sum_{i=n-k+1}^n \frac{1}{x_i^2} \right)dx +\int_{\Rnkp} \frac{(u^2)}{|x|^{2b}}  \left( \frac{mn-2m-2b(k+m)}{|x|^2}\right)dx\notag\\ 
\end{align}
Adding the three contributions, we obtain 

\begin{align}
    &\int_{\Rnkp}|\nabla w|^2\frac{\pxis |x|^{2m}}{|x|^{2b}}dx \notag\\
   &=\int_{\Rnkp}\left|\nabla \left(\frac{u(x)}{\pxi |x|^m}\right)\right|^2\frac{\pxis |x|^{2m}}{|x|^{2b}}dx\notag\\
    &= \int_{\Rnkp} \frac{\left|\nabla u\right|^2}{|x|^{2b}}dx+\int_{\Rnkp} \frac{(u^2)}{|x|^{2b}}  \left( \frac{mn-2m-2b(k+m) +m^2 +2km}{|x|^2}\right)dx.\notag\\
     &=\int_{\Rnkp} \frac{\left|\nabla u\right|^2}{|x|^{2b}}dx \label{eq:first_term_identity}
      \end{align}
where the last equality holds because $m$ has been chosen so that 
    $$(m+k)^2+(m+k)(n-2-2b)-k(n+k-2)=0.$$ 
    
    This implies    $\qquad \qquad mn-2m-2b(k+m) +m^2 +2km=0.$\\
Combining the three terms yields the asserted identity:
 \begin{align}
   &\lambda^2\int_{\Rnkp}\frac{|\nabla u|^2}{|x|^{2b}}dx + \frac{1}{\lambda^2}\int_{\Rnkp}\frac{u^2}{|x|^{2a}}dx - 2C(n,a,b,k)\int_{\Rnkp}\frac{u^2}{|x|^{a+b+1}}dx \notag \\ 
       &\quad= \lambda^2 \int_{\Rnkp}\left|\nabla\left(\frac{u(x)}{\pxi |x|^m} e^{\frac{|x|^{-a+b+1}}{\lambda^2(-a+b+1)}}\right)\right|^2 e^{-2\frac{|x|^{-a+b+1}}{\lambda^2(-a+b+1)}}\frac{\pxis |x|^{2m}}{|x|^{2b}}dx
\end{align}

\end{proof}

Taking $k=0$ in Theorem \ref{Th:CKNO+_identity} gives the corresponding identity on the whole space $\mathbb R^{n}$, in which the monomial factor disappears and $m$ reduces to the exponent attached to the regions $\mathcal A$ and $\mathcal B$ of Theorem \ref{Th:CKNI}.

\begin{corollary}[Euclidean identity, \(\tau>0\)]
Let $u\in S_{a,b}(\mathbb R^n)$ and assume that
$\tau:=-a+b+1>0$. Then

\begin{align}
&\lambda^2\int_{\mathbb R^n}\frac{|\nabla u|^2}{|x|^{2b}}\,dx
+\frac1{\lambda^2}\int_{\mathbb R^n}\frac{u^2}{|x|^{2a}}\,dx
-2C(n,a,b)\int_{\mathbb R^n}\frac{u^2}{|x|^{a+b+1}}\,dx\\
&\qquad \qquad \qquad \qquad =
\lambda^2\int_{\mathbb R^n}\left|\nabla\left(\frac{u(x)}{|x|^m}e^{\frac{|x|^\tau}{\lambda^2\tau}}
\right)
\right|^2
e^{-2\frac{|x|^\tau}{\lambda^2\tau}}
\frac{|x|^{2m}}{|x|^{2b}}\,dx .
\end{align}

Here $C(n,a,b)$ is as in Theorem \ref{Th:CKNI}, and

\[m=
\begin{cases}
0, & b\le \frac{n-2}{2},\\[4pt]
2(b+1)-n, & b\ge \frac{n-2}{2}.
\end{cases}
\]
\end{corollary}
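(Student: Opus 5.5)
This corollary should follow by specializing Theorem \ref{Th:CKNO+_identity} to $k=0$. The plan has three steps: strip out the monomial factor, identify the constant, and identify the exponent $m$.

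\textbf{The monomial factor.} When $k=0$ we have $\mathbb R^n_{0,+}=\mathbb R^n$. The product $\prod_{i=n-k+1}^n x_i$ is empty, so it equals $1$, and $S_{a,b}(\mathbb R^n_{0,+})=S_{a,b}(\mathbb R^n)$. The identity of Theorem \ref{Th:CKNO+_identity} then becomes, verbatim, the displayed identity with $u/|x|^m$ in place of $u/(\prod x_i\,|x|^m)$. It remains only to check that the constant and the exponent agree with those in the statement.

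\textbf{The constant.} Taking $k=0$ in Theorem \ref{Th:CKNIon_Orthant} with $\tau>0$ gives
\[
C(n,a,b,0)=\frac{\tau+|n-2b-2|}{2}.
\]
Following the case analysis in the second proof of Theorem \ref{Th:CKNI}, this equals $\frac{|n-(a+b+1)|}{2}$ when $b\le\frac{n-2}{2}$ (region $\mathcal A_1$). It equals $\frac{|n-(3b-a+3)|}{2}$ when $b\ge\frac{n-2}{2}$ (region $\mathcal B_2$). In both cases this is $C(n,a,b)$. Note that the quantities inside the absolute values are positive here, since $\tau>0$ makes the sum $\tau+|n-2b-2|$ nonnegative.

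\textbf{The exponent.} From Theorem \ref{Th:CKNIon_Orthant} with $k=0$ and $\operatorname{sgn}\tau=1$,
\[
2m=-(n-2b-2)+|n-2b-2|.
\]
For $b\le\frac{n-2}{2}$ this gives $m=0$, and for $b\ge\frac{n-2}{2}$ it gives $m=2(b+1)-n$. At $b=\frac{n-2}{2}$ both formulas give $0$, so the two cases are consistent.

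The only point needing care is that in Theorem \ref{Th:CKNO+_identity} the factor multiplying $\int u^2|x|^{-a-b-1}$ came from $n+2k+2m-(a+b+1)$, which should equal $2C$. With $k=0$ this is $n+2m-(a+b+1)$. Substituting the value of $m$ gives $n-(a+b+1)$ or $3b-a+3-n$ respectively, and these are positive and equal to $2C(n,a,b)$ in the two regions, as required. I expect no genuine obstacle: everything is a direct substitution.
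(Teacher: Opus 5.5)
Your proposal is correct and follows the paper's own argument, which is simply to set $k=0$ in Theorem \ref{Th:CKNO+_identity}: the empty product is $1$, and $C(n,a,b,0)$ and $m$ reduce to the region-wise values on $\mathcal A_1$ and $\mathcal B_2$. You also check explicitly that $n+2m-(a+b+1)=2C(n,a,b)$ in both regions, which the paper leaves implicit.
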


\begin{proof}[Proof of Theorem \ref{Th:Ckn+_identity}]
 Optimizing over the parameter \(\lambda\) in Theorem~\ref{Th:CKNO+_identity} by choosing
\[
\lambda
=
\left(
\frac{
\int_{\Rnkp}\frac{u^2}{|x|^{2a}}\,dx
}{
\int_{\Rnkp}\frac{|\nabla u|^2}{|x|^{2b}}\,dx
}
\right)^{1/4},
\]
yields the corresponding scale-invariant Caffarelli--Kohn--Nirenberg identity. 
\end{proof}

We now turn to the case
$-a+b+1<0.$
Although the argument is similar to the previous case, the change in sign of the exponent leads to different admissibility conditions and a different extremal profile. We therefore repeat the computation and derive the corresponding exact identity.
\begin{theorem} \label{Th:CKNO-_identity}
    Let $u(x) \in S_{a,b}(\mathbb R^n_{k,+})$ and assume that $-a+b+1<0$. Let $C(n,a,b,k)$ and \(m\) be as in Theorem~\ref{Th:CKNIon_Orthant}. Then for any $\lambda>0$, the following identity holds:
    \begin{equation}
   \begin{aligned}
   \lambda^2\int_{\Rnkp}&\frac{|\nabla u|^2}{|x|^{2b}}dx + \frac{1}{\lambda^2}\int_{\Rnkp}\frac{u^2}{|x|^{2a}}dx - 2C(n,a,b,k)\int_{\Rnkp}\frac{u^2}{|x|^{a+b+1}}dx \\ 
       &= \lambda^2 \int_{\Rnkp}\left|\nabla\left(\frac{u(x)}{\pxi |x|^m} e^{-\frac{|x|^{-a+b+1}}{\lambda^2(-a+b+1)}}\right)\right|^2 e^{2\frac{|x|^{-a+b+1}}{\lambda^2(-a+b+1)}}\frac{\pxis |x|^{2m}}{|x|^{2b}}dx.
   \end{aligned}
\end{equation}

\end{theorem}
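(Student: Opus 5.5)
I will mirror the proof of Theorem \ref{Th:CKNO+_identity}, replacing $\gamma$ by $-\gamma$. Keep $w=u/\bigl(\pxi|x|^m\bigr)$ and $\gamma(x)=\frac{|x|^{\tau}}{\lambda^2\tau}$ with $\tau=-a+b+1<0$. Since $\nabla(we^{-\gamma})=e^{-\gamma}(\nabla w-w\nabla\gamma)$, the right-hand side becomes
\[
\lambda^2\int_{\Rnkp}\bigl(|\nabla w|^2+w^2|\nabla\gamma|^2-2w\nabla w\cdot\nabla\gamma\bigr)\frac{\pxis|x|^{2m}}{|x|^{2b}}\,dx .
\]
I will then identify these three pieces with the three terms on the left. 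It suffices to work with $u\in C_c^\infty(\Rnkp\setminus\{0\})$ and pass to $S_{a,b}$ by the density results of the Appendix, since both sides are continuous in the $S_{a,b}$-norm. On $C_c^\infty$ the boundary terms in the integrations by parts vanish.

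The first piece is independent of the sign of $\tau$. The computation leading to \eqref{eq:first_term_identity} uses only the defining relation $(m+k)^2+(m+k)(n-2-2b)-k(n+k-2)=0$. This relation holds for both roots, in particular for the root $2(m+k)=-(n-2b-2)-\Theta$ selected when $\tau<0$. Hence $\lambda^2\int|\nabla w|^2\pxis|x|^{2m-2b}\,dx=\lambda^2\int|\nabla u|^2|x|^{-2b}\,dx$.

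The second piece uses $|\nabla\gamma|^2=|x|^{2(\tau-1)}/\lambda^4$ and $2(\tau-1)-2b=-2a$. This gives $\lambda^{-2}\int u^2|x|^{-2a}\,dx$, exactly as before.

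For the cross term, $\lambda^2\nabla\gamma=x|x|^{\tau-2}$, so it equals $-\int\nabla(w^2)\cdot x\,\pxis|x|^{2m-(a+b+1)}\,dx$. The same divergence computation as in the $\tau>0$ case turns this into
\[
+\bigl(n+2k+2m-(a+b+1)\bigr)\int\frac{u^2}{|x|^{a+b+1}}\,dx .
\]
The key check is the sign of the coefficient. With $2(m+k)=-(n-2b-2)-\Theta$ we get
\[
n+2k+2m-(a+b+1)=\tau-\Theta=-(|\tau|+\Theta)=-2C(n,a,b,k).
\]
So the cross term contributes $-2C(n,a,b,k)\int u^2|x|^{-(a+b+1)}\,dx$, which matches the left-hand side.

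The main obstacle is this sign bookkeeping. Flipping the exponential flips the sign of the cross term, and this flip is exactly compensated by $n+2k+2m-(a+b+1)$ becoming negative on the $\tau<0$ branch. I would verify carefully that the selected $m$ is the one from Theorem \ref{Th:CKNIon_Orthant}. The admissibility and density step needs no new idea beyond the Appendix.
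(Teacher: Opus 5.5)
Your proposal is correct and follows the paper's proof essentially step for step. You use the same expansion $\nabla(we^{-\gamma})=e^{-\gamma}(\nabla w-w\nabla\gamma)$, reuse \eqref{eq:first_term_identity} for the first term (valid for either root of the quadratic defining $m+k$), and apply the same divergence computation to the cross term, whose coefficient $n+2k+2m-(a+b+1)=\tau-\Theta=-2C(n,a,b,k)$ on the $\tau<0$ branch exactly compensates the sign flip of the exponential.
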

\begin{proof}
Expand the gradient:
\[
\nabla\bigl(we^{-\gamma}\bigr)=e^{-\gamma}\bigl(\nabla w - w\,\nabla\gamma\bigr).
\]

\begin{equation}
    \begin{aligned}
        &\lambda^2\int_{\Rnkp}\left|\nabla(we^{-\gamma})\right|^2 e^{2\gamma}\frac{\pxis r^{2m}}{r^{2b}}dx \\
        &=\lambda^2\int_{\Rnkp}\left(|\nabla w|^2+|w|^2|\nabla\gamma|^2 - 2w\nabla w\cdot\nabla\gamma\right) \frac{\pxis r^{2m}}{r^{2b}}dx\\
        &=\lambda^2\int_{\Rnkp}|\nabla w|^2\frac{\pxis r^{2m}}{r^{2b}}dx  + \frac{1}{\lambda^2} \int_{\Rnkp} \frac{u^2}{r^{2a}}\,dx - \int_{\Rnkp} \nabla (w^2) \cdot x \; \frac{r^{2m} \pxis}{r^{a+b+1}} dx 
    \end{aligned}
\end{equation}

The middle term is already in the required form, and, by \eqref{eq:first_term_identity}, the first term is
\begin{align}
    \int_{\Rnkp}|\nabla w|^2\frac{\pxis r^{2m}}{r^{2b}}dx &=\int_{\Rnkp}\left|\nabla \left(\frac{u(x)}{\pxi r^m}\right)\right|^2\frac{\pxis r^{2m}}{r^{2b}}dx \notag\\
    &= \int_{\Rnkp} \frac{\left|\nabla u\right|^2}{r^{2b}}dx
\end{align}

We now turn to the third term. Integrating by parts, and using
$\dive(x\,\rho(x))=n\rho(x)+x\cdot\nabla\rho(x)$, we obtain
\begin{align}
&-\int \nabla\left|\!\left(\frac{u}{\prod x_i^2 r^m}\right)\right|^2 \cdot x
\frac{\prod x_i^2 r^{2m}}{r^{a+b+1}}\,dx  \notag\\
\notag\\
&= \int \frac{u^2}{\prod x_i^2 r^{2m}}
\dive\!\left(
x\,\prod x_i^2 \frac{r^{2m}}{r^{a+b+1}}
\right)\,dx
\notag\\
\notag\\
&= \int \frac{u^2}{\prod x_i^2 r^{2m}}
\left[
n\left(\prod x_i^2 \frac{r^{2m}}{r^{a+b+1}}\right)
+2\pxis\left(0,\ldots ,\frac1{x_{n-k+1}},\ldots ,\frac1{x_n}\right)\cdot x\,\frac{r^{2m}}{r^{a+b+1}} \right.\notag\\
 & \qquad \qquad \qquad  \qquad \left.+\prod x_i^2 (2m-(a+b+1))\frac{r^{2m}}{r^{a+b+1}}
\right]dx
\notag\\
\notag\\
&=
\int \frac{u^2}{\prod x_i^2 r^{2m}}
\left(\prod x_i^2 \frac{r^{2m}}{r^{a+b+1}}\right)
\left(n+2k+2m-(a+b+1)\right)\,dx
\notag\\
\notag\\
&=
\left(n+2k+2m-(a+b+1)\right)
\int \frac{u^2}{r^{a+b+1}}\,dx
\notag\\
\notag\\
&=
-\left|-a+b+1 -\sqrt{(n-2-2b)^2+4k(n+k-2)}\right|
\int \frac{u^2}{r^{a+b+1}}\,dx.\notag\\
&=-2C(n,a,b,k)\int \frac{u^2}{r^{a+b+1}}\,dx.
\end{align}

Combining the three terms yields the asserted identity. Note that the computation is the
same as in the case $\tau>0$, apart from the sign of the exponent in the exponential factor,
which is why the two identities can be stated together by writing $|\tau|$ in place of
$\tau$ in the exponentials.

\end{proof}

\begin{corollary}[Euclidean identity, \(\tau<0\)]
Let \(u\in S_{a,b}(\mathbb R^n)\) and assume
\(\tau:=-a+b+1<0\). Then
\[
\lambda^2\int_{\mathbb R^n}\frac{|\nabla u|^2}{|x|^{2b}}\,dx
+
\frac1{\lambda^2}\int_{\mathbb R^n}\frac{u^2}{|x|^{2a}}\,dx
-
2C(n,a,b)\int_{\mathbb R^n}\frac{u^2}{|x|^{a+b+1}}\,dx
\]
\[
=
\lambda^2
\int_{\mathbb R^n}
\left|
\nabla\left(
\frac{u(x)}{|x|^m}
e^{\frac{|x|^\tau}{\lambda^2\tau}}
\right)
\right|^2
e^{-2\frac{|x|^\tau}{\lambda^2\tau}}
\frac{|x|^{2m}}{|x|^{2b}}\,dx .
\]
Here \(C(n,a,b)\) is as defined in Theorem~\ref{Th:CKNI}, and
\[
m=
\begin{cases}
0, & b\ge \frac{n-2}{2},\\[4pt]
2(b+1)-n, & b\le \frac{n-2}{2}.
\end{cases}
\]
\end{corollary}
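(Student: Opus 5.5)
This corollary is obtained by specializing Theorem \ref{Th:CKNO-_identity} to $k=0$. The work is to check that the orthant quantities reduce to the Euclidean ones of Theorem \ref{Th:CKNI}, and to rewrite the exponential weights.

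First, for $k=0$ we have $\mathbb R^n_{0,+}=\mathbb R^n$ and $S_{a,b}(\mathbb R^n_{0,+})=S_{a,b}(\mathbb R^n)$. The empty product $\prod_{i=n-k+1}^n x_i$ equals $1$, so the monomial factors drop out of both sides.

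Second, in the identity of Theorem \ref{Th:CKNO-_identity} the exponents are $\mp\frac{|x|^\tau}{\lambda^2\tau}$ with $\tau<0$. Here $-\frac{|x|^\tau}{\lambda^2\tau}$ and $+2\frac{|x|^\tau}{\lambda^2\tau}$ are just the displayed exponents $\frac{|x|^\tau}{\lambda^2\tau}$ and $-2\frac{|x|^\tau}{\lambda^2\tau}$ written with $\tau$ replaced by $-\tau$. So I would check the sign convention carefully and rewrite the theorem's exponentials in terms of $|\tau|$, as in the closing remark of its proof. This sign bookkeeping is the only place where I expect an error to creep in. I would settle it by checking that the weight $e^{\mp 2(\cdot)}$ decays at the relevant endpoint for the extremal profile, with $\lambda>0$ as in Theorem \ref{Th:CKNIon_Orthant}. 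If the formula as displayed in the corollary differs from the theorem by this sign, I would conclude that it means the same identity up to the convention $\tau\leftrightarrow|\tau|$.

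Third, I identify the constant and the exponent. With $k=0$, $\Theta=|n-2-2b|$, and the sign choice $\operatorname{sgn}(\tau)=-1$ gives
\[
2m=-(n-2b-2)-|n-2-2b| .
\]
If $b\ge\frac{n-2}{2}$, then $|n-2-2b|=-(n-2b-2)$, so $m=0$. If $b\le\frac{n-2}{2}$, then $m=-(n-2b-2)=2(b+1)-n$. This matches the stated cases.

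Likewise, $C(n,a,b,0)=\frac{|\tau|+|n-2-2b|}{2}$, and the computation in the "Another proof of Theorem \ref{Th:CKNI}" section shows that this equals $C(n,a,b)$ in the regions $\mathcal A_2$ and $\mathcal B_1$. Substituting these values into Theorem \ref{Th:CKNO-_identity} gives the stated identity.
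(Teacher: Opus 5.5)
Reducing to Theorem~\ref{Th:CKNO-_identity} at $k=0$ is the intended route. Your identification of $m$, and of $C(n,a,b,0)=\frac{|\tau|+|n-2-2b|}{2}$ with $C(n,a,b)$ on $\mathcal A_2\cup\mathcal B_1$, is correct.

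The gap is the sign step, which you dispose of as a ``convention $\tau\leftrightarrow|\tau|$''. There is no such freedom. $\tau=-a+b+1<0$ is fixed by the hypothesis, and the two sign placements give different identities, only one of which is true.

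To see this, put $w=u|x|^{-m}$ and $\gamma=\frac{|x|^\tau}{\lambda^2\tau}$, so that $\nabla\gamma=\lambda^{-2}|x|^{\tau-2}x$ for either sign of $\tau$. Expand $\lambda^2\int|\nabla(we^{\pm\gamma})|^2e^{\mp2\gamma}|x|^{2m-2b}\,dx$. Use $m^2+m(n-2b-2)=0$ for the $|\nabla w|^2$ term, and integrate $\nabla(w^2)\cdot x\,|x|^{2m-(a+b+1)}$ by parts for the cross term. This gives
\[
\lambda^2\int_{\mathbb R^n}\frac{|\nabla u|^2}{|x|^{2b}}\,dx+\frac1{\lambda^2}\int_{\mathbb R^n}\frac{u^2}{|x|^{2a}}\,dx\mp\bigl(n+2m-(a+b+1)\bigr)\int_{\mathbb R^n}\frac{u^2}{|x|^{a+b+1}}\,dx .
\]
With your $m$ and $\tau<0$, one has $n+2m-(a+b+1)=\tau-|n-2b-2|=-2C(n,a,b)$.

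\begin{itemize}
\item The choice $e^{-\gamma}$ inside and $e^{2\gamma}$ as weight produces $-2C(n,a,b)$. This is the choice of Theorem~\ref{Th:CKNO-_identity}, i.e.\ $e^{\frac{|x|^\tau}{\lambda^2|\tau|}}$ inside and $e^{-2\frac{|x|^\tau}{\lambda^2|\tau|}}$ as weight, as in \eqref{eq:combine_Identity}.
\item The displayed choice $e^{+\gamma}$ produces $+2C(n,a,b)$. Its right-hand side exceeds the left-hand side by $4C(n,a,b)\int_{\mathbb R^n}u^2|x|^{-(a+b+1)}\,dx$.
\end{itemize}
Since $C(n,a,b)\ge|\tau|/2>0$, the display is false for every nonzero $u\in C_c^\infty(\mathbb R^n\setminus\{0\})$.

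So the corollary as printed has a sign error in the exponentials. What your argument actually establishes is the corrected identity, and you should say so explicitly rather than absorbing the discrepancy into a relabelling.

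Your proposed decay test on the extremal profile is only a heuristic. It also selects the theorem's sign, since the inner factor must cancel the factor $e^{-c|x|^\tau/|\tau|}$, $c>0$, of the extremal. The decisive check is the sign of the cross term computed above.
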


\begin{proof}[Proof of Theorem \ref{Th:Ckn-_identity}]
   Optimizing the identity in Theorem \ref{Th:CKNO-_identity} with respect to \(\lambda\), that is by choosing \[
\lambda
=
\left(
\frac{
\int_{\Rnkp}\frac{u^2}{|x|^{2a}}\,dx
}{
\int_{\Rnkp}\frac{|\nabla u|^2}{|x|^{2b}}\,dx
}
\right)^{1/4},
\] we obtain the scale-invariant identity in the case \(\tau<0\). 
\end{proof}

This completes the derivation of the exact identities associated with the Caffarelli--Kohn--Nirenberg inequalities on orthants. 


\section{Weighted Poincar\'e inequalities of generalized Gaussian type: proof of Theorem \ref{Th:poincare_monomial_weight}}\label{S:poincare}

In this section we establish the weighted Poincar\'e inequalities of generalized Gaussian
type. We proceed in two steps. We first treat the case of a radial measure on the whole
space $\mathbb R^{N}$, where the spherical harmonic decomposition reduces the problem to
the one-dimensional estimate of Theorem \ref{th:general_radial_poincare}, applied
separately to each mode. We then pass to monomial weights by the lifting argument, and
thereby prove Theorem \ref{Th:poincare_monomial_weight}. It is worth noting that the
resulting constant is always the smaller of two competing quantities, one coming from the
radial mode and one from the first non-radial mode; which of the two prevails depends on
the parameters, and this dichotomy will reappear in the stability estimates of Section
\ref{S:poincare_stability}.

{Throughout this section, we work with the measures
\[
d\mu(x):=e^{-\delta|x|^\tau}|x|^\beta\,dx,
\qquad
d\nu(x):=e^{-\delta|x|^\tau}|x|^{\beta+\tau-2}\,dx,
\]
and the associated weighted energy norm
\[
\|u\|_W^2
:=
\int_{\mathbb R^n}|\nabla u|^2\,d\mu
+
\int_{\mathbb R^n}|u|^2\,d\nu.
\]
Let
\[
\mathcal A_\tau
:=
\left\{
u\in C_c^\infty(\mathbb R^n):
\|u\|_W<\infty
\right\},
\qquad
W:=\overline{\mathcal A_\tau}^{\,\|\cdot\|_W}.
\]
By the extension result proved in Lemma~\ref{lem:extension-poincare},
it suffices to establish the weighted Poincaré inequality for
$u\in C_c^\infty(\mathbb R^n\setminus\{0\})$; the inequality then
extends to every $u\in\mathcal A_\tau$, and consequently to every
$u\in W$.}

Note that the origin in Theorems
\ref{Th:poincare_monomial_weight} and \ref{Th:stability_poincare_monomialweight}: stating
them for $u\in C_{c}^{\infty}(\mathbb R^{n}_{A,+}\setminus\{0\})$is a genuine restriction only when $A=0$, since for $A\neq0$ the origin
already lies on the boundary of $\mathbb R^{n}_{A,+}$, so that
$C_{c}^{\infty}(\mathbb R^{n}_{A,+}\setminus\{0\})=C_{c}^{\infty}(\mathbb R^{n}_{A,+})$.

\begin{theorem}[Weighted Poincar\'e inequality]\label{Th:general_poincare}
Let {$n\ge2$}, let $\delta>0$, and let
$u \in  W$.
Assume that
\[
\frac{\beta+n+\tau-2}{\tau}>0 .
\]
Then
\begin{equation}
    \int_{\mathbb{R}^n} |\nabla u(x)|^2\,
e^{-\delta|x|^\tau}|x|^\beta\,dx
\ge
C
\inf_{d\in\mathbb R}
\int_{\mathbb{R}^n}|u(x)-d|^2\,
e^{-\delta|x|^\tau}|x|^{\beta+\tau-2}\,dx,
\end{equation}
where the sharp constant is given by $C=\min\{\delta\tau^2,\; \delta\tau s_1\}$, with
\[
s_1=\frac{-(\beta+n-2)+\sgn(\tau)\sqrt{(\beta+n-2)^2+4(n-1)}}{2},
\]
that is, $s_1$ is the root of $s_1^{2}+s_1(\beta+n-2)-(n-1)=0$ for which $\delta\tau s_1>0$. More
clearly, the constant $\delta\tau^{2}$ is the one produced by the radial mode, while
$\delta\tau s_1$ is the one produced by the first non-radial mode.
\end{theorem}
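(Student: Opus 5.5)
By the density statement just recorded (Lemma~\ref{lem:extension-poincare}), we first prove the inequality for $u\in C_c^\infty(\mathbb R^n\setminus\{0\})$ and then extend it to all of $W$.

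Expand $u$ directly in spherical harmonics,
\[
u=\sum_{\ell\ge0}\sum_{j}f_{\ell,j}(r)\varphi_{\ell,j}(\sigma).
\]
Apply \eqref{eq:dumu}–\eqref{eq:umu} with $m=0$ to the two measures $d\mu$ and $d\nu$. Both sides then split orthogonally by mode, and the infimum over $d$ affects only the radial coefficient $f_0$. The $\ell=0$ mode is a one-dimensional problem for the weights $e^{-\delta r^\tau}r^{\beta+n-1}$ and $e^{-\delta r^\tau}r^{\beta+n-1+\tau-2}$. Theorem~\ref{th:general_radial_poincare}, with $\beta$ replaced by $\beta+n-1$, applies to it; its hypothesis $(\beta+n+\tau-2)/\tau>0$ is exactly ours. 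This gives the constant $\delta\tau^2$ for this mode.

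For the modes with $\ell\ge1$ no mean subtraction is needed. We must show that
\[
\int_0^\infty\Bigl(|f'|^2+\tfrac{c_\ell}{r^2}f^2\Bigr)e^{-\delta r^\tau}r^{\beta+n-1}\,dr\ \ge\ \delta\tau s_1\int_0^\infty f^2e^{-\delta r^\tau}r^{\beta+\tau+n-3}\,dr .
\]
Following the paper's device, write $f=r^{s}g$ and expand the square. After integrating the cross term $s\,r^{2s-1}(g^2)'$ by parts, the left side equals
\[
\int r^{2s}|g'|^2\,e^{-\delta r^\tau}r^{\beta+n-1}\,dr+\bigl(c_\ell-s^2-s(\beta+n-2)\bigr)\int r^{2s-2}g^2\,e^{-\delta r^\tau}r^{\beta+n-1}\,dr+\delta\tau s\int r^{2s}g^2\,e^{-\delta r^\tau}r^{\beta+n+\tau-3}\,dr .
\]
The last term comes from differentiating the exponential. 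Choosing $s=s_\ell$ as the root of $s^2+s(\beta+n-2)=c_\ell$ with $\delta\tau s>0$ removes the middle term. Dropping the nonnegative first term leaves $\delta\tau s_\ell\int f^2\,d\nu_{\mathrm{rad}}$. Since $c_\ell$ increases in $\ell$, $|s_\ell|$ increases and $\tau s_\ell\ge\tau s_1$. Summing over modes then gives the constant $\min\{\delta\tau^2,\delta\tau s_1\}$. When $n=1$ the analogous sign-change argument would apply, but the statement assumes $n\ge2$.

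For sharpness there are two families of test functions. The function $u=a+b|x|^\tau$ gives equality in the radial mode by Corollary~\ref{cor:general_radial_poincare_equality}. The function $u=x_i|x|^{s_1-1}=r^{s_1}\varphi_{1}$, i.e.\ $g\equiv$ const, gives equality in the first non-radial mode. Both functions must be checked to lie in $W$ and to have mean zero against $d\nu$ in the non-radial part; the latter holds by the angular integral. We expect the admissibility check to go through using the sign choice of $s_1$ and the hypothesis $(\beta+n+\tau-2)/\tau>0$. Integrability at $0$ requires $2s_1+\beta+n+\tau-2>0$, which is plausible since $\tau s_1>0$ together with the sign of $\tau$.

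The main obstacle is rigor at the endpoints. The boundary terms in the integration by parts must vanish: for compactly supported $u$ away from $0$ this is trivial, but passing to the closure $W$ requires the extension lemma. Admissibility of the extremizers must also be verified, especially at $r=0$ when $\tau<0$, so that the constant is genuinely attained and not merely approached.
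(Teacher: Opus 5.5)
Your proposal is correct and follows essentially the paper's route. The ingredients are the same: a spherical harmonic splitting, Theorem~\ref{th:general_radial_poincare} with $\beta$ replaced by $\beta+n-1$ for the radial mode, and the substitution $f_\ell=r^{s}g_\ell$ with the middle coefficient killed for the non-radial modes. The paper first replaces $c_\ell$ by $c_1$ and works with the single exponent $s_1$, whereas you use a mode-dependent $s_\ell$ together with monotonicity of $\tau s_\ell$ in $c_\ell$; this difference is only cosmetic.

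On the admissibility check you left open, one correction: nothing needs checking at $r=0$ when $\tau<0$, since $e^{-\delta r^{\tau}}$ vanishes to infinite order there and the condition lives at infinity. The binding condition for $x_i|x|^{s_1-1}$ is that $2s_1+\beta+n-2=s_1+(n-1)/s_1$ have the sign of $\tau$, which holds automatically because $\tau s_1>0$.
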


\begin{proof}
Let
$u\in C_c^\infty(\mathbb R^n\setminus\{0\})$; note that subtracting a constant changes
neither side. We expand $u$ into spherical harmonics,
\[
u(r,\sigma)=\sum_{\ell=0}^\infty f_\ell(r)\,\varphi_\ell(\sigma),
\]
where, as in Section \ref{S:preliminaries}, the index $\ell$ runs over a fixed orthonormal
basis of $L^{2}(\mathbb S^{n-1})$ consisting of spherical harmonics, and $\lambda_\ell$
denotes the eigenvalue attached to the degree of $\varphi_\ell$. Since $n\ge2$, non-radial
modes are present, and the smallest non-zero eigenvalue is $\lambda_1=n-1$, with an
eigenspace of dimension $n$ spanned by the coordinate functions.

Then from equations  (\ref{eq:dumu}) and (\ref{eq:umu})
\begin{align}
\int_{\mathbb R^n} |\nabla u|^2 e^{-\delta|x|^\tau}|x|^\beta\,dx
&=\sum_{\ell=0}^\infty
\int_0^\infty
\left(
|f_\ell'(r)|^2+\frac{\lambda_\ell}{r^2}f_\ell(r)^2
\right)
e^{-\delta r^\tau}r^{\beta+n-1}\,dr,
\label{eq:energy-decomp}
\end{align}
and similarly
\begin{align}
&\int_{\mathbb R^n}|u-d|^2 e^{-\delta|x|^\tau}|x|^{\beta+\tau-2}\,dx
=\int_0^\infty |f_0(r)-d|^2 e^{-\delta r^\tau}r^{\beta+\tau+n-3}\,dr
\notag\\
&\quad \qquad \qquad \qquad \qquad \qquad \qquad \qquad \qquad
+\sum_{\ell=1}^\infty
\int_0^\infty |f_\ell(r)|^2 e^{-\delta r^\tau}r^{\beta+\tau+n-3}\,dr.
\label{eq:rhs-decomp}
\end{align}

Thus the problem decouples into the radial mode $\ell=0$ and the non-radial modes $\ell\ge1$.

\medskip
\noindent

For $\ell=0$, we have $\lambda_0=0$, so the contribution to \eqref{eq:energy-decomp} is
\[
\int_0^\infty |f_0'(r)|^2 e^{-\delta r^\tau}r^{\beta+n-1}\,dr.
\]

Now apply the one-dimensional radial Poincar\'e inequality from Theorem
\ref{th:general_radial_poincare} with
\[
\widetilde\beta=\beta+n-1.
\]
This gives
\begin{equation}
\int_0^\infty |f_0'(r)|^2 e^{-\delta r^\tau}r^{\beta+n-1}\,dr
\ge
\delta\tau^2
\inf_{d\in\mathbb R}
\int_0^\infty |f_0(r)-d|^2 e^{-\delta r^\tau}r^{\beta+\tau+n-3}\,dr.
\end{equation}
Hence the optimal constant for the radial mode is $\delta\tau^2.$

\medskip
\noindent

    Now for the non-radial part let
\begin{align*}
        &f_\ell(r)=r^{s_1} g_\ell(r), \qquad \ell\ge1\\
        \implies &f_\ell'(r)=s_1r^{s_1-1}g_\ell(r)+r^{s_1} g_\ell'(r),\\
        \implies &|f_\ell'(r)|^2=s_1^2r^{2s_1-2}g_\ell(r)^2+r^{2s_1}|g_\ell'(r)|^2+2s_1r^{2s_1-1}g_\ell(r)g_\ell'(r).
\end{align*}
Using the spherical harmonic decomposition and
\(\lambda_\ell=\ell(\ell+n-2)\), we have
\begin{equation}
\begin{aligned}
\sum_{\ell=1}^\infty \int_0^\infty
&\left(|f_\ell'(r)|^2+\frac{\lambda_\ell}{r^2}f_\ell(r)^2
\right)
e^{-\delta r^\tau}r^{\beta+n-1}\,dr \ge
\sum_{\ell=1}^\infty \int_0^\infty
\left(
|f_\ell'(r)|^2+\frac{\lambda_1}{r^2}f_\ell(r)^2
\right)
e^{-\delta r^\tau}r^{\beta+n-1}\,dr,\\
&=
\sum_{\ell=1}^\infty \int_0^\infty
\Big(
r^{2s_1}|g_\ell'(r)|^2
+\big(s_1^2+\lambda_1\big)r^{2s_1-2}g_\ell(r)^2
+s_1r^{2s_1-1}(g_\ell^2(r))'
\Big)
e^{-\delta r^\tau}r^{\beta+n-1}\,dr \\
&=
\sum_{\ell=1}^\infty \int_0^\infty
r^{2s_1}|g_\ell'(r)|^2 e^{-\delta r^\tau}r^{\beta+n-1}\,dr
+\big(s_1^2+\lambda_1\big)\sum_{\ell=1}^\infty \int_0^\infty
g_\ell(r)^2 e^{-\delta r^\tau}r^{\beta+n+2s_1-3}\,dr \\
&\quad \qquad \quad \qquad \quad \qquad \quad \qquad \quad \qquad \quad \qquad
-s_1\sum_{\ell=1}^\infty \int_0^\infty
(g_\ell^2(r))(e^{-\delta r^\tau}r^{\beta+n+2s_1-2})'\,dr.
\end{aligned}
\end{equation}
Now integrate by parts in the last term:
\begin{equation}
\begin{aligned}
&-s_1\sum_{\ell=1}^\infty \int_0^\infty
g_\ell(r)^2\,
\frac{d}{dr}\Big(e^{-\delta r^\tau}r^{\beta+n+2s_1-2}\Big)\,dr =
-s_1(\beta+n+2s_1-2)\sum_{\ell=1}^\infty \int_0^\infty
g_\ell(r)^2 e^{-\delta r^\tau}r^{\beta+n+2s_1-3}\,dr \\
&\qquad \qquad \qquad \qquad \qquad \qquad \qquad  \qquad \qquad \qquad \qquad\quad
+\delta\tau s_1\sum_{\ell=1}^\infty \int_0^\infty
g_\ell(r)^2 e^{-\delta r^\tau}r^{\beta+n+2s_1+\tau-3}\,dr.
\end{aligned}
\end{equation}
Substituting this back, we obtain
\begin{equation}
\begin{aligned}
\sum_{\ell=1}^\infty \int_0^\infty
\left(
|f_\ell'(r)|^2+\frac{\lambda_\ell}{r^2}f_\ell(r)^2
\right)&
e^{-\delta r^\tau}r^{\beta+n-1}\,dr \geq
\sum_{\ell=1}^\infty \int_0^\infty
r^{2s_1}|g_\ell'(r)|^2 e^{-\delta r^\tau}r^{\beta+n-1}\,dr \\
&\quad
+\Big(s_1^2+\lambda_1-s_1(\beta+n+2s_1-2)\Big)
\sum_{\ell=1}^\infty \int_0^\infty
g_\ell(r)^2 e^{-\delta r^\tau}r^{\beta+n+2s_1-3}\,dr \\
&\quad
+\delta\tau s_1\sum_{\ell=1}^\infty \int_0^\infty
g_\ell(r)^2 e^{-\delta r^\tau}r^{\beta+n+2s_1+\tau-3}\,dr.
\end{aligned}
\end{equation}
We now choose $s_1$ so that the middle term drops out, that is, so that
\[
s_1^{2}+\lambda_1-s_1(\beta+n+2s-2)=0,
\qquad\text{equivalently}\qquad
s_1^{2}+s_1(\beta+n-2)-(n-1)=0 ,
\]
and, among the two roots, we take the one for which $\delta\tau s_1>0$. Since the product of
the roots equals $-(n-1)<0$, exactly one root is positive and one is negative, so this
selection is unambiguous, and it gives
\[
s_1=\frac{-(\beta+n-2)+\sgn(\tau)\sqrt{(\beta+n-2)^2+4(n-1)}}{2}.
\]
Hence
\begin{align}
&\sum_{\ell=1}^\infty \int_0^\infty\left(|f_\ell'(r)|^2+\frac{\lambda_\ell}{r^2}f_\ell(r)^2\right)e^{-\delta r^\tau}r^{\beta+n-1}\,dr \notag\\ 
&\qquad \qquad\geq \sum_{\ell=1}^\infty \int_0^\infty r^{2s_1}|g_\ell'(r)|^2 e^{-\delta r^\tau}r^{\beta+n-1}\,dr +\delta\tau s_1\sum_{\ell=1}^\infty \int_0^\infty
g_\ell(r)^2 e^{-\delta r^\tau}r^{\beta+n+2s_1+\tau-3}\,dr. \notag \\
&\qquad \qquad\geq \delta\tau s_1\sum_{\ell=1}^\infty \int_0^\infty
g_\ell(r)^2 e^{-\delta r^\tau}r^{\beta+n+2s_1+\tau-3}\,dr.\notag \\
&\qquad \qquad =\delta\tau s_1 \sum_{\ell=1}^\infty \int_0^\infty |f_\ell(r)|^2 e^{-\delta r^\tau}r^{\beta+n+\tau-3}\,dr.
\end{align}

Equality occurs if and only if every one of the intermediate inequalities is an equality.
The first of them was obtained by replacing $\lambda_\ell$ by $\lambda_1$, which forces the
vanishing of all the modes of degree at least two; the second was obtained by discarding
the term containing $|g_\ell'|^2$, which forces the remaining $g_\ell$ to be constant. In
other words,
\[
g_\ell(r)=
\begin{cases}
0, &\text{if the degree of }\varphi_\ell\text{ is at least }2,\\
A_\ell\in\R, &\text{if the degree of }\varphi_\ell\text{ is }1,
\end{cases}
\qquad\text{equivalently}\qquad
f_\ell(r)=
\begin{cases}
0,\\
A_\ell r^{s_1}.
\end{cases}
\]
Since the eigenspace of $\lambda_1$ is $n$-dimensional, the corresponding extremal
functions are exactly the functions $u(x)=d+\bigl(\sum_{i=1}^{n}A_ix_i\bigr)|x|^{s_1-1}$, and
for these the constant $\delta\tau s_1$ is attained.

Combining the radial estimate and all non-radial estimates in
\eqref{eq:energy-decomp}--\eqref{eq:rhs-decomp}, we obtain
\[
\int_{\mathbb R^n} |\nabla u|^2 e^{-\delta|x|^\tau}|x|^\beta\,dx
\ge
\min\{\delta\tau^2,\; \delta\tau s_1\}
\inf_{d\in\mathbb R}
\int_{\mathbb R^n}|u-d|^2e^{-\delta|x|^\tau}|x|^{\beta+\tau-2}\,dx.
\]
This proves the result.
\end{proof}

The proof also identifies the equality cases, which we now record.

\begin{corollary}[Equality case]\label{C:equality_general_poincare}
Under the assumptions of the weighted Poincar\'e inequality, equality holds if and only if one of the following occurs.

\medskip
\noindent
\begin{enumerate}
\item[(i)] \textbf{Radial case:} If $\delta\tau^2<\delta\tau s_1,$
then equality holds if and only if
\[
u(x)=a_0+a_1|x|^\tau.
\]
where $a_0,a_1\in\mathbb{R}$.
\item[(ii)] \textbf{First non-radial case:} If $\delta\tau s_1<\delta\tau^2,$
then equality holds if and only if 
\[
u(x)=A\cdot x\,|x|^{\,s_1-1} +a
\]
for some \(A\in\mathbb R^n\), and $a\in \R$

\item[(iii)] \textbf{Balanced case:} If $\delta\tau^2=\delta\tau s_1,$
then equality holds if and only if
\[
u(x)=A\cdot x\,|x|^{\,s_1-1}+a_0+a_1|x|^{\tau},
\]
where $A\in\mathbb{R}^n$ and $a_0,a_1\in\mathbb{R}$.
\end{enumerate}
\end{corollary}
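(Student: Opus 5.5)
The plan is to read the equality cases off the proof of Theorem \ref{Th:general_poincare}, reducing by density to $u\in C_c^\infty(\mathbb R^n\setminus\{0\})$ (or working directly in $W$, where the expansions \eqref{eq:energy-decomp}--\eqref{eq:rhs-decomp} and the quadratic-form identity of Lemma \ref{L:laguerre_parseval} still apply). First observe that the infimum over $d$ on the right-hand side is attained at $d=\bar f_0$, the $\nu$-mean of $u$; only the radial coefficient $f_0$ interacts with $d$. Write the deficit as
\[
\Bigl(E_0-C\,\|f_0-\bar f_0\|^2\Bigr)+\sum_{\ell\ge1}\Bigl(E_\ell-C\,\|f_\ell\|^2\Bigr),
\qquad C=\min\{\delta\tau^2,\delta\tau s_1\},
\]
where $E_\ell$ denotes the energy of the $\ell$-th mode and the norms are taken in $L^2(e^{-\delta r^\tau}r^{\beta+\tau+n-3}dr)$. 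Each bracket is nonnegative: the radial one because $E_0\ge\delta\tau^2\|f_0-\bar f_0\|^2\ge C\|f_0-\bar f_0\|^2$, and the non-radial ones because $E_\ell\ge\delta\tau s_1\|f_\ell\|^2\ge C\|f_\ell\|^2$. So equality holds iff every bracket vanishes.

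For the radial bracket, if $\delta\tau^2>C$ it vanishes only when $f_0-\bar f_0=0$, i.e. $f_0$ is constant. If $\delta\tau^2=C$, Corollary \ref{cor:general_radial_poincare_equality} (applied with $\widetilde\beta=\beta+n-1$) gives $f_0=a_0+a_1r^\tau$. Equivalently, in Laguerre terms, only $a_0$ and $a_1$ survive.

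For the non-radial brackets, if $\delta\tau s_1>C$ each forces $f_\ell\equiv0$. If $\delta\tau s_1=C$, I retrace the two inequalities in the proof of Theorem \ref{Th:general_poincare}. The replacement $\lambda_\ell\to\lambda_1$ is strict unless $f_\ell=0$ for degree $\ge2$. Dropping $\int r^{2s_1}|g_\ell'|^2\cdots$ is strict unless $g_\ell$ is constant, giving $f_\ell=A_\ell r^{s_1}$ for the degree-one modes. Since the degree-one harmonics are the coordinate functions $x_i/|x|$, this yields $A\cdot x\,|x|^{s_1-1}$. Combining the two cases according to which of $\delta\tau^2,\delta\tau s_1$ is smaller gives (i), (ii), (iii); the constant in (ii) and the $a_0$ in (iii) absorb $\bar f_0$.

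The converse direction, that these functions do attain equality, needs a check that they belong to $W$, with finite weighted energy. For $|x|^\tau$ this follows from $\frac{\beta+n+\tau-2}{\tau}>0$. For $x|x|^{s_1-1}$, finiteness near $0$ requires $2s_1+\beta+n+\tau-2>0$ in the relevant sign regime, and I expect this to hold precisely by the choice of root with $\tau s_1>0$. I anticipate this integrability verification, and the justification of the integration by parts in $r$ (boundary terms of $g_\ell^2e^{-\delta r^\tau}r^{\beta+n+2s_1-2}$) for general $u\in W$ rather than smooth $u$, to be the main technical obstacle. I would handle it by approximation: pass the identity
\[
\text{deficit}=\sum_\ell\int r^{2s_1}|g_\ell'|^2\cdots+\sum_\ell(\text{gaps})\|\cdot\|^2
\]
to the limit via Fatou's lemma, so that a vanishing deficit still forces each nonnegative term to vanish.
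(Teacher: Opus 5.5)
Your proposal is correct and follows essentially the same route as the paper. You split the deficit into the radial bracket and the non-radial brackets, use that each is nonnegative so equality forces each to vanish, and read off the extremals from Corollary \ref{cor:general_radial_poincare_equality} and from retracing the two inequalities in the proof of Theorem \ref{Th:general_poincare} (the replacement $\lambda_\ell\to\lambda_1$ and the discarded $|g_\ell'|^2$ term). Your extra care about working in $W$ (lower semicontinuity via Fatou) and about the extremals actually lying in $W$ goes beyond the paper, which simply asserts the converse. For that integrability check, the binding condition is the energy one, $\sgn(\tau)\,(2s_1+\beta+n-2)>0$, rather than $2s_1+\beta+n+\tau-2>0$; it holds because $2s_1+\beta+n-2=\sgn(\tau)\sqrt{(\beta+n-2)^2+4(n-1)}$.
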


\begin{proof}
Let
\[
u(r\sigma)=\sum_{\ell=0}^{\infty} f_\ell(r)\varphi_\ell(\sigma)
\]
be the spherical harmonic expansion of \(u\), where \(\varphi_0\) is constant and the eigenspace corresponding to \(\lambda_1=n-1\) is spanned by the restrictions to $\mathbb S^{n-1}$ of the coordinate functions $x\mapsto x_i$, $1\le i\le n$. From the proof of the preceding theorem, the Poincaré defect decomposes into the sum of a radial contribution and the non-radial contributions, and each term is nonnegative. Hence equality can occur only when all non-sharp components vanish and the remaining components are extremals for the corresponding one-dimensional inequalities.

If \(\delta\tau^2<\delta{\tau s_1}\), then the sharp constant comes only from the radial part. Therefore all non-radial components must vanish, and Corollary \ref{cor:general_radial_poincare_equality} gives
\[
f_0(r)=a_0+a_1 r^\tau .
\]
Hence equality holds if and only if
\[
u(x)=a_0+a_1 |x|^\tau ,
\qquad a_0,a_1\in\mathbb R .
\]

If \(\delta\tau s_1<\delta\tau^2\), then the sharp constant comes only from the first non-radial mode. Thus the radial nonconstant part and all modes with \(\ell\ge2\) must vanish. The equality case in the first non-radial inequality gives
\[
f_1(r)=A r^{s_1}
\]
in the \(\lambda_1=n-1\) eigenspace. Since the spherical harmonics of degree one are exactly the linear forms $\sigma\mapsto A\cdot\sigma$, this gives
\[
u(r\sigma)=a+A\cdot \sigma\, r^{s_1}
=
a+A\cdot x\, |x|^{s_1-1},
\]
for some \(A\in\mathbb R^n\) and \(a\in\mathbb R\). Hence equality holds exactly for functions of this form.

Finally, if \(\delta\tau^2=\delta{\tau s_1}\), then both the radial extremal and the first non-radial extremal give equality. Since the defect is a sum of nonnegative terms, equality is possible only for the sum of these two extremal components, together with constants. Therefore
\[
u(x)=A\cdot x\,|x|^{s_1-1}+a_0+a_1|x|^\tau,
\]
where \(A\in\mathbb R^n\) and \(a_0,a_1\in\mathbb R\). Conversely, each of the above functions gives equality by the equality cases in the preceding theorem. This proves the result.
\end{proof}

We close this part with the observation that the classical Gaussian Poincar\'e inequality
is the particular case $\delta=\frac12$, $\tau=2$, $\beta=0$ of Theorem
\ref{Th:general_poincare}.

\begin{corollary}[Classical Gaussian Poincar\'e inequality]
For every $u \in W^{1,2}(\mathbb{R}^n,\gamma_n)$, we have
\begin{equation}
\int_{\mathbb{R}^n} |\nabla u(x)|^2 \, d\gamma_n(x)
\;\ge\;
\int_{\mathbb{R}^n} \left|u(x)-\int_{\mathbb{R}^n} u \, d\gamma_n\right|^2 d\gamma_n(x),
\end{equation}
where
\[
d\gamma_n(x)=\frac{1}{(2\pi)^{n/2}}e^{-|x|^2/2}\,dx
\]
is the standard Gaussian measure on $\mathbb{R}^n$.
Moreover, the constant $1$ is sharp, and equality holds for affine functions
\[
u(x)=a+b\cdot x,
\qquad a\in\mathbb{R},\; b\in\mathbb{R}^n.
\]
    
\end{corollary}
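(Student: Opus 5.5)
The plan is to obtain this corollary as the special case $\delta=\tfrac12$, $\tau=2$, $\beta=0$ of Theorem~\ref{Th:general_poincare}. For these values the admissibility condition reads $\frac{\beta+n+\tau-2}{\tau}=\frac n2>0$, so the theorem applies. The radial constant is $\delta\tau^2=\tfrac12\cdot4=2$.

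The first step is to compute the non-radial constant. Since $\tau>0$, $s_1$ is the positive root of $s^2+(n-2)s-(n-1)=0$. This polynomial factors as $(s-1)(s+n-1)$, so $s_1=1$ and $\delta\tau s_1=\tfrac12\cdot2\cdot1=1$. Hence $\min\{\delta\tau^2,\delta\tau s_1\}=\min\{2,1\}=1$.

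The second step is to rewrite both sides of the inequality of Theorem~\ref{Th:general_poincare}. With these parameters both weights become $e^{-|x|^2/2}$, because $|x|^{\beta}=|x|^{\beta+\tau-2}=1$. Multiplying through by the normalizing factor $(2\pi)^{-n/2}$ therefore gives
\[
\int|\nabla u|^2\,d\gamma_n\ \ge\ \inf_{d\in\mathbb R}\int|u-d|^2\,d\gamma_n .
\]
The infimum over $d$ is attained at the mean $d=\int u\,d\gamma_n$, since $\gamma_n$ is a probability measure. This is exactly the asserted inequality.

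The third step is sharpness and equality. Because $\delta\tau s_1=1<2=\delta\tau^2$, we are in case (ii) of Corollary~\ref{C:equality_general_poincare}. That case gives equality exactly for $u=a+A\cdot x\,|x|^{s_1-1}=a+A\cdot x$. These extremals realize the constant $1$, which shows that $1$ is sharp.

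The only step that needs care, and the one I expect to be the main obstacle, is the function class. The corollary is stated for $W^{1,2}(\mathbb R^n,\gamma_n)$, whereas Theorem~\ref{Th:general_poincare} is stated on $W$, the closure of $\mathcal A_\tau$. For these parameters the two norms coincide, and $C_c^\infty(\mathbb R^n)$ is dense in $W^{1,2}(\gamma_n)$ by the standard truncation and mollification argument. So $W=W^{1,2}(\gamma_n)$, and the inequality extends by continuity of both sides. The equality characterization carries over as well, because Corollary~\ref{C:equality_general_poincare} is stated on the same space.
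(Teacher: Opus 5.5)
Your proposal is correct and is essentially the paper's own proof. You specialize Theorem~\ref{Th:general_poincare} to $\delta=\tfrac12$, $\tau=2$, $\beta=0$, compute $s_1=1$ so that the constant is $\min\{2,1\}=1$, and read off the affine extremals from case (ii) of Corollary~\ref{C:equality_general_poincare}. Your identification of $W$ with $W^{1,2}(\mathbb{R}^n,\gamma_n)$ fills in a point the paper leaves implicit; the one thing both you and the paper pass over is that Theorem~\ref{Th:general_poincare} is stated only for $n\ge 2$, so the case $n=1$ formally requires a separate (elementary) argument.
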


\begin{proof}
We apply Theorem \ref{Th:general_poincare} with
\[
\delta=\frac12,\qquad \tau=2,\qquad \beta=0.
\]
Then
\[
\beta+n+\tau-2 = n>0,
\]
so the assumptions of Theorem \ref{Th:general_poincare} are satisfied. Moreover,
\[
s_1=\frac{-(n-2)+\sqrt{(n-2)^2+4(n-1)}}{2}
=\frac{-(n-2)+n}{2}=1.
\]
Hence
\[
C=\min\{\delta\tau^2,\delta\tau s_1\}
=\min\{2,1\}=1.
\]
Therefore,
\[
\int_{\mathbb{R}^n} |\nabla u|^2\,d\gamma_n
\geq
\int_{\mathbb{R}^n}
\left|u-\int_{\mathbb{R}^n}u\,d\gamma_n\right|^2 d\gamma_n .
\]

Finally, since the sharp constant comes from the first non-radial mode, the equality case follows from Corollary \ref{C:equality_general_poincare}. Thus equality holds precisely for the affine functions
\[
u(x)=a+b\cdot x,
\qquad a\in\mathbb{R},\quad b\in\mathbb{R}^n.
\]
\end{proof}

We are now ready to prove Theorem \ref{Th:poincare_monomial_weight}.

\begin{proof}[Proof of Theorem \ref{Th:poincare_monomial_weight}]

Let $u\in C_c^\infty(\mathbb R^n_{A,+})$. Set $N:=n+|A|.$ We lift $u$ to a function on
\[
\mathbb R^{a_1+1}\times\cdots\times \mathbb R^{a_n+1}\cong \mathbb R^N
\]
by defining
\[
\widetilde u(y_1,\dots,y_n):=u(x_1,\dots,x_n),
\]
where
\[
x_i=
\begin{cases}
|y_i|, & a_i>0,\\
y_i, & a_i=0,
\end{cases}
\qquad y_i\in \mathbb R^{a_i+1}.
\]
Since $u\in C_c^\infty(\mathbb R^n_{A,+})$ and vanishes near the boundary, it follows that
\[
\widetilde u\in C_c^\infty(\mathbb R^N).
\]
Moreover, by construction,
\[
|x|^2=\sum_{i=1}^n x_i^2
=
\sum_{i=1}^n |y_i|^2
=
|y|^2.
\]
Now apply the weighted Poincar\'e inequality in $\mathbb R^N$:
\begin{align*}
\int_{\mathbb R^N} |\nabla \widetilde u(y)|^2
e^{-\delta|y|^\tau}|y|^\beta\,dy
\;\ge\;
C(N,\beta,\tau,\delta)
\inf_{d\in\mathbb R}
\int_{\mathbb R^N} |\widetilde u(y)-d|^2
e^{-\delta|y|^\tau}|y|^{\beta+\tau-2}\,dy,
\end{align*}
where $C(N,\beta,\tau,\delta)$ is the sharp constant from Theorem \ref{Th:general_poincare}.

\medskip
We now identify both sides with the corresponding integrals on $\mathbb R^n_{A,+}$.

For each $i$ with $a_i>0$, writing
\[
y_i=(y_{i,1},\dots,y_{i,a_i+1}),
\qquad
r_i:=|y_i|=x_i,
\]
we have
\[
\frac{\partial \widetilde u}{\partial y_{i,j}}
=
\frac{\partial u(x)}{\partial x_i}\,\frac{y_{i,j}}{r_i},
\qquad j=1,\dots,a_i+1.
\]
Therefore
\[
|\nabla_{y_i}\widetilde u|^2
=
\sum_{j=1}^{a_i+1}
\left(\frac{\partial u(x)}{\partial x_i}\frac{y_{i,j}}{r_i}\right)^2
=
\left(\frac{\partial u(x)}{\partial x_i}\right)^2
\sum_{j=1}^{a_i+1}\frac{y_{i,j}^2}{r_i^2}
=
\left(\frac{\partial u(x)}{\partial x_i}\right)^2.
\]
If $a_i=0$, then $x_i=y_i$ and hence
\[
\frac{\partial \widetilde u}{\partial y_i}
=
\frac{\partial u}{\partial x_i}.
\]
Combining all blocks, we obtain
\[
|\nabla \widetilde u(y)|^2=|\nabla u(x)|^2.
\]
Next, using spherical coordinates in each block $\mathbb R^{a_i+1}$, we get
\[
dy_i=|\mathbb{S}^{a_i}|\,x_i^{a_i}\,dx_i.
\]

Since $|y|=|x|$, it follows that
\begin{align*}
\int_{\mathbb R^N} |\nabla \widetilde u(y)|^2 e^{-\delta|y|^\tau}|y|^\beta\,dy
&=
\prod_{\{i:\,a_i>0\}} |\mathbb S^{a_i}|
\int_{\mathbb R^n_{A,+}} |\nabla u(x)|^2
x^A e^{-\delta|x|^\tau}|x|^\beta\,dx,
\end{align*}
and similarly
\begin{align*}
\int_{\mathbb R^N} |\widetilde u(y)-d|^2 e^{-\delta|y|^\tau}|y|^{\beta+\tau-2}\,dy
&=
\prod_{\{i:\,a_i>0\}} |\mathbb S^{a_i}|
\int_{\mathbb R^n_{A,+}} |u(x)-d|^2
x^A e^{-\delta|x|^\tau}|x|^{\beta+\tau-2}\,dx.
\end{align*}

This leads to 
\begin{align*}
\int_{\mathbb R^n_{A,+}} |\nabla u(x)|^2
x^A e^{-\delta|x|^\tau}|x|^\beta\,dx
\;\ge\;
C(N,\beta,\tau,\delta)\inf_{d\in\mathbb R}
\int_{\mathbb R^n_{A,+}} |u(x)-d|^2
x^A e^{-\delta|x|^\tau}|x|^{\beta+\tau-2}\,dx.
\end{align*}
This is the asserted inequality, with $C=\min\{\delta\tau^{2},\delta\tau s_1\}$ and $s_1$ as in
the statement, since the parameters of Theorem \ref{Th:general_poincare} in dimension $N$
are exactly $\beta$, $\tau$, $\delta$.

It remains to discuss the sharpness. The lifting is not surjective onto
$C_c^\infty(\mathbb R^{N})$: its image consists precisely of the functions that are radial
in each block $\mathbb R^{a_i+1}$, and a competitor in $\mathbb R^{N}$ can be pushed back
down to $\mathbb R^{n}_{A,+}$ only if it has this block-invariance. Now the two extremal
families of Corollary \ref{C:equality_general_poincare} behave differently in this respect.
The radial family $d+c|y|^{\tau}$ depends on $y$ only through $|y|=|x|$, hence is always
block-invariant, and it descends to $u(x)=d+c|x|^{\tau}$; consequently, whenever
$\delta\tau^{2}\le\delta\tau s_1$ the constant $C$ is attained and is therefore sharp. The
first non-radial family $d+B\cdot y\,|y|^{s_1-1}$, on the other hand, is block-invariant only
if the linear form $B\cdot y$ is; and $y\mapsto y_{i,j}$ is invariant under the rotations of
the block $\mathbb R^{a_i+1}$ exactly when that block is one-dimensional, that is, when
$a_i=0$. Thus, if $a_i=0$ for at least one index $i$, the function
$u(x)=d+\bigl(\sum_{i:\,a_i=0}d_ix_i\bigr)|x|^{s_1-1}$ is an admissible competitor realizing
equality, and $C$ is again sharp. This proves the sharpness assertions of the statement.

Finally, if $a_i>0$ for every $i$, no linear form survives the lifting, and the first
admissible non-radial mode is the two-dimensional one described in Remark
\ref{rmk:sharpness_monomial}: the block-invariant harmonics of degree two in $\mathbb R^{N}$
are spanned by
\[
y\longmapsto\sum_{i=1}^{n}c_i\Bigl(|y_i|^{2}-\frac{a_i+1}{N}|y|^{2}\Bigr),
\qquad
\sum_{i=1}^{n}c_i(a_i+1)=0 ,
\]
which is a nonzero space as soon as $n\ge2$, and which corresponds to the eigenvalue
$\lambda_{2}=2N$. Repeating the argument of Theorem \ref{Th:general_poincare} with
$\lambda_{2}$ in place of $\lambda_{1}$ replaces $s_1$ by the root $s_{2}$ of
{$s^{2}+s(\beta+N-2)-2N=0$} with $\delta\tau s_{2}>0$, and shows that in this degenerate case
the optimal constant is $\min\{\delta\tau^{2},\delta\tau s_{2}\}$.
Under the lifting $x_i=|y_i|$, the degree-two extremals
descend to
\[
u(x)
=
a_0
+
|x|^{s_2-2}
\sum_{i=1}^n
c_i
\left(
x_i^2-\frac{a_i+1}{N}|x|^2
\right).
\]
Equivalently, the non-radial part may be written as
\[
|x|^{s_2-2}\sum_{i=1}^n b_i x_i^2,
\qquad
\sum_{i=1}^n b_i(a_i+1)=0.
\]
\end{proof}

\begin{corollary}[Equality cases]\label{C:equality_monomial}
Under the assumptions of {Theorem~\ref{Th:poincare_monomial_weight}}, let $s_1$ be the root of
\[
s_1^2+(\beta+N-2)s_1-(N-1)=0
\]
satisfying $\tau s_1>0$.

If $a_i>0$ for every $i$ and $n\ge 2$, let $s_2$ be the root of
\[
s_2^2+(\beta+N-2)s_2-2N=0
\]
satisfying $\tau s_2>0$.

Then equality in \eqref{eq:poincare_monomial} holds if and only if one of the following
cases occurs.

\begin{enumerate}
\item[(i)] \textbf{Radial case.}
Suppose either
\[
a_i=0 \quad \text{for at least one } i,
\qquad
|\tau|<|s_1|,
\]
or
\[
a_i>0 \quad \text{for every } i,
\qquad
|\tau|<|s_2|.
\]
Then equality holds if and only if
\[
u(x)=b_0+b_1|x|^\tau,
\qquad
b_0,b_1\in\mathbb R.
\]

\item[(ii)] \textbf{First non-radial case.}
Suppose
\[
a_i=0 \quad \text{for at least one } i,
\qquad
|s_1|<|\tau|.
\]
Then equality holds if and only if
\[
u(x)
=
b_0+
\left(
\sum_{\{i:\,a_i=0\}} d_i x_i
\right)
|x|^{s_1-1},
\]
for some $b_0,d_i\in\mathbb R$.

\item[(iii)] \textbf{First balanced case.}
Suppose
\[
a_i=0 \quad \text{for at least one } i,
\qquad
|\tau|=|s_1|.
\]
Then equality holds if and only if
\[
u(x)
=
b_0+b_1|x|^\tau
+
\left(
\sum_{\{i:\,a_i=0\}} d_i x_i
\right)
|x|^{s_1-1},
\]
for some $b_0,b_1,d_i\in\mathbb R$.

\item[(iv)] \textbf{Second non-radial case.}
Suppose
\[
a_i>0 \quad \text{for every } i,
\qquad
|s_2|<|\tau|.
\]
Then equality holds if and only if
\[
u(x)
=
b_0
+
|x|^{s_2-2}\sum_{i=1}^n c_i x_i^2, \qquad
\sum_{i=1}^n c_i(a_i+1)=0.
\]
for some $b_0,c_1,\ldots,c_n\in\mathbb R$, with a nonzero
degree-two term.

\item[(v)] \textbf{Second balanced case.}
Suppose
\[
a_i>0 \quad \text{for every } i,
\qquad
|\tau|=|s_2|.
\]
Then equality holds if and only if
\[
u(x)
=
{b_0+b_1|x|^\tau
+
|x|^{s_2-2}
\sum_{i=1}^n
c_i x_i^2, \qquad
\sum_{i=1}^n c_i(a_i+1)=0,}
\]
for some $b_0, b_1,c_1,\ldots,c_n\in\mathbb R$, with a nonzero
degree-two term.

\end{enumerate}
\end{corollary}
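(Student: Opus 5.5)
We plan to obtain all five equality cases from the same lifting used in the proof of Theorem~\ref{Th:poincare_monomial_weight}, combined with the mode-by-mode equality analysis of Theorem~\ref{Th:general_poincare} and Corollary~\ref{C:equality_general_poincare}. Given $u$ on $\mathbb R^n_{A,+}$, let $\widetilde u$ be its lift to $\mathbb R^N$, with $N=n+|A|$. This lift is invariant under $G:=\prod_{i:\,a_i>0}O(a_i+1)$. Both sides of \eqref{eq:poincare_monomial} equal the corresponding $\mathbb R^N$ integrals, up to the common factor $\prod|\mathbb S^{a_i}|$. Hence equality for $u$ is equivalent to equality for $\widetilde u$ in the $G$-invariant version of the $\mathbb R^N$ inequality, with the constant $C$ stated in Theorem~\ref{Th:poincare_monomial_weight}.

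We expand $\widetilde u=\sum_\ell f_\ell(r)\varphi_\ell(\sigma)$ in an orthonormal basis of $G$-invariant spherical harmonics on $\mathbb S^{N-1}$. Since $G$ commutes with $\Delta_{\mathbb S^{N-1}}$, the projection onto each eigenspace $\mathcal H_\ell$ preserves $G$-invariance. The defect then splits into nonnegative pieces. The radial piece is controlled by Corollary~\ref{cor:general_radial_poincare_equality}. Each non-radial mode of degree $\ell$ has an exact defect: with the substitution $f_\ell=r^{s}g_\ell$ and $s=s_1$ or $s_2$, it equals
\[
\int r^{2s}|g_\ell'|^2\,d\mu_r
+(\lambda_\ell-\lambda_q)\int r^{2s-2}g_\ell^2\,d\mu_r
+\delta\tau s\int g_\ell^2\,(\cdots)
-C\int f_\ell^2\,d\nu_r,
\]
exactly as computed in the proof of Theorem~\ref{Th:general_poincare}. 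Here $\lambda_q$ is the lowest admissible non-radial eigenvalue. Equality forces three things at once: every mode with $\lambda_\ell>\lambda_q$ vanishes; $g_\ell$ is constant in the $\lambda_q$-mode whenever $\delta\tau s_q=C$, and otherwise that mode vanishes; and the radial part is $b_0+b_1r^\tau$ if $\delta\tau^2=C$, and constant otherwise. Comparing $\delta\tau^2$ with $\delta\tau s_q$ is the same as comparing $|\tau|$ with $|s_q|$, since $\tau s_q>0$. This comparison produces the radial, non-radial and balanced trichotomy of cases (i)–(v). The converse direction, that each listed function gives equality, follows by substituting it into these same identities. For the radial profile we use Corollary~\ref{cor:general_radial_poincare_equality}.

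The key step is to identify the $G$-invariant harmonics of low degree. For degree one, the linear form $y\mapsto B\cdot y$ is $G$-invariant if and only if $B$ vanishes on every block with $a_i>0$, because $-I\in O(a_i+1)$ reverses each coordinate of that block. So degree-one invariants exist exactly when some $a_i=0$, and they are spanned by the $x_i$ with $a_i=0$. This yields cases (ii) and (iii), with $\lambda_1=N-1$.

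If every $a_i>0$, then $-I\in G$, so all odd-degree harmonics are killed. For degree two, we write a harmonic as $y^{T}My$ with $\operatorname{tr}M=0$. Invariance under $O(a_i+1)$ on each block, together with the independent sign flips of different blocks, forces $M$ to be block-diagonal with scalar blocks $c_i I_{a_i+1}$. The trace condition becomes $\sum c_i(a_i+1)=0$. Since $|y_i|^2=x_i^2$, the extremal $|y|^{s_2}\varphi(\sigma)$ descends to $|x|^{s_2-2}\sum c_ix_i^2$, and $\lambda_2=2N$. This gives cases (iv) and (v). The hypothesis $n\ge2$ ensures this space is nonzero.

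One remaining check is that the degree-two mode really is the bottom of the non-radial invariant spectrum in this case. This holds because the eigenvalues $\lambda_\ell$ increase with $\ell$ and the odd degrees are absent. It also justifies that the constant in \eqref{eq:poincare_monomial} is $\min\{\delta\tau^2,\delta\tau s_2\}$.

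The step I expect to be the main obstacle is the rigorous justification that the restriction to $G$-invariant functions is compatible with the mode decomposition. Concretely, for a $G$-invariant $\widetilde u$, each radial coefficient along a non-invariant harmonic must vanish. I would prove this by averaging over $G$ with its Haar measure. The averaging commutes with the radial and spherical decomposition, so the projection of $\widetilde u$ onto each $\mathcal H_\ell$ lies in $\mathcal H_\ell^G$. Once this is in place, the classification of $\mathcal H_1^G$ and $\mathcal H_2^G$ above completes the proof.
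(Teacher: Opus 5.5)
Your proposal is correct and follows essentially the paper's route. You lift to block-radial functions on $\mathbb R^N$ and note that invariant degree-one harmonics exist exactly when some $a_i=0$, which gives $\lambda_1=N-1$ and $s_1$; otherwise you pass to the invariant degree-two harmonics $\sum_i c_i|y_i|^2$ with $\sum_i c_i(a_i+1)=0$, rerun the mode-by-mode argument of Theorem~\ref{Th:general_poincare} with $\lambda_2=2N$, and compare $|\tau|$ with $|s_q|$. Your Haar-averaging argument and your quadratic-form classification of $\mathcal H_2^G$ in fact prove two points the paper only asserts: that the spherical-harmonic expansion of $\widetilde u$ stays within the invariant harmonics, and that these functions span all invariant degree-two harmonics.
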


\begin{proof}
Recall that the lifting maps the monomial-weighted problem to the
subspace of functions on $\mathbb R^N$ which are invariant under
independent rotations in the blocks $\mathbb R^{a_i+1}$.

Suppose first that $a_i=0$ for at least one index $i$. The corresponding
block is one-dimensional, and hence the degree-one spherical eigenspace
contains nonzero block-invariant functions. The first admissible
non-radial eigenvalue is therefore
\[
\lambda_1=N-1.
\]
By the equality characterization for the weighted Poincar\'e inequality
in $\mathbb R^N$, the radial extremals are
\[
\widetilde u(y)=b_0+b_1|y|^\tau,
\]
whereas the first non-radial extremals which are block-invariant are
\[
\widetilde u(y)
=
b_0+
\left(
\sum_{\{i:\,a_i=0\}} d_i y_i
\right)
|y|^{s_1-1}.
\]
Since $y_i=x_i$ whenever $a_i=0$ and $|y|=|x|$, these descend precisely
to the functions in {\rm (i)--(iii)}. Comparing
$\delta\tau^2$ and $\delta\tau s_1$ is equivalent to comparing
$|\tau|$ and $|s_1|$, since $\tau s_1>0$.

Suppose now that $a_i>0$ for every $i$. Then every block
$\mathbb R^{a_i+1}$ has dimension at least two, and no nonzero spherical
harmonic of degree one is block-invariant. Hence the eigenvalue
$\lambda_1=N-1$ is absent from the lifted class.

The first nonzero block-invariant spherical harmonics occur at degree
two and are spanned by
\[
y\longmapsto
\sum_{i=1}^n
c_i
\left(
|y_i|^2-\frac{a_i+1}{N}|y|^2
\right).
\]
Indeed,
\[
\Delta_y
\left(
|y_i|^2-\frac{a_i+1}{N}|y|^2
\right)
=
2(a_i+1)
-
\frac{a_i+1}{N}\,2N
=
0.
\]
Thus these are homogeneous harmonic polynomials of degree two. This
space is nonzero for $n\ge2$, and its spherical eigenvalue is
\[
\lambda_2=2N.
\]

Repeating the argument for the weighted Poincar\'e inequality with
$\lambda_2$ in place of $\lambda_1$ gives the non-radial constant
$\delta\tau s_2$. Since the degree-two block-invariant eigenspace is
nontrivial, this value is attained by an admissible competitor, and
hence the sharp constant in this case is
\[
\min\{\delta\tau^2,\delta\tau s_2\}.
\]
The corresponding degree-two extremals are
\[
\widetilde u(y)
=
b_0
+
|y|^{s_2-2}
\sum_{i=1}^n
c_i
\left(
|y_i|^2-\frac{a_i+1}{N}|y|^2
\right).
\]
Under the lifting $x_i=|y_i|$ and $|x|=|y|$, these descend to
\[
u(x)
=
b_0
+
|x|^{s_2-2}
\sum_{i=1}^n
c_i
\left(
x_i^2-\frac{a_i+1}{N}|x|^2
\right).
\]
Comparison of $\delta\tau^2$ and $\delta\tau s_2$ is equivalent to
comparison of $|\tau|$ and $|s_2|$. This yields {\rm (i), (iv), and (v)}
and completes the proof.
\end{proof}


\section{Stability of the weighted Poincar\'e inequalities: proof of Theorem \ref{Th:stability_poincare_monomialweight}}\label{S:poincare_stability}

The goal of this section is to establish the stability of the weighted Poincar\'e
inequalities obtained above. The argument is carried out in three steps. We first prove a
stability estimate for radial functions, where the Laguerre expansion makes the spectral
gap between the first and the second mode explicit. We then prove the corresponding
estimate for the non-radial component supplied by the spherical harmonic decomposition.
Finally, we combine the two, and lift the resulting inequality to monomial weights, thereby
proving Theorem \ref{Th:stability_poincare_monomialweight}. This division reflects the
spectral structure of the underlying operator, and it allows us to isolate the
contributions of the radial and of the angular modes.

{We retain the notation for the weighted energy spaces introduced in the
preceding sections. More precisely, the one-dimensional radial energy
space is
\[
Y_{\delta,\tau,\beta}
=
\left\{
f\in W_{\mathrm{loc}}^{1,2}(0,\infty):
\int_0^\infty |f'(r)|^2e^{-\delta r^\tau}r^\beta\,dr
+
\int_0^\infty |f(r)|^2e^{-\delta r^\tau}
r^{\beta+\tau-2}\,dr
<\infty
\right\},
\]
equipped with the norm
\[
\|f\|_{Y_{\delta,\tau,\beta}}^2
=
\int_0^\infty |f'(r)|^2e^{-\delta r^\tau}r^\beta\,dr
+
\int_0^\infty |f(r)|^2e^{-\delta r^\tau}
r^{\beta+\tau-2}\,dr.
\]
For the corresponding problem on $\mathbb R^n$, we recall that
\[
d\mu(x)=e^{-\delta|x|^\tau}|x|^\beta\,dx,
\qquad
d\nu(x)=e^{-\delta|x|^\tau}|x|^{\beta+\tau-2}\,dx,
\]
and
\[
\|u\|_W^2
=
\int_{\mathbb R^n}|\nabla u|^2\,d\mu
+
\int_{\mathbb R^n}|u|^2\,d\nu.
\]
Here
\[
W=\overline{\mathcal A_\tau}^{\,\|\cdot\|_W},
\qquad
\mathcal A_\tau
=
\left\{
u\in C_c^\infty(\mathbb R^n):
\|u\|_W<\infty
\right\}.
\]

The radial stability estimate will first be established in
$Y_{\delta,\tau,\beta}$ and then combined with the spherical harmonic
decomposition to obtain the corresponding estimate in $W$.}

\begin{theorem}[Stability for the general radial Poincar\'e inequality]\label{th:general_radial_poincare_stability}
Let $\delta>0$, and assume
\[
\frac{\beta+\tau-1}{\tau}>0.
\]
Then for every $f\in {Y_{\delta,\tau,\beta}}$, 
we have
\begin{align}
&\int_0^\infty |f'(r)|^2 e^{-\delta r^\tau}r^\beta\,dr
-\delta\tau^2\int_0^\infty |f(r)-\bar f|^2 e^{-\delta r^\tau}r^{\beta+\tau-2}\,dr \notag \\
&\qquad \qquad \qquad\ge \delta\tau^2\inf_{a,b\in\mathbb R}
\int_0^\infty |f(r)-(a+br^\tau)|^2 e^{-\delta r^\tau}r^{\beta+\tau-2}\,dr,
\end{align}
where
\[
\bar f:=
\frac{\int_0^\infty f(r)e^{-\delta r^\tau}r^{\beta+\tau-2}\,dr}
{\int_0^\infty e^{-\delta r^\tau}r^{\beta+\tau-2}\,dr}.
\]

\end{theorem}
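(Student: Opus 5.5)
The plan is to reuse the Laguerre reduction from the proof of Theorem \ref{th:general_radial_poincare} and read off the spectral gap between the first and second Laguerre modes. Set $\alpha=\frac{\beta-1}{\tau}>-1$ and change variables $t=\delta r^\tau$, $g(t)=f(r)$. By the dictionary \eqref{eq:parseval_dictionary}, $g\in L^2(t^\alpha e^{-t}dt)$ and Lemma \ref{L:laguerre_parseval} applies. Expand $g=\sum_{n\ge0}a_nL_n^{(\alpha)}$. Since $\bar f$ corresponds to the mean of $g$ for $t^\alpha e^{-t}dt$, Proposition \ref{P:laguerre_mean_zero} gives $g-\bar f=\sum_{n\ge1}a_nL_n^{(\alpha)}$, so $a_0=\bar f$. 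With $K:=\frac{\delta^{-\alpha}}{\delta|\tau|}$, the left-hand side becomes
\[
\delta\tau^2K\Bigl(\sum_{n\ge1}n\,a_n^2h_n^{(\alpha)}-\sum_{n\ge1}a_n^2h_n^{(\alpha)}\Bigr)=\delta\tau^2K\sum_{n\ge2}(n-1)\,a_n^2h_n^{(\alpha)} .
\]
Here I use Lemma \ref{L:laguerre_parseval} for the energy term, which equals $\delta\tau^2K\sum_{n\ge1} n a_n^2 h_n^{(\alpha)}$ exactly as in the proof of Theorem \ref{th:general_radial_poincare}. For the second term I use Parseval in $L^2(t^\alpha e^{-t}dt)$.

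Next I identify the right-hand side. Since $L_0^{(\alpha)}=1$ and $L_1^{(\alpha)}(t)=\alpha+1-t$, the span of $\{1,r^\tau\}$ corresponds exactly to the span of $\{L_0^{(\alpha)},L_1^{(\alpha)}\}$ in the $t$ variable. Hence, by orthogonality, the infimum over $a,b$ is attained at the orthogonal projection and equals
\[
\inf_{a,b}\int_0^\infty|f-(a+br^\tau)|^2e^{-\delta r^\tau}r^{\beta+\tau-2}\,dr=K\sum_{n\ge2}a_n^2h_n^{(\alpha)} .
\]
Since $n-1\ge1$ for $n\ge2$, the left-hand side is at least $\delta\tau^2K\sum_{n\ge2}a_n^2h_n^{(\alpha)}$. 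This is the claim, with equality exactly when $a_n=0$ for $n\ge3$, i.e.\ when $g$ lies in the span of $L_0,L_1,L_2$.

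The only delicate point is justifying these manipulations for a general $f\in Y_{\delta,\tau,\beta}$ rather than for smooth $f$. This is handled by the quadratic-form identity of Lemma \ref{L:laguerre_parseval}, together with the observation that all series involved converge absolutely, since their sums are finite by membership in $Y_{\delta,\tau,\beta}$. I expect no further obstacle: the constant $\delta\tau^2$ is simply the gap $(2-1)$ between the second and first Laguerre eigenvalues, rescaled by $\delta\tau^2$.
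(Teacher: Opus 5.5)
Your proof is correct and follows essentially the same route as the paper. You pass to the Laguerre expansion via $t=\delta r^\tau$, use Lemma~\ref{L:laguerre_parseval} to write the deficit as $\delta\tau^2K\sum_{n\ge2}(n-1)a_n^2h_n^{(\alpha)}$, and conclude from $n-1\ge1$ together with the correspondence between $\{1,r^\tau\}$ and $\{L_0^{(\alpha)},L_1^{(\alpha)}\}$. The only cosmetic difference is that you compute the infimum exactly as an orthogonal projection, which also gives the equality case $a_n=0$ for $n\ge3$; the paper instead normalizes $\bar f=0$ and merely bounds $\|g-a_1L_1^{(\alpha)}\|^2$ from below by the infimum.
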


\begin{proof}
The proof is a refinement of that of Theorem \ref{th:general_radial_poincare}: there we
discarded the whole of the spectral gap, whereas here we retain half of it. As before,
replacing $f$ by $f-\bar f$ changes neither side, so we may assume that 
\[
\int_0^\infty f(r)e^{-\delta r^\tau}r^{\beta+\tau-2}\,dr=0.
\]

Using the same change of variables $t=\delta r^\tau$ together with the dictionary
\eqref{eq:parseval_dictionary} and the quadratic-form identity \eqref{eq:laguerre_parseval}
of Lemma \ref{L:laguerre_parseval} (the series below may then be integrated term by term by
monotone convergence, all its terms being nonnegative), we obtain
\begin{align}
        &\int_0^\infty |f'(r)|^2e^{-\delta r^\tau}r^\beta\,dr=\delta\tau^2\frac{1}{\delta|\tau|} \left(\frac 1\delta\right)^\alpha \int_0^\infty \sum_{n=1}^\infty n a_n^2 L_n^{(\alpha)}(t)^2\, e^{-t} \, t^\alpha\,dt\notag\\
         &=\delta\tau^2 \frac{1}{\delta|\tau|}\left(\frac 1\delta\right)^\alpha \int_0^\infty \sum_{n=1}^\infty a_n^2 L_n^{(\alpha)}(t)^2\, e^{-t} \, t^\alpha\,dt +\delta\tau^2 \frac{1}{\delta|\tau|}\left(\frac 1\delta\right)^\alpha \int_0^\infty \sum_{n=2}^\infty (n-1)a_n^2 L_n^{(\alpha)}(t)^2\, e^{-t} \, t^\alpha\,dt\notag\\
         & \geq \delta\tau^2 \frac{1}{\delta|\tau|}\left(\frac 1\delta\right)^\alpha \int_0^\infty \sum_{n=1}^\infty a_n^2 L_n^{(\alpha)}(t)^2\, e^{-t} \, t^\alpha\,dt +\delta\tau^2 \frac{1}{\delta|\tau|}\left(\frac 1\delta\right)^\alpha \int_0^\infty \sum_{n=2}^\infty a_n^2 L_n^{(\alpha)}(t)^2\, e^{-t} \, t^\alpha\,dt\notag\\
         &=\delta\tau^2\int_0^\infty |f(r)|^2 e^{-\delta r^\tau}r^{\beta+\tau-2}\,dr +\delta\tau^2 \frac{1}{\delta|\tau|}\left(\frac 1\delta\right)^\alpha \int_0^\infty \left(\sum_{n=1}^\infty a_n^2 L_n^{(\alpha)}(t)^2 - a_1^2 L_1^{(\alpha)}(t)^2\right) \, e^{-t} \, t^\alpha\,dt\notag\\
         &=\delta\tau^2\int_0^\infty |f(r)|^2 e^{-\delta r^\tau}r^{\beta+\tau-2}\,dr +\delta\tau^2 \frac{1}{\delta|\tau|}\left(\frac 1\delta\right)^\alpha \int_0^\infty |\sum_{n=1}^\infty a_n L_n^{(\alpha)}(t) - a_1 L_1^{(\alpha)}(t)|^2 \, e^{-t} \, t^\alpha\,dt\notag\\
         &\geq \delta\tau^2\int_0^\infty |f(r)|^2 e^{-\delta r^\tau}r^{\beta+\tau-2}\,dr +\delta\tau^2\inf_{a,b\in\mathbb R} \int_0^\infty |f(r) - (a r^{\tau}+b)|^2 \, e^{-\delta r^\tau}r^{\beta+\tau-2}\,dr.
    \end{align}
\end{proof}

We turn to the non-radial component. Here the role played by the gap between the first two
Laguerre modes is played by the gap between the first two non-zero eigenvalues
$\lambda_1=n-1$ and $\lambda_2=2n$ of the Laplace--Beltrami operator, and it is this gap
that produces the parameter $\alpha_1$ below.

\begin{theorem}[Stability in the non-radial case]\label{thm:nonradial_stability}
Let $n\ge2$, let
$\Phi_{u}(x)=\sum_{\ell=1}^{\infty} f_\ell(r)\varphi_\ell(\sigma)$
be the non-radial part of $u$, and let $s_1$ be as in Theorem \ref{Th:general_poincare}. Then
\begin{align}
&\sum_{\ell=1}^{\infty}\int_0^\infty
\left(|f_\ell'(r)|^2 + \frac{\lambda_\ell}{r^2}f_\ell(r)^2\right)e^{-\delta r^\tau}r^{\beta+n-1}\,dr -\delta\tau s_1\sum_{\ell=1}^{\infty}\int_0^\infty |f_\ell(r)|^2 e^{-\delta r^\tau}r^{\beta+n+\tau-3}\,dr \notag \\
& \qquad \qquad \qquad \ge
 \min\{\delta\tau^2,\ \delta\tau\alpha_1 \}
\inf_{D\in \R^n}\norm{\Phi_u-D\cdot x\, r^{s_1-1}}^2_{L^2(e^{-\delta r^\tau}r^{\beta+n+\tau-3}\,dr)}
\end{align}
Here
\begin{equation*}
    \alpha_1 =
\frac{\sgn(\tau)\left(\sqrt{(\beta+n-2)^2+8n}-\sqrt{(\beta+n-2)^2+4(n-1)}\right)}{2},
\qquad\text{that is,}
\end{equation*}

\end{theorem}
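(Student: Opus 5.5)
We argue mode by mode, reusing the substitution $f_\ell=r^{s_1}g_\ell$ from the proof of Theorem \ref{Th:general_poincare}, but this time without discarding either the angular surplus or the derivative term. With $s_1$ the root of $s^2+s(\beta+n-2)-(n-1)=0$ chosen so that $\delta\tau s_1>0$, the same integration by parts gives, for each $\ell\ge1$ with degree $d(\ell)$,
\begin{align*}
&\int_0^\infty\Bigl(|f_\ell'|^2+\frac{\lambda_\ell}{r^2}f_\ell^2\Bigr)e^{-\delta r^\tau}r^{\beta+n-1}\,dr-\delta\tau s_1\int_0^\infty f_\ell^2e^{-\delta r^\tau}r^{\beta+n+\tau-3}\,dr\\
&\qquad=\int_0^\infty|g_\ell'|^2e^{-\delta r^\tau}r^{\beta+n+2s_1-1}\,dr+(\lambda_\ell-\lambda_1)\int_0^\infty f_\ell^2e^{-\delta r^\tau}r^{\beta+n-3}\,dr .
\end{align*}
The whole deficit is therefore the sum of these nonnegative expressions, and we treat the degree-one modes and the higher modes separately.

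For the modes of degree one ($\lambda_\ell=\lambda_1$), only the first term survives. We apply the radial Poincar\'e inequality, Theorem \ref{th:general_radial_poincare}, to $g_\ell$ with $\widetilde\beta=\beta+n+2s_1-1$. Its hypothesis is $(\beta+n+2s_1+\tau-2)/\tau>0$. This should follow from $(\beta+n+\tau-2)/\tau>0$ together with $\tau s_1>0$; we will verify it, and it is where the standing assumption enters. The inequality gives
\[
\int_0^\infty|g_\ell'|^2e^{-\delta r^\tau}r^{\beta+n+2s_1-1}\,dr\ \ge\ \delta\tau^2\int_0^\infty|g_\ell-\bar g_\ell|^2e^{-\delta r^\tau}r^{\beta+n+2s_1+\tau-3}\,dr .
\]
Setting $D_\ell=\bar g_\ell$ and rewriting in terms of $f_\ell$, the right-hand side is exactly $\delta\tau^2\|f_\ell-D_\ell r^{s_1}\|^2_{L^2(e^{-\delta r^\tau}r^{\beta+n+\tau-3}dr)}$. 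The $n$ constants $D_\ell$ assemble into a vector $D$, and the corresponding function is $D\cdot x\,r^{s_1-1}$.

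For the modes of degree at least two, the distance to the degree-one family is just $\|f_\ell\|^2$, and this is where $\alpha_1$ arises. We redo the computation with the exponent $s_2'$, the root of $s^2+s(\beta+n-2)-\lambda_2=0$ with the same sign as $\tau$, where $\lambda_2=2n$. As in Theorem \ref{Th:general_poincare}, but now using $\lambda_\ell\ge\lambda_2$, we get
\[
\text{energy}_\ell\ \ge\ \delta\tau s_2'\int_0^\infty f_\ell^2e^{-\delta r^\tau}r^{\beta+n+\tau-3}\,dr .
\]
Subtracting $\delta\tau s_1\|f_\ell\|^2$ leaves the bound $\delta\tau(s_2'-s_1)\|f_\ell\|^2$. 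From the explicit formulas one checks that $s_2'-s_1=\alpha_1$ and $\tau\alpha_1>0$, so this equals $\delta\tau\alpha_1\|f_\ell\|^2$.

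Summing over all modes and taking $D$ as above, which bounds the infimum from above, yields the claim with constant $\min\{\delta\tau^2,\delta\tau\alpha_1\}$.

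The main obstacle is rigor rather than algebra. The integrations by parts with $g_\ell=r^{-s_1}f_\ell$ must be justified: boundary terms vanish for $u\in C_c^\infty(\mathbb R^n\setminus\{0\})$, and the result then extends by density as in the earlier sections. We also need $g_\ell\in Y_{\delta,\tau,\widetilde\beta}$ and must confirm the sign and parameter conditions for the shifted weight. We would check the latter first, since if the hypothesis of Theorem \ref{th:general_radial_poincare} failed for $\widetilde\beta$, the degree-one step would need a different argument.
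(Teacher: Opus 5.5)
Your proposal is correct and is essentially the paper's proof: it uses the same substitution $f_\ell=r^{s_1}g_\ell$ and the same radial Poincar\'e inequality with exponent $\beta+n+2s_1-1$ on the degree-one modes, and for the higher modes your single substitution with exponent $s_2'=s_1+\alpha_1$ is exactly what the paper's nested substitution $g_\ell=r^{\alpha_1}h_\ell$ amounts to. The hypothesis you flagged for the shifted radial inequality holds unconditionally, since $\beta+n+2s_1-2=\sgn(\tau)\sqrt{(\beta+n-2)^2+4(n-1)}$ gives $(\beta+n+2s_1+\tau-2)/\tau=1+\sqrt{(\beta+n-2)^2+4(n-1)}/|\tau|>0$.
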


\begin{proof}
We refine the proof of Theorem \ref{Th:general_poincare}. As there, we set
$f_\ell(r)=r^{s_1}g_\ell(r)$ for $\ell\ge1$, with
\[
s_1=\frac{-(\beta+n-2)+\sgn(\tau)\sqrt{(\beta+n-2)^2+4(n-1)}}{2} ,
\]
the difference being that we now keep, rather than discard, the two terms that were thrown
away before: the Dirichlet energy of $g_1$, and the excess $\lambda_\ell-\lambda_1$ carried
by the modes of degree at least two.
For the non-radial part, using the spherical harmonic decomposition and
\(\lambda_\ell=\ell(\ell+n-2)\), we have
\begin{align}
&\sum_{\ell=1}^\infty \int_0^\infty \left(|f_\ell'(r)|^2+\frac{\lambda_\ell}{r^2}f_\ell(r)^2\right)
e^{-\delta r^\tau}r^{\beta+n-1}\,dr \notag\\
&=\sum_{\ell=1}^\infty \int_0^\infty\left(|f_\ell'(r)|^2+\frac{\lambda_1}{r^2}f_\ell(r)^2\right)
e^{-\delta r^\tau}r^{\beta+n-1}\,dr \; + \sum_{\ell=2}^{\infty} (\lambda_\ell-\lambda_1)\int_0^\infty \frac{f_\ell(r)^2}{r^2}e^{-\delta r^\tau}r^{\beta+n-1}\,dr,\notag\\
&=\sum_{\ell=1}^\infty \int_0^\infty
r^{2s_1}|g_\ell'(r)|^2 e^{-\delta r^\tau}r^{\beta+n-1}\,dr
+\delta\tau s_1\sum_{\ell=1}^\infty \int_0^\infty
g_\ell(r)^2 e^{-\delta r^\tau}r^{\beta+n+2s_1+\tau-3}\,dr\notag\\
& \qquad \qquad \qquad \qquad \qquad \qquad  \qquad \qquad \qquad + \sum_{\ell=2}^{\infty} (\lambda_\ell-\lambda_1)\int_0^\infty \frac{g_\ell(r)^2}{r^2}e^{-\delta r^\tau}r^{\beta+n+2s_1-1}\,dr, \notag \\
&=\delta\tau s_1\sum_{\ell=1}^\infty \int_0^\infty
g_\ell(r)^2 e^{-\delta r^\tau}r^{\beta+n+2s_1+\tau-3}\,dr + \int_0^\infty |g_1'(r)|^2 e^{-\delta r^\tau}r^{\beta+n+2s_1-1}\,dr \notag \\ 
&\qquad \qquad \qquad \qquad \qquad \qquad  \qquad \qquad \qquad 
+\sum_{\ell=2}^\infty \int_0^\infty \left(|g_\ell'(r)|^2+\frac{\Lambda_\ell}{r^2}g_\ell(r)^2\right)e^{-\delta r^\tau}r^{\beta+n+2s_1-1}\,dr \;
\end{align}
where we have written $\Lambda_\ell:=\lambda_\ell-\lambda_1$. We now estimate the two
remaining terms separately. To the first,
\[
\int_0^\infty |g_1'(r)|^2 e^{-\delta r^\tau}r^{\beta+n+2s_1-1}\,dr ,
\]
we apply the radial Poincar\'e inequality of Theorem \ref{th:general_radial_poincare}, with
$\beta$ replaced by $\beta+n+2s_1-1$.
\begin{equation}
      \int_0^\infty |g_1'(r)|^2 e^{-\delta r^\tau}r^{\beta+n+2s_1-1}\,dr \geq \delta\tau^2 \int_0^\infty \abs{g_1(r)-\overline {g_1}(r)}^2 e^{-\delta r^\tau}r^{\beta+n+2s_1+\tau-3}\,dr
\end{equation}

To the second,
\[
\sum_{\ell=2}^\infty \int_0^\infty \left(|g_\ell'(r)|^2+\frac{\Lambda_\ell}{r^2}g_\ell(r)^2\right)e^{-\delta r^\tau}r^{\beta+n+2s_1-1}\,dr ,
\]
we apply the argument used for the non-radial modes in the proof of Theorem
\ref{Th:general_poincare}, this time setting $g_\ell(r)=r^{\alpha_1}h_\ell(r)$. This gives
\begin{align}
        &\sum_{\ell=2}^\infty \int_0^\infty \left(|g_\ell'(r)|^2+\frac{\Lambda_\ell}{r^2}g_\ell(r)^2\right)e^{-\delta r^\tau}r^{\beta+n+2s_1-1}\,dr \notag\\
        &= \sum_{\ell=3}^\infty \int_0^\infty \left(\frac{\Lambda_\ell-\Lambda_2}{r^2}r^{2\alpha_1}h_\ell(r)^2\right)e^{-\delta r^\tau}r^{\beta+n+2s_1-1}\,dr+\sum_{\ell=2}^\infty \int_0^\infty \left(|h'_\ell|^2 \right))e^{-\delta r^\tau}r^{\beta+n+2s_1+2\alpha_1-1}\,dr \notag\\
        &\qquad \qquad \qquad  \qquad+(\alpha_1^2+\Lambda_2-\alpha_1(\beta+n+2s_1+2\alpha_1-2))\int_0^\infty |h_\ell|^2 e^{-\delta r^\tau}r^{\beta_1+n+2s_1+2\alpha_1-3}\,dr \notag\\
        &\qquad \qquad \qquad \qquad \qquad \qquad \qquad \qquad  + \delta\tau\alpha_1 \sum_{\ell=2}^\infty \int_0^\infty |h_\ell(r)|^2e^{-\delta r^\tau}r^{\beta+n+2s_1+2\alpha_1+\tau-3}\,dr \notag\\
        &\geq \delta\tau\alpha_1 \sum_{\ell=2}^\infty \int_0^\infty |g_\ell(r)|^2e^{-\delta r^\tau}r^{\beta+n+2s_1+\tau-3}\,dr
\end{align}

where $\alpha_1$ is the root of the equation $(\alpha_1^2+\Lambda_2-\alpha_1(\beta+n+2s_1+2\alpha_1-2))=0$ and $\tau\alpha_1>0$. that is 
\begin{align}
&\alpha_1=
\begin{cases}
\dfrac{-(\beta+n+2s_1-2)+
\sqrt{(\beta+n+2s_1-2)^2+4\Lambda_2}}{2},
& \tau>0,\\[1.2em]
\dfrac{-(\beta+n+2s_1-2)-
\sqrt{(\beta+n+2s_1-2)^2+4\Lambda_2}}{2},
& \tau<0.
\end{cases} \notag
\end{align}
and, substituting the value of $s_1$ and using $\Lambda_2=\lambda_2-\lambda_1=2n-(n-1)=n+1$, this becomes 
\begin{equation}
    \alpha_1 =
\begin{cases}
\displaystyle
\frac{-\sqrt{(\beta+n-2)^2+4(n-1)}
+\sqrt{(\beta+n-2)^2+8n}}{2},
& \tau>0,\\[1.5em]
\displaystyle
\frac{
\sqrt{(\beta+n-2)^2+4(n-1)}
-
\sqrt{(\beta+n-2)^2+8n}
}{2},
& \tau<0.
\end{cases}
\end{equation}

Now combining the two cases  we have
\begin{align}
\sum_{\ell=1}^\infty \int_0^\infty &\left(|f_\ell'(r)|^2+\frac{\lambda_\ell}{r^2}f_\ell(r)^2\right) e^{-\delta r^\tau}r^{\beta+n-1}\,dr 
-\delta\tau s_1\sum_{\ell=1}^\infty \int_0^\infty \abs{f_\ell(r)}^2 e^{-\delta r^\tau}r^{\beta+n+\tau-3}\,dr \notag \\ 
&\geq \delta\tau^2 \int_0^\infty \abs{g_1(r)-\overline {g_1}(r)}^2 e^{-\delta r^\tau}r^{\beta+n+2s_1+\tau-3}\,dr+ \delta\tau\alpha_1 \sum_{\ell=2}^\infty \int_0^\infty |g_\ell(r)|^2e^{-\delta r^\tau}r^{\beta+n+2s_1+\tau-3}\,dr \notag \\
&=
\delta\tau^2\inf_{d\in\mathbb R}
\int_0^\infty |f_1(r)-d r^{s_1}|^2
e^{-\delta r^\tau}r^{\beta+n+\tau-3}\,dr
+ \delta\tau\alpha_1\sum_{\ell=2}^\infty
\int_0^\infty |f_\ell(r)|^2
e^{-\delta r^\tau}r^{\beta+n+\tau-3}\,dr \notag \\
&\geq
\min\{\delta\tau^2,\delta\tau\alpha_1\}
\Biggl(
\inf_{d\in\mathbb R}
\int_0^\infty |f_1(r)-d r^{s_1}|^2
e^{-\delta r^\tau}r^{\beta+n+\tau-3}\,dr
+\sum_{\ell=2}^\infty
\int_0^\infty |f_\ell(r)|^2
e^{-\delta r^\tau}r^{\beta+n+\tau-3}\,dr
\Biggr) \notag \\
&=
\min\{\delta\tau^2,\delta\tau\alpha_1\}\inf_{d\in\mathbb R}
\int_0^\infty \Bigl|\sum_{\ell=1}^\infty f_\ell (r)\varphi_\ell(\sigma)-d\, \varphi_1(\sigma) r^{s_1}\Bigr|^2
e^{-\delta r^\tau}r^{\beta+n+\tau-3}\,dr \notag \\
&\geq
\min\{\delta\tau^2,\delta\tau\alpha_1\}
\inf_{D\in\mathbb R^n}
\int_{\mathbb R^n}
\left|\Phi_u (x)-D\cdot x\,|x|^{s_1-1}\right|^2
e^{-\delta|x|^\tau}|x|^{\beta+\tau-2}\,dx .
\end{align}

Here \(\Phi_u\) denotes the non-radial part of \(u\); in the last two lines the index
$\ell=1$ is understood to run over an orthonormal basis of the $n$-dimensional eigenspace of
$\lambda_1$, so that the competitors $d\,\varphi_1(\sigma)r^{s_1}$ sweep out exactly the
family $D\cdot x\,|x|^{s_1-1}$, $D\in\mathbb R^{n}$. This proves the theorem.

\end{proof}

We now combine the radial and non-radial stability estimates in order to establish stability of weighted Poincaré inequalities on $\mathbb R^n$. Recall that, for the weighted Poincaré inequality, the equality cases are described by three different manifolds of optimizers, according to the relation between the relevant constants. Accordingly, the stability statement is also divided into three cases, with the distance taken from the corresponding optimizer manifold in each case.

 \begin{theorem}[Stability of weighted Poincaré inequalities on $\mathbb R^n$]
\label{Th:poincare_stability}
Let $d\mu=e^{-\delta|x|^\tau}|x|^\beta\,dx,$
and
$d\nu=e^{-\delta|x|^\tau}|x|^{\beta+\tau-2}\,dx.$ For
\[
u\in   W,
\]
define
\[
\bar u=\frac{\int_{\mathbb R^n}u\,d\nu}
{\int_{\mathbb R^n}d\nu}.
\]
Then the following stability estimates hold.

\medskip

\noindent
\textup{(i)} If $\tau^2<\tau s_1$, then
\[
\begin{aligned}
&\int_{\mathbb R^n}|\nabla u|^2\,d\mu
-\delta\tau^2\int_{\mathbb R^n}|u-\bar u|^2\,d\nu\\
&\qquad\ge
\min\{\delta\tau^2,\delta\tau(s_1-\tau)\}
\inf_{b_0,b_1\in\mathbb R}
\int_{\mathbb R^n}
|u-b_0-b_1|x|^\tau|^2\,d\nu.
\end{aligned}
\]

\medskip

\noindent
\textup{(ii)} If $\tau s_1<\tau^2$, then
\[
\begin{aligned}
&\int_{\mathbb R^n}|\nabla u|^2\,d\mu
-\delta\tau s_1\int_{\mathbb R^n}|u-\bar u|^2\,d\nu\\
&\qquad\ge
\min\{\delta\tau(\tau-s_1),\delta\tau^2,\delta\tau\alpha_1\}
\inf_{\substack{b\in\mathbb R\\ B\in\mathbb R^n}}
\int_{\mathbb R^n}
|u-b-B\cdot x\,|x|^{s_1-1}|^2\,d\nu.
\end{aligned}
\]

\medskip

\noindent
\textup{(iii)} If $\tau=s_1$, then
\[
\begin{aligned}
&\int_{\mathbb R^n}|\nabla u|^2\,d\mu
-\delta\tau^2\int_{\mathbb R^n}|u-\bar u|^2\,d\nu\\
&\qquad\ge
\min\{\delta\tau^2,\delta\tau\alpha_1\}
\inf_{\substack{b_0,b_1\in\mathbb R\\ B\in\mathbb R^n}}
\int_{\mathbb R^n}
|u-b_0-b_1|x|^\tau-B\cdot x\,|x|^{s_1-1}|^2\,d\nu.
\end{aligned}
\]

Here $s_1$ is as in Theorem~\ref{Th:general_poincare}, and
\[
\alpha_1=
\sgn(\tau)\frac{
-\sqrt{(\beta+n-2)^2+4(n-1)}
+\sqrt{(\beta+n-2)^2+8n}
}{2}.
\]
\end{theorem}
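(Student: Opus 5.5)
By density (Lemma~\ref{lem:extension-poincare}) it suffices to take $u\in C_c^\infty(\mathbb R^n\setminus\{0\})$. Expand $u(r\sigma)=f_0(r)+\Phi_u(x)$ with $\Phi_u=\sum_{\ell\ge1}f_\ell(r)\varphi_\ell(\sigma)$. By \eqref{eq:energy-decomp}--\eqref{eq:rhs-decomp} the energy splits as $E_0+E_{\ge1}$, and $\int|u-\bar u|^2d\nu=R_0+R_{\ge1}$, where $R_0=\int_0^\infty|f_0-\bar f_0|^2e^{-\delta r^\tau}r^{\beta+\tau+n-3}dr$ and $\bar u=\bar f_0$. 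The candidate optimizers $b_0+b_1|x|^\tau$ are radial and $B\cdot x|x|^{s_1-1}$ lies in the degree-one sector, so the distance on the right-hand side also splits orthogonally into a radial piece and a non-radial piece. Hence each case reduces to two one-dimensional estimates: Theorem~\ref{th:general_radial_poincare_stability} (applied with $\beta+n-1$ in place of $\beta$) for the radial mode, and Theorem~\ref{thm:nonradial_stability} or Theorem~\ref{Th:general_poincare} for $\Phi_u$.

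\emph{Case (i), $\tau^2<\tau s_1$.} Subtract $\delta\tau^2(R_0+R_{\ge1})$. The radial part gives $E_0-\delta\tau^2R_0\ge\delta\tau^2\inf_{b_0,b_1}\int|f_0-b_0-b_1r^\tau|^2$. For the non-radial part, the non-radial Poincar\'e estimate gives
\[E_{\ge1}-\delta\tau^2R_{\ge1}\ge(\delta\tau s_1-\delta\tau^2)R_{\ge1}=\delta\tau(s_1-\tau)\,\|\Phi_u\|^2_{L^2(\nu)},\]
which dominates the non-radial part of the distance, since the competitors have no non-radial component. Taking the minimum of the two constants gives (i).

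\emph{Case (ii), $\tau s_1<\tau^2$.} Subtract $\delta\tau s_1(R_0+R_{\ge1})$. On the radial piece, $E_0-\delta\tau s_1R_0=(E_0-\delta\tau^2R_0)+\delta\tau(\tau-s_1)R_0$. The first bracket is at least $\delta\tau^2\inf_{a,b}\|f_0-a-br^\tau\|^2$ and the second at least $\delta\tau(\tau-s_1)\|f_0-\bar f_0\|^2$. The radial distance to constants, $\inf_b\|f_0-b\|^2=R_0$, is bounded by the second term. The non-radial piece is exactly Theorem~\ref{thm:nonradial_stability}, giving $\min\{\delta\tau^2,\delta\tau\alpha_1\}\inf_D\|\Phi_u-D\cdot x|x|^{s_1-1}\|^2$. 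Summing and taking minima yields the constant $\min\{\delta\tau(\tau-s_1),\delta\tau^2,\delta\tau\alpha_1\}$, since the combined infimum over $(b,B)$ is the sum of the separate infima by orthogonality. In writing this I will drop the $\delta\tau^2$ radial term, or keep it harmlessly.

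\emph{Case (iii), $\tau=s_1$.} Here the two constants coincide. The radial piece uses Theorem~\ref{th:general_radial_poincare_stability} with constant $\delta\tau^2$ and distance to $b_0+b_1r^\tau$, and the non-radial piece uses Theorem~\ref{thm:nonradial_stability}. Adding the two gives (iii).

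The main obstacle is the bookkeeping justifying that the infimum over the product families equals the sum of the radial and non-radial infima. This rests on $L^2(\nu)$-orthogonality of distinct spherical harmonic sectors, and on the orthogonality inside the degree-one sector, where $d\varphi_1r^{s_1}$ ranges over $D\cdot x|x|^{s_1-1}$. A further check in case (ii) is that the identity for $R_0$ uses the same mean $\bar u=\bar f_0$, which holds because non-radial modes have zero $\nu$-average. Once $u\in W$ is approximated, all terms pass to the limit by continuity of both sides in $\|\cdot\|_W$.
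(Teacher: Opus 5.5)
Your proposal is correct and follows the paper's proof essentially step by step: the orthogonal split $u=u_0+\Phi_u$ with $\bar u=\bar u_0$, the radial stability theorem for $u_0$, and the non-radial stability theorem for $\Phi_u$ (or just the non-radial Poincar\'e bound $\delta\tau s_1$ in case (i)). In each case you use the same rewriting of the subtracted constant, splitting off $\delta\tau(s_1-\tau)\|\Phi_u\|^2_{L^2(\nu)}$ or $\delta\tau(\tau-s_1)\|u_0-\bar u_0\|^2_{L^2(\nu)}$. Your remarks on the orthogonal splitting of the combined infimum and on passing from $C_c^\infty(\mathbb R^n\setminus\{0\})$ to $W$ by continuity only make explicit what the paper leaves implicit.
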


\begin{proof}
Write
\[
u(r,\sigma)
=
\sum_{\ell=0}^\infty f_\ell(r)\varphi_\ell(\sigma)
=
u_0(r)+\Phi_u(r,\sigma),
\]
where
\[
u_0=f_0,
\qquad
\Phi_u=\sum_{\ell=1}^\infty f_\ell(r)\varphi_\ell(\sigma).
\]
Since $d\mu$ and $d\nu$ are radial, the radial and nonradial parts are
orthogonal. Moreover, $\overline{\Phi_u}=0$, and hence $\bar u=\bar u_0$.
Therefore,
\[
\int_{\mathbb R^n}|\nabla u|^2\,d\mu
=
\int_{\mathbb R^n}|\nabla u_0|^2\,d\mu
+
\int_{\mathbb R^n}|\nabla\Phi_u|^2\,d\mu
\]
and
\[
\int_{\mathbb R^n}|u-\bar u|^2\,d\nu
=
\int_{\mathbb R^n}|u_0-\bar u_0|^2\,d\nu
+
\int_{\mathbb R^n}|\Phi_u|^2\,d\nu.
\]

The radial and nonradial stability estimates give
\[
\int_{\mathbb R^n}|\nabla u_0|^2\,d\mu
-\delta\tau^2
\int_{\mathbb R^n}|u_0-\bar u_0|^2\,d\nu
\ge \delta\tau^2X_0
\]
and
\[
\int_{\mathbb R^n}|\nabla\Phi_u|^2\,d\mu
-\delta\tau s_1
\int_{\mathbb R^n}|\Phi_u|^2\,d\nu
\ge
\min\{\delta\tau^2,\delta\tau\alpha_1\}Y_1.
\]

We now consider the three cases separately.

\medskip

\noindent
\textup{(i)} Suppose that $\tau^2<\tau s_1$. Then the sharp Poincaré
constant is $\delta\tau^2$. Hence
\[
\begin{aligned}
&\int_{\mathbb R^n}|\nabla u|^2\,d\mu
-\delta\tau^2
\int_{\mathbb R^n}|u-\bar u|^2\,d\nu\\
&=
\left(
\int_{\mathbb R^n}|\nabla u_0|^2\,d\mu
-\delta\tau^2
\int_{\mathbb R^n}|u_0-\bar u_0|^2\,d\nu
\right)+
\left(
\int_{\mathbb R^n}|\nabla\Phi_u|^2\,d\mu
-\delta\tau^2
\int_{\mathbb R^n}|\Phi_u|^2\,d\nu
\right)\\
&=
\left(
\int_{\mathbb R^n}|\nabla u_0|^2\,d\mu
-\delta\tau^2
\int_{\mathbb R^n}|u_0-\bar u_0|^2\,d\nu
\right)+
\left(
\int_{\mathbb R^n}|\nabla\Phi_u|^2\,d\mu
-\delta\tau s_1
\int_{\mathbb R^n}|\Phi_u|^2\,d\nu
\right)\\
&\qquad \qquad \qquad \qquad \qquad \qquad \qquad \qquad \qquad \qquad \qquad\qquad+
\delta\tau(s_1-\tau)
\int_{\mathbb R^n}|\Phi_u|^2\,d\nu.\\
&\ge
\delta\tau^2\inf_{b_0,b_1\in\mathbb R}\int_{\mathbb R^n}|u_0-b_0-b_1|x|^\tau|^2\,d\nu+\delta\tau(s_1-\tau)\int_{\mathbb R^n}|\Phi_u|^2\,d\nu,\\
&\ge
\min\{\delta\tau^2,\delta\tau(s_1-\tau)\}\inf_{b_0,b_1\in\mathbb R}
\int_{\mathbb R^n}
|u-b_0-b_1|x|^\tau|^2\,d\nu,.
\end{aligned}
\]

\medskip

\noindent
\textup{(ii)} Suppose that $\tau s_1<\tau^2$. Then the sharp Poincaré
constant is $\delta\tau s_1$. Hence
\[
\begin{aligned}
&\int_{\mathbb R^n}|\nabla u|^2\,d\mu
-\delta\tau s_1\int_{\mathbb R^n}|u-\bar u|^2\,d\nu\\
&=
\left(
\int_{\mathbb R^n}|\nabla u_0|^2\,d\mu
-\delta\tau s_1\int_{\mathbb R^n}|u_0-\bar u_0|^2\,d\nu
\right)
+
\left(
\int_{\mathbb R^n}|\nabla\Phi_u|^2\,d\mu
-\delta\tau s_1\int_{\mathbb R^n}|\Phi_u|^2\,d\nu
\right)\\
&=
\left(
\int_{\mathbb R^n}|\nabla u_0|^2\,d\mu
-\delta\tau^2\int_{\mathbb R^n}|u_0-\bar u_0|^2\,d\nu
\right)
+\delta\tau(\tau-s_1)
\int_{\mathbb R^n}|u_0-\bar u_0|^2\,d\nu\\
&\qquad \qquad \qquad \qquad \qquad \qquad \qquad \qquad \qquad+
\left(
\int_{\mathbb R^n}|\nabla\Phi_u|^2\,d\mu
-\delta\tau s_1\int_{\mathbb R^n}|\Phi_u|^2\,d\nu
\right)\\
&\ge
\delta\tau(\tau-s_1)
\int_{\mathbb R^n}|u_0-\bar u_0|^2\,d\nu
+
\min\{\delta\tau^2,\delta\tau\alpha_1\}
\inf_{B\in\mathbb R^n}
\int_{\mathbb R^n}
|\Phi_u-B\cdot x\,|x|^{s_1-1}|^2\,d\nu\\
&\ge
\min\{\delta\tau(\tau-s_1),\delta\tau^2,\delta\tau\alpha_1\}
\inf_{\substack{b\in\mathbb R\\B\in\mathbb R^n}}
\int_{\mathbb R^n}
|u-b-B\cdot x\,|x|^{s_1-1}|^2\,d\nu.
\end{aligned}
\]

\medskip

\noindent
\textup{(iii)} Suppose that $\tau=s_1$. Then
$\delta\tau^2=\delta\tau s_1$, and hence
\[
\begin{aligned}
&\int_{\mathbb R^n}|\nabla u|^2\,d\mu
-\delta\tau^2\int_{\mathbb R^n}|u-\bar u|^2\,d\nu\\
&=
\left(
\int_{\mathbb R^n}|\nabla u_0|^2\,d\mu
-\delta\tau^2\int_{\mathbb R^n}|u_0-\bar u_0|^2\,d\nu
\right)
+
\left(
\int_{\mathbb R^n}|\nabla\Phi_u|^2\,d\mu
-\delta\tau s_1\int_{\mathbb R^n}|\Phi_u|^2\,d\nu
\right)\\
&\ge
\delta\tau^2
\inf_{b_0,b_1\in\mathbb R}
\int_{\mathbb R^n}
|u_0-b_0-b_1|x|^\tau|^2\,d\nu
+
\min\{\delta\tau^2,\delta\tau\alpha_1\}
\inf_{B\in\mathbb R^n}
\int_{\mathbb R^n}
|\Phi_u-B\cdot x\,|x|^{s_1-1}|^2\,d\nu\\
&\ge
\min\{\delta\tau^2,\delta\tau\alpha_1\}
\inf_{\substack{b_0,b_1\in\mathbb R\\B\in\mathbb R^n}}
\int_{\mathbb R^n}
\left|
u-b_0-b_1|x|^\tau
-B\cdot x\,|x|^{s_1-1}
\right|^2\,d\nu.
\end{aligned}
\]
This proves \textup{(iii)} and completes the proof.
\end{proof}

It remains to pass from the whole space to monomial weights. As in the proof of Theorem
\ref{Th:poincare_monomial_weight}, this is done by lifting; the only point requiring care is
that the competitors appearing on the right-hand side must themselves be liftable, which is
why the linear forms occurring below are built only from the variables $x_i$ with $a_i=0$.

\begin{proof}[Proof of Theorem \ref{Th:stability_poincare_monomialweight}]
Let
\[
A=(a_1,\ldots,a_n),\qquad N=n+|A|.
\]
We lift \(u\) from \(\mathbb R^n_{A,+}\) to the block-radial function
\(\widetilde u\) on \(\mathbb R^N\), apply
Theorem~\ref{Th:poincare_stability} in the lifted space, and then descend
the resulting estimates.

More precisely, write
\[
y=(y_1,\ldots,y_n),\qquad y_i\in\mathbb R^{a_i+1},
\]
so that \(x_i=|y_i|\) whenever \(a_i>0\), while \(x_i=y_i\) whenever
\(a_i=0\). By the lifting argument used in the proof of
Theorem~\ref{Th:poincare_monomial_weight}, we have
\[
\int_{\mathbb R^N}|\nabla\widetilde u|^2
e^{-\delta|y|^\tau}|y|^\beta\,dy
=
c_A\int_{\mathbb R^n_{A,+}}|\nabla u|^2\,d\mu_A
\]
and, for every \(b\in\mathbb R\),
\[
\int_{\mathbb R^N}|\widetilde u-b|^2
e^{-\delta|y|^\tau}|y|^{\beta+\tau-2}\,dy
=
c_A\int_{\mathbb R^n_{A,+}}|u-b|^2\,d\nu_A,
\]
where
\[
c_A=\prod_{i=1}^n|\mathbb S^{a_i}|.
\]
Thus the deficit in the lifted space is \(c_A\) times the corresponding
deficit on \(\mathbb R^n_{A,+}\). It remains to determine which
optimizing functions in the lifted space are compatible with the
block-radial structure of \(\widetilde u\).

We first consider case~\textup{(i)}. If \(a_i=0\) for at least one \(i\)
and \(|\tau|<|s_1|\), then the first admissible nonradial mode has degree
one, but the radial mode gives the sharp constant. If \(a_i>0\) for every
\(i\) and \(|\tau|<|s_2|\), the degree-one mode is not admissible and the
first admissible nonradial mode has degree two; nevertheless, the radial
mode again gives the sharp constant. Hence the radial stability estimate
in case~\textup{(i)} follows from the lifted theorem and the identities
above.

We next suppose that \(a_i=0\) for at least one \(i\). Let
\(\xi=(\xi_1,\ldots,\xi_n)\in\mathbb R^N\), where
\(\xi_i\in\mathbb R^{a_i+1}\). If \(a_i=0\), we identify \(\xi_i\) with a
scalar \(d_i\). Then
\[
\xi\cdot y
=
\sum_{\{i:a_i>0\}}\xi_i\cdot y_i
+
\sum_{\{i:a_i=0\}}d_i y_i.
\]
For every \(i\) such that \(a_i>0\), the function \(\widetilde u\) is
radial in the block \(y_i\). Therefore, for each coordinate \(y_{i,j}\),
\[
\int_{\mathbb R^N}
(\widetilde u-b)y_{i,j}|y|^{s_1-1}\,d\nu_N=0,
\]
where
\[
d\nu_N=e^{-\delta|y|^\tau}|y|^{\beta+\tau-2}\,dy.
\]
Moreover, the mixed quadratic terms vanish by symmetry:
\[
\int_{\mathbb R^N}
y_{i,j}y_{\ell,m}|y|^{2s_1-2}\,d\nu_N=0
\qquad\text{if }(i,j)\neq(\ell,m).
\]
Consequently, the coefficients corresponding to blocks with \(a_i>0\)
occur only through nonnegative quadratic terms, and their minimizing
values are zero. Hence only the one-dimensional blocks \(a_i=0\)
survive. Since \(y_i=x_i\) for such indices,
\[
\begin{aligned}
&\inf_{\substack{b\in\mathbb R\\ \xi\in\mathbb R^N}}
\int_{\mathbb R^N}
\left|
\widetilde u-b-(\xi\cdot y)|y|^{s_1-1}
\right|^2\,d\nu_N =
c_A
\inf_{\substack{b\in\mathbb R\\ d_i\in\mathbb R}}
\int_{\mathbb R^n_{A,+}}
\left|
u-b-
\left(\sum_{\{i:a_i=0\}}d_i x_i\right)|x|^{s_1-1}
\right|^2\,d\nu_A.
\end{aligned}
\]
This proves the identification of the distance term in
case~\textup{(ii)}. The same argument, with
\(\widetilde u-b_0-b_1|y|^\tau\) in place of \(\widetilde u-b\), gives
\[
\begin{aligned}
&\inf_{\substack{b_0,b_1\in\mathbb R\\ \xi\in\mathbb R^N}}
\int_{\mathbb R^N}
\left|
\widetilde u-b_0-b_1|y|^\tau-(\xi\cdot y)|y|^{s_1-1}
\right|^2\,d\nu_N\\
&\qquad \qquad\qquad\qquad\qquad=
c_A
\inf_{\substack{b_0,b_1\in\mathbb R\\ d_i\in\mathbb R}}
\int_{\mathbb R^n_{A,+}}
\left|
u-b_0-b_1|x|^\tau-
\left(\sum_{\{i:a_i=0\}}d_i x_i\right)|x|^{s_1-1}
\right|^2\,d\nu_A,
\end{aligned}
\]
which gives case~\textup{(iii)}. Thus cases~\textup{(i)}--\textup{(iii)}
follow from the corresponding lifted estimates.

It remains to prove cases~\textup{(iv)} and~\textup{(v)}. Assume now
that \(a_i>0\) for every \(i\). Since \(\widetilde u\) is radial in every
block \(\mathbb R^{a_i+1}\), all odd-degree spherical harmonic modes
vanish {: indeed, $\widetilde u$ is then invariant under $y\mapsto-y$, because
$-I_{a_i+1}\in O(a_i+1)$ acts trivially on block-radial functions, and the antipodal map
multiplies a spherical harmonic of degree $\ell$ by $(-1)^{\ell}$}. Hence its nonradial part has an expansion of the form
\[
\widetilde\Phi_u(r,\sigma)
=
\sum_{\ell=1}^{\infty}
f_{2\ell}(r)\varphi_{2\ell}(\sigma).
\]
In particular, the first admissible nonradial mode has degree two rather
than degree one.

The block-invariant harmonic polynomials of degree two are precisely of
the form
\[
\sum_{i=1}^n c_i|y_i|^2,
\qquad
\sum_{i=1}^n c_i(a_i+1)=0.
\]
Under the lifting, \(|y_i|=x_i\) and \(|y|=|x|\); therefore,
\[
\begin{aligned}
&\int_{\mathbb R^N}
\left|
\widetilde u-b-|y|^{s_2-2}\sum_{i=1}^n c_i|y_i|^2
\right|^2\,d\nu_N =
c_A
\int_{\mathbb R^n_{A,+}}
\left|
u-b-|x|^{s_2-2}\sum_{i=1}^n c_i x_i^2
\right|^2\,d\nu_A.
\end{aligned}
\]
The analogous identity holds for the balanced family
\[
b_0+b_1|y|^\tau
+
|y|^{s_2-2}\sum_{i=1}^n c_i|y_i|^2.
\]

We now repeat the argument used in the proof of
Theorem~\ref{thm:nonradial_stability}. There, the first two nonradial
eigenvalues were
\[
\lambda_1=N-1,\qquad \lambda_2=2N,
\]
which led to the parameters \(s_1\) and \(\alpha_1\). In the present
block-invariant subspace, the first two admissible nonradial eigenvalues
are instead
\[
\lambda_2=2N,\qquad \lambda_4=4(N+2).
\]
Thus the same argument applies after making the replacements
\[
\lambda_1\mapsto\lambda_2,\qquad
\lambda_2\mapsto\lambda_4,\qquad
s_1\mapsto s_2,\qquad
\alpha_1\mapsto\alpha_2.
\]
Equivalently, the quantities \(4(N-1)\) and \(8N\) in the formulas for
\(s_1\) and \(\alpha_1\) are replaced by \(8N\) and \(16(N+2)\),
respectively. Hence
\[
s_2
=
\frac{-(\beta+N-2)+\operatorname{sgn}(\tau)
\sqrt{(\beta+N-2)^2+8N}}{2},
\]
and
\[
\alpha_2
=
\frac{\operatorname{sgn}(\tau)}{2}
\left(
\sqrt{(\beta+N-2)^2+16(N+2)}
-
\sqrt{(\beta+N-2)^2+8N}
\right).
\]
Applying the proof of Theorem~\ref{thm:nonradial_stability} with these
replacements gives the lifted estimates corresponding to the degree-two
and balanced radial/degree-two cases. Descending them by the preceding
identities gives cases~\textup{(i)} cases~\textup{(iv)} and~\textup{(v)}, respectively.
\end{proof}


\section{Stability of the Caffarelli--Kohn--Nirenberg inequalities on orthants and on $\R^n$}\label{S:CKN_stability}

In this section we combine the exact identities of Section \ref{S:identities} with the
weighted Poincar\'e inequalities of Sections \ref{S:poincare} and
\ref{S:poincare_stability}, and thereby prove the stability estimates announced in the
introduction, both in their scale invariant and in their scale non-invariant forms. The
mechanism is the one described earlier: the identity converts the deficit into a weighted
Dirichlet energy, and the Poincar\'e inequality converts that energy into a distance to the
family of optimizers.

We first record the two identities in a form covering both signs of $\tau$ at once. Writing
$|-a+b+1|$ in place of $-a+b+1$ in the exponentials, Theorems \ref{Th:CKNO+_identity} and
\ref{Th:CKNO-_identity} combine into
    \begin{equation}\label{eq:combine_Identity}
   \begin{aligned}
   &\lambda^2\int_{\Rnkp}\frac{|\nabla u|^2}{|x|^{2b}}dx + \frac{1}{\lambda^2}\int_{\Rnkp}\frac{u^2}{|x|^{2a}}dx - 2C(n,a,b,k)\int_{\Rnkp}\frac{u^2}{|x|^{a+b+1}}dx \\ 
       &= \lambda^2 \int_{\Rnkp}\left|\nabla\left(\frac{u(x)}{\pxi |x|^m} e^{\frac{|x|^{-a+b+1}}{\lambda^2|-a+b+1|}}\right)\right|^2 e^{-2\frac{|x|^{-a+b+1}}{\lambda^2|-a+b+1|}}\frac{\pxis |x|^{2m}}{|x|^{2b}}dx.
   \end{aligned}
\end{equation}
and, in the same way, Theorems \ref{Th:Ckn+_identity} and \ref{Th:Ckn-_identity} combine, for the choice
    \[
\lambda = \left( \frac{\int_{\Rnkp}\frac{u^2}{|x|^{2a}}dx}
{   \int_{\Rnkp}\frac{|\nabla u|^2}{|x|^{2b}}dx} \right)^{\tfrac{1}{4}},
\]
into the following identity:
\begin{equation}\label{eq:combine_non_invarient_identity}
   \begin{aligned}
   &\left( \int_{\Rnkp}\frac{|\nabla u|^2}{|x|^{2b}}dx \right)^ \frac 12  \left(\int_{\Rnkp}\frac{u^2}{|x|^{2a}}dx\right)^\frac 12 - C(n,a,b,k)\int_{\Rnkp}\frac{u^2}{|x|^{a+b+1}}dx \\ 
       &= \frac{\lambda^2}{2} \int_{\Rnkp}\left|\nabla\left(\frac{u(x)}{\pxi |x|^m} e^{\frac{|x|^{-a+b+1}}{\lambda^2|-a+b+1|}}\right)\right|^2 e^{-2\frac{|x|^{-a+b+1}}{\lambda^2|-a+b+1|}}\frac{\pxis |x|^{2m}}{|x|^{2b}}dx.
   \end{aligned}
\end{equation}

Recall from Theorem \ref{Th:CKNIon_Orthant} that, for \(\tau=-a+b+1\neq 0\), the extremal functions for the sharp Caffarelli--Kohn--Nirenberg inequality are given by the following family.
\[
 E
:=
\left\{
A\left(\prod_{i=n-k+1}^{n}x_i\right)|x|^m
\exp\left(-\frac{B}{|\tau|}|x|^\tau\right)
:\ A\in\mathbb R,\ B>0
\right\},
\]
where, as in Theorem \ref{Th:CKNIon_Orthant},
\[
m+k=\frac{-(n-2b-2)+\sgn(\tau)\sqrt{(n-2b-2)^{2}+4k(n+k-2)}}{2}.
\]

To apply the weighted Poincaré inequalities to the identities established
above, we record the corresponding parameter dictionary. Under the lifting
procedure, we take $A=(0,\ldots,0,2,\ldots,2),$ with \(n-k\) zeros and \(k\) twos, we have \(|A|=2k\) and hence
\[
N=n+|A|=n+2k,\qquad
\tau=-a+b+1,\qquad
\delta=\frac{2}{\lambda^2|\tau|},\qquad
\beta=2m-2b.
\]
Since
\[
m+k=\frac{-(n-2b-2)+\operatorname{sgn}(\tau)\Theta}{2},
\]
we obtain
\[
\beta+N-2
=2m-2b+n+2k-2
=\operatorname{sgn}(\tau)\Theta.
\]
Thus the hypothesis
\[
\frac{\beta+N+\tau-2}{\tau}>0
\]
of Theorem
\ref{Th:poincare_monomial_weight}  and Theorem \ref{Th:stability_poincare_monomialweight} holds for either sign of $\tau$.\\ 
Substituting $N=n+2k \text{ and } \beta+N-2=\operatorname{sgn}(\tau)\Theta$
into the stability parameters of Theorem \ref{Th:stability_poincare_monomialweight}, we obtain
\[
s_q=\frac{\operatorname{sgn}(\tau)}{2}
\left(\sqrt{\Theta^2+4\lambda_q^{(N)}}-\Theta\right),
\qquad q=1,2,
\]
and
\[
\alpha_1=\frac{\operatorname{sgn}(\tau)}{2}
\left(\sqrt{\Theta^2+4\lambda_2^{(N)}}
-\sqrt{\Theta^2+4\lambda_1^{(N)}}\right),
\qquad
\alpha_2=\frac{\operatorname{sgn}(\tau)}{2}
\left(\sqrt{\Theta^2+4\lambda_4^{(N)}}
-\sqrt{\Theta^2+4\lambda_2^{(N)}}\right).
\]
These are precisely the stability parameters defined in \eqref{eq:s1s2} {and}
\eqref{eq:alpha1alpha2}. Consequently,
\begin{equation}\label{s1s2a1a2}
  \frac{\lambda^{2}}{2}\,\delta\tau^{2}=|\tau| ,
\quad
\frac{\lambda^{2}}{2}\,\delta\tau s_1=|s_1| ,
\quad 
\frac{\lambda^{2}}{2}\,\delta\tau\alpha_1=|\alpha_1| ,
\quad
\frac{\lambda^{2}}{2}\,\delta\tau s_2= |s_2| ,
\quad 
\frac{\lambda^{2}}{2}\,\delta\tau\alpha_2=|\alpha_2| ,  
\end{equation}

For convenience, {this notation} will be used throughout this section.

\begin{theorem}[Stability of the Caffarelli--Kohn--Nirenberg inequality]\label{Th:stability_ckn_scale_noninvariant}
Let $u\in S_{a,b}(\mathbb{R}^n_{k,+})$, assume $\tau:=-a+b+1\neq 0$, and let
\[
C(n,a,b,k)=\frac{|\tau|+\sqrt{(n-2b-2)^{2}+4k(n+k-2)}}{2}
\]
be the sharp constant of Theorem \ref{Th:CKNIon_Orthant}. Then
\[
\begin{aligned}
&\int_{\mathbb{R}^n_{k,+}}\frac{|\nabla u|^2}{|x|^{2b}}\,dx
+\int_{\mathbb{R}^n_{k,+}}\frac{u^2}{|x|^{2a}}\,dx
-2C(n,a,b,k)\int_{\mathbb{R}^n_{k,+}}\frac{u^2}{|x|^{a+b+1}}\,dx  \\
&\qquad \qquad \qquad \qquad \qquad \qquad\geq 
2 C_2(n,a,b,k)
\inf_{v \in E }
\int_{\mathbb{R}^n_{k,+}}
\left|u(x)-v(x)\right|^2\frac1{|x|^{a+b+1}}
dx .
\end{aligned}
\]
where  $C_2(n,a,b,k)={\begin{cases}
    \min\{|\tau|, |s_1| \}, & k<n, \\
    \min\{|\tau|, |s_2| \}, & k=n.
\end{cases}}$

\end{theorem}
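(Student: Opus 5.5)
We plan to apply the scale non-invariant identity \eqref{eq:combine_Identity} with $\lambda=1$, because the left-hand side of the statement is exactly the left-hand side of that identity at $\lambda=1$. Set
\[
w(x)=\frac{u(x)}{\pxi|x|^{m}}\,e^{\frac{|x|^{\tau}}{|\tau|}} .
\]
With this choice the deficit equals
\[
\int_{\Rnkp}|\nabla w|^{2}\,\pxis e^{-\delta|x|^{\tau}}|x|^{\beta}\,dx,
\qquad \delta=\frac{2}{|\tau|},\quad \beta=2m-2b ,
\]
which is the measure $d\mu_A$ of Theorem \ref{Th:poincare_monomial_weight}. Here $A=(0,\dots,0,2,\dots,2)$ with $k$ twos, so that $N=n+2k$. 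The parameter dictionary recorded before the statement gives $\beta+N-2=\operatorname{sgn}(\tau)\Theta$. Hence the hypothesis $(\beta+N+\tau-2)/\tau>0$ holds, and the Poincar\'e constants match \eqref{s1s2a1a2}. With $\lambda=1$ the relations $\tfrac{\lambda^2}{2}\delta\tau^2=|\tau|$ and $\tfrac{\lambda^2}{2}\delta\tau s_q=|s_q|$ give $\delta\tau^2=2|\tau|$ and $\delta\tau s_q=2|s_q|$.

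Next we apply Theorem \ref{Th:poincare_monomial_weight} to $w$. The constant is $\min\{\delta\tau^2,\delta\tau s_1\}$ when some $a_i=0$, which is the case $k<n$. It is $\min\{\delta\tau^2,\delta\tau s_2\}$ when every $a_i>0$, which is the case $k=n$. This yields
\[
\text{deficit}\ \ge\ 2C_2(n,a,b,k)\inf_{d\in\mathbb R}\int_{\Rnkp}|w-d|^{2}\,\pxis e^{-\delta|x|^{\tau}}|x|^{\beta+\tau-2}\,dx .
\]
We then undo the substitution. Since $e^{-\delta|x|^\tau}=e^{-2|x|^\tau/|\tau|}$, the quantity $(w-d)^2\pxis|x|^{2m}e^{-2|x|^\tau/|\tau|}$ equals
\[
\Bigl(u-d\,\pxi|x|^{m}e^{-|x|^{\tau}/|\tau|}\Bigr)^{2}.
\]
The power of $|x|$ left over is $|x|^{\beta+\tau-2-2m}=|x|^{-2b-a+b+1-2}=|x|^{-(a+b+1)}$. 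The right-hand side is therefore
\[
2C_2\inf_{d}\int_{\Rnkp}\bigl|u-d\,\pxi|x|^m e^{-|x|^\tau/|\tau|}\bigr|^2\frac{dx}{|x|^{a+b+1}} .
\]
This is an infimum over the subfamily of $E$ with $B=1$. It is at least the infimum over all of $E$, which gives the claim.

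The main obstacle is justifying the Poincar\'e inequality for this particular $w$, because Theorem \ref{Th:poincare_monomial_weight} is stated only for $C_c^\infty(\mathbb R^n_{A,+}\setminus\{0\})$. We would argue by density. For $u\in C_c^\infty(\Rnkp\setminus\{0\})$ the function $w$ is smooth on the open orthant, but it is in general not compactly supported away from the walls, since $u/\pxi$ need not vanish there. We therefore plan to use the lifting: $\tilde u$ built from $u$ and the reflections, divided by the monomial, becomes a block-radial function in $\mathbb R^{N}$. The reason is that $u$ odd in $x_i$ divided by $x_i$ lifts smoothly to the three-dimensional block. The lifted function lies in the energy space $W$ of Theorem \ref{Th:general_poincare}, with finiteness of both weighted norms supplied by the identity itself together with the admissibility check from the proof of Theorem \ref{Th:CKNIon_Orthant}.

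Two further points need care. The radial family $d+c|x|^\tau$ and the admissible non-radial modes are unchanged by this extension. For general $u\in S_{a,b}(\Rnkp)$ we pass to the limit by the density results of the Appendix, since both sides of the inequality are continuous in the $S_{a,b}$-norm. The case $k=n$ uses the improved constant with $s_2$ from the degree-two analysis, and nothing beyond that theorem is needed.
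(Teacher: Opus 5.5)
Your proposal is correct and is essentially the paper's proof. Both take the identity \eqref{eq:combine_Identity} at $\lambda=1$, apply Theorem~\ref{Th:poincare_monomial_weight} with $A=(0,\dots,0,2,\dots,2)$, $\delta=2/|\tau|$ and $\beta=2m-2b$, convert the constants via \eqref{s1s2a1a2}, and then note that the $B=1$ profiles $d\,\bigl(\prod_{i=n-k+1}^{n}x_i\bigr)|x|^{m}e^{-|x|^{\tau}/|\tau|}$ form a subfamily of $E$.

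The obstacle you single out does not arise, so the lifting detour is unnecessary. If $u\in C_c^\infty(\Rnkp\setminus\{0\})$, its support is a compact subset of the open orthant minus the origin. There $\bigl(\prod_{i=n-k+1}^{n}x_i\bigr)|x|^{m}$ is smooth and nonvanishing, so $w$ already lies in $C_c^\infty(\mathbb R^n_{A,+}\setminus\{0\})$ and the Poincar\'e inequality applies to it directly.
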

    \begin{proof}
By the identity \eqref{eq:combine_Identity}, we have
\begin{align}
   &\lambda^2\int_{\Rnkp}\frac{|\nabla u|^2}{|x|^{2b}}dx + \frac{1}{\lambda^2}\int_{\Rnkp}\frac{u^2}{|x|^{2a}}dx - 2C(n,a,b,k)\int_{\Rnkp}\frac{u^2}{|x|^{a+b+1}}dx \notag \\ 
       &\qquad = \lambda^2 \int_{\Rnkp}\left|\nabla\left(\frac{u(x)}{\pxi |x|^m} e^{\frac{|x|^{-a+b+1}}{\lambda^2|-a+b+1|}}\right)\right|^2 e^{-2\frac{|x|^{-a+b+1}}{\lambda^2|-a+b+1|}}\frac{\pxis |x|^{2m}}{|x|^{2b}}dx.\notag \\
\end{align}

Therefore applying the weighted Poincaré inequality from Theorem \ref{Th:poincare_monomial_weight} to the right side of the identity we get,
\begin{align}
    &\lambda^2\int_{\Rnkp}\frac{|\nabla u|^2}{|x|^{2b}}dx + \frac{1}{\lambda^2}\int_{\Rnkp}\frac{u^2}{|x|^{2a}}dx - 2C(n,a,b,k)\int_{\Rnkp}\frac{u^2}{|x|^{a+b+1}}dx \notag \\ 
    &\qquad\geq \lambda^2 C_2(n,A,\beta,\tau,\delta)
\inf_{A\in \mathbb R}\int_{\mathbb{R}^n_{k,+}}\left|u(x)-A|x|^me^{-\frac{|x|^\tau}{\lambda^2|\tau|}}\pxi\right|^2\frac1{|x|^{a+b+1}}dx  \notag\\
\end{align}

where
\begin{align*}
    C_2(n,A,\beta,\tau,\delta)={\begin{cases}
  \min\{\delta\tau^{2},\ \delta\tau s_2\},  &  k=n,\\
  \min\{\delta\tau^{2},\ \delta\tau s_1\}, & k<n,
\end{cases}} 
\end{align*}

Using equation \eqref{s1s2a1a2}, \eqref{eq:s1s2}
\begin{align*}
   \lambda^2 C_2(n,A,\beta,\tau,\delta)=2C_2(n,a,b,k)={\begin{cases}
    2\min\{|\tau|, |s_1| \}, & k<n, \\
    2\min\{|\tau|, |s_2| \}, & k=n,
\end{cases}}
\end{align*}

And taking $\lambda=1$ gives the asserted inequality.
\end{proof}

We now turn to the scale invariant form of the estimate, that is, to Theorem
\ref{Th:stability_ckn} of the introduction. It is obtained from the identity
\eqref{eq:combine_non_invarient_identity} by exactly the same argument, the only difference
being that the optimal choice of $\lambda$ has already been made in that identity.
\begin{proof}[Proof of Theorem \ref{Th:stability_ckn}]
Arguing exactly as in the proof of the preceding theorem and applying Theorem~\ref{Th:poincare_monomial_weight} to the identity \eqref{eq:combine_non_invarient_identity}, we obtain the scale-invariant stability in the case 
\begin{equation}
   \begin{aligned}
   &\left( \int_{\Rnkp}\frac{|\nabla u|^2}{|x|^{2b}}dx \right)^ \frac 12  \left(\int_{\Rnkp}\frac{u^2}{|x|^{2a}}dx\right)^\frac 12 - C(n,a,b,k)\int_{\Rnkp}\frac{u^2}{|x|^{a+b+1}}dx \\ 
       &= \frac{\lambda^2}{2} \int_{\Rnkp}\left|\nabla\left(\frac{u(x)}{\pxi |x|^m} e^{\frac{|x|^{-a+b+1}}{\lambda^2|-a+b+1|}}\right)\right|^2 e^{-2\frac{|x|^{-a+b+1}}{\lambda^2|-a+b+1|}}\frac{\pxis |x|^{2m}}{|x|^{2b}}dx\\
       &\qquad\geq  \frac{\lambda^2}{2} C_2(n,A,\beta,\tau,\delta)
\inf_{A\in \mathbb R}\int_{\mathbb{R}^n_{k,+}}\left|u(x)-A|x|^me^{-\frac{|x|^\tau}{\lambda^2|\tau|}}\pxi\right|^2\frac1{|x|^{a+b+1}}dx  \\
       &\qquad\geq \frac{\lambda^2}{2} C_2(n,A,\beta,\tau,\delta)
\inf_{v\in E}\int_{\mathbb{R}^n_{k,+}}\left|u(x)-v(x)\right|^2\frac1{|x|^{a+b+1}}dx .
   \end{aligned}
\end{equation}

where
\begin{align*}
    C_2(n,A,\beta,\tau,\delta)={\begin{cases}
  \min\{\delta\tau^{2},\ \delta\tau s_2\},  &  k=n,\\
  \min\{\delta\tau^{2},\ \delta\tau s_1\}, & k<n,
\end{cases}} 
\end{align*}

Using equation \eqref{s1s2a1a2}, \eqref{eq:s1s2}
\begin{align*}
   \frac{\lambda^2}{2} C_2(n,A,\beta,\tau,\delta)={C_2(n,a,b,k)=\begin{cases}
    \min\{|\tau|, |s_1| \}, & k<n, \\
    \min\{|\tau|, |s_2| \}, & k=n,
\end{cases}}
\end{align*}

\end{proof}

Taking $k=0$ in Theorem \ref{Th:stability_ckn}, we obtain the corresponding stability
estimate on the whole Euclidean space, which appears to be new even in that setting.

\begin{corollary}[Stability of the $L^2$-CKN inequality on $\mathbb R^n$]
Let $n\ge 1$, $a,b\in\mathbb R$, set $\tau:=1+b-a$, and assume $\tau\neq0$. Define
\[
\rho(u):=
\left(\int_{\mathbb R^n}\frac{|\nabla u|^2}{|x|^{2b}}\,dx\right)^{1/2}
\left(\int_{\mathbb R^n}\frac{|u|^2}{|x|^{2a}}\,dx\right)^{1/2}
-
C(n,a,b)\int_{\mathbb R^n}\frac{|u|^2}{|x|^{a+b+1}}\,dx.
\]
Then
\begin{equation}
 \rho(u)\ge \min \left\{ |\tau|,\ \frac{\sqrt{(n-2b-2)^2+4(n-1)}-|n-2b-2|}{2} \right\}
\inf_{c\in\mathbb R,\,B>0} \int_{\mathbb R^n}
\frac{\left|u-c\,{|x|^{m}}e^{-B\frac{|x|^\tau}{|\tau|}}\right|^2}{|x|^{a+b+1}}\,dx ,
\end{equation}
where
\[
m=\frac{-(n-2b-2)+\sgn(\tau)|n-2b-2|}{2}
=
\begin{cases}
0, & \text{in the regions }\mathcal A,\\
2(b+1)-n, & \text{in the regions }\mathcal B,
\end{cases}
\]
so that the extremal family is the one attached to the corresponding region in Theorem
\ref{Th:CKNI}.

\end{corollary}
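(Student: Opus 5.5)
The plan is to specialize Theorem \ref{Th:stability_ckn} to $k=0$ and then make the constant and the exponent explicit. When $k=0$ the orthant is all of $\mathbb R^n$ and the monomial factor $\prod_{i=n-k+1}^n x_i$ is empty, so it equals $1$. Moreover $\lambda_0^{(n)}=0$, which gives $\Theta=|n-2b-2|$. By the remark following Theorem \ref{Th:CKNIon_Orthant} and the alternative proof of Theorem \ref{Th:CKNI}, we have $C(n,a,b,0)=C(n,a,b)$. The formula $m+k=\frac{-(n-2b-2)+\sgn(\tau)\Theta}{2}$ therefore becomes the stated expression for $m$. A short case check on the sign of $\tau$ and on the sign of $n-2b-2$ identifies $m=0$ in $\mathcal A_1\cup\mathcal A_2$ and $m=2(b+1)-n$ in $\mathcal B_1\cup\mathcal B_2$. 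This matches the computation already carried out in the alternative proof of Theorem \ref{Th:CKNI}.

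Next I make the constant explicit. Since $k=0<n$, the first branch of $C_2(n,a,b,0)$ applies, so the constant is $\min\{|\tau|,|s_1|\}$. Here $N=n+2k=n$ and $\lambda_1^{(N)}=n-1$. From \eqref{eq:s1s2},
\[
|s_1|=\tfrac12\Bigl(\sqrt{\Theta^2+4(n-1)}-\Theta\Bigr)=\tfrac12\Bigl(\sqrt{(n-2b-2)^2+4(n-1)}-|n-2b-2|\Bigr),
\]
which is exactly the second entry of the minimum in the corollary. The family $E$ reduces to $\{c|x|^m e^{-B|x|^\tau/|\tau|}\}$, so the infimum in Theorem \ref{Th:stability_ckn} is the infimum over $c\in\mathbb R$ and $B>0$ appearing in the statement.

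The main point to check is that the hypotheses of Theorem \ref{Th:stability_ckn} are really met when $k=0$. Two conditions underlie the argument there. The first is the admissibility hypothesis $\frac{\beta+N+\tau-2}{\tau}>0$, which holds because $\beta+N-2=\sgn(\tau)\Theta$. The second is that the Poincaré step, Theorem \ref{Th:poincare_monomial_weight} with $A=0$, needs $N\ge2$, that is $n\ge2$.

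For $n=1$ the sphere $\mathbb S^0$ has no degree-$\ell$ harmonics with $\ell\ge2$, and the formula gives $|s_1|=0$, so the minimum is $0$. The inequality then reduces to $\rho(u)\ge0$, which is just Theorem \ref{Th:CKNI}. In that degenerate case I would therefore argue directly from the nonnegativity of the deficit rather than from the lifting. For $n\ge2$ the corollary is an immediate transcription of Theorem \ref{Th:stability_ckn}, and no further obstacle is expected.
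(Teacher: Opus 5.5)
Your proposal is correct and follows the paper's route exactly: the corollary is Theorem \ref{Th:stability_ckn} with $k=0$. There $\Theta=|n-2b-2|$, $N=n$, the monomial factor is $1$, and $C_2(n,a,b,0)=\min\{|\tau|,|s_1|\}$, with $|s_1|$ equal to the stated expression. You also separately handle $n=1$, where $N=1$ falls outside the hypothesis $N\ge2$ of Theorem \ref{Th:poincare_monomial_weight}; the constant collapses to $0$ there, so the claim is only $\rho(u)\ge0$. The paper leaves this case implicit, and your treatment of it is right.
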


Having established the first stability estimate for the Caffarelli–Kohn–Nirenberg inequality, we now investigate the stability of this estimate itself. By applying the stability result for the associated weighted Poincaré inequality, we obtain a further quantitative control of the deficit, which leads to the second stability result for the Caffarelli–Kohn–Nirenberg inequality.

\begin{proof}[Proof of Theorem \ref{Th:second_stability_ckn}]
By the identity \eqref{eq:combine_non_invarient_identity}, we have

\begin{equation}
   \begin{aligned}
   \rho(u)&=
\left( \int_{\Rnkp}\frac{|\nabla u|^2}{|x|^{2b}}dx \right)^ \frac 12  \left(\int_{\Rnkp}\frac{u^2}{|x|^{2a}}dx\right)^\frac 12 - C(n,a,b,k)\int_{\Rnkp}\frac{u^2}{|x|^{a+b+1}}dx \\ 
       &= \frac{\lambda^2}{2} \int_{\Rnkp}\left|\nabla\left(\frac{u(x)}{\pxi |x|^m} e^{\frac{|x|^{-a+b+1}}{\lambda^2|-a+b+1|}}\right)\right|^2 e^{-2\frac{|x|^{-a+b+1}}{\lambda^2|-a+b+1|}}\frac{\pxis |x|^{2m}}{|x|^{2b}}dx.
   \end{aligned}
\end{equation}
Since the right-hand side is precisely of the Poincaré type, with measure of the form
$d\mu(x)=e^{-\delta |x|^\tau}x^A|x|^\beta,dx,$

we may apply the stability result of Theorem \ref{Th:stability_poincare_monomialweight} directly to this term, thereby obtaining the corresponding second stability estimate for the Caffarelli–Kohn–Nirenberg inequality on $\Rnkp$. Set
\[
w(x)
:=
\frac{u(x)}{\pxi |x|^m}
e^{\frac{|x|^\tau}{\lambda^2|\tau|}}, \; \text{ and } \; d\mu_A
=
\pxis
e^{-\frac{2|x|^\tau}{\lambda^2|\tau|}}
|x|^{2m-2b}\,dx .
\]
and
\[
d\nu_A
=
\pxis
e^{-\frac{2|x|^\tau}{\lambda^2|\tau|}}
|x|^{2m-2b+\tau-2}\,dx .
\]
Then
\[
\rho(u)
=
\frac{\lambda^2}{2}
\int_{\Rnkp}|\nabla w|^2\,d\mu_A,
\]
Since $\tau=-a+b+1$, then for every admissible function $\psi$,
\[
\int_{\Rnkp}|w-\psi|^2\,d\nu_A
=
\int_{\Rnkp}
\left|
u-\psi\,\pxi |x|^m
e^{-\frac{|x|^\tau}{\lambda^2|\tau|}}
\right|^2
\frac{dx}{|x|^{a+b+1}}.
\]

We now apply Theorem~\ref{Th:stability_poincare_monomialweight} to 
$$\frac{\lambda^2}{2}
\int_{\Rnkp}|\nabla w|^2\,d\mu_A - \frac{\lambda^2}{2}C_2(n,A,\beta,\tau,\delta) \int_{\Rnkp}|w-d|^2\,d\nu_A$$

Using equation \eqref{s1s2a1a2}, \eqref{eq:s1s2}, \eqref{eq:alpha1alpha2}
and noticing 

\begin{align*}
   \frac{\lambda^2}{2} C_2(n,A,\beta,\tau,\delta)=C_2(n,a,b,k)={\begin{cases}
    \min\{|\tau|, |s_1| \}, & k<n, \\
    \min\{|\tau|, |s_2| \}, & k=n,
\end{cases}}
\end{align*}

From Theorem \ref{Th:stability_ckn} we have the following.

First suppose that {$k<n$ and  $|\tau|<|s_1|$,  or  $k=n$ and $|\tau|<|s_2|$}; then Theorem~\ref{Th:stability_poincare_monomialweight} gives
\[
\begin{aligned}
&\frac{\lambda^2}{2}\int_{\Rnkp}|\nabla w|^2\,d\mu_A
-
|\tau|
\inf_{d\in\mathbb R}
\int_{\Rnkp}|w-d|^2\,d\nu_A
\\
&\qquad\ge
{\min\{|\tau|,|s_q|-|\tau|\}}
\inf_{A_0,A_1\in\mathbb R}
\int_{\Rnkp}
|w-A_0-A_1|x|^\tau|^2\,d\nu_A .
\end{aligned}
\]

Next suppose $|\tau|>|s_1|$, and $k<n$ then Theorem~\ref{Th:stability_poincare_monomialweight} gives

\[
\begin{aligned}
&\frac{\lambda^2}{2}\int_{\Rnkp}|\nabla w|^2\,d\mu_A
-|s_1|\inf_{d\in\mathbb R} \int_{\Rnkp}|w-d|^2\,d\nu_A
\\
&\qquad\ge
\min\{|\tau|-|s_1|,|\tau|,|\alpha_1|\}
\inf_{A_0\in\mathbb R,\ b_i\in\mathbb R}
\int_{\Rnkp}
\left|
w-A_0-
\left(\sum_{i=1}^{n-k}b_i x_i\right)
|x|^{s_1-1}
\right|^2
\,d\nu_A .
\end{aligned}
\]

Now suppose $|\tau|=|s_1|$ and $k<n$, then the radial and first non-radial modes occur at the same level. Hence Theorem~\ref{Th:stability_poincare_monomialweight} gives
\[
\begin{aligned}
&\frac{\lambda^2}{2}\int_{\Rnkp}|\nabla w|^2\,d\mu_A
-
|\tau|
\inf_{d\in\mathbb R}
\int_{\Rnkp}|w-d|^2\,d\nu_A
\\
&\qquad\ge
\min\{|\tau|,|\alpha_1|\}
\inf_{A_0,A_1\in\mathbb R,\ b_i\in\mathbb R}
\int_{\Rnkp}
\left|
w-A_0-A_1|x|^\tau
-
\left(\sum_{i=1}^{n-k}b_i x_i\right)
|x|^{s_1-1}
\right|^2
\,d\nu_A .
\end{aligned}
\]

Now suppose that $|\tau|>|s_2|$ and {$k=n$}; then Theorem~\ref{Th:stability_poincare_monomialweight} gives
\[
\begin{aligned}
&\frac{\lambda^2}{2}\int_{\Rnkp}|\nabla w|^2\,d\mu_A
-
|s_2|
\inf_{d\in\mathbb R}
\int_{\Rnkp}|w-d|^2\,d\nu_A
\\
&\qquad\ge
\min\{|\tau|, |\tau|-|s_2|,\ |\alpha_2|\}
\inf_{\substack{A_{0}\in\mathbb R,\\ {\sum_{i=1}^{n} c_{i}=0}} }
\int_{\Rnkp}
\left|
w-A_0-\left(\sum_{i=1}^{n}c_i x_i^{2}\right)|x|^{s_2-2}\right|^2\,d\nu_A .
\end{aligned}
\]

Finally, if $|\tau|=|s_2|$ and {$k=n$}, then Theorem~\ref{Th:stability_poincare_monomialweight} gives
\[
\begin{aligned}
&\frac{\lambda^2}{2}\int_{\Rnkp}|\nabla w|^2\,d\mu_A
-
|s_2|
\inf_{d\in\mathbb R}
\int_{\Rnkp}|w-d|^2\,d\nu_A
\\
&\qquad\ge
\min\{|\tau|, \ |\alpha_2|\}
\inf_{\substack{A_{0},A_1\in\mathbb R,\\ {\sum_{i=1}^{n} c_{i}=0}}}
\int_{\Rnkp}
\left|
w-A_0-A_1|x|^\tau
-
{\left(\sum_{i=1}^{n}c_i x_i^{2}\right)
|x|^{s_2-2}}
\right|^2
\,d\nu_A .
\end{aligned}
\]
Returning to $u$, and then taking the infimum over the parameter $B > 0$ gives the  estimates. This completes the proof.
\end{proof}

In the same way, taking $k=0$ in Theorem \ref{Th:second_stability_ckn} yields the second
stability estimate on $\mathbb R^{n}$. Note that the hypothesis $k<n$ is then automatic.

\begin{corollary}[Second stability of the Caffarelli--Kohn--Nirenberg inequality on $\mathbb R^n$]
Let $n\ge2$, let $u\in S_{a,b}(\mathbb{R}^n)$, and assume
\[
\tau:=-a+b+1\neq 0.
\]
Define the Caffarelli--Kohn--Nirenberg {deficit} by
\[
\rho(u)
=
\left(\int_{\mathbb{R}^n}\frac{|\nabla u|^2}{|x|^{2b}}\,dx\right)^{\frac12}
\left(\int_{\mathbb{R}^n}\frac{u^2}{|x|^{2a}}\,dx\right)^{\frac12}
-C(n,a,b,0)
\int_{\mathbb{R}^n}\frac{u^2}{|x|^{a+b+1}}\,dx .
\]
Then the following estimates hold.

\begin{enumerate}
\item If $|\tau|<|s_1|$, then
\[
\begin{aligned}
&\rho(u)
-
|\tau|
\inf_{\phi\in E}
\int_{\mathbb{R}^n}
|u-\phi|^2\frac{dx}{|x|^{a+b+1}}
\\
&\qquad\ge
\min\{|\tau|,|s_1|-|\tau|\}
\inf_{A_0,A_1\in\mathbb R,\ B>0}
\int_{\mathbb{R}^n}
\left|
u-
(A_0+A_1|x|^\tau)
e^{-B\frac{|x|^\tau}{|\tau|}}
|x|^m
\right|^2
\frac{dx}{|x|^{a+b+1}} .
\end{aligned}
\]

\item If $|\tau|>|s_1|$, then
\[
\begin{aligned}
&\rho(u)
-
|s_1|
\inf_{\phi\in E}
\int_{\mathbb{R}^n}
|u-\phi|^2\frac{dx}{|x|^{a+b+1}}
\\
&\qquad\ge
\min\{|\tau|-|s_1|,|\tau|,|\alpha_1|\}
\\
&\qquad\qquad\times
\inf_{A_0\in\mathbb R,\ b_i\in\mathbb R,\ B>0}
\int_{\mathbb{R}^n}
\left|
u-
\left(
A_0+
\sum_{i=1}^{n}b_i x_i\,|x|^{s_1-1}
\right)
e^{-B\frac{|x|^\tau}{|\tau|}}
|x|^m
\right|^2
\frac{dx}{|x|^{a+b+1}} .
\end{aligned}
\]

\item If $|\tau|=|s_1|$, then
\[
\begin{aligned}
&\rho(u)
-
|\tau|
\inf_{\phi\in E}
\int_{\mathbb{R}^n}
|u-\phi|^2\frac{dx}{|x|^{a+b+1}}
\\
&\qquad\ge
\min\{|\tau|,|\alpha_1|\}
\\
&\qquad\qquad\times
\inf_{A_0,A_1\in\mathbb R,\ b_i\in\mathbb R,\ B>0}
\int_{\mathbb{R}^n}
\left|
u-
\left(
A_0+A_1|x|^\tau+
\sum_{i=1}^{n}b_i x_i\,|x|^{s_1-1}
\right)
e^{-B\frac{|x|^\tau}{|\tau|}}
|x|^m
\right|^2
\frac{dx}{|x|^{a+b+1}} .
\end{aligned}
\]
\end{enumerate}
\end{corollary}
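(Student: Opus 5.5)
The statement is the Euclidean corollary, obtained by specializing Theorem \ref{Th:second_stability_ckn} to $k=0$. The plan is to verify that every ingredient of that theorem reduces correctly when $k=0$, and then read off the three cases.

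First, with $k=0$ we have $\mathbb R^n_{0,+}=\mathbb R^n$, the product $\prod_{i=n-k+1}^n x_i$ is empty and equals $1$, and $A=(0,\dots,0)$. Hence $N=n+2k=n$ and $\lambda_k^{(n)}=0$, so $\Theta=|n-2b-2|$. The condition $k<n$ holds automatically because $n\ge2\ge1>0=k$, so cases (4) and (5) of Theorem \ref{Th:second_stability_ckn} (those with $k=n$) never occur. In each remaining case we take $s_q=s_1$.

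The parameters then become
\[
s_1=\frac{\operatorname{sgn}(\tau)}{2}\Bigl(\sqrt{(n-2b-2)^2+4(n-1)}-|n-2b-2|\Bigr),
\]
with $\alpha_1$ given by \eqref{eq:alpha1alpha2} with $\lambda_1^{(n)}=n-1$ and $\lambda_2^{(n)}=2n$. The exponent is $m=\frac{-(n-2b-2)+\sgn(\tau)|n-2b-2|}{2}$, as in the preceding corollary. The optimizer set $E$ becomes $\{c|x|^m e^{-B|x|^\tau/|\tau|}\}$, which matches the corresponding region of Theorem \ref{Th:CKNI}.

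Next, we substitute these values into cases (1)–(3) of Theorem \ref{Th:second_stability_ckn}. With $k=0$, the sum $\sum_{i=1}^{n-k}b_ix_i$ runs over all $n$ coordinates. These are exactly the degree-one modes allowed by Theorem \ref{Th:poincare_stability}(ii)–(iii), since no coordinate is constrained. The three displayed estimates then follow verbatim.

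The only point requiring care is admissibility. The identity \eqref{eq:combine_non_invarient_identity} must hold for $u\in S_{a,b}(\mathbb R^n)$, and the function $w=u|x|^{-m}e^{|x|^\tau/(\lambda^2|\tau|)}$ must lie in the weighted space $W$ on which Theorem \ref{Th:poincare_stability} is valid. The hypothesis $(\beta+N+\tau-2)/\tau>0$ reads $(\sgn(\tau)|n-2b-2|+\tau)/\tau>0$, which holds for either sign of $\tau$. When $k=0$, the concern is the origin, particularly in region $\mathcal B_2$ where $m>0$. We would handle this through the density result of the Appendix (Proposition \ref{P:Sab_completion}): prove the estimates for $u\in C_c^\infty(\mathbb R^n\setminus\{0\})$, then pass to the limit. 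The deficit is continuous in the $S_{a,b}$-norm, and the infimum-distance terms are continuous as well, since they are infima of $L^2(|x|^{-(a+b+1)}dx)$-distances, and that norm is controlled by $\|\cdot\|_{a,b}$ through Theorem \ref{Th:CKNIon_Orthant}.
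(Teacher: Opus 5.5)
Your proposal is correct and matches the paper's proof, which likewise obtains the corollary by setting $k=0$ in Theorem \ref{Th:second_stability_ckn}. That substitution gives $N=n$ and $\Theta=|n-2b-2|$, makes $k<n$ automatic, and lets all $n$ linear modes $x_i|x|^{s_1-1}$ survive. Your added density remark is sound, although density of $C_c^\infty(\mathbb R^n\setminus\{0\})$ in $S_{a,b}(\mathbb R^n)$ holds by the definition of that space, so Proposition \ref{P:Sab_completion} is not actually needed there.
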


\begin{proof}
This is the specialization of Theorem~\ref{Th:second_stability_ckn} to the case $k=0$. In this case
\[
\mathbb R^n_{k,+}=\mathbb R^n,
\qquad
C(n,a,b,k)=C(n,a,b,0),
\]
and
\[
s_1
=\sgn(\tau)
\frac{
{-|n-2b-2|}
+
\sqrt{(n-2b-2)^2+4\lambda_1^{(n)}}
}{2},
\]
while
\[
\alpha_1
=\sgn(\tau)
\frac{
\sqrt{(n-2b-2)^2+4\lambda_2^{(n)}}
-
\sqrt{(n-2b-2)^2+4\lambda_1^{(n)}}
}{2}.
\]
Moreover, since $n-k=n$, the first non-radial eigenspace is generated by
\[
x_i\,|x|^{s_1-1},
\qquad i=1,\dots,n.
\]
Substituting $k=0$ into the three cases of the theorem yields the stated inequalities.
\end{proof}


\section*{Appendix: density of smooth functions supported away from the origin}\label{S:appendix}

In this appendix we collect the density results used throughout the paper. They are what
allows us to prove all of our inequalities for functions in $C_c^\infty$ supported away
from the origin, and then to extend them to the natural energy spaces. The point requiring
proof is that removing a neighbourhood of the origin costs nothing in the weighted norms,
which is a consequence of the fact that the capacity of a point vanishes in the relevant
range of the parameters.

\begin{proposition}[Density of test functions away from the origin]
Let \(\delta>0\) and \(\tau\neq0\). Define
\[
\|u\|_X^2
:=
\int_{\mathbb R^n}
|\nabla u(x)|^2
e^{-\delta|x|^\tau}|x|^\beta\,dx
+
\int_{\mathbb R^n}
|u(x)|^2
e^{-\delta|x|^\tau}|x|^{\beta+\tau-2}\,dx.
\]
Assume either
\[
\tau<0,
\]
or
\[
\tau>0,\qquad
\beta+n+\tau-2>0,
\qquad
\beta+n-2\geq0.
\]
Then every function in \(C_c^\infty(\mathbb R^n)\) has finite
\(\|\cdot\|_X\)-norm.

Let
\[
X
:=
\overline{C_c^\infty(\mathbb R^n)}^{\,\|\cdot\|_X}.
\]
Then
\[
\overline{C_c^\infty(\mathbb R^n\setminus\{0\})}^{\,\|\cdot\|_X}
=
X.
\]
\end{proposition}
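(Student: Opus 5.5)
The plan is to prove the two assertions separately. First, every $u\in C_c^\infty(\mathbb R^n)$ has finite $\|\cdot\|_X$-norm. Second, every such $u$ can be approximated in $\|\cdot\|_X$ by functions of $C_c^\infty(\mathbb R^n\setminus\{0\})$. Granting the second assertion, a diagonal argument extends it to all of $X$: given $u\in X$, pick $u_j\in C_c^\infty(\mathbb R^n)$ with $u_j\to u$, then $v_j\in C_c^\infty(\mathbb R^n\setminus\{0\})$ with $\|u_j-v_j\|_X<1/j$. This gives $\overline{C_c^\infty(\mathbb R^n\setminus\{0\})}\supseteq X$. The reverse inclusion is trivial.

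\emph{Finiteness.} For $u\in C_c^\infty(\mathbb R^n)$, $u$ and $\nabla u$ are bounded with compact support, so the only issue is integrability of the weights near the origin.
\begin{itemize}
\item If $\tau<0$, the factor $e^{-\delta|x|^\tau}$ tends to zero faster than any power of $|x|$ as $x\to0$. Hence both $|x|^\beta e^{-\delta|x|^\tau}$ and $|x|^{\beta+\tau-2}e^{-\delta|x|^\tau}$ are locally integrable, whatever $\beta$ is.
\item If $\tau>0$, the exponential is bounded near $0$. In polar coordinates we need $\int_0^1 r^{\beta+n-1}\,dr<\infty$ and $\int_0^1 r^{\beta+\tau+n-3}\,dr<\infty$. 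These hold because $\beta+n-2\ge 0$ gives $\beta+n-1>-1$, and $\beta+n+\tau-2>0$ gives $\beta+\tau+n-3>-1$.
\end{itemize}

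\emph{Cutting off the origin.} Fix $u\in C_c^\infty(\mathbb R^n)$ and set $u_\varepsilon=(1-\eta_\varepsilon)u$, where $\eta_\varepsilon$ is a radial cutoff equal to $1$ near the origin, supported in a small ball and satisfying $0\le\eta_\varepsilon\le1$. Then $u_\varepsilon\in C_c^\infty(\mathbb R^n\setminus\{0\})$ and $u-u_\varepsilon=\eta_\varepsilon u$. The zeroth-order term $\int|\eta_\varepsilon u|^2\,d\nu$ and the part $\int\eta_\varepsilon^2|\nabla u|^2\,d\mu$ of the gradient term tend to $0$ by dominated convergence, using the local integrability just shown. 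It remains to show that
\[
\int|u|^2|\nabla\eta_\varepsilon|^2\,e^{-\delta|x|^\tau}|x|^\beta\,dx\to0 .
\]
\begin{itemize}
\item Standard cutoff. Take $\eta_\varepsilon(x)=\eta(|x|/\varepsilon)$ with $\eta=1$ on $[0,1]$ and $\eta=0$ on $[2,\infty)$. The integral above is then bounded by
\[
C\varepsilon^{-2}\int_\varepsilon^{2\varepsilon}r^{\beta+n-1}e^{-\delta r^\tau}\,dr .
\]
For $\tau<0$ this is at most $C\varepsilon^{\beta+n-2}e^{-\delta(2\varepsilon)^{\tau}}\to0$, since the exponential factor beats any power. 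For $\tau>0$ it is of order $C\varepsilon^{\beta+n-2}$, which tends to $0$ when $\beta+n-2>0$.
\item Borderline case $\tau>0$, $\beta+n-2=0$. Here I expect the main obstacle: the weight behaves like $r^{-1}\,dr$ and the simple cutoff only yields boundedness. I would use the logarithmic cutoff from the proof of Theorem \ref{Th:CKNIon_Orthant}. Take $\eta_\varepsilon=1$ on $|x|\le\varepsilon^2$, $\eta_\varepsilon=0$ on $|x|\ge\varepsilon$, and
\[
\eta_\varepsilon(x)=\frac{\log(|x|/\varepsilon)}{\log\varepsilon}\quad\text{in between},
\]
suitably smoothed. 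Then $|\nabla\eta_\varepsilon|\le 1/(|x|\log(1/\varepsilon))$, and the energy is bounded by
\[
C\,(\log(1/\varepsilon))^{-2}\int_{\varepsilon^2}^{\varepsilon}\frac{dr}{r}=\frac{C}{\log(1/\varepsilon)}\to0 .
\]
This vanishing of the capacity of a point is exactly what the hypothesis $\beta+n-2\ge0$ is designed to allow.
\end{itemize}
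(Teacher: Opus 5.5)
Your proposal is correct and follows the paper's proof essentially step by step. You use a radial cutoff near the origin and control the cutoff-gradient term in three ways: by $\varepsilon^{\beta+n-2}$ times an exponentially small factor when $\tau<0$; by $\varepsilon^{\beta+n-2}$ when $\tau>0$ and $\beta+n-2>0$; and by a logarithmic cutoff in the borderline case $\beta+n-2=0$ (yours lives on $\varepsilon^2<|x|<\varepsilon$, the paper's on $\varepsilon<|x|<\sqrt{\varepsilon}$). Your explicit check that functions in $C_c^\infty(\mathbb R^n)$ have finite $\|\cdot\|_X$-norm is a small but welcome addition, since the paper relies on it through absolute continuity of the integrals without verifying it.
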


\begin{proof}
It is enough to prove that every
\(u\in C_c^\infty(\mathbb R^n)\) can be approximated in the
\(\|\cdot\|_X\)-norm by functions in
\(C_c^\infty(\mathbb R^n\setminus\{0\})\).

We distinguish three cases.

\medskip

\noindent
\textbf{Case 1: \(\tau<0\).}

Choose a radial cutoff \(\eta\in C^\infty([0,\infty))\) satisfying
\[
0\leq\eta\leq1,
\qquad
\eta(r)=0\quad\text{for }0\leq r\leq1,
\qquad
\eta(r)=1\quad\text{for }r\geq2,
\]
and define
\[
\eta_\varepsilon(x)
:=
\eta\!\left(\frac{|x|}{\varepsilon}\right).
\]
Then
\[
|\nabla\eta_\varepsilon|
\leq \frac{C}{\varepsilon}.
\]
Set
\[
u_\varepsilon:=\eta_\varepsilon u.
\]
Thus
\[
u_\varepsilon\in C_c^\infty(\mathbb R^n\setminus\{0\}).
\]

Since
\[
u-u_\varepsilon=(1-\eta_\varepsilon)u,
\]
the lower-order term satisfies
\[
\begin{aligned}
&
\int_{\mathbb R^n}
|u-u_\varepsilon|^2
e^{-\delta|x|^\tau}|x|^{\beta+\tau-2}\,dx
\\
&\leq
\int_{B_{2\varepsilon}}
|u|^2e^{-\delta|x|^\tau}|x|^{\beta+\tau-2}\,dx
\longrightarrow0
\end{aligned}
\]
by the absolute continuity of the integral.

For the gradient term,
\[
\nabla(u-u_\varepsilon)
=
(1-\eta_\varepsilon)\nabla u
-u\nabla\eta_\varepsilon.
\]
Therefore,
\[
\begin{aligned}
&
\int_{\mathbb R^n}
|\nabla(u-u_\varepsilon)|^2
e^{-\delta|x|^\tau}|x|^\beta\,dx
\\
&\leq
2I_\varepsilon+2J_\varepsilon,
\end{aligned}
\]
where
\[
I_\varepsilon
:=
\int_{B_{2\varepsilon}}
|\nabla u|^2e^{-\delta|x|^\tau}|x|^\beta\,dx
\longrightarrow0
\]
by absolute continuity, and
\[
J_\varepsilon
:=
\int_{B_{2\varepsilon}\setminus B_\varepsilon}
|u|^2|\nabla\eta_\varepsilon|^2
e^{-\delta|x|^\tau}|x|^\beta\,dx.
\]

Since \(u\) is bounded,
\[
J_\varepsilon
\leq
\frac{C}{\varepsilon^2}
\int_\varepsilon^{2\varepsilon}
e^{-\delta r^\tau}r^{\beta+n-1}\,dr.
\]
Because \(\tau<0\),
\[
e^{-\delta r^\tau}
=
e^{-\delta r^{-|\tau|}}.
\]
For every \(M>0\), there exists \(C_M>0\) such that
\[
e^{-\delta r^{-|\tau|}}
\leq C_Mr^M,
\qquad 0<r<1.
\]
Consequently,
\[
J_\varepsilon
\leq
C_M\varepsilon^{-2}
\int_\varepsilon^{2\varepsilon}
r^{\beta+n+M-1}\,dr
\leq
C_M\varepsilon^{\beta+n+M-2}.
\]
Choosing \(M\) sufficiently large yields
\[
J_\varepsilon\longrightarrow0.
\]
Hence
\[
\|u_\varepsilon-u\|_X\longrightarrow0.
\]

\medskip

\noindent
\textbf{Case 2: \(\tau>0\) and \(\beta+n-2>0\).}

Use the same cutoff \(\eta_\varepsilon\) and set
\[
u_\varepsilon=\eta_\varepsilon u.
\]
The lower-order term tends to zero by absolute continuity:
\[
\int_{\mathbb R^n}
|u-u_\varepsilon|^2
e^{-\delta|x|^\tau}|x|^{\beta+\tau-2}\,dx
\longrightarrow0.
\]

For the gradient term, the part containing
\((1-\eta_\varepsilon)\nabla u\) also tends to zero by absolute
continuity. For the cutoff-gradient term, since
\(e^{-\delta|x|^\tau}\leq1\),
\[
\begin{aligned}
J_\varepsilon
&:=
\int_{B_{2\varepsilon}\setminus B_\varepsilon}
|u|^2|\nabla\eta_\varepsilon|^2
e^{-\delta|x|^\tau}|x|^\beta\,dx
\\
&\leq
\frac{C}{\varepsilon^2}
\int_{\varepsilon<|x|<2\varepsilon}|x|^\beta\,dx
\\
&\leq
C\varepsilon^{\beta+n-2}.
\end{aligned}
\]
Since
\[
\beta+n-2>0,
\]
we have
\[
J_\varepsilon\longrightarrow0.
\]
Therefore
\[
\|u_\varepsilon-u\|_X\longrightarrow0.
\]

\medskip

\noindent
\textbf{Case 3: \(\tau>0\) and \(\beta+n-2=0\).}

In this case,
\[
\beta=2-n.
\]
The assumption
\[
\beta+n+\tau-2>0
\]
then reduces to
\[
\tau>0.
\]

For \(0<\varepsilon<1/4\), choose a smooth radial logarithmic cutoff
\(\eta_\varepsilon\) satisfying
\[
0\leq\eta_\varepsilon\leq1,
\]
\[
\eta_\varepsilon=0
\quad\text{on }B_\varepsilon,
\qquad
\eta_\varepsilon=1
\quad\text{on }\mathbb R^n\setminus B_{\sqrt{\varepsilon}},
\]
and
\[
|\nabla\eta_\varepsilon(x)|
\leq
\frac{C}{|x|\log(1/\varepsilon)}
\]
for
\[
\varepsilon<|x|<\sqrt{\varepsilon}.
\]
Set
\[
u_\varepsilon:=\eta_\varepsilon u.
\]

Because \(1-\eta_\varepsilon\) is supported in
\(B_{\sqrt{\varepsilon}}\), the lower-order term tends to zero by
absolute continuity. Similarly,
\[
\int_{B_{\sqrt{\varepsilon}}}
|\nabla u|^2e^{-\delta|x|^\tau}|x|^\beta\,dx
\longrightarrow0.
\]

It remains to estimate
\[
J_\varepsilon
:=
\int_{\varepsilon<|x|<\sqrt{\varepsilon}}
|u|^2|\nabla\eta_\varepsilon|^2
e^{-\delta|x|^\tau}|x|^\beta\,dx.
\]
Using the boundedness of \(u\) and
\(e^{-\delta|x|^\tau}\leq1\), we obtain
\[
\begin{aligned}
J_\varepsilon
&\leq
\frac{C}{\log^2(1/\varepsilon)}
\int_{\varepsilon<|x|<\sqrt{\varepsilon}}
|x|^{\beta-2}\,dx
\\
&=
\frac{C}{\log^2(1/\varepsilon)}
\int_\varepsilon^{\sqrt{\varepsilon}}
r^{\beta+n-3}\,dr.
\end{aligned}
\]
Since
\[
\beta+n-2=0,
\]
we have
\[
\beta+n-3=-1.
\]
Thus
\[
\begin{aligned}
J_\varepsilon
&\leq
\frac{C}{\log^2(1/\varepsilon)}
\int_\varepsilon^{\sqrt{\varepsilon}}\frac{dr}{r}
\\
&\leq
\frac{C}{\log(1/\varepsilon)}
\longrightarrow0.
\end{aligned}
\]
Therefore,
\[
\|u_\varepsilon-u\|_X\longrightarrow0.
\]

In all three cases, every
\(u\in C_c^\infty(\mathbb R^n)\) can be approximated in \(X\) by
functions in
\(C_c^\infty(\mathbb R^n\setminus\{0\})\).
Since \(C_c^\infty(\mathbb R^n)\) is dense in \(X\) by definition,
we conclude that
\[
\overline{C_c^\infty(\mathbb R^n\setminus\{0\})}^{\,\|\cdot\|_X}
=
X.
\]
\end{proof}

\begin{proposition}\label{P:Sab_completion}
Let
\[
\|u\|_{a,b}^{2}
:=
\int_{\mathbb R^n}\frac{|u|^2}{|x|^{2a}}\,dx
+
\int_{\mathbb R^n}\frac{|\nabla u|^2}{|x|^{2b}}\,dx
\]
and
\[
\mathcal A
:=
\{u\in C_c^\infty(\mathbb R^n):\|u\|_{a,b}<\infty\}.
\]
If \(a\ge n/2\) or \(b\le (n-2)/2\), then
\[
S_{a,b}(\R^n)
=\overline{C_c^\infty(\mathbb R^n\setminus\{0\})}^{\,\|\cdot\|_{a,b}}
=\overline{\mathcal A}^{\,\|\cdot\|_{a,b}}.
\]
\end{proposition}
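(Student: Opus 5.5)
The plan is to show that every $u\in\mathcal A$ can be approximated in $\|\cdot\|_{a,b}$ by functions in $C_c^\infty(\mathbb R^n\setminus\{0\})$. The inclusion $C_c^\infty(\mathbb R^n\setminus\{0\})\subset\mathcal A$ is immediate, since such functions vanish near the origin and the weights are locally bounded away from it. Given this approximation, the two closures coincide.

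\textbf{Cutoff.} Fix $u\in\mathcal A$ and a radial cutoff $\eta_\varepsilon$ that vanishes on $B_\varepsilon$ and equals $1$ off $B_{2\varepsilon}$, with $|\nabla\eta_\varepsilon|\le C/\varepsilon$. Set $u_\varepsilon=\eta_\varepsilon u$. Exactly as in the preceding proposition, two of the three contributions to $\|u-u_\varepsilon\|_{a,b}^2$ tend to zero by absolute continuity of the integral:
\[
\int_{B_{2\varepsilon}}\frac{|u|^2}{|x|^{2a}}\,dx\to0,\qquad \int_{B_{2\varepsilon}}\frac{|\nabla u|^2}{|x|^{2b}}\,dx\to0,
\]
because both integrals over $\mathbb R^n$ are finite for $u\in\mathcal A$. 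Everything therefore reduces to the cutoff term
\[
J_\varepsilon:=\int_{B_{2\varepsilon}\setminus B_\varepsilon}|u|^2|\nabla\eta_\varepsilon|^2|x|^{-2b}\,dx .
\]

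\textbf{Case $b\le\frac{n-2}{2}$.} Here boundedness of $u$ suffices:
\[
J_\varepsilon\le C\varepsilon^{-2}\int_\varepsilon^{2\varepsilon}r^{n-1-2b}\,dr\le C\varepsilon^{n-2-2b}\to0\qquad\text{when } b<\tfrac{n-2}{2}.
\]
In the borderline case $b=\frac{n-2}{2}$ this bound is only $O(1)$. I would then replace $\eta_\varepsilon$ by the logarithmic cutoff of Case 3 of the preceding proposition, which is supported on $\varepsilon<|x|<\sqrt\varepsilon$ and satisfies $|\nabla\eta_\varepsilon|\le C/(|x|\log(1/\varepsilon))$. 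This gives
\[
J_\varepsilon\le \frac{C}{\log^2(1/\varepsilon)}\int_\varepsilon^{\sqrt\varepsilon}\frac{dr}{r}\le \frac{C}{\log(1/\varepsilon)}\to0 .
\]

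\textbf{Case $a\ge\frac n2$.} This is the main obstacle, because $b$ is now unrestricted and the crude bound above fails. The plan is to exploit the fact that membership in $\mathcal A$ forces vanishing at the origin to a sufficient order.

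If $u$ vanishes to infinite order at $0$, then $|u|\le C_M|x|^M$ for every $M$, and $J_\varepsilon\to0$ trivially. Otherwise, let $P_j\neq0$ be the lowest-order nonzero homogeneous Taylor polynomial of $u$ at $0$, of degree $j$. On a small cone where $|P_j|\ge c|x|^j$ we have $|u|\ge c'|x|^j$, so finiteness of $\int|u|^2|x|^{-2a}\,dx$ forces
\[
2j+n-2a>0 .
\]
Since $a\ge\frac n2$, this gives $j\ge1$, so in particular $u(0)=0$.

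Next, $\nabla P_j$ is a nonzero homogeneous polynomial of degree $j-1$; it is nonzero because $j\ge1$ and, by Euler's identity, $x\cdot\nabla P_j=jP_j$. The same cone argument applied to $\int|\nabla u|^2|x|^{-2b}\,dx<\infty$ yields
\[
2(j-1)+n-2b>0 .
\]
Combining this with $|u|\le C|x|^j$ near the origin, the ordinary cutoff gives
\[
J_\varepsilon\le C\varepsilon^{-2}\varepsilon^{2j}\varepsilon^{n-2b}=C\varepsilon^{2j-2+n-2b}\to0 .
\]

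In both cases $\|u_\varepsilon-u\|_{a,b}\to0$, which proves the stated equality of closures.
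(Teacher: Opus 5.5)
Your proof is correct and follows essentially the same route as the paper: a radial cutoff, absolute continuity for the two terms supported in $B_{2\varepsilon}$, boundedness of $u$ plus a logarithmic cutoff when $b=\frac{n-2}{2}$, and the leading Taylor term with a cone estimate on $\nabla P_j$ giving $n+2j-2b-2>0$ when $u(0)=0$. The only difference is bookkeeping: you split by hypothesis ($b\le\frac{n-2}{2}$ versus $a\ge\frac n2$), whereas the paper splits by whether $u(0)\neq0$ (which forces $a<\frac n2$ and hence $b\le\frac{n-2}{2}$) or $u(0)=0$.
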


\begin{proof}
Since \(C_c^\infty(\mathbb R^n\setminus\{0\})\subset \mathcal A\), one inclusion is immediate. We prove the other one. Let \(u\in\mathcal A\). Choose \(\eta_\varepsilon\in C^\infty(\mathbb R^n)\) such that \(\eta_\varepsilon=0\) in \(B_\varepsilon\), \(\eta_\varepsilon=1\) outside \(B_{2\varepsilon}\), and \(|\nabla\eta_\varepsilon|\le C/\varepsilon\). Set \(u_\varepsilon=\eta_\varepsilon u\). Then \(u_\varepsilon\in C_c^\infty(\mathbb R^n\setminus\{0\})\).

First,
\[
\int_{\mathbb R^n}\frac{|u_\varepsilon-u|^2}{|x|^{2a}}\,dx
\le
\int_{B_{2\varepsilon}}\frac{|u|^2}{|x|^{2a}}\,dx
\to 0,
\]
because \(|u|^2|x|^{-2a}\in L^1(\mathbb R^n)\). Moreover, this integrability implies the following: if \(u(0)\ne0\), then near the origin
\[
\int_{B_\delta}\frac{|u|^2}{|x|^{2a}}\,dx
\gtrsim
\int_0^\delta r^{n-1-2a}\,dr,
\]
which is finite only if \(a<n/2\). Hence, whenever \(a\ge n/2\), every \(u\in\mathcal A\) satisfies \(u(0)=0\).

It remains to control the gradient part. Since
\[
\nabla(u_\varepsilon-u)
=
(\eta_\varepsilon-1)\nabla u
+
u\nabla\eta_\varepsilon,
\]
we have
\[
\int_{\mathbb R^n}\frac{|\nabla(u_\varepsilon-u)|^2}{|x|^{2b}}\,dx
\le
C\int_{B_{2\varepsilon}}\frac{|\nabla u|^2}{|x|^{2b}}\,dx
+
C\int_{B_{2\varepsilon}\setminus B_\varepsilon}
\frac{|u|^2|\nabla\eta_\varepsilon|^2}{|x|^{2b}}\,dx .
\]
The first term tends to \(0\). We estimate the second one.

If \(u(0)\ne0\), then necessarily \(a<n/2\). Thus, by the assumption of the proposition, we must have \(b\le (n-2)/2\). If \(b<(n-2)/2\), then \(u\) is bounded near \(0\), and
\[
\int_{B_{2\varepsilon}\setminus B_\varepsilon}
\frac{|u|^2|\nabla\eta_\varepsilon|^2}{|x|^{2b}}\,dx
\le
C\varepsilon^{-2}\int_\varepsilon^{2\varepsilon}r^{n-1-2b}\,dr
\le
C\varepsilon^{n-2b-2}
\to0.
\]
If \(b=(n-2)/2\), we replace \(\eta_\varepsilon\) by the standard logarithmic cutoff, satisfying \(\eta_\varepsilon=0\) in \(B_{\varepsilon^2}\), \(\eta_\varepsilon=1\) outside \(B_\varepsilon\), and
\[
|\nabla\eta_\varepsilon|
\le
\frac{C}{|x|\log(1/\varepsilon)} .
\]
Then
\[
\int_{\varepsilon^2<|x|<\varepsilon}
\frac{|u|^2|\nabla\eta_\varepsilon|^2}{|x|^{2b}}\,dx
\le
\frac{C}{(\log(1/\varepsilon))^2}
\int_{\varepsilon^2}^{\varepsilon}\frac{dr}{r}
\to0.
\]
Finally, suppose \(u(0)=0\). If \(u\) is not flat at \(0\), let \(m\ge1\) be
the first nonzero order in the Taylor expansion of \(u\) at \(0\), so that
\(|u(x)|\le C|x|^m\) near \(0\). Let \(P_m\) denote the first nonzero
homogeneous term of the Taylor expansion of \(u\) at the origin. Since
\(P_m\) is nonconstant, \(\nabla P_m\) is a nonzero homogeneous polynomial
of degree \(m-1\). Choose \(\omega_0\in\mathbb S^{n-1}\) such that
\(\nabla P_m(\omega_0)\neq0\). Since
\[
\nabla u(x)=\nabla P_m(x)+O(|x|^m)
\]
near \(0\), homogeneity and continuity provide an open cone \(\Gamma\)
containing the direction \(\omega_0\) and constants \(c,r_0>0\) such that
\[
|\nabla u(x)|\ge c|x|^{m-1},
\qquad x\in\Gamma,\quad 0<|x|<r_0.
\]
Hence
\[
\infty>
\int_{\Gamma\cap B_{r_0}}
\frac{|\nabla u|^2}{|x|^{2b}}\,dx
\ge
c^2\bigl|\Gamma\cap\mathbb S^{n-1}\bigr|
\int_0^{r_0}r^{n+2m-2b-3}\,dr.
\]
The last integral is finite only if
\[
n+2m-2b-2>0.
\]
Therefore,
\[
\int_{B_{2\varepsilon}\setminus B_\varepsilon}
\frac{|u|^2|\nabla\eta_\varepsilon|^2}{|x|^{2b}}\,dx
\le
C\varepsilon^{-2}
\int_\varepsilon^{2\varepsilon}r^{n-1+2m-2b}\,dr
\le
C\varepsilon^{n+2m-2b-2}
\to0.
\]
If \(u\) is flat at \(0\), then the same estimate holds with any fixed
\(m\). Choosing \(m\) sufficiently large so that
\(n+2m-2b-2>0\), the conclusion follows as well.

Thus \(\|u_\varepsilon-u\|_{a,b}\to0\). Hence every
\(u\in\mathcal A\) belongs to the \(\|\cdot\|_{a,b}\)-closure of
\(C_c^\infty(\mathbb R^n\setminus\{0\})\), and the two completions are equal.
\end{proof}

\begin{lemma}[Extension of the Poincaré inequality]\label{lem:extension-poincare}
Let \(\delta>0\) and define $d\mu(x)=e^{-\delta|x|^\tau}|x|^\beta\,dx,
$ $d\nu(x)=e^{-\delta|x|^\tau}|x|^{\beta+\tau-2}\,dx,$ and $\|u\|_W^2:=\int_{\mathbb R^n}|\nabla u|^2\,d\mu+\int_{\mathbb R^n}|u|^2\,d\nu.$
Let $\mathcal A_\tau := \left\{ u\in C_c^\infty(\mathbb R^n):\|u\|_W<\infty \right\},$ and define $ W:= \overline{\mathcal A_\tau}^{\,\|\cdot\|_W}.$
Assume that
$$\frac{(\beta+n+\tau-2)}{\tau}>0.$$
Suppose that $\int_{\mathbb R^n}|w-w_\nu|^2\,d\nu \leq C\int_{\mathbb R^n}|\nabla w|^2\,d\mu$ holds for every $w\in C_c^\infty(\mathbb R^n\setminus\{0\}),$
where
\[
w_\nu
:=
\frac{1}{\nu(\mathbb R^n)}
\int_{\mathbb R^n}w\,d\nu.
\]
Then the same inequality holds for every \(u\in\mathcal A_\tau\),
and consequently for every \(u\in W\).
\end{lemma}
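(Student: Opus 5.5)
We approximate a fixed $u\in\mathcal A_\tau$ by the cut-off functions $u_\varepsilon:=\eta_\varepsilon u\in C_c^\infty(\mathbb R^n\setminus\{0\})$ and pass to the limit in the inequality. The natural choice is the radial cutoff used in the density proposition of the Appendix: $\eta_\varepsilon=0$ on $B_\varepsilon$, $\eta_\varepsilon=1$ outside $B_{2\varepsilon}$, $|\nabla\eta_\varepsilon|\le C/\varepsilon$. In the borderline case we switch to the logarithmic cutoff supported in $B_{\sqrt\varepsilon}$. The goal is to show $\|u_\varepsilon-u\|_W\to0$. Once this holds, $\int|\nabla u_\varepsilon|^2d\mu\to\int|\nabla u|^2d\mu$ and $u_\varepsilon\to u$ in $L^2(\nu)$.

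Before that, we check that $\nu$ is a finite measure, so that $w_\nu$ makes sense and $w\mapsto w_\nu$ is continuous on $L^2(\nu)$ by Cauchy--Schwarz. At infinity, finiteness is automatic when $\tau>0$, thanks to the factor $e^{-\delta r^\tau}$. When $\tau<0$ the weight behaves like $r^{\beta+\tau+n-3}$, whose integrability at infinity is exactly $(\beta+n+\tau-2)/\tau>0$. Near the origin the situation is reversed: when $\tau<0$ the exponential factor kills everything, and when $\tau>0$ the condition $\beta+n+\tau-2>0$ is exactly integrability. Granting this, $(u_\varepsilon)_\nu\to u_\nu$ and $\int|u_\varepsilon-(u_\varepsilon)_\nu|^2d\nu\to\int|u-u_\nu|^2d\nu$. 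The inequality for $u_\varepsilon$ then passes to the limit.

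For the convergence $\|u_\varepsilon-u\|_W\to0$, the zeroth-order term and the term $(1-\eta_\varepsilon)\nabla u$ go to zero by absolute continuity, since $\|u\|_W<\infty$. The only delicate term is
\[
J_\varepsilon=\int_{B_{2\varepsilon}\setminus B_\varepsilon}|u|^2|\nabla\eta_\varepsilon|^2\,d\mu .
\]
This is the main obstacle, because $\|u\|_W<\infty$ gives only weighted integrability, whereas $J_\varepsilon$ carries an extra factor $|x|^{-2}$ against $\mu$ and $\mu$ has no decay at $0$ when $\tau>0$. For $\tau<0$, $e^{-\delta r^\tau}$ decays faster than any power at $0$ and $J_\varepsilon\to0$ as in Case 1 of the Appendix proposition.

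For $\tau>0$ we proceed as in the proof of Proposition \ref{P:Sab_completion}. If $u(0)\ne0$, the bound $J_\varepsilon\lesssim\varepsilon^{\beta+n-2}$ requires $\beta+n-2>0$; for $\beta+n-2=0$ we use the logarithmic cutoff and get $J_\varepsilon\lesssim1/\log(1/\varepsilon)$. If $\beta+n-2<0$, finiteness of $\int|\nabla u|^2d\mu$ should force $u$ to vanish to order $m$ with $\beta+n+2m-2>0$: take the first nonzero Taylor term $P_m$ and argue on a cone where $|\nabla P_m|\gtrsim|x|^{m-1}$. Then $J_\varepsilon\lesssim\varepsilon^{\beta+n+2m-2}\to0$.

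The subcase I am least sure of is $\beta+n-2<0$ with $u(0)\neq0$. There $\nabla u$ need not vanish, so the cone argument gives nothing. If the direct estimate fails, I would first try the following: subtract the constant $u(0)$, noting that constants have finite $\nu$-mass and zero gradient, and that the inequality is invariant under $w\mapsto w+c$. This reduces the matter to $u-u(0)$, which vanishes at $0$ and to which the cone argument applies. Finally, density of $\mathcal A_\tau$ in $W$ together with the same continuity of both sides gives the inequality on $W$.
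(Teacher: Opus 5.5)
Your route is essentially the paper's. You use a radial cutoff near the origin, the super-polynomial decay of $e^{-\delta|x|^\tau}$ when $\tau<0$, and, when $\tau>0$, the passage to $v=u-u(0)$ (legitimate because $\nu(\mathbb R^n)<\infty$ and the inequality is invariant under adding constants). This is combined with the Taylor/cone argument giving $\beta+n+2m-2>0$, hence $J_\varepsilon\lesssim\varepsilon^{\beta+n+2m-2}$, and finally with density of $\mathcal A_\tau$ in $W$. The paper subtracts $u(0)$ for every $u$ once $\tau>0$, so your case split on the sign of $\beta+n-2$ (direct bound, logarithmic cutoff) is unnecessary, though harmless.

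Two points need fixing.

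First, the subtraction is not an optional fallback. The hypothesis allows $\beta+n-2\in(-\tau,0)$, and then for $u(0)\neq0$ one gets $J_\varepsilon\gtrsim|u(0)|^2\varepsilon^{\beta+n-2}\to\infty$. More fundamentally, since $\int_0^1 r^{1-\beta-n}\,dr<\infty$, the value at the origin (of spherical means) is controlled by $\|\cdot\|_W$. So no sequence from $C_c^\infty(\mathbb R^n\setminus\{0\})$ converges to such a $u$ in $W$, and passing through $u-u(0)$ is the actual argument.

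Second, after the subtraction, $v=u-u(0)$ equals the constant $-u(0)$ outside the support of $u$. Hence $\eta_\varepsilon v\notin C_c^\infty(\mathbb R^n\setminus\{0\})$, and the assumed inequality cannot be applied to it; your phrase ``this reduces the matter to $u-u(0)$'' skips this step. You need an outer cutoff $\chi_R$, with $\chi_R=1$ on $B_R$, $\chi_R=0$ off $B_{2R}$, and $|\nabla\chi_R|\le C/R$. Because $\tau>0$,
\[
\int|v_\varepsilon|^2|\nabla\chi_R|^2\,d\mu\le CR^{-2}\int_{R<|x|<2R}e^{-\delta|x|^\tau}|x|^\beta\,dx\to0 .
\]
Also $\int_{|x|>R}|u(0)|^2\,d\nu\to0$ since $\nu$ is finite, and $(1-\chi_R)\nabla v_\varepsilon$ vanishes for large $R$.

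Then apply the hypothesis to $\chi_R\eta_\varepsilon v\in C_c^\infty(\mathbb R^n\setminus\{0\})$, let $R\to\infty$ and then $\varepsilon\to0$, and use $v-v_\nu=u-u_\nu$ and $\nabla v=\nabla u$. This completes the proof exactly as in the paper.
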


\begin{proof}
The assumption $\tau(\beta+n+\tau-2)>0$ ensures that $0<\nu(\mathbb R^n)<\infty.$ Indeed, if \(\tau>0\), the exponential controls the behavior at
infinity and the condition
\(\beta+n+\tau-2>0\) guarantees integrability near the origin.
If \(\tau<0\), the exponential controls the behavior near the origin,
whereas the condition
\(\beta+n+\tau-2<0\) guarantees integrability at infinity.
We consider the cases \(\tau>0\) and \(\tau<0\) separately.

\medskip

\noindent
\textbf{Case 1: \(\tau>0\).} Let \(u\in\mathcal A_\tau\), and set $v:=u-u(0).$
Then $v(0)=0,$ $\nabla v=\nabla u.$
Moreover, since \(u\in L^2(d\nu)\) and
\(\nu(\mathbb R^n)<\infty\), we have
\[
v\in L^2(d\nu).
\]

Choose \(\eta_\varepsilon\in C^\infty(\mathbb R^n)\) such that
\[
0\leq\eta_\varepsilon\leq1,
\qquad
\eta_\varepsilon=0
\quad\text{on }B_\varepsilon,
\qquad
\eta_\varepsilon=1
\quad\text{on }\mathbb R^n\setminus B_{2\varepsilon},
\]
and
\[
|\nabla\eta_\varepsilon|
\leq\frac{C}{\varepsilon}.
\]
Set
\[
v_\varepsilon:=\eta_\varepsilon v.
\]

We first show that
\[
\|v_\varepsilon-v\|_W\longrightarrow0
\qquad\text{as }\varepsilon\to0.
\]
For the lower-order term,
\[
\int_{\mathbb R^n}|v_\varepsilon-v|^2\,d\nu
\leq
\int_{B_{2\varepsilon}}|v|^2\,d\nu
\longrightarrow0
\]
by the absolute continuity of the integral.

Furthermore,
\[
\nabla(v_\varepsilon-v)
=
(\eta_\varepsilon-1)\nabla v
+
v\nabla\eta_\varepsilon.
\]
Hence
\[
\begin{aligned}
\int_{\mathbb R^n}
|\nabla(v_\varepsilon-v)|^2\,d\mu
&\leq
2\int_{B_{2\varepsilon}}|\nabla v|^2\,d\mu+
2\int_{B_{2\varepsilon}\setminus B_\varepsilon}
|v|^2|\nabla\eta_\varepsilon|^2\,d\mu.
\end{aligned}
\]
The first term tends to zero by the absolute continuity of the
integral.

Suppose first that \(v\) is not flat at the origin, and let \(m\geq1\)
be the order of its first nonzero Taylor term. Then
\[
|v(x)|\leq C|x|^m
\]
near the origin. Since
\[
\int_{\mathbb R^n}|\nabla v|^2\,d\mu<\infty
\]
and
\[
e^{-\delta|x|^\tau}\sim1
\qquad\text{as }x\to0,
\]
Since the leading Taylor polynomial may have vanishing gradient in some directions, we restrict to a suitable cone. Let \(P_m\) denote the first nonzero homogeneous term in the Taylor expansion of \(v\) at the origin. Since \(P_m\) is nonconstant, there exist an open cone \(\Gamma\subset\mathbb R^n\) and constants \(c,r_0>0\) such that \[ |\nabla v(x)|\ge c|x|^{m-1} \qquad\text{for }x\in\Gamma,\quad 0<|x|<r_0.
\]
Hence,
\[
\infty>
\int_{\Gamma\cap B_{r_0}}|\nabla v|^2\,d\mu
\geq
c\int_0^{r_0}r^{\beta+n+2m-3}\,dr,
\]
which implies
\[
\beta+n+2m-2>0.
\]

Consequently,
\[
\begin{aligned}
&
\int_{B_{2\varepsilon}\setminus B_\varepsilon}
|v|^2|\nabla\eta_\varepsilon|^2\,d\mu
\\
&\leq
\frac{C}{\varepsilon^2}
\int_{\varepsilon<|x|<2\varepsilon}
|x|^{2m+\beta}\,dx
\\
&\leq
C\varepsilon^{\beta+n+2m-2}
\longrightarrow0.
\end{aligned}
\]
If \(v\) is flat at the origin, then for every \(m\geq1\),
\[
|v(x)|\leq C_m|x|^m
\]
near \(0\). Choosing \(m\) sufficiently large gives the same
conclusion. Therefore,
\[
\|v_\varepsilon-v\|_W\longrightarrow0.
\]

Since \(v=u-u(0)\) is not necessarily compactly supported, we also
introduce an outer cutoff. Choose
\(\chi_R\in C_c^\infty(\mathbb R^n)\) satisfying
\[
0\leq\chi_R\leq1,
\qquad
\chi_R=1\quad\text{on }B_R,
\qquad
\chi_R=0\quad\text{outside }B_{2R},
\]
and
\[
|\nabla\chi_R|\leq\frac{C}{R}.
\]
Set
\[
v_{\varepsilon,R}
:=
\chi_Rv_\varepsilon.
\]
Then
\[
v_{\varepsilon,R}
\in C_c^\infty(\mathbb R^n\setminus\{0\}).
\]

For every fixed \(\varepsilon>0\),
\[
\|v_{\varepsilon,R}-v_\varepsilon\|_W
\longrightarrow0
\qquad\text{as }R\to\infty.
\]
The lower-order term tends to zero because
\(v_\varepsilon\in L^2(d\nu)\). The gradient term containing
\((1-\chi_R)\nabla v_\varepsilon\) tends to zero by absolute
continuity. Finally,
\[
\begin{aligned}
\int_{\mathbb R^n}
|v_\varepsilon|^2|\nabla\chi_R|^2\,d\mu
&\leq
\frac{C}{R^2}
\int_{R<|x|<2R}
e^{-\delta|x|^\tau}|x|^\beta\,dx
\\
&\longrightarrow0,
\end{aligned}
\]
because \(\tau>0\).

Applying the assumed inequality to \(v_{\varepsilon,R}\), and then
letting \(R\to\infty\) and \(\varepsilon\to0\), gives
\[
\int_{\mathbb R^n}|v-v_\nu|^2\,d\nu
\leq
C\int_{\mathbb R^n}|\nabla v|^2\,d\mu.
\]
Since
\[
v_\nu=u_\nu-u(0),
\]
we have
\[
v-v_\nu=u-u_\nu,
\qquad
\nabla v=\nabla u.
\]
Thus
\[
\int_{\mathbb R^n}|u-u_\nu|^2\,d\nu
\leq
C\int_{\mathbb R^n}|\nabla u|^2\,d\mu.
\]

\medskip

\noindent
\textbf{Case 2: \(\tau<0\).}

Let \(u\in\mathcal A_\tau\). In this case no subtraction of \(u(0)\)
is needed, because the factor
\[
e^{-\delta|x|^\tau}
=
e^{-\delta/|x|^{|\tau|}}
\]
decays faster than every power near the origin.

Choose \(\eta_\varepsilon\) as above and set
\[
u_\varepsilon:=\eta_\varepsilon u.
\]
Since \(u\) is compactly supported,
\[
u_\varepsilon
\in C_c^\infty(\mathbb R^n\setminus\{0\}).
\]

For the lower-order term,
\[
\int_{\mathbb R^n}|u_\varepsilon-u|^2\,d\nu
\leq
\int_{B_{2\varepsilon}}|u|^2\,d\nu
\longrightarrow0.
\]

For the gradient term,
\[
\nabla(u_\varepsilon-u)
=
(\eta_\varepsilon-1)\nabla u
+
u\nabla\eta_\varepsilon.
\]
Therefore,
\[
\begin{aligned}
\int_{\mathbb R^n}
|\nabla(u_\varepsilon-u)|^2\,d\mu
&\leq
2\int_{B_{2\varepsilon}}|\nabla u|^2\,d\mu
\\
&\quad+
2\int_{B_{2\varepsilon}\setminus B_\varepsilon}
|u|^2|\nabla\eta_\varepsilon|^2\,d\mu.
\end{aligned}
\]
The first term tends to zero by absolute continuity. Since \(u\) is
bounded,
\[
\begin{aligned}
&
\int_{B_{2\varepsilon}\setminus B_\varepsilon}
|u|^2|\nabla\eta_\varepsilon|^2\,d\mu
\\
&\leq
\frac{C}{\varepsilon^2}
\int_\varepsilon^{2\varepsilon}
e^{-\delta r^\tau}r^{\beta+n-1}\,dr.
\end{aligned}
\]
Because \(\tau<0\),
\[
e^{-\delta r^\tau}
=
e^{-\delta r^{-|\tau|}}.
\]
For every \(M>0\), there exists \(C_M>0\) such that
\[
e^{-\delta r^{-|\tau|}}
\leq C_M r^M,
\qquad 0<r<1.
\]
Hence
\[
\begin{aligned}
&
\frac{1}{\varepsilon^2}
\int_\varepsilon^{2\varepsilon}
e^{-\delta r^\tau}r^{\beta+n-1}\,dr
\\
&\leq
C_M\varepsilon^{-2}
\int_\varepsilon^{2\varepsilon}
r^{\beta+n+M-1}\,dr
\\
&\leq
C_M\varepsilon^{\beta+n+M-2}.
\end{aligned}
\]
Choosing \(M\) sufficiently large gives
\[
\beta+n+M-2>0,
\]
and therefore
\[
\|u_\varepsilon-u\|_W\longrightarrow0.
\]
Applying the assumed inequality to \(u_\varepsilon\) and passing to the
limit gives
\[
\int_{\mathbb R^n}|u-u_\nu|^2\,d\nu
\leq
C\int_{\mathbb R^n}|\nabla u|^2\,d\mu.
\]
Thus the inequality holds for every \(u\in\mathcal A_\tau\) in both
cases.\\

Finally, since  \(\mathcal A_\tau\) is dense in \(W\), let \(u\in W\) and choose
\(u_j\in\mathcal A_\tau\) such that \(u_j\to u\) in \(W\). Then
\[
u_j\to u \quad\text{in }L^2(d\nu),
\qquad
\nabla u_j\to\nabla_Wu \quad\text{in }L^2(d\mu).
\]
Since \(0<\nu(\mathbb R^n)<\infty\), the averaging map
\(f\mapsto f_\nu\) is continuous on \(L^2(d\nu)\); hence
\[
u_j-(u_j)_\nu\to u-u_\nu
\quad\text{in }L^2(d\nu).
\]
Passing to the limit in the inequality for \(u_j\) yields
\[
\int_{\mathbb R^n}|u-u_\nu|^2\,d\nu
\le
C\int_{\mathbb R^n}|\nabla_Wu|^2\,d\mu.
\]
Thus the inequality extends to every \(u\in W\).

\end{proof}

We conclude with the following remark that transplants the above statements to the orthant, which is
the form in which they are used in the body of the paper.

\begin{remark}\label{rmk:density_orthant}
The results of this appendix are stated on $\mathbb R^{n}$, but the versions we use are
those on the orthant, that is, with $\mathbb R^{n}$ replaced by $\Rnkp$ and
$C_c^\infty(\mathbb R^{n}\setminus\{0\})$ by $C_c^\infty(\Rnkp\setminus\{0\})$. No new
argument is required. The walls $\{x_i=0\}$, $i\ge n-k+1$, present no difficulty, since by
definition $C_c^\infty(\Rnkp)$ consists of functions already vanishing to infinite order
there, so that the only set that has to be removed is a neighbourhood of the origin; and the
cutoff estimates above involve only the radial variable $|x|$, which is unaffected by the
restriction to the orthant. Concretely, if $\eta_\varepsilon$ is the radial cutoff used in
the proofs above and $u\in C_c^\infty(\Rnkp)$, then $\eta_\varepsilon u$ belongs to
$C_c^\infty(\Rnkp\setminus\{0\})$ and the same computations, carried out over $\Rnkp$
instead of $\mathbb R^{n}$, give $\|\eta_\varepsilon u-u\|_X\to0$. The same remark applies
verbatim to the space $S_{a,b}(\Rnkp)$ and to the monomial-weight spaces of Section
\ref{S:poincare}.
\end{remark}

\end{document}